\documentclass[3p,times]{amsart}

\usepackage{amssymb}

\usepackage{amsthm}

\usepackage{enumerate} 

\usepackage{amsmath}
\usepackage{amssymb}

\newtheorem{theorem}{Theorem}[section] 
\newtheorem{lemma}[theorem]{Lemma}   

\newtheorem{proposition}[theorem]{Proposition}

\usepackage[normalem]{ulem} 

\usepackage{dsfont}
\usepackage[all]{xy}

\usepackage{empheq} 

\renewcommand{\mod}{\operatorname{mod}}

\newcommand{\Hom}{\operatorname{Hom}}

\newcommand{\Ext}{\operatorname{Ext}}
\newcommand{\Der}{\operatorname{Der}}

\newcommand{\alg}{\operatorname{alg}}
\newcommand{\soc}{\operatorname{soc}}
\newcommand{\op}{\operatorname{op}}

\newcommand{\rad}{\operatorname{rad}}

\newcommand{\GL}{\operatorname{GL}}
\newcommand{\charact}{\operatorname{char}}
\newcommand{\Ker}{\operatorname{Ker}}

\newcommand{\cB}{\mathcal{B}}

\newcommand{\cN}{\mathcal{N}}
\newcommand{\cT}{\mathcal{T}}
\newcommand{\bA}{\mathbb{A}}

\newcommand{\bG}{\mathbb{G}}
\newcommand{\bN}{\mathbb{N}}
\newcommand{\bQ}{\mathbb{Q}}
\newcommand{\bP}{\mathbb{P}}

%
%
%
%
%
%
%
%
%
%
%
%
%

\begin{document}


\title[Hochschild cohomology for periodic algebras]{Hochschild cohomology for periodic algebras of polynomial growth}

{\def\thefootnote{}
\footnote{The research was supported by the research grant
DEC-2011/02/A/ST1/00216 of the National Science Center Poland.}
}

\author[J. Bia\l kowski]{Jerzy Bia\l kowski}
\address[Jerzy Bia\l kowski]{Faculty of Mathematics and Computer Science,
   Nicolaus Copernicus University,
   Chopina~12/18,
   87-100 Toru\'n,
   Poland}
\email{jb@mat.uni.torun.pl}

\author[K. Edrmann]{Karin Erdmann}
\address[Karin Erdmann]{Mathematical Institute,
   Oxford University,
   ROQ, Oxford OX2 6GG,
   United Kingdom}
\email{erdmann@maths.ox.ac.uk}

\author[A. Skowro\'nski]{Andrzej Skowro\'nski}
\address[Andrzej Skowro\'nski]{Faculty of Mathematics and Computer Science,
   Nicolaus Copernicus University,
   Chopina~12/18,
   87-100 Toru\'n,
   Poland}
\email{skowron@mat.uni.torun.pl}

\subjclass[2010]{16D50, 16E30, 16G60}

\maketitle

\begin{center}
\textit{Dedicated to Kunio Yamagata on the occasion of his 70th birthday}
\end{center}

\begin{abstract}
We describe the dimensions of  low Hochschild cohomology spaces
of exceptional periodic representation-infinite algebras
of polynomial growth.
As an application we obtain that an indecomposable non-standard
periodic representation-infinite algebra of polynomial growth
is not derived equivalent to a standard self-injective algebra.

\bigskip

\noindent
\textit{Keywords:}
Hochschild cohomology, Periodic algebra, Tubular algebra, Algebra of polynomial growth
 
\noindent
\textit{2010 MSC:}
16D50, 16E30, 16G60

\end{abstract}

\bigskip

\section{Introduction and the main results}%
\label{sec:intro}

Throughout this paper, $K$ will denote a fixed algebraically closed field.
By an algebra we  mean an associative, finite-dimensional $K$-algebra
with  identity, which we assume to be basic and indecomposable.
For an algebra $A$, we denote by $\mod A$ the category of
finite-dimensional right $A$-modules,
and by $D$ the standard duality $\Hom_K(-,K)$ on $\mod A$.
An algebra $A$ is called {self-injective}
if $A \cong D(A)$ in $\mod A$, that is
the projective modules in $\mod A$ are injective.
Further, $A$ is called symmetric if $A$ and $D(A)$
are isomorphic as $A$-$A$-bimodules.
Two self-injective algebras $A$ and $B$ are said to be
socle equivalent if the quotient algebras
$A/\soc (A)$ and $B/\soc (B)$ are isomorphic.

For an algebra $A$, we denote by $D^b(\mod A)$
the derived category of bounded complexes from $\mod A$.
Two algebras $A$ and $B$ are said to be derived equivalent
if $D^b(\mod A)$ and $D^b(\mod B)$ are equivalent
as triangulated categories.
Recall that by the criterion by Rickard \cite{R1}
two algebras $A$ and $B$ are derived equivalent
if and only if $B$ is the endomorphism algebra of a tilting
complex over $A$.
Since many  interesting properties of algebras are preserved
by derived equivalences, it is 
important to classify algebras up to derived equivalence.

For an algebra $A$, the syzygy operator $\Omega_A$ on $\mod A$
is an important tool to construct modules and relate them.
Recall that $\Omega_A$ assigns to a module $M$ in $\mod A$
the kernel $\Omega_A(M)$ of a
projective cover of $M$ in $\mod A$.
A module $M$ in $\mod A$ is called {periodic}
if $\Omega_A^n(M) \cong M$ in $\mod A$ for some $n \geq 1$,
and if so the minimal such $n$ is called the
period of $M$.

An algebra $A$ is defined to be periodic
if it is periodic viewed as a module over
the enveloping algebra $A^e = A^{\op} \otimes_K A$,
or equivalently,  as an $A$-$A$-bimodule.
If $A$ is a periodic algebra, then $A$ is self-injective,
and every nonprojective indecomposable
module in $\mod A$ is periodic.
We also note that
periodicity of  algebras is invariant under derived equivalence.
Periodic algebras have interesting connections with
group theory,
topology,
singularity theory,
cluster algebras
and
mathematical physics.
Periodic algebras include
all nonsimple representation-finite self-injective algebras \cite{Du1},
algebras of quaternion type \cite{ESk1},
weighted surface algebras \cite{ESk3},
preprojective algebras of Dynkin type \cite{ESn},
deformed preprojective algebras of generalized Dynkin type \cite{BES1},
and stable Auslander algebras of hypersurface singularities
of Cohen-Macaulay type \cite{Du2}.
The classification of all periodic algebras up to
isomorphism is an 
attractive open problem.
For tame algebras
one can apply
techniques and results which have been obtained
so far.
Recall that an algebra $A$ is tame if, for each
positive integer $d$, all but finitely many
isomorphism classes of
$d$-dimensional indecomposable modules
come in in a finite number of one-parameter
families \cite{CB1,Dr}.
The algebra  $A$ is said to be of {polynomial growth}
if there is a positive integer $m$
such that for each positive integer $d$,
there are at most $d^m$ such one-parameter families \cite{S1}.
Moreover, $A$ is representation-finite if $\mod A$
admits only a finite number of indecomposable
modules up to isomorphism.
For self-injective algebras,
tameness, polynomial growth and representation-finiteness
of algebras are invariant under derived equivalences.

An important invariant of an algebra $A$ is its
Hochschild cohomology algebra $HH^*(A)$,
which is the graded commutative $K$-algebra
\[
  HH^*(A) = \Ext_{A^e}^*(A,A) = \bigoplus_{i \geq 0} \Ext_{A^e}^i(A,A)
\]
with respect to the Yoneda product
(see \cite{H2,Hoc} for more details).
We note that
$HH^0(A)$ is the center $C(A)$ of $A$, 
$HH^1(A)$ is isomorphic to the quotient space
$\Der_K(A,A)/\Der_K^0(A,A)$
of the space $\Der_K(A,A)$ of $K$-linear derivations
of $A$ modulo the subspace $\Der_K^0(A,A)$ of inner derivations
of $A$, and the vector spaces $HH^n(A)$, $n \geq 2$,
control deformations of $A$ \cite{Ger}.
Moreover, $H^2(A,A)$ describes the Hochschild extensions of $A$
by \cite{SY2}.
It follows from a theorem by Rickard \cite{R3} that,
if $A$ and $B$ are derived equivalent algebras,
then $HH^*(A)$ and $HH^*(B)$ are isomorphic
as graded $K$-algebras.
Moreover, if $A$ is a periodic algebra, then
$HH^*(A)$ is a noetherian graded algebra \cite{Sc}. Let $\cN(A)$
be the ideal generated by all homogeneous nilpotent elements, then 
the quotient algebra $HH^*(A)/\cN(A)$
of $HH^*(A)$ 
is isomorphic, as a graded algebra, to the  polynomial algebra
$K[x]$ where the degree of $x$ is the period of $A$
\cite{Ca}.

Recently, the periodic representation-infinite tame algebras
of polynomial growth have been classified in \cite{BES2}.
By general theory,  self-injective algebras split into two classes:
the standard algebras which admit simply connected Galois coverings,
and the remaining non-standard algebras.
It follows from \cite[Theorem~1.1]{BES2}
that the standard periodic representation-infinite algebras
of polynomial growth are exactly the orbit algebras
$\widehat{B}/G$ of the repetitive categories $\widehat{B}$
of tubular algebras $B$
with respect to admissible infinite cyclic automorphism groups $G$
of $\widehat{B}$. The
 non-standard periodic representation-infinite algebras
of polynomial growth are socle deformations of the corresponding
periodic standard algebras.
Non-standard periodic representation-infinite algebras
of polynomial growth occur only in characteristic $2$ and $3$
(see \cite{BiS3,S5}),
and every such an algebra $\Lambda$ is a geometric socle deformation
of exactly one periodic representation-infinite standard algebra
$\Lambda'$ of polynomial growth, called the standard form of $\Lambda$
(see \cite{BHS2,BiS2,S5}).
We call these algebras $\Lambda$ and $\Lambda'$
exceptional periodic algebras of polynomial growth.
We refer to
Section \ref{sec:exceptional}
for bound quiver presentations of these algebras.

The main aim of this paper is to describe the dimensions
of the  Hochschild cohomology spaces $HH^n(\Lambda)$ and
$HH^n(\Lambda')$ for $n \in \{0,1,2\}$ of the
exceptional periodic algebras $\Lambda$ and $\Lambda'$
of polynomial growth and derive the theorems below.

\begin{theorem}
\label{thm:main1}
Let $\Lambda$ be a
non-standard, representation-infinite periodic algebra
of polynomial growth
and $\Lambda'$ the standard form of $\Lambda$.
Then the following  hold:
\begin{enumerate}[(i)]
\item $\dim_K HH^0 (\Lambda) = \dim_K HH^0 (\Lambda')$;
\item $\dim_K HH^1 (\Lambda) < \dim_K HH^1 (\Lambda')$;
\item $\dim_K HH^2 (\Lambda) < \dim_K HH^2 (\Lambda')$.
\end{enumerate}\end{theorem}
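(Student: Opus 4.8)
The plan is to reduce Theorem~\ref{thm:main1} to a finite calculation. By the classification recalled above, together with the bound quiver presentations collected in Section~\ref{sec:exceptional}, the non-standard representation-infinite periodic algebras $\Lambda$ of polynomial growth and their standard forms $\Lambda'$ form a finite list, all occurring in characteristic $2$ or $3$. Moreover, in each case $\Lambda$ and $\Lambda'$ have the same quiver $Q$, and the defining ideal of $\Lambda'$ is obtained from that of $\Lambda$ by replacing finitely many socle-deformed relations $p = c\,\omega$ — with $c \in K \setminus \{0\}$ and $\omega$ a path lying in $\soc(\Lambda)$ — by the monomial relations $p = 0$. In particular $\Lambda/\soc(\Lambda) \cong \Lambda'/\soc(\Lambda')$ and $\dim_K \Lambda = \dim_K \Lambda'$, so it is enough to compute $\dim_K HH^n$ for $n \in \{0,1,2\}$ for each algebra on the list and to compare the non-standard algebra with its standard form.

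For the computation I would use the beginning
\[
  P_3 \longrightarrow P_2 \longrightarrow P_1 \longrightarrow P_0 \longrightarrow A \longrightarrow 0
\]
of the minimal projective resolution of $A = KQ/I$ over $A^e$, where $P_0$ is indexed by the vertices of $Q$, $P_1$ by the arrows, $P_2$ by a minimal set of relations, and $P_3$ by a minimal set of relations among the relations. Applying $\Hom_{A^e}(-,A)$ turns $P_\bullet$ into a cochain complex assembled from the spaces $e_i A e_j$, and one reads off $HH^0(A) = Z(A)$, $HH^1(A) = \ker d_2^{\ast}/\operatorname{im} d_1^{\ast}$ (derivations modulo inner derivations), and $HH^2(A) = \ker d_3^{\ast}/\operatorname{im} d_2^{\ast}$. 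Since every algebra on the list is periodic as a bimodule, the differentials $d_2$ and $d_3$ needed here are available explicitly (directly or via the periodic structure of the resolution), and since all these algebras have small dimension, each of $HH^0$, $HH^1$, $HH^2$ is finite-dimensional and can be pinned down by a direct, if lengthy, calculation.

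Carrying this out for each algebra, one fixes the path basis, reads off $Z(A)$, solves the linearized relation equations to describe $\Der_K(A,A)$ and subtracts the dimension of the inner derivations, and evaluates $\ker d_3^{\ast}/\operatorname{im} d_2^{\ast}$. For part (i) one observes that the deformation alters only products of paths that land in $\soc(A)$, and checks in each case that this leaves the dimension of the centre unchanged, giving $\dim_K HH^0(\Lambda) = \dim_K HH^0(\Lambda')$. For parts (ii) and (iii) the comparison exhibits a strict drop: the socle-deformed relation $p = c\,\omega$ of $\Lambda$ is respected by strictly fewer derivations than the monomial relation $p = 0$ of $\Lambda'$, so $HH^1$ decreases, and the nontrivial geometric deformation of $\Lambda'$ towards $\Lambda$ persists as a class in $HH^2(\Lambda')$ with no counterpart for the more rigid algebra $\Lambda$, so $HH^2$ decreases — with no compensating increase in the affected degrees.

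The main obstacle is the $HH^2$ computation in characteristics $2$ and $3$: one must correctly pin down a minimal set of relations, and of relations among relations, for the non-standard algebra $\Lambda$, since the extra socle term in $p = c\,\omega$ can change which relations and which syzygies are redundant compared with $\Lambda'$, and one must then control $\ker d_3^{\ast}/\operatorname{im} d_2^{\ast}$ finely enough to obtain the strict inequality rather than merely ``$\le$''. There is no conceptual shortcut around treating the finite list case by case, because $\Lambda$, being non-standard, has no simply connected Galois covering and hence lies outside the reach of the covering and orbit-algebra techniques available for the standard form $\Lambda'$.
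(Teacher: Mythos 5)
Your proposal takes essentially the same route as the paper: Theorem~\ref{thm:main1} is proved there by a case-by-case computation over the finite list of exceptional algebras, using the first terms of the minimal projective bimodule resolution and the exact sequences of Proposition~\ref{prop:2.5} to determine $\dim_K HH^n$ for $n\in\{0,1,2\}$ in each case (Propositions \ref{prop:5.1}--\ref{prop:5.2}, \ref{prop:6.1}--\ref{prop:6.2}, \ref{prop:6.4}--\ref{prop:6.5}, \ref{prop:7.1}--\ref{prop:7.2}, \ref{prop:8.1}--\ref{prop:8.2}). The only cosmetic difference is that the paper works with explicit bimodule generators of $\Omega_{A^e}^3(A)=\Ker R$ instead of a fourth projective term $P_3$ with a differential $d_3$, which is the same data in different packaging; your heuristics for why (ii) and (iii) give strict inequalities are not themselves proofs, but you correctly defer to the case-by-case verification.
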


The analogue of the above theorem
for the representation-finite self-injective algebras
was established by Al-Kadi in \cite{AK1}.

Since the dimensions  of the Hochschild cohomology
spaces are derived invariants,
we obtain the following consequence of
Theorem~\ref{thm:main1} and the theory of self-injective
algebras of polynomial growth.

\begin{theorem}
\label{thm:main2}
Let $A$ be a standard self-injective algebra
and $\Lambda$ be a non-standard periodic representation-infinite
algebra of polynomial growth.
Then $A$ and $\Lambda$ are not derived equivalent.
\end{theorem}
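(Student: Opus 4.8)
The plan is to argue by contradiction using the known structure theory for self-injective algebras of polynomial growth together with Theorem~\ref{thm:main1}. Suppose that $A$ is a standard self-injective algebra and $\Lambda$ a non-standard periodic representation-infinite algebra of polynomial growth, and assume for contradiction that $A$ and $\Lambda$ are derived equivalent. First I would recall that periodicity, representation-infiniteness, tameness and polynomial growth are all invariant under derived equivalence for self-injective algebras (as noted in the introduction); hence $A$ is itself periodic, representation-infinite, and of polynomial growth. Thus $A$ falls into the class of standard periodic representation-infinite algebras of polynomial growth, which by \cite[Theorem~1.1]{BES2} are precisely the orbit algebras $\widehat{B}/G$ of repetitive categories of tubular algebras $B$.

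Next I would invoke the classification in \cite{BES2} of the non-standard periodic representation-infinite algebras of polynomial growth: every such $\Lambda$ is a geometric socle deformation of exactly one periodic representation-infinite standard algebra $\Lambda'$ of polynomial growth, its standard form, and $\Lambda$ exists only in characteristic $2$ or $3$. The key point is that, since $A$ is a \emph{standard} periodic algebra derived equivalent to $\Lambda$, and derived equivalences preserve all the relevant invariants, $A$ must have the same "numerical type" as $\Lambda$; in particular, by the theory of self-injective algebras of polynomial growth, the only standard algebra that could be derived equivalent to $\Lambda$ is its standard form $\Lambda'$. (Here one uses that $\Lambda$ and $\Lambda'$ are socle equivalent, share the same Cartan matrix and the same stable Auslander--Reiten quiver, and that among the orbit algebras $\widehat{B}/G$ the derived equivalence class is pinned down by these data.) So the assumption forces $A$ to be derived equivalent to $\Lambda'$, and hence $\Lambda$ to be derived equivalent to $\Lambda'$.

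Finally I would derive the contradiction from Theorem~\ref{thm:main1}. By Rickard's theorem \cite{R3}, a derived equivalence between $\Lambda$ and $\Lambda'$ would give a graded $K$-algebra isomorphism $HH^*(\Lambda) \cong HH^*(\Lambda')$, and in particular $\dim_K HH^n(\Lambda) = \dim_K HH^n(\Lambda')$ for every $n$. But Theorem~\ref{thm:main1}(ii) asserts $\dim_K HH^1(\Lambda) < \dim_K HH^1(\Lambda')$ (and likewise (iii) for $n=2$), a contradiction. Therefore no such derived equivalence exists, proving the theorem.

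The main obstacle I expect is the middle step: justifying rigorously that a standard self-injective algebra derived equivalent to the non-standard algebra $\Lambda$ must in fact be (derived equivalent to) the standard form $\Lambda'$, rather than merely having the same gross invariants. This requires citing precisely the classification of self-injective algebras of polynomial growth up to derived equivalence (the work of Skowro\'nski and collaborators on orbit algebras of repetitive categories of tubular algebras) and checking that the invariants preserved by derived equivalence — Cartan matrix, stable equivalence type, Hochschild cohomology in low degrees — are enough to single out $\Lambda'$ within that class. Once this is in place, the reduction to Theorem~\ref{thm:main1} via Rickard's theorem is immediate.
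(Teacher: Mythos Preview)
Your overall strategy---reduce to a finite list of standard candidates via derived-invariant data, then use Theorem~\ref{thm:main1} and Rickard's theorem---matches the paper's approach for the symmetric exceptional algebras $\Lambda_1,\Lambda_2,\Lambda_3(\lambda),\Lambda_4,\dots,\Lambda_8$. There the paper argues exactly as you outline: derived equivalence forces $A$ to be periodic representation-infinite of polynomial growth, the stable AR-quiver fixes the tubular type, symmetry and the rank of $K_0$ pin $A$ down (via \cite{BiS2}) to one of the standard algebras $\Lambda'_i$ of the same type, all of which share the same $HH^1$ and $HH^2$ dimensions, and Theorem~\ref{thm:main1} gives the contradiction.

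However, your middle step fails for $\Lambda_9$ and requires extra work for $\Lambda_{10}$. For $\Lambda_9$ (in characteristic $2$), the standard form $\Lambda'_9$ is \emph{symmetric} while $\Lambda_9$ is not (Proposition~\ref{prop:4.5}); since symmetry is a derived invariant, $A$ cannot be $\Lambda'_9$ at all, so Theorem~\ref{thm:main1} is irrelevant here. The paper instead shows that the classification forces $A$ to be the mesh algebra $\Lambda(\mathbb{G}_2)$, and then obtains a contradiction from the period ($3$ versus $6$) or from $\dim_K C(A)=2\neq 5=\dim_K C(\Lambda_9)$. For $\Lambda_{10}$, the classification yields \emph{two} standard candidates for $A$: the standard form $\Lambda'_{10}$ and the preprojective algebra $P(\mathbb{A}_5)$. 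Theorem~\ref{thm:main1} handles $\Lambda'_{10}$, but ruling out $P(\mathbb{A}_5)$ requires the separate input $\dim_K HH^1(P(\mathbb{A}_5))=2$ from \cite{ESn2}. So the assertion that ``the only standard algebra that could be derived equivalent to $\Lambda$ is its standard form $\Lambda'$'' is not true in general, and the proof must treat these two non-symmetric cases separately.
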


For the symmetric algebras, the above theorem is a direct
consequence of \cite[Theorem~1.1]{BES2} and the main result
of \cite{HS2}.
The main working tool in \cite{HS2}
were so-called K\"ulshammer ideals of the centers of algebras,
which for  symmetric algebras
over algebraically closed fields
of positive characteristic are derived invariants, this was 
shown by Zimmermann \cite{Zi}.
The analogues of the above theorem
are valid for  representation-finite self-injective
algebras (see \cite{AK1,As,HS1})
and for  representation-infinite non-periodic (domestic)
self-injective algebras of polynomial growth
(see \cite{BoS,HS1}).

The paper is organized as follows.
In Section~\ref{sec:pre-results} we present general results on
bimodule resolutions of algebras, Hochschild extension algebras,
and derived equivalences of algebras, needed for the proofs 
of the main results of the paper.
Section~\ref{sec:periodic} is devoted to the description 
of representation-infinite periodic algebras of polynomial growth
and their Auslander-Reiten quivers.
In Section~\ref{sec:exceptional} we introduce the exceptional periodic
algebras of polynomial growth and describe their basic properties.
In Sections 
\ref{sec:type2222},
\ref{sec:type333},
\ref{sec:type244},
\ref{sec:type236}
we determine the dimensions of the low Hochschild cohomology
spaces of the exceptional periodic algebras of polynomial growth
and prove  Theorems \ref{thm:main1} and \ref{thm:main2}.


For general background on the relevant representation theory
we refer to the books \cite{ASS,Ri,SS1,SS2,SY,SY2}.

\section{Preliminary results}%
\label{sec:pre-results}

Let $A = K Q/I$, where $Q$ is
a finite quiver and $I$ is an admissible ideal in the path algebra $K Q$
of $Q$ over $K$ (see \cite[Chapter~II]{ASS}).
Let $Q_0$ be the set of vertices of $Q$.
We denote by $e_i$ for $i \in Q_0$, the associated complete set
of pairwise orthogonal primitive idempotents of $A$, and by $S_i = e_i A/e_i \rad A$ for $i \in Q_0$,
the complete family  of pairwise non-isomorphic simple modules in $\mod A$.
Then $P_i:= e_iA$ for $i\in Q_0$ gives a complete set of
pairwise non-isomorphic indecomposable projective right $A$-modules.
The $e_i \otimes e_j$, $i,j \in Q_0$, form a set
of pairwise orthogonal primitive idempotents of
the enveloping algebra $A^e = A^{\op} \otimes_K A$
whose sum is the identity of $A^e$, and consequently
$P(i,j) = (e_i \otimes e_j) A^e = A e_i \otimes e_j A$,
for $i,j \in Q_0$, form a complete set of pairwise non-isomorphic indecomposable
projective modules in $\mod A^e$ (see \cite[Proposition~IV.11.3]{SY}).

The following result by Happel \cite[Lemma~1.5]{H2} describes
the terms of a minimal projective resolution of $A$ in $\mod A^e$.

\begin{proposition}
\label{prop:2.1}
Let $A = K Q/I$ be a bound quiver algebra.
Then there is in $\mod A^e$ a minimal projective resolution of $A$
of the form
\[
  \cdots \rightarrow
  \bP_n \xrightarrow{d_n}
  \bP_{n-1} \xrightarrow{ }
  \cdots \rightarrow
  \bP_1 \xrightarrow{d_1}
  \bP_0 \xrightarrow{d_0}
  A \rightarrow 0,
\]
where
\[
  \bP_n = \bigoplus_{i,j \in Q_0}
          P(i,j)^{\dim_K \Ext_A^n(S_i,S_j)}
\]
for any $n \in \bN$.
\end{proposition}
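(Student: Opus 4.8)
The plan is to construct the resolution explicitly as the standard (normalized bar-type) resolution adapted to the quiver presentation, and then identify its terms with the projective $A^e$-modules $P(i,j)$ counted by $\dim_K\Ext_A^n(S_i,S_j)$. First I would record the basic fact that, for a bound quiver algebra $A = KQ/I$ with complete set of primitive orthogonal idempotents $e_i$, one has $\Ext_A^n(S_i,S_j) \cong e_j \cdot \Ext_{A^e}^n(A,A) \cdot e_i$ in an appropriate sense; more to the point, for the minimal projective resolution $\bP_\bullet \to A$ in $\mod A^e$, applying $- \otimes_A S_i$ (or equivalently $e_i(-)$ on the left and $(-)e_j$ considerations) turns it into a minimal projective resolution of $S_i$ in $\mod A$. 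Concretely, the key identity is that for a left $A^e$-module, tensoring $P(i,j) = Ae_i \otimes e_jA$ over $A$ with the simple $S_k$ yields $Ae_i$ if $k = j$ and $0$ otherwise, so that $\bP_\bullet \otimes_A S_j$ becomes a complex of projective left $A$-modules whose $n$-th term is $\bigoplus_i P_i^{\,a_{n}(i,j)}$ where $a_n(i,j)$ is the multiplicity of $P(i,j)$ in $\bP_n$.

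The main steps are then: (1) establish that if $\bP_\bullet \to A$ is a \emph{minimal} projective resolution in $\mod A^e$, then $\bP_\bullet \otimes_A S_j \to S_j$ is a \emph{minimal} projective resolution of $S_j$ in $\mod A$ — minimality is preserved because $- \otimes_A S_j$ sends a radical map (image in $\rad A^e \cdot \bP_{n-1}$) to a radical map of left $A$-modules, using that the radical of $A^e$ is $\rad A \otimes A + A \otimes \rad A$; (2) conclude from the structure theory of projective modules over $A$ that the multiplicity of $P_i$ in the $n$-th term of a minimal projective resolution of $S_j$ equals $\dim_K \Ext_A^n(S_i, S_j)$, which is the standard interpretation of $\Ext$ via minimal resolutions; (3) combine (1) and (2) to read off $a_n(i,j) = \dim_K \Ext_A^n(S_i,S_j)$, hence $\bP_n = \bigoplus_{i,j} P(i,j)^{\dim_K \Ext_A^n(S_i,S_j)}$. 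One should also note the symmetric check using $S_i \otimes_A -$ on the right to pin down both indices, or simply observe that $P(i,j)$ is the unique indecomposable projective in $\mod A^e$ with $P(i,j)\otimes_A S_j \cong P_i$ and top $S_i \otimes S_j^{\op}$, so the two multiplicity counts are compatible.

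The step I expect to be the main obstacle is (1): proving that minimality of the bimodule resolution is inherited by the one-sided resolution after tensoring with a simple module. The subtlety is that $- \otimes_A S_j$ is right exact but not exact, so one cannot simply say it takes resolutions to resolutions — one must use that $A$ is projective as a one-sided module (each $\bP_n$ is a direct sum of $Ae_i \otimes e_jA$, and $Ae_i$ is projective), so the complex stays exact, combined with a careful check that differentials with entries in $\rad A^e$ map to differentials with entries in $\rad A$. Concretely this reduces to the computation $(\rad A^e \cdot \bP_{n-1}) \otimes_A S_j = \rad A \cdot (\bP_{n-1} \otimes_A S_j)$, which follows from $\rad(A^e) = \rad A \otimes_K A + A \otimes_K \rad A$ together with $(A \otimes_K \rad A)\otimes_A S_j = 0$ since $\rad A$ acts as zero on $S_j$. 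Once this is in place the counting is routine.

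Alternatively, the whole statement can be derived from the interpretation of $\Ext_A^n(S_i,S_j)$ as the $(i,j)$-component of the $n$-th term of the minimal resolution together with the observation that $A$ itself, as an $A^e$-module, has top $\bigoplus_i S_i \otimes S_i^{\op}$ and the minimal resolution is built by iteratively covering kernels; this is precisely the content of Happel's original argument in \cite{H2}, so I would follow that route and simply fill in the identification of $\Hom_{A^e}(\bP_n, A/\rad A)$ with $\bigoplus_{i,j}\Ext_A^n(S_i,S_j)$.
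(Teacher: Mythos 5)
Your argument is correct and is essentially the standard one: the paper itself gives no proof of Proposition~\ref{prop:2.1} but attributes it to Happel \cite[Lemma~1.5]{H2}, and your reconstruction (tensor the minimal bimodule resolution with the simples, use one-sided projectivity of the $\bP_n$ and of the syzygies from Lemma~\ref{lem:2.2} to preserve exactness, and use $\rad(A^e)=\rad A\otimes A+A\otimes\rad A$ to preserve minimality) is precisely that argument. The only cosmetic caveat is your opening identification of $\Ext_A^n(S_i,S_j)$ with a ``component'' of $\Ext_{A^e}^n(A,A)$, which is not literally correct, but it plays no role in the actual proof you give.
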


There is  no general recipe for
the differentials $d_n$, except the first three.
Before describing  the differentials $d_0, d_1, d_2$,
we recall an important property of the syzygy
modules
(see \cite[Lemma~IV.11.16]{SY}).

\begin{lemma}
\label{lem:2.2}
Let $A$ be an algebra.
For any positive integer $n$, the module  $\Omega_{A^e}^n(A)$
is  projective as a left $A$-module and also as a right
$A$-module.
\end{lemma}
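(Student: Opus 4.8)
Lemma~\ref{lem:2.2}: for any positive integer $n$, the bimodule $\Omega_{A^e}^n(A)$ is projective as a left $A$-module and as a right $A$-module.

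**Plan of proof.** The plan is to argue by induction on $n$, reducing everything to the case $n=1$ by a dimension-shift. The key structural input is that $A^e = A^{\op}\otimes_K A$, so restricting an $A^e$-module along either $A \to A^e$ or $A^{\op}\to A^e$ just remembers one side of the bimodule structure; and the indecomposable projective $A^e$-module $P(i,j) = Ae_i \otimes e_j A$ restricts, as a left $A$-module, to a direct sum of copies of $Ae_i$ (indexed by a $K$-basis of $e_jA$), hence is projective as a left $A$-module — and symmetrically projective as a right $A$-module. Therefore every projective $A^e$-module is projective on each side; in particular each term $\bP_n$ of the minimal resolution of Proposition~\ref{prop:2.1} is left-projective and right-projective.

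For the base case $n=1$, consider the start of the resolution $\bP_1 \xrightarrow{d_1} \bP_0 \xrightarrow{d_0} A \to 0$. The map $d_0\colon \bP_0 \to A$ is the multiplication-type map $A\otimes_K A \to A$ (up to the grouping by vertices), and it is a split surjection of left $A$-modules: the section sends $a \mapsto a\otimes 1$ (and symmetrically $a\mapsto 1\otimes a$ splits it as right $A$-modules). Hence $\Omega_{A^e}(A) = \Ker d_0$ is, as a left $A$-module, a direct summand of $\bP_0$, which is left-projective; so $\Omega_{A^e}(A)$ is projective as a left $A$-module, and by the symmetric argument also as a right $A$-module. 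For the inductive step, note that for $n\ge 1$ there is a short exact sequence of $A^e$-modules $0 \to \Omega_{A^e}^{n+1}(A) \to \bP_n \to \Omega_{A^e}^{n}(A) \to 0$. Restrict to left $A$-modules: $\bP_n$ is left-projective, and by the induction hypothesis $\Omega_{A^e}^{n}(A)$ is left-projective, so the sequence splits over $A$ and $\Omega_{A^e}^{n+1}(A)$ is a left-$A$ direct summand of $\bP_n$, hence left-projective. The same argument with right modules finishes the induction.

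**Main obstacle.** There is no serious obstacle; the only point requiring a little care is the very first reduction — that restriction of an $A^e$-module to a one-sided module is an exact functor sending projectives to projectives. The cleanest way to see this is the explicit computation $ (Ae_i\otimes e_jA)|_{A\text{-left}} \cong (Ae_i)^{\dim_K e_jA}$, which makes the left-projectivity of $\bP_0$ transparent and simultaneously exhibits the splitting of $d_0$. Everything after that is the standard Schanuel-type dimension-shifting argument, carried out once on each side. Since the excerpt already records Proposition~\ref{prop:2.1} (and hence the existence of the minimal $A^e$-projective resolution), no further machinery is needed.
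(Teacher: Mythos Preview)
Your argument is correct and is essentially the standard proof. The paper does not actually prove this lemma; it simply cites \cite[Lemma~IV.11.16]{SY}, and your induction via one-sided splitting of the short exact sequences $0 \to \Omega_{A^e}^{n+1}(A) \to \bP_n \to \Omega_{A^e}^{n}(A) \to 0$ is precisely the argument one finds there.

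One small simplification: you do not need to exhibit an explicit section of $d_0$. Since $A$ is free as a left $A$-module (and as a right $A$-module), the base case $n=1$ is already an instance of your inductive step, starting from $\Omega_{A^e}^0(A)=A$. In other words, the whole proof is the single observation that restriction from $A^e$-modules to one-sided $A$-modules is exact and sends projectives to projectives, together with the fact that a kernel of a surjection between projectives is projective.
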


We describe now the start of the resolution in Proposition~\ref{prop:2.1}.
We have
\[
  \bP_0 = \bigoplus_{i \in Q_0} P(i,i)
        = \bigoplus_{i \in Q_0} A e_i \otimes e_i A .
\]
The homomorphism $d_0 : \bP_0 \to A$ in $\mod A^e$ defined by
$d_0 (e_i \otimes e_i) = e_i$ for all $i \in Q_0$
is a minimal projective cover of $A$ in $\mod A^e$.
Let $Q_1$ be the set of arrows of the quiver $Q$. For an arrow $\alpha$ of $Q$,
we denote by $s(\alpha)$ and $t(\alpha)$ the source
and target of $\alpha$.
Recall also that, for two vertices $i$ and $j$ in $Q$,
the number of arrows from $i$ to $j$ in $Q$ is equal
to $\dim_K \Ext_A^1(S_i,S_j)$
(see \cite[Lemma~III.2.12]{ASS}).
Hence we have
\[
  \bP_1 = \bigoplus_{\alpha \in Q_1} P\big(s(\alpha),t(\alpha)\big)
        = \bigoplus_{\alpha \in Q_1} A e_{s(\alpha)} \otimes e_{t(\alpha)} A
        .
\]
Then we have the following known fact.

\begin{lemma}
\label{lem:2.3}
Let $A = K Q/I$ be a bound quiver algebra, and
$d_1 : \bP_1 \to \bP_0$ the homomorphism in $\mod A^e$
defined by
\[
 d_1(e_{s(\alpha)} \otimes e_{t(\alpha)}) =
   \alpha \otimes e_{t(\alpha)} - e_{s(\alpha)} \otimes \alpha
\]
for any arrow $\alpha$ in $Q$.
Then $d_1$ induces a minimal projective cover
$d_1 : \bP_1 \to \Omega_{A^e}^1(A)$ of
$\Omega_{A^e}^1(A) = \Ker d_0$ in $\mod A^e$.
In particular, we have
$\Omega_{A^e}^2(A) \cong \Ker d_1$ in $\mod A^e$.
\end{lemma}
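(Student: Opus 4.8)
The plan is to verify directly that the stated map $d_1$ is a well-defined $A^e$-homomorphism, that its image lies in $\Ker d_0$, that it is surjective onto $\Ker d_0$, and that it is a projective cover (i.e.\ that it is a minimal, or essential, epimorphism). First I would check well-definedness: since $\bP_1 = \bigoplus_{\alpha} A e_{s(\alpha)} \otimes e_{t(\alpha)} A$ and $P(s(\alpha),t(\alpha))$ is the indecomposable projective generated by $e_{s(\alpha)} \otimes e_{t(\alpha)}$, a homomorphism out of $\bP_1$ is determined freely by the images of these generators, so it suffices that $\alpha \otimes e_{t(\alpha)} - e_{s(\alpha)} \otimes \alpha$ lies in the correct component $e_{s(\alpha)} A^e e_{t(\alpha)} = A e_{s(\alpha)} \otimes e_{t(\alpha)} A$, which is immediate from $\alpha = e_{s(\alpha)} \alpha e_{t(\alpha)}$.

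Next I would compute $d_0 d_1(e_{s(\alpha)} \otimes e_{t(\alpha)}) = d_0(\alpha \otimes e_{t(\alpha)}) - d_0(e_{s(\alpha)} \otimes \alpha)$. Writing $\alpha \otimes e_{t(\alpha)} = (\alpha \otimes e_{t(\alpha)})\cdot(e_{t(\alpha)} \otimes e_{t(\alpha)})$ as an element of $P(t(\alpha),t(\alpha)) = Ae_{t(\alpha)}\otimes e_{t(\alpha)}A$ acting on its generator, applying $d_0$ (which is an $A^e$-map sending $e_{t(\alpha)} \otimes e_{t(\alpha)} \mapsto e_{t(\alpha)}$) yields $\alpha \cdot e_{t(\alpha)} = \alpha$; similarly the second term gives $\alpha$, so the difference is $0$. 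Hence $\Im d_1 \subseteq \Ker d_0$.

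For surjectivity onto $\Ker d_0$, I would argue that $\Ker d_0 = \Omega^1_{A^e}(A)$ is generated as an $A^e$-module by the elements $\alpha \otimes e_{t(\alpha)} - e_{s(\alpha)} \otimes \alpha$; the standard way is to observe that $\bP_0 \to A$ identifies $A$ with $A \otimes A$ modulo the relations forcing tensor symbols to "absorb" arrows, and an element $\sum a_i \otimes b_i \in \bP_0$ lying in the kernel of multiplication can be rewritten, using a telescoping/path-induction argument on words in the arrows, as an $A^e$-combination of the $d_1$-generators. Concretely, for a path $p = \alpha_1 \cdots \alpha_r$ one has the identity $p \otimes e_{t(p)} - e_{s(p)} \otimes p = \sum_{k=1}^{r} \alpha_1\cdots\alpha_{k-1}\,(\alpha_k \otimes e_{t(\alpha_k)} - e_{s(\alpha_k)}\otimes\alpha_k)\,\alpha_{k+1}\cdots\alpha_r$ inside $\bP_0$, which exhibits any such kernel element in the image of $d_1$. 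Finally, minimality of the projective cover follows from Proposition~\ref{prop:2.1}: $\bP_1$ has exactly the projective summands prescribed by $\dim_K \Ext^1_A(S_i,S_j)$ = number of arrows $i \to j$, which by the proposition is precisely the top term of the minimal resolution, so the essential epimorphism $\bP_1 \to \Omega^1_{A^e}(A)$ constructed here must be the minimal projective cover; equivalently, one checks $\Im d_1 \subseteq \rad A^e \cdot \bP_0$ since each generator $\alpha \otimes e_{t(\alpha)} - e_{s(\alpha)}\otimes\alpha$ lies in $(\rad A \otimes A + A \otimes \rad A)\bP_0 = \rad(A^e)\bP_0$. The last assertion $\Omega^2_{A^e}(A) \cong \Ker d_1$ is then immediate from the definition of syzygy.

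The main obstacle is the surjectivity step: it requires the telescoping identity above together with a clean induction showing that the span of the $d_1$-generators, closed under the $A^e$-action, exhausts $\Ker d_0$. One must be careful that $A = KQ/I$ is only a quotient of the path algebra, so the argument has to descend through the relations in $I$ — but since $d_0$ and $d_1$ are defined compatibly on the path-algebra level and then pass to the quotient, and $\bP_1, \bP_0$ are the correct projective $A^e$-modules, this descent is routine; I would simply remark that the computation is identical to the one carried out in \cite[Lemma~IV.11.16]{SY} and \cite{H2}.
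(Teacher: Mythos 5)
The paper offers no proof of Lemma~\ref{lem:2.3}; it is quoted as a known fact (essentially \cite[Lemma~IV.11.16]{SY} combined with the standard description of the kernel of the multiplication map). Your argument is the standard one and its main line is correct: well-definedness and $d_0 d_1 = 0$ are routine, and the telescoping identity
\[
 p \otimes e_{t(p)} - e_{s(p)} \otimes p
 = \sum_{k=1}^{r} \alpha_1\cdots\alpha_{k-1}\bigl(\alpha_k \otimes e_{t(\alpha_k)} - e_{s(\alpha_k)}\otimes\alpha_k\bigr)\alpha_{k+1}\cdots\alpha_r
\]
shows that every element of $\bP_0$ is congruent modulo $\Im d_1$ to one of the form $\sum_i e_i \otimes v_i$ with $v_i \in e_i A$; applying $d_0$ to such an element gives $\sum_i v_i$, which vanishes only if every $v_i = 0$, whence $\Ker d_0 = \Im d_1$. (Since the images of paths span $A = KQ/I$, the reduction to this normal form needs only the identity above applied to spanning elements, so the descent through $I$ is indeed harmless.) Minimality then follows, as you say, from Proposition~\ref{prop:2.1}: $\bP_1$ is isomorphic to the projective cover of $\Omega_{A^e}^1(A)$, and a surjection onto a finite-dimensional module from a projective isomorphic to its projective cover is itself a projective cover.

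One side remark is wrong, however. Verifying $\Im d_1 \subseteq \rad(A^e)\bP_0$ is \emph{not} equivalent to $d_1 : \bP_1 \to \Omega_{A^e}^1(A)$ being a projective cover; it only re-proves minimality of $d_0$, i.e.\ that $\Omega_{A^e}^1(A) \subseteq \rad \bP_0$. Minimality of $d_1$ means $\Ker d_1 \subseteq \rad \bP_1$, equivalently that no indecomposable summand of $\bP_1$ is mapped into $\rad \Omega_{A^e}^1(A)$, and your containment does not address this. Since the appeal to Proposition~\ref{prop:2.1} already settles minimality, the proof stands, but the ``equivalently'' clause should be deleted.
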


We will denote  the  homomorphism
$d_1 : \bP_1 \to \bP_0$ by $d$.
For the algebras $A$ we will consider, the kernel
$\Omega_{A^e}^2(A)$ of $\Ker d_1$ will be generated,
as an $A$-$A$-bimodule, by some elements of $\bP_1$
associated to a set of relations generating the
admissible ideal $I$.
Recall that a relation in the path algebra $KQ$
is an element of the form
\[
  \mu = \sum_{r=1}^n c_r \mu_r
  ,
\]
where $c_1, \dots, c_r$ are elements of $K$ and
$\mu_r = \alpha_1^{(r)} \alpha_2^{(r)} \dots \alpha_{m_r}^{(r)}$
are paths in $Q$ of length $m_r \geq 2$, $r \in \{1,\dots,n\}$,
having a common source and a common target.
Then the admissible ideal $I$ is generated by a finite set
of relations in $K Q$ (see \cite[Corollary~II.2.9]{ASS}).
In particular, the bound quiver algebra $A = K Q/I$ is given
by the path algebra $K Q$ and a finite number of equalities
$\sum_{r=1}^n c_r \mu_r = 0$ given by a finite set of relations
generating the ideal $I$.
Consider the $K$-linear homomorphism $\varrho : K Q \to \bP_1$
which assigns to a path $\alpha_1 \alpha_2 \dots \alpha_m$ in $Q$
the element
\[
  \varrho(\alpha_1 \alpha_2 \dots \alpha_m)
   = \sum_{k=1}^m \alpha_1 \alpha_2 \dots \alpha_{k-1}
                  \otimes \alpha_{k+1} \dots \alpha_m
\]
in $\bP_1$, where $\alpha_0 = e_{s(\alpha_1)}$
and $\alpha_{m+1} = e_{t(\alpha_m)}$.
The $k$-th term in the sum lies in the summand of $\bP_1$ corresponding
to the arrow $\alpha_k$. Observe that
$\varrho(\alpha_1 \alpha_2 \dots \alpha_m) \in e_{s(\alpha_1)} \bP_1 e_{t(\alpha_m)}$.
Then, for a relation $\mu = \sum_{r=1}^n c_r \mu_r$
in $K Q$ lying in $I$, we have an element
\[
  \varrho(\mu) = \sum_{r=1}^n c_r \varrho(\mu_r) \in e_i \bP_1 e_j ,
\]
where $i$ is the common source and $j$ is the common
target of the paths $\mu_1,\dots,\mu_r$.
The following lemma will be useful; we will leave the proof to the
reader as it is straightforward.

\begin{lemma}
\label{lem:2.4}
Let $A = K Q/I$ be a bound quiver algebra and
$d_1 : \bP_1 \to \bP_0$ the homomorphism in $\mod A^e$
defined above.
Then for any relation $\mu$ in $K Q$ lying in $I$,
we have $d_1(\varrho(\mu)) = 0$.
\end{lemma}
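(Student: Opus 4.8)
The plan is to verify the identity $d_1(\varrho(\mu)) = 0$ directly from the explicit formulas for $d_1$ and $\varrho$, reducing first to the case of a single path. Since both $d_1$ and $\varrho$ are $K$-linear, and $\mu = \sum_{r=1}^n c_r \mu_r$, it suffices to show $d_1(\varrho(\mu_r)) = 0$ is \emph{not} quite what we want; rather, the point is that $d_1(\varrho(p))$ takes the ``telescoping'' value $p \otimes e_{t(p)} - e_{s(p)} \otimes p$ for any path $p$ with $|p| \geq 1$, and then $d_1(\varrho(\mu)) = \sum_r c_r \big(\mu_r \otimes e_j - e_i \otimes \mu_r\big) = \mu \otimes e_j - e_i \otimes \mu$, which vanishes in $\bP_0 = \bigoplus_k Ae_k \otimes e_k A$ because $\mu \in I$ and hence $\mu = 0$ in $A$. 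So the real content is the single-path computation.

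For a path $p = \alpha_1 \alpha_2 \cdots \alpha_m$, I would apply $d_1$ term by term to
\[
  \varrho(p) = \sum_{k=1}^m \alpha_1 \cdots \alpha_{k-1} \otimes \alpha_{k+1} \cdots \alpha_m,
\]
where the $k$-th summand lies in the copy of $P(s(\alpha_k), t(\alpha_k))$ indexed by the arrow $\alpha_k$. By definition $d_1$ sends the generator $e_{s(\alpha_k)} \otimes e_{t(\alpha_k)}$ of that summand to $\alpha_k \otimes e_{t(\alpha_k)} - e_{s(\alpha_k)} \otimes \alpha_k$, so since $d_1$ is an $A$-$A$-bimodule map,
\[
  d_1\big(\alpha_1 \cdots \alpha_{k-1} \otimes \alpha_{k+1} \cdots \alpha_m\big)
   = \alpha_1 \cdots \alpha_{k-1} \alpha_k \otimes \alpha_{k+1} \cdots \alpha_m
     - \alpha_1 \cdots \alpha_{k-1} \otimes \alpha_k \alpha_{k+1} \cdots \alpha_m.
\]
Summing over $k = 1, \dots, m$ gives a telescoping sum in $\bP_0$: the ``plus'' term for index $k$ cancels the ``minus'' term for index $k+1$, leaving only $\alpha_1 \cdots \alpha_m \otimes e_{t(\alpha_m)}$ from $k = m$ and $-e_{s(\alpha_1)} \otimes \alpha_1 \cdots \alpha_m$ from $k = 1$. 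Here one must be slightly careful about the conventions $\alpha_0 = e_{s(\alpha_1)}$ and $\alpha_{m+1} = e_{t(\alpha_m)}$, but with those in place the boundary terms are exactly $p \otimes e_{t(p)} - e_{s(p)} \otimes p$ as claimed.

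Finally I would assemble the pieces: for the relation $\mu$ with common source $i$ and common target $j$,
\[
  d_1(\varrho(\mu)) = \sum_{r=1}^n c_r \big(\mu_r \otimes e_j - e_i \otimes \mu_r\big)
   = \Big(\sum_{r=1}^n c_r \mu_r\Big) \otimes e_j - e_i \otimes \Big(\sum_{r=1}^n c_r \mu_r\Big),
\]
viewed inside $Ae_i \otimes e_j A \subseteq \bP_0$; since $\mu = \sum_r c_r \mu_r$ lies in $I$ it represents $0$ in $A$, so both tensor factors vanish and $d_1(\varrho(\mu)) = 0$. The only genuinely delicate point is bookkeeping — keeping track of which summand of $\bP_1$ each term of $\varrho(p)$ sits in, and handling the degenerate endpoints $k=1$ and $k=m$ correctly so the telescoping is clean — but there is no conceptual obstacle, which is presumably why the authors leave it to the reader.
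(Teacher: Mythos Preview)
Your proof is correct and is precisely the straightforward telescoping argument the authors have in mind when they leave the proof to the reader. One small bookkeeping slip: the boundary terms $p \otimes e_j$ and $-e_i \otimes p$ do not both lie in a summand $Ae_i \otimes e_j A$ of $\bP_0$ (indeed $\bP_0$ has only diagonal summands $Ae_k \otimes e_k A$); rather $p \otimes e_j \in Ae_j \otimes e_j A$ and $e_i \otimes p \in Ae_i \otimes e_i A$, but this does not affect the conclusion since $\mu = 0$ in $A$ still kills both.
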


For an algebra $A = K Q/I$ in our context, we will see that
there exists a family of relations $\mu^{(1)},\dots,\mu^{(q)}$
generating the ideal $I$ such that the associated elements
$\varrho(\mu^{(1)}), \dots, \varrho(\mu^{(q)})$ generate
the $A$-$A$-bimodule $\Omega_{A^e}^2(A) = \Ker d_1$.
Moreover, we have
\[
  \bP_2 = \bigoplus_{j = 1}^q P\big(s(\mu^{(j)}),t(\mu^{(j)})\big)
        = \bigoplus_{j = 1}^q A e_{s(\mu^{(j)})} \otimes e_{t(\mu^{(j)})} A
        ,
\]
and the homomorphism $d_2 : \bP_2 \to \bP_1$ in $\mod A^e$ such that
\[
  d_2 \big(e_{s(\mu^{(j)})} \otimes e_{t(\mu^{(j)})}\big) = \varrho(\mu^{(j)})
  ,
\]
for $j \in \{1,\dots,q\}$, defines a projective cover
$d_2 : \bP_2 \to \Omega_{A^e}^2(A)$ of $\Omega_{A^e}^2(A)$ in $\mod A^e$.
We will abbreviate this homomorphism $d_2$ by $R$.
In particular, we will have $\Omega_{A^e}^3(A) \cong \Ker R$.

To compute the dimensions of  Hochschild cohomology spaces,
we will use the following.

\begin{proposition}
\label{prop:2.5}
Let $A = KQ/I$ be a bound quiver algebra over $K$
and
\[
  \bP_0 = \bigoplus_{i \in Q_0} A e_i \otimes e_i A
  ,
  \ \
  \bP_1 = \bigoplus_{\alpha \in Q_1} A e_{s(\alpha)} \otimes e_{t(\alpha)} A
        .
\]
Then there are exact sequences in $\mod K$ of the forms
\begin{gather*}
 0
  \to HH^0(A)
  \to \Hom_{A^e}(\bP_0,A)
  \to \Hom_{A^e}\big(\Omega_{A^e}(A),A\big)
  \to HH^1(A)
  \to 0
 ,\\
 0
  \to \Hom_{A^e}\big(\Omega_{A^e}(A),A\big)
  \to \Hom_{A^e}(\bP_1,A)
  \to \Hom_{A^e}\big(\Omega_{A^e}^2(A),A\big)
  \to HH^2(A)
  \to 0
  .
\end{gather*}
\end{proposition}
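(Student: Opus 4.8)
The plan is to extract the two exact sequences from the minimal projective resolution of $A$ in $\mod A^e$ furnished by Proposition~\ref{prop:2.1} by the standard dimension-shifting argument, using throughout that $HH^n(A) = \Ext^n_{A^e}(A,A)$ and that $\Hom_{A^e}(A,A) = HH^0(A)$.

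First I would split the resolution into the two short exact sequences of $A$-$A$-bimodules
\[
 0 \to \Omega_{A^e}(A) \to \bP_0 \xrightarrow{d_0} A \to 0
 \qquad\text{and}\qquad
 0 \to \Omega_{A^e}^2(A) \to \bP_1 \to \Omega_{A^e}(A) \to 0,
\]
where $\Omega_{A^e}(A) = \Ker d_0$ and $\Omega_{A^e}^2(A) = \Ker d_1$ (cf. Lemma~\ref{lem:2.3}), the right-hand map of the first being $d_0$ and that of the second the corestriction of $d_1$. Applying the contravariant functor $\Hom_{A^e}(-,A)$ to the first sequence produces a long exact $\Ext$-sequence in which the term $\Ext^1_{A^e}(\bP_0,A)$ vanishes, since $\bP_0$ is projective in $\mod A^e$; it therefore reduces to
\[
 0 \to \Hom_{A^e}(A,A) \to \Hom_{A^e}(\bP_0,A) \to \Hom_{A^e}\big(\Omega_{A^e}(A),A\big) \to \Ext^1_{A^e}(A,A) \to 0,
\]
which is the first asserted sequence once we identify $\Hom_{A^e}(A,A) = HH^0(A)$ and $\Ext^1_{A^e}(A,A) = HH^1(A)$.

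For the second sequence I would apply $\Hom_{A^e}(-,A)$ to $0 \to \Omega_{A^e}^2(A) \to \bP_1 \to \Omega_{A^e}(A) \to 0$; using $\Ext^1_{A^e}(\bP_1,A) = 0$ this gives the exact sequence
\[
 0 \to \Hom_{A^e}\big(\Omega_{A^e}(A),A\big) \to \Hom_{A^e}(\bP_1,A) \to \Hom_{A^e}\big(\Omega_{A^e}^2(A),A\big) \to \Ext^1_{A^e}\big(\Omega_{A^e}(A),A\big) \to 0.
\]
It then remains to produce the dimension-shift isomorphism $\Ext^1_{A^e}\big(\Omega_{A^e}(A),A\big) \cong \Ext^2_{A^e}(A,A) = HH^2(A)$, which follows from the $\Ext$ long exact sequence of $0 \to \Omega_{A^e}(A) \to \bP_0 \to A \to 0$ together with $\Ext^1_{A^e}(\bP_0,A) = \Ext^2_{A^e}(\bP_0,A) = 0$. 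Substituting this isomorphism into the previous display yields the second asserted sequence.

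This is routine homological algebra, and I do not expect a genuine obstacle. The only points requiring care are that $\Hom_{A^e}(-,A)$ is merely left exact, so one must pass through the $\Ext$ long exact sequences rather than naively dualize short exact sequences, and that the rightmost maps in the two sequences are the connecting homomorphisms (composed, in the second case, with the dimension-shift isomorphism); both facts are standard. I note that minimality of the resolution plays no role in the argument itself — it is used only to pin down the explicit shapes of $\bP_0$ and $\bP_1$ recorded in the proposition and exploited in the later sections.
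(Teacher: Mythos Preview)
The paper states Proposition~\ref{prop:2.5} without proof, treating it as a standard tool. Your argument is correct and is exactly the standard dimension-shifting derivation one would expect: apply $\Hom_{A^e}(-,A)$ to the short exact sequences $0 \to \Omega_{A^e}(A) \to \bP_0 \to A \to 0$ and $0 \to \Omega_{A^e}^2(A) \to \bP_1 \to \Omega_{A^e}(A) \to 0$, use projectivity of $\bP_0,\bP_1$ to truncate the long exact $\Ext$-sequences, and identify $\Ext^1_{A^e}(\Omega_{A^e}(A),A) \cong \Ext^2_{A^e}(A,A)$ via the long exact sequence of the first short exact sequence. Your closing remarks about minimality being irrelevant to the argument itself are also accurate.
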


We present now some results applied in the proofs of our
main results.

The following two results are due to Rickard \cite{R3}.

\begin{theorem}
\label{thm:2.6}
Let $A$ and $B$ be derived equivalent algebras over a field $K$.
Then $HH^*(A)$ and $HH^*(B)$ are isomorphic as graded $K$-algebras.
\end{theorem}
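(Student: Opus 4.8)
The plan is to deduce this from Rickard's theory of two-sided tilting complexes. Since $A$ and $B$ are derived equivalent $K$-algebras, by \cite{R3} (see also \cite{R1}) the derived equivalence can be realized by a \emph{two-sided tilting complex}: there is a bounded complex $X$ of $A$-$B$-bimodules, perfect on each side, together with a bounded complex $Y$ of $B$-$A$-bimodules, such that
\[
  X \otimes_B^{\mathbb{L}} Y \cong A \ \text{ in } D^b(\mod A^e)
  \qquad\text{and}\qquad
  Y \otimes_A^{\mathbb{L}} X \cong B \ \text{ in } D^b(\mod B^e),
\]
where an $A$-$B$-bimodule is viewed as a right module over $A^{\op} \otimes_K B$, and similarly for the other bimodules. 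This reduction of a one-sided derived equivalence to a standard one given by tensoring with a complex of bimodules is the essential input, and it is exactly the content of Rickard's results in \cite{R3}.

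Using $X$ and $Y$ one forms the $K$-linear functor
\[
  F \colon D^b(\mod A^e) \longrightarrow D^b(\mod B^e),
  \qquad
  F(M) = Y \otimes_A^{\mathbb{L}} M \otimes_A^{\mathbb{L}} X .
\]
The first step is to verify that $F$ is a well-defined triangulated functor and an equivalence. Because $X$ and $Y$ are one-sided projective with bounded, finite-dimensional total cohomology, the iterated derived tensor products stay inside $D^b(\mod B^e)$, and $F$ commutes with shifts and sends distinguished triangles to distinguished triangles. An inverse is given by $N \mapsto X \otimes_B^{\mathbb{L}} N \otimes_B^{\mathbb{L}} Y$; that the two composites are isomorphic to the identity functors follows from associativity of the derived tensor product together with the displayed isomorphisms, since for instance
\[
  Y \otimes_A^{\mathbb{L}} \big( X \otimes_B^{\mathbb{L}} N \otimes_B^{\mathbb{L}} Y \big) \otimes_A^{\mathbb{L}} X
  \cong \big( Y \otimes_A^{\mathbb{L}} X \big) \otimes_B^{\mathbb{L}} N \otimes_B^{\mathbb{L}} \big( Y \otimes_A^{\mathbb{L}} X \big)
  \cong B \otimes_B^{\mathbb{L}} N \otimes_B^{\mathbb{L}} B \cong N .
\]
The second step is the key identification
\[
  F(A) = Y \otimes_A^{\mathbb{L}} A \otimes_A^{\mathbb{L}} X \cong Y \otimes_A^{\mathbb{L}} X \cong B
  \quad \text{in } D^b(\mod B^e).
\]

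Finally, recall that $HH^n(A) = \Ext_{A^e}^n(A,A) = \Hom_{D^b(\mod A^e)}(A, A[n])$, and under this identification the Yoneda product is the composition $(g,f) \mapsto g[n] \circ f$ on $\Hom(A,A[m]) \times \Hom(A,A[n]) \to \Hom(A,A[m+n])$. Thus $HH^*(A)$ is exactly the graded endomorphism ring $\bigoplus_{n \geq 0} \Hom_{D^b(\mod A^e)}(A, A[n])$ of the object $A$, and likewise $HH^*(B)$ is the graded endomorphism ring of $B$. Since $F$ is a $K$-linear triangulated equivalence with $F(A) \cong B$, it respects shifts and composition and therefore induces an isomorphism between these graded endomorphism rings, that is, $HH^*(A) \cong HH^*(B)$ as graded $K$-algebras. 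I expect the main obstacle to be the first step: establishing that the one-sided derived equivalence is standard (realized by a two-sided tilting complex with an honest inverse) and that the induced functor on bimodule categories is again an equivalence. Over a field this requires care with the left/right bimodule structures and with boundedness and finiteness of the complexes involved, but it introduces no new idea beyond \cite{R3}.
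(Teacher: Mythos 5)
Your proposal is correct, and it reconstructs exactly the standard argument from Rickard's paper \cite{R3}: realize the derived equivalence by a two-sided tilting complex, use it to induce an equivalence $D^b(\mod A^e) \to D^b(\mod B^e)$ carrying $A$ to $B$, and identify $HH^*$ with the graded endomorphism ring $\bigoplus_{n\geq 0}\Hom_{D^b(\mod A^e)}(A,A[n])$ under the Yoneda product. The paper itself gives no proof of Theorem~\ref{thm:2.6} — it simply cites \cite{R3} — so there is nothing to compare beyond noting that your sketch is the argument of the cited source.
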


\begin{theorem}
\label{thm:2.7}
Let $A$ and $B$ be derived equivalent algebras.
Then $A$ is symmetric if and only if $B$ is symmetric.
\end{theorem}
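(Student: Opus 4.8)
The plan is to deduce this from the derived invariance of $HH^*$ together with a characterization of symmetric algebras purely in terms of derived-categorical data. Recall that for a finite-dimensional $K$-algebra $A$ the Nakayama functor $\nu_A = D\Hom_A(-,A)$ on $D^b(\mod A)$ is a Serre functor, and $A$ is symmetric precisely when $\nu_A$ is isomorphic to the identity functor as a triangulated functor, equivalently when the bimodule $D(A)$ is isomorphic to $A$. Since a derived equivalence $F\colon D^b(\mod A)\to D^b(\mod B)$ commutes with Serre functors (a Serre functor, when it exists, is unique up to isomorphism and preserved by triangulated equivalences), one has $F\nu_A\cong\nu_B F$. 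Hence $\nu_A\cong\operatorname{id}$ if and only if $\nu_B\cong\operatorname{id}$, which gives the claim.

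Concretely, I would first record that if $X$ is a two-sided tilting complex inducing the equivalence, then $B\cong\operatorname{End}_{D^b(A^e\text{-ish})}(X)$ in the appropriate sense and $D(B)\cong X^{-1}\otim^{\mathbf L}_A D(A)\otimes^{\mathbf L}_A X$ as objects of the derived category of $B$-$B$-bimodules, while $B\cong X^{-1}\otimes^{\mathbf L}_A A\otimes^{\mathbf L}_A X$. Thus $A\cong D(A)$ as $A$-bimodules forces $B\cong D(B)$ as $B$-bimodules; symmetry of $A$ is exactly the former, symmetry of $B$ the latter. By symmetry of the argument (using $X^{-1}$) the converse holds, so the equivalence of the two conditions follows. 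One can equally phrase this via the observation that $A$ is symmetric iff the identity bimodule $A$ is isomorphic in $D^b(A^e)$ to its ``inverse dualizing complex'', a property transported along the standard equivalence $D^b(A^e)\simeq D^b(B^e)$ attached to $X$.

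The main obstacle is purely expository: one must be careful that derived equivalence of $A$ and $B$ does give an equivalence at the level of bimodule derived categories compatible with the tensor structure. This is precisely the content of Rickard's Morita theory for derived categories, which guarantees the existence of the two-sided tilting complex $X$ and the isomorphism of the relevant bimodule functors; granting that, the above chain of isomorphisms is formal. I would therefore simply cite \cite{R3} (or Rickard's Morita-theory paper) for the existence and properties of $X$, and present the bimodule computation $D(B)\simeq X^{-1}\otimes^{\mathbf L}_A D(A)\otimes^{\mathbf L}_A X$, $B\simeq X^{-1}\otimes^{\mathbf L}_A A\otimes^{\mathbf L}_A X$ as the one-line core of the argument, noting that an isomorphism of complexes concentrated in degree zero (namely $A\cong D(A)$) produces such an isomorphism on the other side, and conversely.
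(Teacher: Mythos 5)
The paper gives no proof of Theorem~\ref{thm:2.7}; it simply cites Rickard \cite{R3}, and your second argument --- conjugating the bimodules $A$ and $D(A)$ by a two-sided tilting complex $X$ to obtain $B \cong X^{-1}\otimes^{\mathbf L}_A A\otimes^{\mathbf L}_A X$ and $D(B)\cong X^{-1}\otimes^{\mathbf L}_A D(A)\otimes^{\mathbf L}_A X$ --- is exactly the standard argument from that source, and it is correct. One caveat on your first paragraph: the Nakayama functor $\nu_A$ is a Serre functor on $K^b(\proj A)$, not on all of $D^b(\mod A)$ unless $A$ has finite global dimension, which fails for the (non-semisimple self-injective) algebras relevant here; since derived equivalences restrict to the perfect subcategories, the Serre-functor argument goes through verbatim on $K^b(\proj A)$, but as literally stated it is imprecise. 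The bimodule computation carries the proof on its own, so nothing essential is missing.
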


The following known result is also due to Rickard \cite{R2}.

\begin{theorem}
\label{thm:2.8}
Let $A$ and $B$ be derived equivalent self-injective algebras.
Then $A$ and $B$ are stably equivalent.
\end{theorem}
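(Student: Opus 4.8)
The plan is to prove the stronger assertion that the stable module categories $\umod A$ and $\umod B$ are equivalent as triangulated categories, where the suspension on $\umod A$ is the inverse syzygy $\Omega_A^{-1}$. Following Rickard, the idea is to identify $\umod A$ with a Verdier quotient of the bounded derived category $D^b(\mod A)$ by an intrinsically characterized triangulated subcategory, and then transport this identification along the given derived equivalence.

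The central input is the equivalence $\umod A \cong D^b(\mod A)/K^b(\proj A)$ of triangulated categories, where $K^b(\proj A)$ denotes the thick subcategory of perfect complexes. Since $A$ is self-injective one has $K^b(\proj A) = K^b(\operatorname{inj} A)$, so the canonical functor $\mod A \hookrightarrow D^b(\mod A) \to D^b(\mod A)/K^b(\proj A)$ annihilates the projective modules and therefore factors through an exact functor $\Phi_A\colon \umod A \to D^b(\mod A)/K^b(\proj A)$ commuting with the shifts. To see that $\Phi_A$ is dense, one represents an object of $D^b(\mod A)$ by a bounded-above complex of projectives, brutally truncates it while discarding a bounded segment of the complex; the discarded complex of projectives is zero in the quotient, so the original object is isomorphic there to the remaining part, which has cohomology concentrated in a single degree and is thus a shift of (the image of) a module. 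For full faithfulness one computes the Hom-spaces of the quotient by the calculus of fractions: because over a self-injective algebra the syzygy and the cosyzygy of a module are again modules, any roof between modules can, after suspending, be replaced by an honest $A$-module homomorphism, and two such homomorphisms agree in the quotient exactly when their difference factors through a projective module — which recovers the stable Hom-spaces. This full-faithfulness step, where self-injectivity is indispensable, is the technical heart of the argument and the main obstacle.

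It remains to observe that any triangle equivalence $F\colon D^b(\mod A) \to D^b(\mod B)$ restricts to an equivalence $K^b(\proj A) \to K^b(\proj B)$: an object $X$ of $D^b(\mod A)$ lies in $K^b(\proj A)$ if and only if $\Hom_{D^b(\mod A)}(X, Y[n]) = 0$ for every $Y$ and all but finitely many $n \in \bZ$ (one implication is the boundedness of the total Hom complex of a perfect complex into a bounded complex; the other follows by testing against the simple modules via a minimal projective resolution of $X$), and this condition is manifestly preserved by a triangle equivalence. Hence $F$ descends to a triangle equivalence $D^b(\mod A)/K^b(\proj A) \cong D^b(\mod B)/K^b(\proj B)$, and composing with $\Phi_A$ and $\Phi_B^{-1}$ yields a triangle equivalence $\umod A \cong \umod B$. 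In particular $A$ and $B$ are stably equivalent, as asserted.
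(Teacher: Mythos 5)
Your argument is correct and is precisely the standard proof of this result due to Rickard \cite{R2}, which the paper cites without reproving: identify $\umod A$ with the Verdier quotient $D^b(\mod A)/K^b(\proj A)$ and observe that $K^b(\proj A)$ is characterized intrinsically by the vanishing of $\Hom(X,Y[n])$ for $|n|\gg 0$, hence is preserved by any triangle equivalence. The one caveat is that the full faithfulness of $\Phi_A$ --- which you rightly single out as the technical heart, and where self-injectivity enters via the invertibility of $\Omega_A$ on $\umod A$ --- is asserted rather than carried out, but the route is exactly the right one.
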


The next result is a consequence of the results proved
by Krause and Zwara \cite{KZ}.

\begin{theorem}
\label{thm:2.9}
Let $A$ and $B$ be stably equivalent  algebras.
Then $A$ is of polynomial growth if and only if $B$ is of polynomial
growth.
\end{theorem}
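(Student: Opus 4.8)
The plan is to route the argument through Crawley--Boevey's theory of generic modules, which repackages the growth behaviour of an algebra as a numerical datum that a stable equivalence can be made to detect.

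First I would recall the relevant characterization. Call a right $A$-module $M$ \emph{generic} if it is indecomposable, of infinite length over $A$, but of finite length over its endomorphism ring $\End_A(M)$, and write $\operatorname{endol}(M)$ for that finite length. By Crawley--Boevey's generic module description of tameness and growth (see \cite{CB1}), $A$ is tame if and only if for each $d \geq 1$ there are only finitely many isomorphism classes of generic $A$-modules of endolength at most $d$, and a tame algebra $A$ is of polynomial growth if and only if the function $g_A$, where $g_A(d)$ denotes the number of isomorphism classes of generic $A$-modules $M$ with $\operatorname{endol}(M) \leq d$, is bounded above by a polynomial in $d$. So the whole statement will come down to transferring polynomial boundedness of $g_A$ to $g_B$, and back.

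Next I would invoke the results of Krause and Zwara \cite{KZ}. Since a generic module is infinite-dimensional, it is automatically non-projective, hence lives in the relevant ambient stable category; the point of \cite{KZ} is that a stable equivalence $F \colon \umod A \to \umod B$ extends to an equivalence between the larger module categories (stable categories of all modules, or equivalently the associated functor categories) which commutes with direct limits, and therefore carries generic $A$-modules to generic $B$-modules with comparable endolength. Concretely, there should be a constant $c = c(F) \geq 1$ so that $FM$ is generic with $\operatorname{endol}(FM) \leq c \cdot \operatorname{endol}(M)$ for every generic $A$-module $M$, and, applying the same to a quasi-inverse of $F$, $\operatorname{endol}(M) \leq c \cdot \operatorname{endol}(FM)$; in particular $M \mapsto FM$ induces a bijection between the isomorphism classes of generic $A$-modules and of generic $B$-modules.

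Finally I would assemble the pieces: the bijection together with the endolength bounds gives $g_B(d) \leq g_A(cd)$ and $g_A(d) \leq g_B(cd)$ for all $d \geq 1$, so if $A$ is of polynomial growth --- hence tame, with $g_A(d) \leq d^m$ for some $m$ and all large $d$ --- then $g_B(d) \leq (cd)^m$, which shows $g_B$ is finite-valued (so $B$ is tame) and polynomially bounded (so $B$ is of polynomial growth), and symmetrically for the converse; note that no self-injectivity hypothesis enters. The main obstacle, and the reason the theorem is only \emph{a consequence of} \cite{KZ}, is the passage from a stable equivalence of finite-dimensional module categories to a limit-preserving equivalence on the ambient category that acts on the infinite-dimensional generic modules with controlled endolength; granting that input, the counting step above is routine.
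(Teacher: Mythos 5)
Your proposal is correct and takes essentially the same route as the paper: the paper gives no independent argument, stating the theorem as a direct consequence of Krause--Zwara \cite{KZ}, and what you describe --- Crawley--Boevey's generic-module characterization of tameness and polynomial growth from \cite{CB1} combined with the endolength-controlled bijection on generic modules that \cite{KZ} extract from a stable equivalence --- is precisely the content of that citation.
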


We end this section with the following known result
(see \cite[Theorem~2.9]{ESk2} for a proof).

\begin{theorem}
\label{thm:2.10}
Let $A$ and $B$ be derived equivalent indecomposable algebras.
Then $A$ is a periodic algebra if and only if $B$ is a periodic algebra.
Moreover, if so then 
their periods coincide.
\end{theorem}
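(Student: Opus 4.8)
The statement to prove is Theorem~\ref{thm:2.10}: derived equivalent indecomposable algebras are simultaneously periodic, with equal periods.

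\medskip

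The plan is to deduce everything from the fact (Theorem~\ref{thm:2.6}) that a derived equivalence $F : D^b(\mod A) \to D^b(\mod B)$ induces an isomorphism $HH^*(A) \cong HH^*(B)$ of graded $K$-algebras, combined with the homological characterization of periodicity in terms of the bimodule $A$ over $A^e$. First I would recall that $A$ is periodic exactly when the trivial $A$-$A$-bimodule $A$ has $\Omega_{A^e}^n(A) \cong A$ for some $n \geq 1$, equivalently when there is an invertible homogeneous element of positive degree $n$ in $\Ext_{A^e}^*(A,A) = HH^*(A)$ whose inverse also lies in positive degree; such an element forces $HH^*(A)$ to contain a polynomial subalgebra $K[x]$ localized appropriately, and more concretely it means $HH^*(A)$ is periodic as a graded ring. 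So the task reduces to: (1) periodicity of a basic indecomposable self-injective algebra is detected by the internal structure of the graded ring $HH^*(A)$, and (2) the minimal period equals an invariant of that graded ring, e.g.\ the smallest positive degree carrying an invertible element.

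\medskip

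Next I would carry out the reduction carefully. Suppose $A$ is periodic of period $n$. Then $A$ is self-injective, and there is a bimodule isomorphism $\Omega_{A^e}^n(A)\cong A$ giving a unit $u \in \Ext^n_{A^e}(A,A)=HH^n(A)$ with inverse in $HH^{-?}$ — more precisely, periodicity yields that multiplication by a suitable class $\eta\in HH^n(A)$ is an isomorphism $HH^i(A)\to HH^{i+n}(A)$ for all $i$, hence $\eta$ is a non-zero-divisor and becomes invertible after inverting it; the key point is that $HH^*(A)$ is then a "periodic graded algebra" and the period $n$ is recovered as $\min\{m>0 : HH^m(A) \text{ contains an element }\eta\text{ with }\eta\cdot(-):HH^*(A)\to HH^{*+m}(A)\text{ bijective}\}$. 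This description is manifestly preserved by a graded algebra isomorphism. Therefore if $\varphi : HH^*(A)\xrightarrow{\sim} HH^*(B)$ is the isomorphism from Theorem~\ref{thm:2.6}, then $HH^*(B)$ is periodic as a graded algebra with the same period $n$. The remaining step is to upgrade "$HH^*(B)$ is periodic with period $n$" to "$B$ is a periodic algebra with period $n$": for this I would use that $B$ is self-injective (derived equivalent to the self-injective algebra $A$, by Theorems~\ref{thm:2.7}--\ref{thm:2.8} or directly), that $B$ is indecomposable, and the standard fact that for an indecomposable self-injective algebra, invertibility of a positive-degree element of $HH^*(B)$ forces $\Omega_{B^e}^n(B)$ to be isomorphic to $B$ (one uses that $\Omega_{B^e}^n(B)$ is an invertible $B$-$B$-bimodule — it is projective on each side by Lemma~\ref{lem:2.2} — and that such an invertible bimodule with a cohomology unit is the identity bimodule up to the right twist, which for the indecomposable case collapses to $B$ itself).

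\medskip

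The main obstacle I anticipate is precisely this last upgrade: translating a purely ring-theoretic statement about $HH^*(B)$ back into a bimodule-isomorphism statement $\Omega^n_{B^e}(B)\cong B$, since $HH^*$ a priori forgets the bimodule category and sees only the endomorphism ring of $B$ in $D^b(\mod B^e)$. The clean way around this is not to route through $HH^*$ at all for the bimodule conclusion, but to invoke the stronger structural fact that a derived equivalence $A \sim B$ of self-injective algebras lifts to a derived equivalence $A^e \sim B^e$ under which the bimodule $A$ corresponds to a complex isomorphic (in $D^b(\mod B^e)$) to $B$ — a result of Rickard's theory of derived-equivalent self-injective algebras and their stable module categories (Theorem~\ref{thm:2.8} and its bimodule enhancement). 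Granting that, $\Omega_{A^e}^n(A)\cong A$ transports directly to $\Omega_{B^e}^n(B)\cong B$, giving both periodicity of $B$ and equality of periods in one stroke; the symmetric argument handles the converse. So in the write-up I would (a) state the bimodule-level transport of the derived equivalence as the crucial lemma, citing the appropriate source, (b) observe $\Omega$ is an invariant of the stable/derived bimodule category up to the relevant shift, (c) conclude $\Omega_{B^e}^n(B)\cong B$, and (d) note minimality transfers because the equivalence is, in particular, an equivalence, so it cannot shorten a period. Indecomposability of $A$ and $B$ is used to rule out the degenerate possibility that $B$ decomposes with different periods on different blocks.
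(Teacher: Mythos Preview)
The paper does not actually prove this theorem: it simply records it as a known result and cites \cite[Theorem~2.9]{ESk2} for a proof. So there is nothing to compare at the level of argument within the paper itself.

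Your second approach --- lifting the derived equivalence $A\sim B$ to a derived equivalence $A^e\sim B^e$ under which the bimodule $A$ corresponds to $B$, and then using that the induced stable equivalence of (self-injective) enveloping algebras intertwines the syzygy operators --- is the standard and correct route, and it is essentially what is done in the cited reference. Note that $A$ self-injective implies $A^e$ self-injective over a field, so Theorem~\ref{thm:2.8} applies at the enveloping level; this is the point that makes $\Omega_{A^e}^n(A)\cong A \Leftrightarrow \Omega_{B^e}^n(B)\cong B$ go through, and minimality of $n$ transfers because an equivalence cannot create a shorter period.

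Your first approach, by contrast, has a genuine gap that you yourself flag: from an abstract graded-ring isomorphism $HH^*(A)\cong HH^*(B)$ one cannot directly conclude $\Omega_{B^e}^n(B)\cong B$. Knowing that some $\eta\in HH^n(B)$ acts bijectively on $HH^*(B)$, or that $HH^*(B)/\mathcal{N}\cong K[x]$ with $|x|=n$, does not by itself force the bimodule isomorphism $\Omega_{B^e}^n(B)\cong B$; one would still need an argument specific to self-injective algebras relating such ring-theoretic periodicity of $HH^*$ back to bimodule periodicity, and that argument is not supplied. So it is right that you abandon this route in favour of the bimodule-level transport.
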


\section{Periodic algebras of polynomial growth}%
\label{sec:periodic}

We will now  describe the indecomposable
representation-infinite periodic algebras of polynomial growth
and their Auslander-Reiten quivers.

The following classification result was established by
the authors in \cite[Theorem~1.1]{BES2}.

\begin{theorem}
\label{thm:pre1}
Let $A$ be a basic, indecomposable, representation-infinite
self-injective algebra of polynomial growth
over an algebraically closed field $K$.
The following statements are equivalent:
\begin{enumerate}[(i)]
\item $A$ is a periodic algebra;
\item $A$ is socle equivalent to an orbit algebra
    $\widehat{B}/G$ for a tubular algebra $B$ and
    an admissible infinite cyclic automorphism group $G$
    of the repetitive category $\widehat{B}$ of $B$.
\end{enumerate}
Moreover, if $K$ is of characteristic different from
$2$ and $3$,
we may replace in (ii) ``socle equivalent''
by ``isomorphic''.
\end{theorem}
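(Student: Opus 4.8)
The plan is to prove the final assertion of Theorem~\ref{thm:pre1}, namely that when $\charact K \notin \{2,3\}$ the statement ``$A$ is socle equivalent to an orbit algebra $\widehat{B}/G$'' can be upgraded to ``$A$ is isomorphic to an orbit algebra $\widehat{B}/G$''. The key input is the dichotomy recalled in the introduction: every periodic representation-infinite algebra of polynomial growth is either standard, in which case it is literally of the form $\widehat{B}/G$ for a tubular algebra $B$ and an admissible infinite cyclic $G$, or it is non-standard, in which case it is a \emph{geometric socle deformation} of exactly one such standard algebra $\Lambda'$, its standard form. Moreover, by \cite{BiS3,S5} non-standard such algebras occur only in characteristic $2$ and $3$. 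So the whole point is to rule out the non-standard case in characteristics different from $2$ and $3$.

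First I would invoke \cite[Theorem~1.1]{BES2} (Theorem~\ref{thm:pre1} above, parts (i) $\Leftrightarrow$ (ii)) to reduce to the following: given $A$ basic, indecomposable, representation-infinite, self-injective, periodic, of polynomial growth, with $\charact K \neq 2,3$, show $A$ is standard, i.e.\ $A \cong \widehat{B}/G$ on the nose rather than merely up to socle equivalence. Since $A$ is socle equivalent to some $\widehat{B}/G$, the algebra $A$ and the standard algebra $\Lambda' := \widehat{B}/G$ have isomorphic quotients modulo socle, so $A$ is a socle deformation of $\Lambda'$. The classification of socle deformations of the standard periodic representation-infinite algebras of polynomial growth — worked out in \cite{BHS2,BiS2,S5} and recalled in the introduction — shows that all non-trivial (i.e.\ non-isomorphic) geometric socle deformations of these standard algebras live only in characteristics $2$ and $3$. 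Hence for $\charact K \neq 2,3$ every socle deformation of $\widehat{B}/G$ is isomorphic to $\widehat{B}/G$ itself, and therefore $A \cong \widehat{B}/G$, as required.

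Concretely, the argument proceeds case by case along the possible types of the tubular algebra $B$ (the weights $(2,2,2,2)$, $(3,3,3)$, $(2,4,4)$, $(2,3,6)$ and the parameter in the $(2,2,2,2)$ case), using the explicit bound quiver presentations of the orbit algebras $\widehat{B}/G$ and of their possible socle deformations. For each presentation one checks that the deforming parameter entering the socle relations can be scaled away by a change of basis (rescaling arrows) precisely when the relevant scalar — typically $2$ or $3$, arising from summing two equal paths or from a multiplicity in a relation — is invertible in $K$. This is exactly the mechanism already used in the representation-finite situation; here one transports it to the infinite tubular setting via the repetitive category $\widehat{B}$, noting that an admissible automorphism $G$ of $\widehat{B}$ and its action on the socle-deformed relations behave compatibly with such rescalings.

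The main obstacle, and the place where real work is needed, is the case analysis establishing that the only genuine socle deformations occur in characteristics $2$ and $3$: this requires the full description of the exceptional periodic algebras and their standard forms, i.e.\ precisely the material developed in Section~\ref{sec:exceptional} and used throughout Sections~\ref{sec:type2222}--\ref{sec:type236}. Once that classification is in hand, the ``moreover'' clause is an immediate corollary: socle equivalence forces the algebra to be a socle deformation of a unique standard form, and outside characteristics $2$ and $3$ there are no non-trivial such deformations, so socle equivalence collapses to isomorphism. I would therefore present the proof as: (1) cite Theorem~\ref{thm:pre1}(i)$\Leftrightarrow$(ii) for the reduction; (2) cite \cite{BiS3,S5,BHS2,BiS2} for the fact that non-standard periodic representation-infinite algebras of polynomial growth exist only in characteristic $2$ and $3$ and are geometric socle deformations of their standard forms; (3) conclude that for $\charact K \neq 2,3$ a socle equivalence $A \sim \widehat{B}/G$ is necessarily an isomorphism.
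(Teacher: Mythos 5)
There is a genuine gap, and it is worth being precise about what it is. The paper does not prove Theorem~\ref{thm:pre1} at all: it is quoted verbatim as \cite[Theorem~1.1]{BES2}, and the entire content of the equivalence (i)$\Leftrightarrow$(ii) lives in that earlier paper. Your proposal begins by ``invoking \cite[Theorem~1.1]{BES2} (Theorem~\ref{thm:pre1} above, parts (i)$\Leftrightarrow$(ii))'', i.e.\ you cite the very statement you are supposed to prove in order to dispose of its main assertion, and then only work out the ``moreover'' clause. As a proof of the theorem this is circular: the hard directions --- that every orbit algebra $\widehat{B}/G$ of the repetitive category of a tubular algebra, together with each of its socle deformations, is periodic as a bimodule (done in \cite[Sections~7--10]{BES2} by constructing explicit periodic projective bimodule resolutions, the same resolutions reused in Sections~\ref{sec:type2222}--\ref{sec:type236} of the present paper), and conversely that a periodic representation-infinite self-injective algebra of polynomial growth must be socle equivalent to such an orbit algebra (which rests on the classification of self-injective algebras of polynomial growth and their Auslander--Reiten structure) --- are nowhere addressed.

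For the ``moreover'' clause itself your reasoning is essentially right and matches how one would extract it from the literature: by \cite[Theorem~1.1]{BiS3} (Theorem~\ref{thm:4.2} here) the non-standard algebras socle equivalent to standard self-injective algebras of tubular type exist only in characteristic $2$ and $3$, so outside those characteristics $A$ is standard and Theorem~\ref{thm:pre2} upgrades socle equivalence to isomorphism with some orbit algebra. Two small cautions there: ``socle equivalent'' ($A/\soc A\cong \Lambda'/\soc\Lambda'$) is not the same notion as ``(geometric) socle deformation'' (degeneration in $\alg_d(K)$), and you slide between them; and the conclusion is that $A$ is isomorphic to \emph{some} orbit algebra, not necessarily to the particular $\widehat{B}/G$ it is socle equivalent to. Neither of these sinks the argument, but the overall proposal should be presented honestly as a derivation of the last sentence of the theorem from the cited classification, not as a proof of the theorem.
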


In particular, we have the following consequence
of the above theorem and the main result of \cite{S2}.

\begin{theorem}
\label{thm:pre2}
Let $A$ be a basic, indecomposable, representation-infinite
self-injective algebra of polynomial growth.
The following statements are equivalent:
\begin{enumerate}[(i)]
\item $A$ is a standard periodic algebra;
\item $A$ is isomorphic to an orbit algebra
    $\widehat{B}/G$ for a tubular algebra $B$ and
    an admissible infinite cyclic automorphism group $G$
    of the repetitive category $\widehat{B}$ of $B$.
\end{enumerate}
\end{theorem}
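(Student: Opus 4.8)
The plan is to obtain the equivalence by combining Theorem~\ref{thm:pre1}, which controls periodicity up to socle equivalence, with the structural classification of standard representation-infinite self-injective algebras of polynomial growth due to Skowro\'nski \cite{S2}. Recall from \cite{S2} that, up to isomorphism, these algebras are exactly the orbit algebras $\widehat{C}/H$ of repetitive categories of algebras $C$ which are either tilted algebras of Euclidean type or tubular algebras, the former class yielding the domestic algebras and the latter the non-domestic ones; in particular every such orbit algebra $\widehat{C}/H$ is standard.

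For the implication (ii)$\Rightarrow$(i), suppose $A \cong \widehat{B}/G$ with $B$ tubular and $G$ an admissible infinite cyclic automorphism group of $\widehat{B}$. Then $A$ is standard by \cite{S2}. Moreover an algebra isomorphism is in particular a socle equivalence, so $A$ satisfies condition~(ii) of Theorem~\ref{thm:pre1} and is therefore periodic. Hence $A$ is a standard periodic algebra.

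For the implication (i)$\Rightarrow$(ii), suppose $A$ is standard and periodic. Since $A$ is periodic, every nonprojective indecomposable module in $\mod A$ is periodic (as recalled in the introduction); in particular $A$ cannot be domestic, because among representation-infinite self-injective algebras of polynomial growth the domestic ones are precisely the non-periodic ones (equivalently, a representation-infinite domestic self-injective algebra of polynomial growth admits nonprojective indecomposable modules whose $\Omega_A$-orbit is infinite). On the other hand, being standard, representation-infinite, self-injective of polynomial growth, $A$ is isomorphic by \cite{S2} to an orbit algebra $\widehat{C}/H$ with $C$ a tilted algebra of Euclidean type or a tubular algebra and $H$ admissible infinite cyclic. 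The first alternative is impossible, since it would make $\widehat{C}/H \cong A$ domestic. Hence $C$ is a tubular algebra, and $A \cong \widehat{C}/H$ has exactly the form required in (ii). When $\charact K \notin \{2,3\}$ this can be shortened, since Theorem~\ref{thm:pre1} then gives directly an isomorphism $A \cong \widehat{B}/G$ with $B$ tubular.

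The only step needing genuine input is the dichotomy inside the classification of \cite{S2}, namely that the orbit algebras built from tilted algebras of Euclidean type are domestic while those built from tubular algebras are non-domestic, so that the periodicity of $A$ — which forces $A$ to be non-domestic — selects the tubular case. Everything else is a formal combination of \cite{S2} and Theorem~\ref{thm:pre1} together with the trivial remark that isomorphic algebras are socle equivalent; in particular the subtlety specific to characteristics $2$ and $3$, that a non-standard socle deformation of $\widehat{B}/G$ may exist, does not intervene here because $A$ is assumed standard.
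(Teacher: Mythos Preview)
Your proof is correct and follows essentially the same approach the paper intends: the paper states Theorem~\ref{thm:pre2} as a direct consequence of Theorem~\ref{thm:pre1} together with the main result of \cite{S2}, and your argument makes this explicit by using \cite{S2} to get the orbit-algebra form and the tubular/Euclidean dichotomy, and Theorem~\ref{thm:pre1} (via the fact that periodic implies non-domestic) to exclude the Euclidean case. The only cosmetic remark is that for (i)$\Rightarrow$(ii) you could replace the module-theoretic justification of ``periodic $\Rightarrow$ non-domestic'' by a direct appeal to Theorem~\ref{thm:pre1}, since a domestic algebra cannot be socle equivalent to a tubular orbit algebra.
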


Recall that the tubular algebras $B$ are algebras of global dimension $2$ with
the Grothendieck groups $K_0(B)$ of ranks $6$, $8$, $9$, or $10$,
corresponding to the tubular types
$(2,2,2,2)$,
$(3,3,3)$,
$(2,4,4)$,
or
$(2,3,6)$,
introduced by Ringel in \cite{Ri}.
The representation theory of tubular algebras is described in \cite[Section~5]{Ri}.
If $B$ is such an algebra, then the module category
$\mod \widehat{B}$
of $\widehat{B}$ may be thought of as a glueing of infinitely many
module categories of the corresponding tubular algebras,
obtained from $B$ by iterated reflections (see \cite{NS,S2}).
The module category of
$\widehat{B}/G$ is then obtained
via the push-down functor
$F_{\lambda} : \mod \widehat{B} \to \mod \widehat{B}/G$
(associated to the Galois covering
$F : \widehat{B} \to \widehat{B}/G$). This functor
 is dense by the density theorem of Dowbor and
Skowro\'nski \cite{DS1,DS2}.
Then by a theorem of Gabriel \cite{Ga} the Auslander-Reiten
quiver $\Gamma_{\widehat{B}/G}$ of $\widehat{B}/G$
is the orbit quiver $\Gamma_{\widehat{B}}/G$ of
the Auslander-Reiten quiver $\Gamma_{\widehat{B}}$
of $\widehat{B}$ with respect to the induced action of $G$.
Socle equivalent  algebras
have isomorphic  Auslander-Reiten quivers.
Hence, if $A$ is an algebra as in Theorem~\ref{thm:pre1}, applying
\cite{NS,S2}, we conclude that
the Auslander-Reiten quiver $\Gamma_A$ of a periodic
indecomposable
representation-infinite self-injective algebra $A$
of polynomial growth
has the following clock structure
\[
\xymatrix@M=0pc@L=0pc@C=.30pc@R=0.30pc{
&&&&&&&& &&&&&&&&&& &&&&&&
  \ar@{-}[dddddddddddd] &&{}\save[] *{\xycircle<.60pc,.30pc>{}} \restore&& \ar@{-}[dddddddddddd] &&
    \ar@{-}[dddddddddddd] &&{}\save[] *{\xycircle<.60pc,.30pc>{}} \restore&& \ar@{-}[dddddddddddd] &&
    \ar@{-}[dddddddddddd] &&{}\save[] *{\xycircle<.60pc,.30pc>{}} \restore&& \ar@{-}[dddddddddddd] &&
  \\
 &&&&&&&&& {}\save[] +<0pc,0pc> *{\bigvee_{q \in \mathbb{Q}^{r-1}_{r}}\mathcal{T}_q^A} \restore
 &&&&&&&&&&&&&&&&& {}\save[] +<0pc,-.05pc> *{\ast} \restore
 &&&&& {}\save[] +<0pc,0pc> *{\ast} \restore &&&&&&&&&&&&&&&&&&&&&
    {}\save[] +<0pc,0pc> *{\bigvee_{q \in \mathbb{Q}^{0}_{1}}\mathcal{T}_q^A} \restore
  \\ \\
 &&&&&&&&&&&&&&&&&&&&&&&&&&&  {}\save[] +<0pc,0pc> *{\ast} \restore
 \\
 &&&&&&&&&&&&&&&&&&&&&&&&&&&&&&&&&&&&&&  {}\save[] +<0pc,0pc> *{\ast} \restore
 \\ \\
 &&&&&&&&&&&&&&&&&&&&&&&&&&&&&&&&&  {}\save[] +<0pc,0pc> *{\ast} \restore
  \\ \\
&&&&&&&&
  \ar@{-}[dddddddd] & {}\save[] *{\xycircle<.24pc,.12pc>{}} \restore & \ar@{-}[dddddddd] &
  \ar@{-}[dddddddd] & {}\save[] *{\xycircle<.24pc,.12pc>{}} \restore & \ar@{-}[dddddddd] &&&
  \ar@{-}[dddddddd] & {}\save[] *{\xycircle<.24pc,.12pc>{}} \restore & \ar@{-}[dddddddd]
  &&&&&& &&&& && &&&& && &&&& &&&&&&
  \ar@{-}[dddddddd] & {}\save[] *{\xycircle<.24pc,.12pc>{}} \restore & \ar@{-}[dddddddd] &
  \ar@{-}[dddddddd] & {}\save[] *{\xycircle<.24pc,.12pc>{}} \restore & \ar@{-}[dddddddd] &&&
  \ar@{-}[dddddddd] & {}\save[] *{\xycircle<.24pc,.12pc>{}} \restore & \ar@{-}[dddddddd]
     \\ \\ \\ \\
&&&&&&&&
  {}\save[] +<-.2pc,0pc> *{\cdot} +<-.2pc,0pc> *{\cdot} +<-.2pc,0pc> *{\cdot} \restore
  && & && {}\save[] *{\xycircle<2.5pc,2.5pc>{}} \restore
    &{}\save[] +<-.08pc,0pc> *{\cdot} +<.2pc,0pc> *{\cdot} +<.2pc,0pc> *{\cdot} \restore&& &&
  {}\save[] +<.21pc,0pc> *{\cdot} +<.2pc,0pc> *{\cdot} +<.2pc,0pc> *{\cdot} \restore
  &&&&&& &&&& && &&&& && &&&& &&&&&&
  {}\save[] +<-.2pc,0pc> *{\cdot} +<-.2pc,0pc> *{\cdot} +<-.2pc,0pc> *{\cdot} \restore
  && & && {}\save[] *{\xycircle<2.5pc,2.5pc>{}} \restore
    &{}\save[] +<-.08pc,0pc> *{\cdot} +<.2pc,0pc> *{\cdot} +<.2pc,0pc> *{\cdot} \restore&& &&
  {}\save[] +<.21pc,0pc> *{\cdot} +<.2pc,0pc> *{\cdot} +<.2pc,0pc> *{\cdot} \restore
 \\ \\
   &&&&&&&&&&&&&&&&&&&&&&&&&&&&&&&&  {}\save[] +<0pc,0pc> *{\cT_0^A = \cT_r^A} \restore
 \\ \\
&&&&&&&&
  &&&&&& && & && &&& &&   &&&& && &&&& && &&&&  &&&&&&  && & && &&& && \\
 \\ \\
 \\ \\
\\
  \ar@{-}[dddddddddddd] &&{}\save[] *{\xycircle<.60pc,.30pc>{}} \restore&& \ar@{-}[dddddddddddd] &&
    \ar@{-}[dddddddddddd] &&{}\save[] *{\xycircle<.60pc,.30pc>{}} \restore&& \ar@{-}[dddddddddddd] &&
    \ar@{-}[dddddddddddd] &&{}\save[] *{\xycircle<.60pc,.30pc>{}} \restore&& \ar@{-}[dddddddddddd] &&
  &&&&&&&&&&&&&&&&&&&&&&&&&&&&&&
  \ar@{-}[dddddddddddd] &&{}\save[] *{\xycircle<.60pc,.30pc>{}} \restore&& \ar@{-}[dddddddddddd] &&
    \ar@{-}[dddddddddddd] &&{}\save[] *{\xycircle<.60pc,.30pc>{}} \restore&& \ar@{-}[dddddddddddd] &&
    \ar@{-}[dddddddddddd] &&{}\save[] *{\xycircle<.60pc,.30pc>{}} \restore&& \ar@{-}[dddddddddddd] &&
  \\
   &&&&&&&&  {}\save[] +<0pc,-0.05pc> *{\ast} \restore
   &&&&&&&&&&&&&&&&&&&&&&&&&&&&&&&&&&&&&&&&&&&&&&&&&&&&&&  {}\save[] +<0pc,-0.05pc> *{\ast} \restore
  \\
   &&&  {}\save[] +<0pc,0pc> *{\ast} \restore
   &&&&&&&&&&&&&&&&&&&&&&&&&&&&&&&&&&&&&&&&&&&&&&&&&&&&&  {}\save[] +<0pc,0pc> *{\ast} \restore
  \\ \\
   &&&&&&&&&&&&&&  {}\save[] +<0pc,0pc> *{\ast} \restore
  \\
   &&&&&&&&&&&&&&&&&&&&&&&&&&&&&&&&&&&&&&&&&&&&&&&&&&&&&&&&&  {}\save[] +<0pc,0pc> *{\ast} \restore
  \\
  &&&&&&&&&&&&&&&&&&&&& {}\save[] +<0pc,0pc> *{\cT_{r-1}^A} \restore
  &&&&&&&&&&&&&&&&&&&&&&&& {}\save[] +<0pc,0pc> *{\cT_{1}^A} \restore
  \\
   &&&&&&&  {}\save[] +<0pc,0pc> *{\ast} \restore
  \\
   &&&&&&&&&&&&&&  {}\save[] +<0pc,0pc> *{\ast} \restore
   &&&&&&&&&&&&&&&&&&&&&&&&&&&&&&&&&&&  {}\save[] +<0pc,0pc> *{\ast} \restore
     \\ \\ \\ \\
  &&&& && &&&& && &&&&
  &&&&&&&&&&&&&&&&&&&&&&&&&&&&&&&&
  &&&& && &&&& && &&&&
  \\
  \\ \\ \\ \\
&&&&&&
  \ar@{-}[dddddddd] & {}\save[] *{\xycircle<.24pc,.12pc>{}} \restore & \ar@{-}[dddddddd] &
  \ar@{-}[dddddddd] & {}\save[] *{\xycircle<.24pc,.12pc>{}} \restore & \ar@{-}[dddddddd] &&&
  \ar@{-}[dddddddd] & {}\save[] *{\xycircle<.24pc,.12pc>{}} \restore & \ar@{-}[dddddddd]
  &&&&&&&& &&&& && &&&& && &&&& &&&&&&&&
  \ar@{-}[dddddddd] & {}\save[] *{\xycircle<.24pc,.12pc>{}} \restore & \ar@{-}[dddddddd] &
  \ar@{-}[dddddddd] & {}\save[] *{\xycircle<.24pc,.12pc>{}} \restore & \ar@{-}[dddddddd] &&&
  \ar@{-}[dddddddd] & {}\save[] *{\xycircle<.24pc,.12pc>{}} \restore & \ar@{-}[dddddddd]
     \\ \\ \\ \\
&&&&&&
  {}\save[] +<-.2pc,0pc> *{\cdot} +<-.2pc,0pc> *{\cdot} +<-.2pc,0pc> *{\cdot} \restore
  && & && {}\save[] *{\xycircle<2.5pc,2.5pc>{}} \restore
    &{}\save[] +<-.08pc,0pc> *{\cdot} +<.2pc,0pc> *{\cdot} +<.2pc,0pc> *{\cdot} \restore&& &&
  {}\save[] +<.21pc,0pc> *{\cdot} +<.2pc,0pc> *{\cdot} +<.2pc,0pc> *{\cdot} \restore
  &&&&&&&& &&&& && &&&& && &&&& &&&&&&&&
  {}\save[] +<-.2pc,0pc> *{\cdot} +<-.2pc,0pc> *{\cdot} +<-.2pc,0pc> *{\cdot} \restore
  && & && {}\save[] *{\xycircle<2.5pc,2.5pc>{}} \restore
    &{}\save[] +<-.08pc,0pc> *{\cdot} +<.2pc,0pc> *{\cdot} +<.2pc,0pc> *{\cdot} \restore&& &&
  {}\save[] +<.21pc,0pc> *{\cdot} +<.2pc,0pc> *{\cdot} +<.2pc,0pc> *{\cdot} \restore
 \\ \\ \\ \\
&&&&&&&&
  &&&&&& && & && &&& &&   &&&& && &&&& && &&&&  &&&&&&  && & && &&& && && \\
  \\ \\ \\ \\
 &&&&&&&&&&&&&&&&&& {}\save[] +<-.3pc,.2pc> *{\mbox{\tiny$\bullet$}} +<.48pc,-.32pc> *{\mbox{\tiny$\bullet$}} +<.48pc,-.32pc> *{\mbox{\tiny$\bullet$}} \restore
  &&&&&&&&&& &&&&
 &&&&&&& &&&&&&& {}\save[] +<.3pc,.2pc> *{\mbox{\tiny$\bullet$}} +<-.48pc,-.32pc> *{\mbox{\tiny$\bullet$}} +<-.48pc,-.32pc> *{\mbox{\tiny$\bullet$}} \restore \\
  \\
  &&&&&&&&
  {}\save[] +<0pc,0pc> *{\bigvee_{q \in \mathbb{Q}^{r-2}_{r-1}}\mathcal{T}_q^A} \restore
  &&&&&&&&&&&&&&&&&&&&&&&&&&&&&&&&&&&&&&&&&&&&&&&
  {}\save[] +<0pc,0pc> *{\bigvee_{q \in \mathbb{Q}^{1}_{2}}\mathcal{T}_q^A} \restore
  }
\]
where $\ast$ denote projective-injective modules,
$r \geq 3$, $\bQ_i^{i-1} = \bQ \cap (i-1,i)$
for any $i \in \{1,\dots,r\}$, and
\begin{enumerate}[(a)]
 \item
  for each $i \in \{ 0,1,\dots,r-1 \}$,
  $\mathcal{T}_i^A$ is a $\mathbb{P}_1(K)$-family of quasi-tubes
  (the stable parts are stable tubes);
 \item
  for each $q \in \bQ_i^{i-1}$, $i \in \{1,\dots,r\}$,
  $\mathcal{T}_q^A$ is a $\mathbb{P}_1(K)$-family of stable tubes;
 \item
  all $\mathbb{P}_1(K)$-families $\mathcal{T}_q^A$, $q \in \bQ \cap [0,r]$,
  have the same tubular type
  $(2,2,2,2)$, $(3,3,3)$, $(2,4,4)$, or $(2,3,6)$.
\end{enumerate}
We also mention that the number of $\mathbb{P}_1(K)$-families
of quasi-tubes in $\Gamma_A$ containing projective
modules in controlled by the reflection sequence of tubular
algebras given by a tubular algebra $B$ and the generator
of the admissible infinite cyclic automorphism group
$G$ of $\widehat{B}$ with $\widehat{B}/G$ being socle
equivalent to $A$ (see \cite{Bi,BiS1,BiS2,LS2,NS,S2}).

\section{Exceptional periodic algebras of polynomial growth}%
\label{sec:exceptional}

We  introduce the exceptional periodic algebras
of polynomial growth and describe their basic properties.

Consider the following family of bound quiver algebras
\[
\begin{minipage}{.2\columnwidth}
\center
$\Lambda'_1$
\[
\xymatrix{
  \bullet \ar@(dl,ul)[]^{\alpha} \ar@<+.5ex>[r]^{\gamma} \save[] +<0mm,-2.8mm> *{2} \restore
  & \bullet \ar@<+.5ex>[l]^{\beta} \save[] +<0mm,-2.8mm> *{1} \restore
}
\]
$\alpha^2 = \gamma \beta$,
$\beta \alpha \gamma = 0$
\end{minipage}
\quad
\begin{minipage}{.2\columnwidth}
\center
$\Lambda'_2$
\[
\xymatrix{
  \bullet \ar@(dl,ul)[]^{\alpha} \ar@<+.5ex>[r]^{\gamma} \save[] +<0mm,-2.8mm> *{2} \restore
  & \bullet \ar@<+.5ex>[l]^{\beta} \save[] +<0mm,-2.8mm> *{1} \restore
}
\]
\center
$\alpha^2 \gamma = 0$,
$\beta \alpha^2 = 0$,
$\beta \gamma = 0$,
$\alpha^3 = \gamma \beta$
\end{minipage}
\quad
\begin{minipage}{.27\columnwidth}
\center
\mbox{}\!$\Lambda'_3(\lambda)$, $\lambda \in K \setminus \{ 0, 1 \}$\!\!
\[
\xymatrix{ \bullet \ar@(dl,ul)[]^{\alpha} \ar@<+.5ex>[r]^{\sigma} \save[] +<0mm,-2.8mm> *{1} \restore
       & \bullet \ar@<+.5ex>[l]^{\gamma} \ar@(ur,dr)[]^{\beta} \save[] +<0mm,-2.8mm> *{2} \restore
}
\]
$\alpha^2 = \sigma \gamma$,
$\lambda \beta^2 = \gamma \sigma$,
$\gamma \alpha = \beta \gamma$,
$\alpha \sigma = \sigma \beta$
\end{minipage}
\quad
\begin{minipage}{.15\columnwidth}%
\vspace{1mm}
\center
$\Lambda'_4$%
\[
  \xymatrix@C=.75pc@R=1.5pc{
      & \bullet \ar@<+0.ex>[dr]^{\gamma} \save[] +<0mm,2.8mm> *{3} \restore \\
      \bullet \ar@<+0.ex>[ur]^{\alpha} \ar@<+.5ex>[rr]^{\delta} \save[] +<0mm,-2.8mm> *{1} \restore
        && \bullet \ar@<+.5ex>[ll]^{\beta} \save[] +<0mm,-2.8mm> *{2} \restore
  }
\]
$\delta \beta \delta = \alpha \gamma$,
$(\beta \delta)^3 \beta = 0$,
$\gamma \beta \alpha = 0$
\end{minipage}
\]
\[
\begin{minipage}{.26\columnwidth}%
\vspace{1mm}
\center
$\Lambda'_5$
\[
  \xymatrix{
     \bullet \ar@<+.5ex>[r]^{\beta} \save[] +<0mm,-2.8mm> *{1} \restore
       & \bullet \ar@(ul,ur)[]^{\alpha} \ar@<+.5ex>[l]^{\gamma} \ar@<+.5ex>[r]^{\delta}
          \save[] +<0mm,-2.8mm> *{2} \restore
       & \bullet \ar@<+.5ex>[l]^{\sigma} \save[] +<0mm,-2.8mm> *{3} \restore
  }
\]
\center
$\alpha^2 = \gamma \beta$,
$\alpha^3 = \delta \sigma$,
$\beta \delta = 0$,
$\sigma \gamma = 0$,
$\alpha \delta = 0$,
$\sigma \alpha = 0$,
$\beta \gamma = 0$
\end{minipage}
\quad
\begin{minipage}{.26\columnwidth}
\center
$\Lambda'_6$%
\[
  \xymatrix{ \bullet \ar@<+.5ex>[r]^{\alpha} \save[] +<0mm,-2.8mm> *{1} \restore
	 & \bullet \ar@<+.5ex>[l]^{\beta} \ar@<+.5ex>[r]^{\delta}  \save[] +<0mm,-2.8mm> *{2} \restore
 	 & \bullet \ar@<+.5ex>[l]^{\gamma}  \save[] +<0mm,-2.8mm> *{3} \restore
}
\]
$\alpha \delta \gamma \delta = 0$,
$\gamma \delta \gamma \beta = 0$,
$\alpha \beta = 0$,
$\beta \alpha = \delta \gamma \delta \gamma$
\end{minipage}
\quad
\begin{minipage}{.28\columnwidth}%
\center
\[
  \xymatrix@C=.75pc@R=1.5pc{
      \save[] *\txt{\!\!\!\!\!\!\!\!$\Lambda'_{7}$} \restore
      & \bullet \ar@<+0.ex>[dr]^{\gamma}  \save[] +<0mm,2.8mm> *{3} \restore \\
      \bullet \ar@(dl,ul)[]^{\alpha} \ar@<+0.ex>[ur]^{\sigma} \ar@<+.5ex>[rr]^{\delta}
             \save[] +<0mm,-2.8mm> *{1} \restore
        && \bullet \ar@<+.5ex>[ll]^{\beta}   \save[] +<0mm,-2.8mm> *{2} \restore
  }
\]
$\beta \delta = 0$,
$\alpha \sigma = 0$,
$\alpha \delta = \sigma \gamma$,
$\gamma \beta \alpha = 0$,
$\alpha^2 = \delta \beta$
\end{minipage}
\]
\[
\begin{minipage}{.27\columnwidth}%
\center
\[
  \xymatrix@C=.75pc@R=1.5pc{
      \save[] *\txt{\!\!\!\!\!\!\!\!$\Lambda'_{8}$} \restore
      & \bullet \ar@{<-}@<+0.ex>[dr]^{\gamma}  \save[] +<0mm,2.8mm> *{3} \restore \\
      \bullet \ar@(dl,ul)[]^{\alpha} \ar@{<-}@<+0.ex>[ur]^{\sigma} \ar@{<-}@<+.5ex>[rr]^{\delta}
         \save[] +<0mm,-2.8mm> *{1} \restore
        && \bullet \ar@{<-}@<+.5ex>[ll]^{\beta}  \save[] +<0mm,-2.8mm> *{2} \restore
  }
\]
$\delta \beta = 0$,
$\sigma \alpha = 0$,
$\delta \alpha = \gamma \sigma$,
$\alpha \beta \gamma = 0$,
$\alpha^2 = \beta \delta$
\end{minipage}
\quad
\begin{minipage}{.28\columnwidth}%
\center
\[
  \xymatrix@C=2pc@R=.75pc{
      \save[] *\txt{$\Lambda'_9$\!\!\!\!\!} \restore
      & \bullet  \ar@<+.5ex>[dd]^{\gamma} \save[] +<-2.1mm,0mm> *{3} \restore \\ \\
      & \bullet  \ar@<+.5ex>[uu]^{\delta} \save[] +<0mm,-2.8mm> *{4} \restore
           \ar@<+.5ex>[dr]^{\varepsilon}
           \ar@<+.5ex>[dl]^{\beta} \\
      \bullet  \ar@<+.5ex>[ru]^{\alpha} \save[] +<-2.1mm,0mm> *{1} \restore
      &&  \bullet  \ar@<+.5ex>[lu]^{\xi} \save[] +<2.1mm,0mm> *{2} \restore
  }
\]
$\beta \alpha + \varepsilon \xi + \delta \gamma =  0$,
$\alpha \beta = 0$,
$\xi \varepsilon = 0$,
$\gamma \delta = 0$\vspace{1mm}
\end{minipage}
\quad
\begin{minipage}{.27\columnwidth}%
\center
$\Lambda'_{10}$
\[
  \xymatrix@C=2pc@R=.75pc{
      & \bullet  \ar@<+.25ex>[dr]^{\mu} \save[] +<0mm,2.8mm> *{5} \restore \\
      \bullet \ar@<+.25ex>[ur]^{\eta} \ar@<-.5ex>[r]_(.7){\xi}
             \save[] +<-2.1mm,0mm> *{2} \restore
        & \bullet \ar@<-.5ex>[l]_(.3){\gamma} \ar@<-.5ex>[r]_(.3){\sigma}
             \save[] +<0mm,-2.8mm> *{1} \restore
        & \bullet  \ar@<-.5ex>[l]_(.7){\delta} \ar@<+.25ex>[dl]^{\beta}
            \save[] +<2.1mm,0mm> *{3} \restore \\
      & \bullet  \ar@<+.5ex>[lu]^{\alpha} \save[] +<0mm,-2.8mm> *{4} \restore
  }
\]
$\mu \beta = 0$,
$\alpha \eta = 0$,
$\beta \alpha = \delta \gamma$,
$\eta \mu = \xi \sigma$,
$\sigma \delta = \gamma \xi$
\end{minipage}
\]

The following theorem describes the basic properties of the above algebras.

\begin{theorem}
\label{thm:4.1}
The following statements hold.
\begin{enumerate}[(i)]
\item
  $\Lambda'_1$ and $\Lambda'_2$ are non-isomorphic, derived equivalent,
  standard symmetric algebras of tubular type $(3,3,3)$.
\item
  For any $\lambda \in K \setminus \{ 0,1 \}$, $\Lambda'_3(\lambda)$ is a
  standard symmetric algebra of tubular type $(2,2,2,2)$.
\item
  $\Lambda'_4$, $\Lambda'_5$, $\Lambda'_6$, $\Lambda'_7$, $\Lambda'_8$
  are pairwise non-isomorphic, derived equivalent,
  standard symmetric algebras of tubular type $(2,4,4)$.
\item
  $\Lambda'_9$ is a standard weakly symmetric algebra
  of tubular type $(3,3,3)$.
  Moreover, $\Lambda'_9$ is a symmetric algebra
  if and only if $\charact K = 2$.
\item
  $\Lambda'_{10}$ is a standard self-injective algebra
  of tubular type $(2,3,6)$.
\end{enumerate}
\end{theorem}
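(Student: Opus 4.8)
## Proof Proposal for Theorem 4.1

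The plan is to verify each of the five assertions by a combination of explicit bound-quiver bookkeeping and appeals to the classification machinery recorded in Section~\ref{sec:periodic}. For each algebra $\Lambda'_i = KQ_i/I_i$ I would first check that the given relations are admissible and that the resulting algebra is finite-dimensional, then compute a $K$-basis of each indecomposable projective $P_j = e_j\Lambda'_i$ directly from the relations. This basis computation is the backbone of everything else: from it one reads off $\dim_K \Lambda'_i$, the Loewy structure of each $P_j$, the socle, and the top of the injectives, which is exactly what one needs to decide self-injectivity, weak symmetry, and symmetry.

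For the self-injectivity and symmetry claims in (i)--(v) I would proceed as follows. To see that each $\Lambda'_i$ is self-injective, compare for every vertex $j$ the projective $P_j = e_j\Lambda'_i$ with the injective $I_j = D(\Lambda'_i e_j)$; it suffices to exhibit, from the explicit bases, a Nakayama permutation $\nu$ of $Q_0$ with $P_j \cong I_{\nu(j)}$, equivalently $\soc P_j \cong S_{\nu(j)}$ together with equality of dimension vectors. For the \emph{symmetric} cases one shows $\nu = \mathrm{id}$ and, more precisely, constructs a nondegenerate associative symmetric bilinear form on $\Lambda'_i$; concretely one picks for each vertex $j$ a basis element $w_j$ spanning $\soc P_j = \soc(e_j\Lambda'_i e_j)$ and defines a symmetrizing linear form $\varphi$ by $\varphi(w_j) = 1$ and $\varphi = 0$ on all other basis paths, then checks $\varphi(ab) = \varphi(ba)$ on basis elements --- this reduces to the observation that every nonzero product of two basis paths landing in a socle is, up to scalar, one of the $w_j$, and the cyclic relations (e.g.\ $\alpha^2 = \gamma\beta$ in $\Lambda'_1$, $\alpha^2=\sigma\gamma$, $\lambda\beta^2=\gamma\sigma$ in $\Lambda'_3(\lambda)$, etc.) make the two cyclic orderings agree. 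For $\Lambda'_9$ in (iv), the form $\varphi$ is symmetrizing precisely when the scalar obstruction coming from the relation $\beta\alpha + \varepsilon\xi + \delta\gamma = 0$ vanishes, and tracing that scalar shows it is zero exactly in characteristic $2$; in other characteristics one still gets weak symmetry ($\nu=\mathrm{id}$ on objects) but not a symmetrizing form. For $\Lambda'_{10}$ in (v) one only claims self-injectivity, so only the Nakayama permutation needs to be produced.

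For the tubular-type assertions, the cleanest route is to invoke Theorem~\ref{thm:pre2} and the reflection/covering description of Section~\ref{sec:periodic}: each $\Lambda'_i$ should be exhibited as an orbit algebra $\widehat{B}/G$ for a tubular algebra $B$ of the stated tubular type $(2,2,2,2)$, $(3,3,3)$, $(2,4,4)$ or $(2,3,6)$ and an admissible infinite cyclic $G$, so that by Theorem~\ref{thm:pre1}(a)--(c) the Auslander--Reiten quiver of $\Lambda'_i$ carries $\mathbb{P}_1(K)$-families of (quasi-)tubes of that tubular type; the tubular type is then an invariant that pins down which family of the classification in \cite{BES2} the algebra belongs to. Alternatively, and more self-contained, one can compute the Cartan matrix $C$ from the projective bases, check $\det C = \pm 1$ (forcing the stable AR-quiver to have the clock shape of Theorem~\ref{thm:pre1}), and identify the tubular type from the number $6,8,9,10$ of simples of the underlying tubular algebra together with the periods of the tubes. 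Finally, for the derived-equivalence statements in (i) and (iii) I would construct explicit tilting complexes: in (i) a two-term tilting complex over $\Lambda'_1$ built from $P_1$ and a complex $(P_2 \to P_1)$ (an APR-type tilt at vertex $2$) whose endomorphism algebra one computes to be $\Lambda'_2$, using Rickard's criterion \cite{R1}; in (iii) one similarly chains APR-type tilts through the five algebras $\Lambda'_4,\dots,\Lambda'_8$, and then invokes Theorems~\ref{thm:2.6}--\ref{thm:2.10} to confirm the derived-equivalence class is consistent (same $HH^*$, symmetry, polynomial growth, periodicity, and period). Non-isomorphism within (i) and within (iii) is settled by comparing a discrete invariant read from the bases --- e.g.\ the multiset of Loewy lengths of the $P_j$, or the number of loops at vertices (the quivers themselves already differ in (iii)), or the dimension of the radical square.

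The main obstacle I expect is not any single step but the sheer volume of bound-quiver computation done correctly and uniformly: getting every projective basis right in all ten algebras (several of which have subtle relations mixing commutativity-type and zero-type constraints, such as $\Lambda'_3(\lambda)$ with its parameter $\lambda$ and $\Lambda'_9$ with the sum relation), and in particular pinning down the precise characteristic condition in (iv) by carefully tracking the scalar obstruction to symmetry through the relation $\beta\alpha+\varepsilon\xi+\delta\gamma=0$. The construction of the tilting complexes in (i) and (iii) is the other delicate point: one must verify the complexes are genuinely tilting (no self-extensions, right number of indecomposable summands, generating the perfect derived category) and then correctly read off the endomorphism algebra with its relations --- routine in principle via Happel's functorial calculus, but error-prone in practice.
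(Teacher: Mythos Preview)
Your proposal is not wrong in spirit, but it takes a fundamentally different route from the paper. The paper's proof of Theorem~\ref{thm:4.1} is a two-line citation: the derived equivalence assertions in (i) and (iii) are referred to \cite[Theorem]{BHS1} and its proof (where the explicit tilting complexes you sketch are in fact constructed), and all remaining claims --- that the algebras are standard, symmetric or weakly symmetric, and of the stated tubular type --- are referred to \cite[Theorems~1 and~2]{BiS2}. In other words, the paper treats this theorem as a summary of results already established in the authors' earlier work, not as something to be reproved here.

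Your plan to compute everything from scratch (projective bases, Nakayama permutations, symmetrizing forms, tilting complexes) is a legitimate self-contained alternative and would reproduce the content of those references. The trade-off is clear: the paper's approach costs nothing beyond the citations but relies on a chain of prior papers, while yours would make the present paper self-contained at the price of several pages of routine but error-prone computation. One point to watch in your outline: the ``standard'' claim (existence of a simply connected Galois covering) is not something you can read off from Cartan data or Loewy structure alone; your first route via exhibiting each $\Lambda'_i$ as an orbit algebra $\widehat{B}/G$ handles this through Theorem~\ref{thm:pre2}, but your alternative Cartan-matrix route does not, so you would need to commit to the orbit-algebra identification for that part.
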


\begin{proof}
The statements on the derived equivalences follow from
\cite[Theorem]{BHS1} and its proof.
The remaining statemets are consequences of
\cite[Theorems 1 and 2]{BiS2}.
\end{proof}

The following theorem summarizes  results proved in 
\cite[Sections~7-10]{BES2}.

\begin{theorem}
\label{thm:4.2-new}
The following statements hold.
\begin{enumerate}[(i)]
\item
  $\Lambda'_1$ and $\Lambda'_2$ are periodic algebras
  of period $6$.
\item
  For any $\lambda \in K \setminus \{ 0,1 \}$, $\Lambda'_3(\lambda)$ is a
  periodic algebra of period $4$. 
\item
  $\Lambda'_4$, $\Lambda'_5$, $\Lambda'_6$, $\Lambda'_7$, $\Lambda'_8$
  are periodic algebras of period $8$.
\item
  $\Lambda'_9$ is a periodic algebra of period $3$ 
   if $\charact K = 2$ and of period $6$ if $\charact K \neq 2$.   
\item
  $\Lambda'_{10}$ is a periodic algebra of period $3$ 
   if $\charact K = 2$ and of period $6$ if $\charact K \neq 2$.   
\end{enumerate}
\end{theorem}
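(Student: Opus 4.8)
The assertions split into two parts: that each $\Lambda'_i$ is a periodic algebra, and that its period takes the stated value. The first part I would not prove from scratch: by Theorem~\ref{thm:4.1} every $\Lambda'_i$ is a standard, indecomposable, representation-infinite self-injective algebra of polynomial growth whose tubular type excludes the domestic case, so it is realised as (indeed isomorphic to) an orbit algebra $\widehat{B}/G$ of a tubular algebra $B$, and then Theorem~\ref{thm:pre1} (equivalently the classification underlying Theorem~\ref{thm:pre2}) shows it is periodic. So only the periods need to be computed. Here I would first cut down the number of cases: by Theorem~\ref{thm:2.10} the period of an indecomposable periodic algebra is a derived invariant, so by Theorem~\ref{thm:4.1}(i),(iii) it suffices to treat one representative of each derived-equivalence class, say $\Lambda'_1$ for the type $(3,3,3)$ family $\{\Lambda'_1,\Lambda'_2\}$ and $\Lambda'_4$ for the type $(2,4,4)$ family $\{\Lambda'_4,\dots,\Lambda'_8\}$, together with $\Lambda'_3(\lambda)$, $\Lambda'_9$ and $\Lambda'_{10}$ individually --- five computations in all.

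For a chosen representative $A=KQ/I$ the plan is to construct an explicit minimal projective resolution of $A$ in $\mod A^e$ and show that it is periodic of the claimed length $d$. One starts by computing $\dim_K\Ext^n_A(S_i,S_j)$ for all $i,j\in Q_0$ and $0\le n\le d$, from minimal projective resolutions of the simple $A$-modules; by Proposition~\ref{prop:2.1} this determines the bimodule terms $\bP_n$, and one checks that $\bP_d\cong\bP_0$ whereas $\bP_n\not\cong\bP_0$ for $0<n<d$, which forces the period to be a multiple of $d$. Next, beginning with $d_0$, $d_1=d$ and $d_2=R$ as in Lemmas~\ref{lem:2.3} and~\ref{lem:2.4} and the discussion following them, one continues the resolution degree by degree --- there being no closed formula for the higher differentials $d_n$, these must be produced and verified by hand, using Lemma~\ref{lem:2.2} to exploit that each syzygy $\Omega^n_{A^e}(A)$ is projective as a one-sided module --- and finally one exhibits an isomorphism of $A$-$A$-bimodules $\Omega^d_{A^e}(A)\cong A_\nu$, where $A_\nu$ denotes $A$ with the right action twisted by an algebra automorphism $\nu$.

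The outcomes are then read off as follows. For the symmetric algebras $\Lambda'_1,\dots,\Lambda'_8$ and for $\Lambda'_3(\lambda)$ one may take $\nu=\mathrm{id}$, so $\Omega^d_{A^e}(A)\cong A$ and the period is exactly $d$, namely $6$, $8$ and $4$ respectively. For $\Lambda'_9$ and $\Lambda'_{10}$ I would invoke Theorem~\ref{thm:4.1}(iv): if $\charact K=2$ the algebra is symmetric, the twist is trivial and the period is $3$; if $\charact K\neq 2$ the algebra is weakly symmetric but not symmetric, the construction yields $\Omega^3_{A^e}(A)\cong A_\nu$ with $A_\nu\not\cong A$ (so the period is not $3$) while $\Omega^6_{A^e}(A)\cong A_{\nu^2}\cong A$, and since the remaining proper divisors $1$ and $2$ of $6$ are excluded by $\bP_1,\bP_2\not\cong\bP_0$, the period equals $6$. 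As a consistency check one can use the fact recalled in the introduction that $HH^*(A)/\cN(A)\cong K[x]$ with $\deg x$ equal to the period, so any non-nilpotent Hochschild class produced in the expected degree confirms the value.

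The main obstacle is the last step of the second paragraph: constructing the higher differentials and verifying the periodicity isomorphism $\Omega^d_{A^e}(A)\cong A_\nu$ --- proving that the $d$-th syzygy is a (possibly twisted) copy of $A$ itself, rather than merely a bimodule with the correct projective terms, is the genuine content, and it has to be carried out separately for each of the five representatives. The period-$8$ family $\Lambda'_4,\dots,\Lambda'_8$ and the characteristic-dependent algebras $\Lambda'_9,\Lambda'_{10}$ are the most laborious, and correctly identifying the automorphism $\nu$ in the non-symmetric situations is the subtlest point.
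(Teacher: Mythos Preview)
The paper gives no proof here: Theorem~\ref{thm:4.2-new} is recorded as a summary of computations carried out in \cite[Sections~7--10]{BES2}. Your outline --- reduce via derived equivalence to one representative per class, then construct the minimal bimodule resolution explicitly and exhibit $\Omega^d_{A^e}(A)\cong A_\nu$ --- is exactly the method used there, and indeed the first three terms of those very resolutions are quoted from \cite{BES2} in the Hochschild computations of Sections~\ref{sec:type2222}--\ref{sec:type236} of the present paper.

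There is, however, a gap in your treatment of the characteristic-dependent cases. You invoke Theorem~\ref{thm:4.1}(iv) for both $\Lambda'_9$ and $\Lambda'_{10}$, but that statement concerns only $\Lambda'_9$; $\Lambda'_{10}$ is not weakly symmetric in any characteristic (its Nakayama permutation is visibly nontrivial --- compare Proposition~\ref{prop:4.5}(iv) for the socle-equivalent $\Lambda_{10}$), so the dichotomy ``symmetric in characteristic $2$, merely weakly symmetric otherwise'' is simply false for it. More seriously, even for $\Lambda'_9$ the inference ``$A$ not symmetric $\Rightarrow A_\nu\not\cong A$'' is not valid without further work: the automorphism $\nu$ appearing in $\Omega^3_{A^e}(A)\cong A_\nu$ is produced by the resolution and need not be the Nakayama automorphism, so non-symmetry alone does not exclude period $3$. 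What is actually done in \cite{BES2} is to write $\nu$ down explicitly from the resolution and check directly whether ${}_1A_\nu\cong{}_1A_1$ as bimodules; the characteristic dependence emerges from those explicit formulas, not from an abstract symmetry criterion.
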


For a positive integer $d$, we denote by $\alg_d(K)$
the affine variety of associative algebra structures with identity
on the affine space $K^d$. The general linear group $\GL_d(K)$
acts on $\alg_d(K)$ by transport of structure, and the $\GL_d(K)$-orbits
in $\alg_d(K)$ correspond to the isomorphism classes of $d$-dimensional
algebras (see \cite{Kr} for more details).
For two $d$-dimensional algebras $\Lambda$ and $A$, we say that
$\Lambda$ is a \emph{deformation} of $A$ (or $A$ is a \emph{degeneration} of $\Lambda$)
if $A$ belongs to the
closure of the $\GL_d(K)$-orbit of $\Lambda$ in the Zariski topology
in $\alg_d(K)$.

Non-standard, basic, indecomposable, periodic representation-infinite
self-injective algebras of polynomial growth
occur only in characteristic 2 and 3 and are (socle)
deformations of the corresponding standard self-injective algebras
of tubular type, introduced above.

Consider the following family of bound quiver algebras
\[
\begin{minipage}{.2\columnwidth}
\center
$\Lambda_1$
\[
\xymatrix{
  \bullet \ar@(dl,ul)[]^{\alpha} \ar@<+.5ex>[r]^{\gamma} \save[] +<0mm,-2.8mm> *{2} \restore
  & \bullet \ar@<+.5ex>[l]^{\beta} \save[] +<0mm,-2.8mm> *{1} \restore
}
\]
$\alpha^2 = \gamma \beta$,
$\beta \alpha \gamma = \beta \alpha^2 \gamma$,
$\alpha^5 = 0$
\end{minipage}
\quad
\begin{minipage}{.2\columnwidth}
\center
$\Lambda_2$
\[
\xymatrix{
  \bullet \ar@(dl,ul)[]^{\alpha} \ar@<+.5ex>[r]^{\gamma} \save[] +<0mm,-2.8mm> *{2} \restore
  & \bullet \ar@<+.5ex>[l]^{\beta} \save[] +<0mm,-2.8mm> *{1} \restore
}
\]
\center
$\alpha^2 \gamma = 0$,
$\beta \alpha^2 = 0$,
$\beta \gamma = \beta \alpha \gamma$,
$\alpha^3 = \gamma \beta$
\end{minipage}
\quad
\begin{minipage}{.27\columnwidth}
\center
\mbox{}\!$\Lambda_3(\lambda)$, $\lambda \in K \setminus \{ 0, 1 \}$\!\!
\[
\xymatrix{ \bullet \ar@(dl,ul)[]^{\alpha} \ar@<+.5ex>[r]^{\sigma} \save[] +<0mm,-2.8mm> *{1} \restore
       & \bullet \ar@<+.5ex>[l]^{\gamma} \ar@(ur,dr)[]^{\beta} \save[] +<0mm,-2.8mm> *{2} \restore
}
\]
$\alpha^2 = \sigma \gamma + \alpha^3$,
$\lambda \beta^2 = \gamma \sigma$,
$\gamma \alpha = \beta \gamma$,
$\alpha \sigma = \sigma \beta$,
$\alpha^4 =  0$
\end{minipage}
\quad
\begin{minipage}{.2\columnwidth}%
\center
$\Lambda_4$%
\[
  \xymatrix@C=.75pc@R=1.5pc{
      & \bullet \ar@<+0.ex>[dr]^{\gamma} \save[] +<0mm,2.8mm> *{3} \restore \\
      \bullet \ar@<+0.ex>[ur]^{\alpha} \ar@<+.5ex>[rr]^{\delta} \save[] +<0mm,-2.8mm> *{1} \restore
        && \bullet \ar@<+.5ex>[ll]^{\beta} \save[] +<0mm,-2.8mm> *{2} \restore
  }
\]
$\delta \beta \delta = \alpha \gamma$,
$(\beta \delta)^3 \beta = 0$,
$\gamma \beta \alpha = \gamma \beta \delta \beta \alpha$
\end{minipage}
\]
\[
\begin{minipage}{.26\columnwidth}
\center
$\Lambda_5$
\[
  \xymatrix{
     \bullet \ar@<+.5ex>[r]^{\beta} \save[] +<0mm,-2.8mm> *{1} \restore
       & \bullet \ar@(ul,ur)[]^{\alpha} \ar@<+.5ex>[l]^{\gamma} \ar@<+.5ex>[r]^{\delta}
          \save[] +<0mm,-2.8mm> *{2} \restore
       & \bullet \ar@<+.5ex>[l]^{\sigma} \save[] +<0mm,-2.8mm> *{3} \restore
  }
\]
\center
$\alpha^2 = \gamma \beta$,
$\alpha^3 = \delta \sigma$,
$\beta \delta = 0$,
$\sigma \gamma = 0$,
$\alpha \delta = 0$,
$\sigma \alpha = 0$,
$\beta \gamma = \beta \alpha \gamma$
\end{minipage}
\quad
\begin{minipage}{.26\columnwidth}
\center
$\Lambda_6$%
\[
  \xymatrix{ \bullet \ar@<+.5ex>[r]^{\alpha} \save[] +<0mm,-2.8mm> *{1} \restore
	 & \bullet \ar@<+.5ex>[l]^{\beta} \ar@<+.5ex>[r]^{\delta}  \save[] +<0mm,-2.8mm> *{2} \restore
 	 & \bullet \ar@<+.5ex>[l]^{\gamma}  \save[] +<0mm,-2.8mm> *{3} \restore
   }
\]
$\alpha \delta \gamma \delta = 0$,
$\gamma \delta \gamma \beta = 0$,
$\alpha \beta = \alpha \delta \gamma \beta$,
$\beta \alpha = \delta \gamma \delta \gamma$
\end{minipage}
\quad
\begin{minipage}{.28\columnwidth}%
\center
\[
  \xymatrix@C=.75pc@R=1.5pc{
      \save[] *\txt{\!\!\!\!\!\!\!\!$\Lambda_{7}$} \restore
      & \bullet \ar@<+0.ex>[dr]^{\gamma}  \save[] +<0mm,2.8mm> *{3} \restore \\
      \bullet \ar@(dl,ul)[]^{\alpha} \ar@<+0.ex>[ur]^{\sigma} \ar@<+.5ex>[rr]^{\delta}
             \save[] +<0mm,-2.8mm> *{1} \restore
        && \bullet \ar@<+.5ex>[ll]^{\beta}   \save[] +<0mm,-2.8mm> *{2} \restore
  }
\]
$\beta \delta = \beta \alpha \delta$,
$\alpha \sigma = 0$,
$\alpha \delta = \sigma \gamma$,
$\gamma \beta \alpha = 0$,
$\alpha^2 = \delta \beta$
\end{minipage}
\]
\[
\begin{minipage}{.27\columnwidth}%
\center
\[
  \xymatrix@C=.75pc@R=1.5pc{
      \save[] *\txt{\!\!\!\!\!\!\!\!$\Lambda_{8}$} \restore
      & \bullet \ar@{<-}@<+0.ex>[dr]^{\gamma}  \save[] +<0mm,2.8mm> *{3} \restore \\
      \bullet \ar@(dl,ul)[]^{\alpha} \ar@{<-}@<+0.ex>[ur]^{\sigma} \ar@{<-}@<+.5ex>[rr]^{\delta}
         \save[] +<0mm,-2.8mm> *{1} \restore
        && \bullet \ar@{<-}@<+.5ex>[ll]^{\beta}  \save[] +<0mm,-2.8mm> *{2} \restore
  }
\]
$\delta \beta = \delta \alpha \beta$,
$\sigma \alpha = 0$,
$\delta \alpha = \gamma \sigma$,
$\alpha \beta \gamma = 0$,
$\alpha^2 = \beta \delta$
\end{minipage}
\quad
\begin{minipage}{.28\columnwidth}%
\center
\[
  \xymatrix@C=2pc@R=.75pc{
      \save[] *\txt{$\Lambda_9$\!\!\!\!\!} \restore
      & \bullet  \ar@<+.5ex>[dd]^{\gamma} \save[] +<-2.1mm,0mm> *{3} \restore \\ \\
      & \bullet  \ar@<+.5ex>[uu]^{\delta} \save[] +<0mm,-2.8mm> *{4} \restore
           \ar@<+.5ex>[dr]^{\varepsilon}
           \ar@<+.5ex>[dl]^{\beta} \\
      \bullet  \ar@<+.5ex>[ru]^{\alpha} \save[] +<-2.1mm,0mm> *{1} \restore
      &&  \bullet  \ar@<+.5ex>[lu]^{\xi} \save[] +<2.1mm,0mm> *{2} \restore
  }
\]
$\beta \alpha + \varepsilon \xi + \delta \gamma =  0$,
$\xi \varepsilon = 0$,
$\gamma \delta = 0$,
$\alpha \beta = \alpha \delta \gamma \beta$
\end{minipage}
\quad
\begin{minipage}{.27\columnwidth}%
\center
$\Lambda_{10}$
\[
  \xymatrix@C=2pc@R=.75pc{
      & \bullet  \ar@<+.25ex>[dr]^{\mu} \save[] +<0mm,2.8mm> *{5} \restore \\
      \bullet \ar@<+.25ex>[ur]^{\eta} \ar@<-.5ex>[r]_(.7){\xi}
             \save[] +<-2.1mm,0mm> *{2} \restore
        & \bullet \ar@<-.5ex>[l]_(.3){\gamma} \ar@<-.5ex>[r]_(.3){\sigma}
             \save[] +<0mm,-2.8mm> *{1} \restore
        & \bullet  \ar@<-.5ex>[l]_(.7){\delta} \ar@<+.25ex>[dl]^{\beta}
            \save[] +<2.1mm,0mm> *{3} \restore \\
      & \bullet  \ar@<+.5ex>[lu]^{\alpha} \save[] +<0mm,-2.8mm> *{4} \restore
  }
\]
$\mu \beta = 0$,
$\alpha \eta = 0$,
$\beta \alpha = \delta \gamma$,
$\eta \mu = \xi \sigma$,
$\sigma \delta = \gamma \xi + \sigma \delta \sigma \delta$,
$\delta \sigma \delta \sigma = 0$
\end{minipage}
\]

We have the following consequence of \cite[Theorem~1.1]{BiS3}.

\begin{theorem}
\label{thm:4.2}
Let $\Lambda$ be a basic, indecomposable, self-injective algebra.
Then $\Lambda$ is a non-standard algebra socle equivalent to
a standard self-injective algebra of tubular type if and only
if one of the following cases holds:
\begin{enumerate}[(i)]
\item
  $K$ is of characteristic $3$ and $\Lambda$ is isomorphic
  to $\Lambda_1$ or $\Lambda_2$;
\item
  $K$ is of characteristic $2$ and $\Lambda$ is isomorphic
  to one of algebras
  $\Lambda_3(\lambda)$, $\Lambda_4$, $\Lambda_5$, $\Lambda_6$,
  $\Lambda_7$, $\Lambda_8$, $\Lambda_9$, $\Lambda_{10}$.
\end{enumerate}
\end{theorem}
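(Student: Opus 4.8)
The plan is to deduce the statement from the classification of non-standard self-injective algebras of polynomial growth obtained in \cite[Theorem~1.1]{BiS3}, combined with the properties of the algebras $\Lambda'_i$ recorded in Theorem~\ref{thm:4.1}. First I would recall from \cite[Theorem~1.1]{BiS3} (see also the discussion in Section~\ref{sec:intro}) that a basic, indecomposable, non-standard self-injective algebra $\Lambda$ of polynomial growth can exist only when $\charact K \in \{2,3\}$, that such a $\Lambda$ is socle equivalent to exactly one standard self-injective algebra $\Lambda'$ of polynomial growth, its standard form, and that both $\Lambda$ and $\Lambda'$ admit explicit bound quiver presentations. By Theorem~\ref{thm:pre2}, the standard self-injective algebras of polynomial growth of tubular type are the orbit algebras $\widehat{B}/G$ of the repetitive categories of tubular algebras, and by Theorem~\ref{thm:4.1} the algebras $\Lambda'_1,\dots,\Lambda'_{10}$ belong to this class. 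Hence the statement reduces to the assertion that the non-standard self-injective algebras whose standard form lies among $\Lambda'_1,\dots,\Lambda'_{10}$ are precisely $\Lambda_1,\dots,\Lambda_{10}$ (with $\Lambda_3$ understood as $\Lambda_3(\lambda)$, $\lambda\in K\setminus\{0,1\}$), and that no other standard self-injective algebra of tubular type admits a non-standard socle-equivalent partner --- both of which are contained in \cite[Theorem~1.1]{BiS3}.

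For the ``if'' direction I would check, case by case, that $\Lambda_i$ is socle equivalent to $\Lambda'_i$. This is immediate from the presentations: $\Lambda_i$ and $\Lambda'_i$ have the same quiver, and the defining relations of $\Lambda_i$ are obtained from those of $\Lambda'_i$ by adding to the right-hand sides certain paths lying in $\soc(\Lambda_i)$ --- for instance $\beta\alpha^2\gamma$ and $\alpha^5$ for $\Lambda_1$, or $\sigma\delta\sigma\delta$ and $\delta\sigma\delta\sigma$ for $\Lambda_{10}$ --- so that $\Lambda_i/\soc(\Lambda_i) \cong \Lambda'_i/\soc(\Lambda'_i)$. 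Granting that each $\Lambda_i$ is a basic, indecomposable, self-injective algebra (established in \cite{BiS2,BiS3}) and using Theorem~\ref{thm:4.1}, one concludes that every algebra on the list is socle equivalent to a standard self-injective algebra of tubular type; its non-standardness, and the fact that $\charact K = 3$ is forced for $\Lambda_1,\Lambda_2$ while $\charact K = 2$ is forced for the remaining algebras, is part of \cite[Theorem~1.1]{BiS3}. Conversely, \cite[Theorem~1.1]{BiS3} gives that any $\Lambda$ as in the statement has standard form among $\Lambda'_1,\dots,\Lambda'_{10}$ and is therefore isomorphic to the corresponding $\Lambda_i$ after a suitable rescaling of the arrows.

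The main obstacle is precisely this last normalization step. The algebras in \cite{BiS3} are produced somewhat abstractly --- as socle deformations of the orbit algebras $\widehat{B}/G$ --- so one must verify that an explicit change of basis brings each of them into one of the bound quiver forms $\Lambda_i$ displayed above; in particular, that the deformation terms can be placed into the indicated socle form and that, apart from the parameter $\lambda$ in tubular type $(2,2,2,2)$, no further parameters survive. This is a finite but delicate computation, already carried out in \cite{BiS2,BiS3}, so here it is invoked rather than reproduced. The pairwise non-isomorphism of $\Lambda_1,\dots,\Lambda_{10}$ likewise follows from \cite[Theorems~1 and~2]{BiS2}.
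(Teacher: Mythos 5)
Your proposal is correct and follows essentially the same route as the paper: the paper states this theorem purely as a consequence of \cite[Theorem~1.1]{BiS3} (with the explicit presentations and normalizations deferred to \cite{BiS2,BiS3}), which is exactly the reduction you carry out, only in more detail. The extra verification you sketch (socle equivalence read off from the presentations, non-isomorphism from \cite{BiS2}) is consistent with what the paper records separately in Theorems~\ref{thm:4.1} and~\ref{thm:4.4}.
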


Moreover, we have the following consequence of
\cite[Theorem 3.1]{BHS2} and \cite[Section 5]{BiS3}.

\begin{theorem}
\label{thm:4.3}
The following statements hold.
\begin{enumerate}[(i)]
\item
  $\Lambda_1$ and $\Lambda_2$ are non-isomorphic but derived equivalent
  algebras.
\item
  $\Lambda_4$, $\Lambda_5$, $\Lambda_6$, $\Lambda_7$, $\Lambda_8$
  are pairwise non-isomorphic but derived equivalent
  algebras.
\end{enumerate}
\end{theorem}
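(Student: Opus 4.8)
The plan is to transfer both the derived equivalences and the non\nobreakdash-isomorphisms from the standard forms $\Lambda'_i$, for which these statements are recorded in Theorem~\ref{thm:4.1}, using the fact that each $\Lambda_i$ occurring in (i) and (ii) is socle equivalent to $\Lambda'_i$ (equivalently, $\Lambda'_i$ is the standard form of $\Lambda_i$); this can also be read off directly from the bound quiver presentations in Section~\ref{sec:exceptional}, where the relations defining $\Lambda_i$ differ from those defining $\Lambda'_i$ only by terms lying in the socle.

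For the derived equivalences I would start from the explicit tilting complexes realising $D^b(\mod\Lambda'_1)\simeq D^b(\mod\Lambda'_2)$ and $D^b(\mod\Lambda'_i)\simeq D^b(\mod\Lambda'_j)$ for $i,j\in\{4,5,6,7,8\}$, constructed in \cite{BHS1} and used in the proof of Theorem~\ref{thm:4.1}. The essential input is \cite[Theorem~3.1]{BHS2}: since $\Lambda_i$ is a geometric socle deformation of $\Lambda'_i$, each of these tilting complexes has a counterpart over $\Lambda_i$ whose endomorphism algebra is precisely the socle deformation $\Lambda_j$ of $\Lambda'_j$, and not $\Lambda'_j$ itself. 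Composing such equivalences then shows that $\Lambda_1$ and $\Lambda_2$ are derived equivalent and that $\Lambda_4,\Lambda_5,\Lambda_6,\Lambda_7,\Lambda_8$ are pairwise derived equivalent.

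For the non-isomorphism, the quickest route is to use that the standard form is an isomorphism invariant: by the uniqueness recalled in Section~\ref{sec:intro}, a non-standard periodic representation-infinite algebra of polynomial growth is a geometric socle deformation of exactly one standard algebra, so $\Lambda_i\cong\Lambda_j$ forces $\Lambda'_i\cong\Lambda'_j$. By Theorem~\ref{thm:4.1}(i) and (iii) this occurs only for $i=j$ within each of the two families. Alternatively, the pairwise non-isomorphism can be obtained directly from the presentations, as carried out in \cite[Section~5]{BiS3}, for instance by comparing the dimensions of the algebras and of their radical powers.

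The main obstacle is \cite[Theorem~3.1]{BHS2} itself, namely that the combinatorially defined tilting complex over $\Lambda'_i$ admits a lift over the socle deformation $\Lambda_i$ whose endomorphism algebra carries exactly the deformed socle relations defining $\Lambda_j$. This amounts to controlling the multiplicative structure of $\End$ of the lifted complex and verifying the propagation of the socle relations; the remaining verifications are bookkeeping with the explicit complexes of \cite{BHS1}.
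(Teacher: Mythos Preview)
Your proposal is correct and matches the paper's approach: the paper simply records Theorem~\ref{thm:4.3} as a consequence of \cite[Theorem~3.1]{BHS2} and \cite[Section~5]{BiS3}, which are exactly the references you invoke for the derived equivalences (lifting the tilting complexes of \cite{BHS1} through the socle deformations) and for the pairwise non-isomorphism, respectively. Your added remark that non-isomorphism also follows from uniqueness of the standard form together with Theorem~\ref{thm:4.1} is a legitimate alternative, but the paper does not spell this out.
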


The next theorem establishes a relationship between the above
families of algebras (see \cite[Theorem 2.1]{BHS2} and \cite[Section 5]{BiS3}).

\begin{theorem}
\label{thm:4.4}
Let $\Lambda$ be one of the algebras
$\Lambda_1$, $\Lambda_2$, $\Lambda_3(\lambda)$, $\lambda \in K \setminus \{ 0,1 \}$,
$\Lambda_4$, $\Lambda_5$, $\Lambda_6$,
$\Lambda_7$, $\Lambda_8$, $\Lambda_9$, $\Lambda_{10}$,
and $\Lambda'$ be the corresponding algebra
$\Lambda'_1$, $\Lambda'_2$, $\Lambda'_3(\lambda)$, $\lambda \in K \setminus \{ 0,1 \}$,
$\Lambda'_4$, $\Lambda'_5$, $\Lambda'_6$,
$\Lambda'_7$, $\Lambda'_8$, $\Lambda'_9$, $\Lambda'_{10}$.
Then the following statements hold.
\begin{enumerate}[(i)]
\item
  $\dim_K \Lambda = \dim_K \Lambda'$.
\item
  $\Lambda$ is a deformation of $\Lambda'$.
\item
  $\Lambda$ and $\Lambda'$ are socle equivalent.
\end{enumerate}
Moreover, we have
\begin{enumerate}[(i)]
\addtocounter{enumi}{3}
\item
  For $i \in \{ 1,2 \}$, $\Lambda_i \cong \Lambda'_i$
  if and only if $\charact K \neq 3$.
\item
  For $\lambda \in K \setminus \{ 0,1 \}$,
  $\Lambda_3(\lambda) \cong \Lambda'_3(\lambda)$
  if and only if $\charact K \neq 2$.
\item
  For $i \in \{ 4,5,6,7,8,9,10 \}$, $\Lambda_i \cong \Lambda'_i$
  if and only if $\charact K \neq 2$.
\end{enumerate}
\end{theorem}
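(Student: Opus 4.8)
The plan is to treat the three unnumbered statements (i)--(iii) together and more or less uniformly, and then handle the isomorphism criteria (iv)--(vi) separately, again family by family but with a single common mechanism. For (i), note that $\Lambda$ and $\Lambda'$ are given by the same quiver with the same arrow set; the defining relations differ only in "higher-order'' correction terms (replacing, e.g., $\beta\alpha\gamma$ by $\beta\alpha\gamma - \beta\alpha^2\gamma$, or $\alpha^2 = \sigma\gamma$ by $\alpha^2 = \sigma\gamma + \alpha^3$). One shows that in each of the eleven cases the two ideals $I$ and $I'$ in $KQ$ yield quotients with the same $K$-basis: the natural set of paths that is a basis for $KQ/I'$ (read off from the defining relations by a straightforward normal-form/diamond-lemma argument, exactly as in the references \cite{BiS2,BiS3}) is still a basis for $KQ/I$, because the correction terms lie in higher powers of the radical and do not change the leading monomials. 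This is a short finite check per algebra and gives $\dim_K\Lambda = \dim_K\Lambda'$.

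For (iii), socle equivalence, the key observation is that each correction term added to pass from $\Lambda'$ to $\Lambda$ lies in $\operatorname{soc}$ of the relevant projective, i.e. in $\soc(\Lambda')$ (respectively $\soc(\Lambda)$); concretely the extra monomials are maximal paths. Hence the induced map $\Lambda/\soc(\Lambda)\to\Lambda'/\soc(\Lambda')$ sending arrows to arrows is a well-defined algebra isomorphism: modulo the socle the two presentations literally coincide. This is the content already recorded in \cite[Theorem 2.1]{BHS2} and \cite[Section 5]{BiS3}, so here one only needs to point to those results and note that the algebras $\Lambda'_1,\dots,\Lambda'_{10}$ of Section~\ref{sec:exceptional} are exactly the standard forms occurring there. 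For (ii), deformation: using the common underlying vector space $K^d$ from (i) and (iii), one writes down a one-parameter family of multiplications $m_t$ on $K^d$ with $m_1$ the multiplication of $\Lambda$ and $m_0$ the multiplication of $\Lambda'$ — the standard construction is to rescale the correction terms by $t$ (so that at $t=0$ they disappear and one recovers $\Lambda'$, while for $t\neq 0$ a change of basis identifies $m_t$ with $\Lambda$), check associativity of each $m_t$, and conclude that $\Lambda'$ lies in the Zariski closure of the $\GL_d(K)$-orbit of $\Lambda$ in $\alg_d(K)$. Again this is precisely \cite[Theorem 2.1]{BHS2}, so the proof is a citation plus the remark that our normalization of the presentations matches theirs.

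Finally, for (iv)--(vi): the "if'' direction is Theorem~\ref{thm:4.4}(iii) sharpened — when $\charact K$ avoids the bad prime ($3$ for $\Lambda_1,\Lambda_2$; $2$ for the rest), a careful choice of scalars in the change of variables (absorbing the correction terms by substitutions like $\alpha\mapsto \alpha + c\alpha^2$ with $c$ depending invertibly on $\charact K$) upgrades the socle equivalence to an honest isomorphism $\Lambda\cong\Lambda'$; this is the classical computation of \cite{BiS2,BiS3}. The "only if'' direction — that in the bad characteristic $\Lambda\not\cong\Lambda'$ — is the genuine content and is quoted from \cite[Theorem~1.1]{BiS3} (see also Theorem~\ref{thm:4.2} above): in characteristic $3$ (resp.\ $2$) the algebras $\Lambda_i$ are \emph{non-standard}, whereas the $\Lambda'_i$ are standard, and standardness is an isomorphism invariant, so they cannot be isomorphic. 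The main obstacle, and the only part requiring real work rather than citation, is the normal-form bookkeeping in (i): one must verify, for each of the eleven (families of) algebras, that the listed relations really do form a Gr\"obner-type basis so that the correction terms are genuinely "lower order'' and the dimension is unchanged — but since these presentations are taken verbatim from \cite{BiS2,BHS2,BiS3} where exactly this was done, in the paper this reduces to assembling the references and observing that the presentations in Section~\ref{sec:exceptional} coincide with theirs.
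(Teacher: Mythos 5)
The paper gives no proof of Theorem~\ref{thm:4.4} beyond citing \cite[Theorem~2.1]{BHS2} and \cite[Section~5]{BiS3}, and your proposal ultimately reduces to the same citations, with a plausible and accurate sketch of what those references actually establish (same quiver with socle-level correction terms, one-parameter degeneration, and non-isomorphism in the bad characteristic via standardness being an isomorphism invariant). So the proposal is correct and follows essentially the same route as the paper.
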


In the above notation, the algebra $\Lambda'$ is called the
\emph{standard form} of the algebra $\Lambda$.

We also know whether or not a 
non-standard algebras as above is symmetric
(see \cite[Proposition~6.8]{BES2}).

\begin{proposition}
\label{prop:4.5}
The following statements hold.
\begin{enumerate}[(i)]
\item
  For $i \in \{ 1,2,4,5,6,7,8 \}$, $\Lambda_i$
  is a symmetric algebra.
\item
  For $\lambda \in K \setminus \{ 0,1 \}$,
  $\Lambda_3(\lambda)$
  is a symmetric algebra.
\item
  $\Lambda_9$ is a weakly symmetric but not symmetric algebra.
\item
  $\Lambda_{10}$ is not a weakly symmetric algebra.
\end{enumerate}
\end{proposition}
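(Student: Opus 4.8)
The proof is carried out algebra by algebra, directly from the bound quiver presentations listed above. For each $\Lambda_i$ fix the $K$-basis given by the residue classes of the paths surviving the relations; one first checks that each indecomposable projective $P_v = e_v \Lambda_i$ is spanned by such paths and has one-dimensional socle, spanned by a single maximal path $\omega_v$ in $e_v(\rad \Lambda_i) e_{\nu(v)}$, where $v \mapsto \nu(v)$ is the Nakayama permutation. Reading $\nu$ off the relations is the common first step for all four items.

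For (i) and (ii) I would exhibit an explicit symmetrizing $K$-linear form $\varphi_i \colon \Lambda_i \to K$. In these cases one finds $\nu = \mathrm{id}$, so $\soc(e_v\Lambda_i) \cong S_v$ with $\soc(e_v\Lambda_i) = K\omega_v$, $\omega_v \in e_v(\rad\Lambda_i)e_v$. Set $\varphi_i(\omega_v) = c_v$ for suitable nonzero scalars $c_v \in K$ (depending in general on $\charact K$, and for $\Lambda_3(\lambda)$ on $\lambda$) and $\varphi_i = 0$ on every other basis path. One then verifies: (a) \emph{symmetry}, $\varphi_i(xy) = \varphi_i(yx)$ for all basis paths $x,y$; since $\varphi_i$ is supported on $\soc\Lambda_i$, only the finitely many pairs $(x,y)$ with $xy$ (equivalently $yx$) a nonzero multiple of some $\omega_v$ contribute, and these identities are checked using the relations — this is precisely the point at which the socle-deformation terms (for instance $\beta\alpha\gamma = \beta\alpha^2\gamma$ in $\Lambda_1$, or $\alpha\beta = \alpha\delta\gamma\beta$ in the other cases) must be taken into account, but because these extra terms lie strictly deeper in the radical than $\soc\Lambda_i$ they do not change the socle and only require re-running the finite check; (b) \emph{nondegeneracy}, which holds because for each ordered pair $(j,k)$ the induced pairing $e_j\Lambda_i e_k \times e_k\Lambda_i e_j \to K$, $(x,y) \mapsto \varphi_i(xy)$, carries the path basis of $e_j\Lambda_i e_k$ to a unimodular transform of the dual of the path basis of $e_k\Lambda_i e_j$. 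Then $(x,y)\mapsto \varphi_i(xy)$ is a nondegenerate associative symmetric bilinear form, so $\Lambda_i \cong D(\Lambda_i)$ as bimodules. This is the same recipe used for the standard forms $\Lambda'_i$ in \cite{BiS2,BES2}, and the socle deformation affects only step (a).

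For (iii), $\Lambda_9$ exists only in characteristic $2$. Computing $\soc(e_v\Lambda_9)$ for $v=1,2,3,4$ from the relations $\alpha\beta = \alpha\delta\gamma\beta$, $\xi\varepsilon = 0$, $\gamma\delta = 0$, $\beta\alpha + \varepsilon\xi + \delta\gamma = 0$ shows each $e_v\Lambda_9$ has simple socle $\cong S_v$, so $\Lambda_9$ is weakly symmetric. To see it is not symmetric, recall that a self-injective algebra is symmetric if and only if its Nakayama automorphism is inner; since $\Lambda_9$ is weakly symmetric we may normalize the Nakayama automorphism $\nu$ to fix every $e_v$, whence $\nu$ acts on each arrow (and on the parallel pair $\gamma,\delta$) by scalars which are forced by the socle structure. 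Carrying out this computation, the socle-deformation term $\alpha\delta\gamma\beta$ forces $\nu$ to rescale some arrow by a factor $\neq 1$ which cannot be cancelled by any inner automorphism (this is exactly the obstruction that disappears for the standard form $\Lambda'_9$, which is symmetric in characteristic $2$ by Theorem~\ref{thm:4.1}(iv)); equivalently, one shows directly that the constraints $\varphi(xy) = \varphi(yx)$ coming from $\alpha\beta = \alpha\delta\gamma\beta$ versus $\beta\alpha = -\varepsilon\xi - \delta\gamma$ are incompatible in characteristic $2$. Hence $\Lambda_9$ is weakly symmetric but not symmetric.

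For (iv), for $\Lambda_{10}$ one computes $\soc(e_v\Lambda_{10})$ for $v = 1,\dots,5$ from the relations $\mu\beta = 0$, $\alpha\eta = 0$, $\beta\alpha = \delta\gamma$, $\eta\mu = \xi\sigma$, $\sigma\delta = \gamma\xi + \sigma\delta\sigma\delta$, $\delta\sigma\delta\sigma = 0$ and finds that the Nakayama permutation $\nu$ is a nontrivial permutation of $\{1,\dots,5\}$ (e.g.\ $\soc(e_4\Lambda_{10}) \cong S_5 \not\cong S_4$), as is already the case for the self-injective standard form $\Lambda'_{10}$; therefore $\Lambda_{10}$ is not weakly symmetric. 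The main obstacle throughout is the bookkeeping in step (a)/(b): choosing the scalars $c_v$ so that the symmetry identity holds simultaneously across all deformed relations, and, for (iii), isolating the precise term of the deformed relation that obstructs a symmetrizing form in characteristic $2$; all of this is finite and mechanical once a path basis is fixed, but must be done separately for each algebra.
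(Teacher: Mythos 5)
The paper does not actually prove this proposition; it is imported verbatim from \cite[Proposition~6.8]{BES2}, so the only comparison available is with the general method of direct verification from the quiver presentations, which is what you propose. Your parts (iii) and (iv) are essentially right: $\soc(e_4\Lambda_{10})$ is spanned by $\alpha\xi\gamma\eta\in e_4\Lambda_{10}e_5$, which settles (iv), and for (iii) your ``equivalently'' clause is the correct argument --- any symmetrizing form $\varphi$ on $\Lambda_9$ must satisfy $\varphi(\alpha\beta)=\varphi(\beta\alpha)$, $\varphi(\xi\varepsilon)=\varphi(\varepsilon\xi)$ and $\varphi(\gamma\delta)=\varphi(\delta\gamma)$; since $\alpha\beta=\alpha\delta\gamma\beta$ spans $\soc(e_1\Lambda_9)$ while $\xi\varepsilon=0=\gamma\delta$ and $\varepsilon\xi=\beta\alpha+\delta\gamma$, these equations force $\varphi$ to vanish on $\soc(e_1\Lambda_9)$, contradicting nondegeneracy. (Your parenthetical about the ``parallel pair $\gamma,\delta$'' is off --- the quiver of $\Lambda_9$ has no parallel arrows --- but that clause is not needed.)

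The genuine gap is in (i) and (ii). The Ansatz ``$\varphi_i(\omega_v)=c_v$ and $\varphi_i=0$ on every other basis path'' cannot work for any of the deformed algebras, and the justification you give rests on a false equivalence: $xy\in\soc\Lambda_i$ does \emph{not} imply $yx\in\soc\Lambda_i$. For $\Lambda_6$ the deformed relation gives $\alpha\beta=\alpha\delta\gamma\beta=\omega_1\in\soc(e_1\Lambda_6)$, whereas $\beta\alpha=(\delta\gamma)^2$ is a basis element of $e_2\Lambda_6e_2$ lying strictly above the socle; the trace condition $\varphi(\alpha\beta)=\varphi(\beta\alpha)$ then forces $\varphi(\beta\alpha)=\varphi(\omega_1)\neq 0$, so the form must be nonzero off the socle. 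The same happens for $\Lambda_1$ (where $\varphi(\beta\cdot\alpha\gamma)=\varphi(\omega_1)$ but $\varphi(\alpha\gamma\cdot\beta)=\varphi(\alpha^3)$, using $\gamma\beta=\alpha^2$), for $\Lambda_2$ (via $\varphi(\beta\gamma)=\varphi(\gamma\beta)=\varphi(\alpha^3)$), and for $\Lambda_3(\lambda)$ in characteristic $2$ (via $\varphi(\sigma\beta\cdot\gamma)=\varphi(\alpha^3)$ versus $\varphi(\gamma\cdot\sigma\beta)=\lambda\varphi(\beta^3)$ together with $\varphi(\sigma\gamma)=\varphi(\gamma\sigma)$ and $\sigma\gamma=\alpha^2+\alpha^3$). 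Running your finite check literally on a socle-supported form yields $\varphi(\omega_1)=0$ and hence the wrong conclusion that these algebras are not symmetric. The statements remain true --- one must allow $\varphi$ to be nonzero on the finitely many non-socle basis elements these identities force (e.g.\ for $\Lambda_1$ take $\varphi(\omega_1)=\varphi(\omega_2)=\varphi(\alpha^3)=1$ and $\varphi=0$ on all other basis paths, which one checks is symmetric and nondegenerate) --- but as written your recipe for (i) and (ii) fails, and it fails precisely because of the socle deformation you claim only requires ``re-running the finite check.''
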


The following describes the periods of the algebras
in question, proved in 
\cite[Sections~7-10]{BES2}.

\begin{theorem}
\label{thm:4.7}
The following statements hold.
\begin{enumerate}[(i)]
\item
  $\Lambda_1$ and $\Lambda_2$ are periodic algebras
  of period $6$.
\item
  For any $\lambda \in K \setminus \{ 0,1 \}$, $\Lambda_3(\lambda)$ is a
  periodic algebra of period $4$. 
\item
  $\Lambda_4$, $\Lambda_5$, $\Lambda_6$, $\Lambda_7$, $\Lambda_8$
  are periodic algebras of period $8$.
\item
  $\Lambda_9$ is a periodic algebra of period $6$.   
\item
  $\Lambda_{10}$ is a periodic algebra of period $6$.   
\end{enumerate}
\end{theorem}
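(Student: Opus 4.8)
The plan is to reduce the problem to a small number of explicit computations of minimal bimodule resolutions, from which the period can be read off. First, by Theorem~\ref{thm:4.3} the algebras $\Lambda_1,\Lambda_2$ are derived equivalent and $\Lambda_4,\Lambda_5,\Lambda_6,\Lambda_7,\Lambda_8$ are derived equivalent, so by Theorem~\ref{thm:2.10} (derived equivalent indecomposable algebras are simultaneously periodic with equal period) it suffices to treat one representative from each of these two classes together with $\Lambda_3(\lambda)$, $\Lambda_9$ and $\Lambda_{10}$. Second, when $\charact K \notin \{2,3\}$ each $\Lambda_i$ is isomorphic to its standard form $\Lambda'_i$ by Theorem~\ref{thm:4.4}(iv)--(vi), and then the assertion follows from Theorem~\ref{thm:4.2-new}; thus only the characteristics $3$ (for $\Lambda_1$, $\Lambda_2$) and $2$ (for the remaining algebras) require a direct argument.

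For each chosen representative $\Lambda = KQ/I$, I would construct explicitly a minimal projective resolution of $\Lambda$ in $\mod\Lambda^e$ and show it is periodic of the claimed period $n$. The terms are given by Proposition~\ref{prop:2.1}, $\bP_k = \bigoplus_{i,j} P(i,j)^{\dim_K \Ext_\Lambda^k(S_i,S_j)}$, and the needed $\Ext$-dimensions come from the minimal projective resolutions of the simple $\Lambda$-modules; these are periodic, since $\Lambda$ is self-injective with the Auslander-Reiten quiver of the periodic tubular algebra $\Lambda'$ to which it is socle equivalent (Theorems~\ref{thm:4.2} and~\ref{thm:4.4}), so the sequence $\bP_0,\bP_1,\bP_2,\dots$ is itself periodic. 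The differentials $d_0,d_1,d_2$ are given by Lemmas~\ref{lem:2.3} and~\ref{lem:2.4}; the higher $d_k$ ($k \geq 3$) are written down by hand, lifting along the defining relations of $I$, and exactness and minimality are checked degree by degree. Lemma~\ref{lem:2.2} is useful here: since each $\Omega_{\Lambda^e}^k(\Lambda)$ is projective on each side, it can be identified as a bimodule from its one-sided structure and its $K$-dimension, the latter read off the Euler characteristic of the truncated resolution. Carried out until the resolution closes up, this produces in the symmetric cases an isomorphism $\bP_n \cong \bP_0$ over $\Lambda^e$ compatible with the differentials, hence $\Omega_{\Lambda^e}^n(\Lambda) \cong \Lambda$, with $n = 6$, $4$, $8$ according as the tubular type is $(3,3,3)$, $(2,2,2,2)$ or $(2,4,4)$.

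To see that $n$ is the exact period, observe that every period divides $n$ (as $\Omega_{\Lambda^e}$ acts invertibly on the stable category of $\mod\Lambda^e$) and that, by uniqueness of minimal projective resolutions, a period $i$ would force $\bP_i \cong \bP_0$; so it suffices to check, for each proper divisor $i$ of $n$, that $\bP_i \not\cong \bP_0$ — for instance by comparing the multiplicities of the diagonal summands $P(j,j)$, or the dimensions $\dim_K \Omega_{\Lambda^e}^i(\Lambda)$. The delicate cases are $\Lambda_9$ and $\Lambda_{10}$, which in characteristic $2$ are not symmetric (Proposition~\ref{prop:4.5}): here the minimal bimodule resolution returns after three steps not to $\Lambda$ itself but to the twisted bimodule ${}_{1}\Lambda_{\nu}$ with $\nu$ the Nakayama automorphism of $\Lambda$, and since $\nu$ has order $2$ modulo inner automorphisms, $\Omega_{\Lambda^e}^3(\Lambda) \not\cong \Lambda$ while $\Omega_{\Lambda^e}^6(\Lambda) \cong {}_{1}\Lambda_{\nu^2} \cong \Lambda$, so the period is $6$. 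For $\Lambda_9$ this is twice the period of its symmetric standard form $\Lambda'_9$ in characteristic $2$ (Theorems~\ref{thm:4.1} and~\ref{thm:4.2-new}), whose resolution closes up untwisted, and $\Lambda_{10}$ is entirely analogous.

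The main obstacle is the explicit construction of the higher differentials together with the degree-by-degree verification of exactness of these periodic resolutions — a lengthy computation, and precisely where the characteristic $2$ and $3$ identities that make these algebras non-standard come into play; in addition, for $\Lambda_9$ and $\Lambda_{10}$ one must recognize the Nakayama twist in $\Omega_{\Lambda^e}^3(\Lambda)$ correctly and confirm that $\nu$ is not inner while $\nu^2$ is, which is exactly what raises the period from $3$ to $6$.
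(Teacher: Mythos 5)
The paper offers no proof of Theorem~\ref{thm:4.7}: it is quoted from \cite[Sections~7--10]{BES2}, where the periods are established exactly as you propose, by writing down explicit periodic minimal projective bimodule resolutions (the paper reuses the first three terms of those resolutions in Sections~\ref{sec:type2222}--\ref{sec:type236}). Your reductions are sound: Theorem~\ref{thm:4.3} plus Theorem~\ref{thm:2.10} cuts the list to one representative of each derived equivalence class, and Theorem~\ref{thm:4.4}(iv)--(vi) plus Theorem~\ref{thm:4.2-new} disposes of all characteristics other than $3$ (for $\Lambda_1,\Lambda_2$) and $2$ (for the rest). The remaining content is the degree-by-degree construction, which you correctly identify as the real work and leave unexecuted; that is acceptable for an outline, but two of your shortcuts would not survive the execution.

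First, your mechanism for $\Lambda_9$ and $\Lambda_{10}$ --- ``the resolution returns at step $3$ to ${}_1\Lambda_\nu$ with $\nu$ the Nakayama automorphism, which is non-inner of order $2$ modulo inner automorphisms, hence the period doubles to $6$'' --- is refuted inside this very paper by the standard algebra $\Lambda'_{10}$: it is not even weakly symmetric (Theorem~\ref{thm:4.1}(v)), so its Nakayama automorphism is certainly not inner, and yet it has period $3$ in characteristic $2$ (Theorem~\ref{thm:4.2-new}(v)). So symmetry versus non-symmetry, and the order of the Nakayama automorphism in the outer automorphism group, do not control whether the resolution closes up at $3$ or at $6$; the twist $\mu$ with $\Omega_{\Lambda^e}^3(\Lambda)\cong{}_1\Lambda_\mu$ (compare the proof of Proposition~\ref{prop:6.5}, where exactly this bimodule isomorphism is used) must be computed for each algebra and need not be the Nakayama automorphism, and whether it is inner is a genuinely characteristic- and relation-dependent calculation. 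Calling $\Lambda_{10}$ ``entirely analogous'' to $\Lambda_9$ therefore papers over the one point where the argument could go either way. Second, your criterion for excluding proper divisors $i$ of $n$ --- verify $\bP_i\not\cong\bP_0$ --- only works when the projective terms actually differ; for the type $(2,4,4)$ algebras of period $8$ the sequence of terms may well satisfy $\bP_4\cong\bP_0$, in which case you must instead show directly that $\Omega_{\Lambda^e}^4(\Lambda)\not\cong\Lambda$ as bimodules (again a twist computation), not merely compare multiplicities of the $P(j,j)$.
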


\section{Hochschild cohomology for tubular type $(2,2,2,2)$}%
\label{sec:type2222}

In this section we determine the dimensions of the low Hochschild cohomology spaces
for the exceptional periodic algebras $\Lambda_3'(\lambda)$ and $\Lambda_3(\lambda)$
(in characteristic $2$)
of tubular type $(2,2,2,2)$.

\begin{proposition}
\label{prop:5.1}
Let $A = \Lambda'_3(\lambda)$ for some
$\lambda \in K \setminus \{ 0, 1 \}$.
Then
\begin{enumerate}[(i)]
\item
  $\dim_K HH^0(A) = 6$.
\item
  $\dim_K HH^1(A) = \left\{ \begin{array}{cl} 4, & \charact (K) \neq 2 \\ 6, & \charact (K) = 2 \end{array} \right.$.\vspace{2pt}
\item
  $\dim_K HH^2(A) = \left\{ \begin{array}{cl} 4, & \charact (K) \neq 2 \\ 6, & \charact (K) = 2 \end{array} \right.$.
\end{enumerate}
\end{proposition}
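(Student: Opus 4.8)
The plan is to compute the three dimensions from the first four terms of the minimal projective bimodule resolution of $A=\Lambda'_3(\lambda)$ together with the exact sequences of Proposition~\ref{prop:2.5}.

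\emph{A concrete model of $A$, and $HH^0$.} From the defining relations one first derives $\sigma\beta^2=0=\alpha^2\sigma$ (indeed $\alpha^2\sigma=\sigma\gamma\sigma=\lambda\sigma\beta^2$ while $\alpha^2\sigma=(\alpha\sigma)\beta=\sigma\beta^2$, so $(1-\lambda)\sigma\beta^2=0$ and $\lambda\neq1$), likewise $\gamma\alpha^2=0=\beta^2\gamma$, and then $\alpha^4=0=\beta^4$. Hence $\dim_K A=12$ with $e_1Ae_1=\langle e_1,\alpha,\alpha^2,\alpha^3\rangle$, $e_2Ae_2=\langle e_2,\beta,\beta^2,\beta^3\rangle$, $e_1Ae_2=\langle\sigma,\sigma\beta=\alpha\sigma\rangle$, $e_2Ae_1=\langle\gamma,\gamma\alpha=\beta\gamma\rangle$, and $A$ is symmetric by Theorem~\ref{thm:4.1}(ii). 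For (i) one uses $\dim_K HH^0(A)=\dim_K Z(A)=\dim_K A/[A,A]$: since $[A,A]$ is spanned by $e_1Ae_2\oplus e_2Ae_1$ together with $\alpha^2-\lambda\beta^2$ and $\alpha^3-\lambda\beta^3$, we get $\dim_K[A,A]=6$, so $\dim_K HH^0(A)=6$ (with $Z(A)$ having basis $1,\alpha+\beta,\alpha^2,\alpha^3,\beta^2,\beta^3$).

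\emph{The resolution and the spaces $HH^1$, $HH^2$.} By Proposition~\ref{prop:2.1}, $\bP_0=P(1,1)\oplus P(2,2)$ and $\bP_1=P(1,1)\oplus P(2,2)\oplus P(1,2)\oplus P(2,1)$, with $d_1=d$ as in Lemma~\ref{lem:2.3}. Verifying that the four relations $\mu^{(1)}=\alpha^2-\sigma\gamma$, $\mu^{(2)}=\lambda\beta^2-\gamma\sigma$, $\mu^{(3)}=\gamma\alpha-\beta\gamma$, $\mu^{(4)}=\alpha\sigma-\sigma\beta$ form a minimal set of generators of the defining ideal and that $\varrho(\mu^{(1)}),\dots,\varrho(\mu^{(4)})$ generate $\Omega^2_{A^e}(A)$, one gets $\bP_2=P(1,1)\oplus P(2,2)\oplus P(2,1)\oplus P(1,2)$ with $d_2=R$ as described before Proposition~\ref{prop:2.5}. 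Since $A$ is periodic of period $4$ (Theorem~\ref{thm:4.2-new}(ii)), $\Omega^4_{A^e}(A)\cong A$; a computation of $\dim_K\Ext^3_A(S_i,S_j)$ gives $\bP_3=P(1,1)\oplus P(2,2)$, and $d_3:\bP_3\to\bP_2$ is determined by minimality and $d_2d_3=0$. Now apply $\Hom_{A^e}(-,A)$ and use $\Hom_{A^e}(P(i,j),A)\cong e_iAe_j$: the resolution becomes a short complex of the (at most $4$-dimensional) spaces $e_iAe_j$, whose differentials are explicit commutator-type maps. For example $d_1^*$ has only two nonzero components, $(z_1,z_2)\mapsto\sigma z_2-z_1\sigma$ and $(z_1,z_2)\mapsto\gamma z_1-z_2\gamma$, while $R^*=d_2^*$ sends $(f_\alpha,f_\beta,f_\sigma,f_\gamma)$ to the quadruple with $\mu^{(1)}$-component $\alpha f_\alpha+f_\alpha\alpha-\sigma f_\gamma-f_\sigma\gamma$, $\mu^{(2)}$-component $\lambda(\beta f_\beta+f_\beta\beta)-\gamma f_\sigma-f_\gamma\sigma$, and analogous formulas for $\mu^{(3)},\mu^{(4)}$. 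Proposition~\ref{prop:2.5} then yields
\[
\dim_K HH^1(A)=\dim_K\ker R^*-2,\qquad \dim_K HH^2(A)=\dim_K\ker d_3^*-\operatorname{rank}R^*,
\]
where the $2$ equals $\dim_K\Hom_{A^e}(\bP_0,A)-\dim_K HH^0(A)$.

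\emph{The computation and the main obstacle.} The remaining work is finite linear algebra in the explicit bases. One finds $\dim_K\ker R^*=6$ if $\charact K\neq2$ and $\dim_K\ker R^*=8$ if $\charact K=2$; the difference comes precisely from the scalars $2$ in $\alpha f_\alpha+f_\alpha\alpha=2\alpha f_\alpha$ (as $f_\alpha$ is a polynomial in $\alpha$) and in $\lambda(\beta f_\beta+f_\beta\beta)=2\lambda\beta f_\beta$, which are invertible in the first case and zero in the second. Hence $\dim_K HH^1(A)$ is $4$ or $6$ accordingly. Likewise $\operatorname{rank}d_3^*=2$ regardless of $\charact K$ (it coincides with $\operatorname{rank}d_1^*=\dim_K\Hom_{A^e}(\bP_0,A)-\dim_K HH^0(A)=2$, reflecting the self-duality of the minimal bimodule resolution of a symmetric algebra), and $\operatorname{rank}R^*$ is $6$ or $4$, so $\dim_K HH^2(A)=10-\operatorname{rank}R^*$ is $4$ or $6$. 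I expect the delicate step to be the control of $\Omega^2_{A^e}(A)$ and $\Omega^3_{A^e}(A)$: confirming that the given presentation of $A$ is minimal (so that $\bP_2$ and the projective cover $d_2=R$ are as claimed) and pinning down $d_3$ (or at least $\operatorname{rank}d_3^*$), since without the term $\bP_3$ the space $\Hom_{A^e}(\Omega^2_{A^e}(A),A)$ entering the computation of $HH^2$ is not directly provided by Proposition~\ref{prop:2.5}. Every characteristic-dependent phenomenon reduces to whether $2$ is invertible in $K$.
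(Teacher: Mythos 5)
Your overall strategy is the one the paper uses: take the first terms $\bP_2\xrightarrow{R}\bP_1\xrightarrow{d}\bP_0\to A\to 0$ of the minimal bimodule resolution, apply $\Hom_{A^e}(-,A)$, and reduce everything to explicit linear algebra via Proposition~\ref{prop:2.5}. Your structure constants for $A$ ($\sigma\beta^2=0$, $\alpha^4=0$, $\dim_K A=12$), your basis of the center, the values $\dim_K\Ker R^*=6$ resp.\ $8$ according to whether $2$ is invertible, and the final dimensions all agree with the paper; your route to (i) via $\dim_K A/[A,A]$ for a symmetric algebra is a harmless variant of the paper's direct computation of $C(A)$.

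The genuine gap is in (iii), and you have located it yourself but not closed it. To get $\dim_K HH^2(A)$ you need $\dim_K\Hom_{A^e}(\Omega_{A^e}^2(A),A)=\dim_K\Ker d_3^*$, i.e.\ you must know which $\varphi\in\Hom_{A^e}(\bP_2,A)$ vanish on $\Omega_{A^e}^3(A)=\Ker R$. Your two substitutes for this are not proofs: (a) the assertion that $d_3$ is ``determined by minimality and $d_2d_3=0$'' is false as stated --- those conditions do not pin down $d_3$; one needs $\operatorname{Im}d_3=\Ker d_2$, which requires actually computing $\Ker d_2$; and (b) the claim $\operatorname{rank}d_3^*=\operatorname{rank}d_1^*=2$ ``by self-duality of the minimal bimodule resolution of a symmetric algebra'' is a heuristic, not an argument (it would follow from the duality $HH^1(A)\cong D\big(HH^2(A)\big)$ for a symmetric periodic algebra of period $4$, but you neither state nor justify that, and the paper does not supply such machinery). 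The paper closes exactly this gap by importing, from the proof of \cite[Proposition~7.1]{BES2}, explicit bimodule generators $\Gamma_1\in e_1\bP_2e_1$ and $\Gamma_2\in e_2\bP_2e_2$ of $\Ker R$, and then checking that $\varphi(\Gamma_1)=\varphi(\Gamma_2)=0$ imposes precisely the two conditions $a_1=b_1$, $a_2=b_2$, whence $\dim_K\Hom_{A^e}(\Omega_{A^e}^2(A),A)=10$ in every characteristic. Without some such identification of $\Omega_{A^e}^3(A)$ (or an honest proof of $\operatorname{rank}d_3^*=2$), your value of $\dim_K HH^2(A)$ is unsupported; parts (i) and (ii) are fine.
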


\begin{proof}
It follows from the proof of \cite[Proposition~7.1]{BES2} that $A$ 
admits the first three terms in a minimal projective resolution in $\mod A^e$
\[
  \bP_2 \xrightarrow{d_2}
  \bP_1 \xrightarrow{d_1}
  \bP_0 \xrightarrow{d_0}
  A \rightarrow 0
\]
with
\begin{align*}
 \bP_0 &= 
   P(1,1) \oplus P(2,2)
,
\\
 \bP_1 = \bP_2 &=
   P(1,1) \oplus P(1,2) \oplus P(2,1) \oplus P(2,2)
,
\end{align*}
the differential
$R = d_2$
given by
\begin{align*}
 R(e_1 \otimes e_1)
  &= \varrho(\alpha^2 - \sigma \gamma)
   = e_1 \otimes \alpha + \alpha \otimes e_1
     - e_1 \otimes \gamma - \sigma \otimes e_1
,\\
 R(e_1 \otimes e_2)
  &= \varrho(\alpha \sigma - \sigma \beta)
   = e_1 \otimes \sigma + \alpha \otimes e_2
     - e_1 \otimes \beta - \sigma \otimes e_2
,\\
 R(e_2 \otimes e_1)
  &= \varrho(\gamma \alpha - \beta \gamma)
   = e_2 \otimes \alpha + \gamma \otimes e_1
     - e_2 \otimes \gamma - \beta \otimes e_1
,\\
 R(e_2 \otimes e_2)
  &= \varrho(\lambda \beta^2 - \gamma \sigma)
   = \lambda e_2 \otimes \beta + \lambda \beta \otimes e_2
     - e_2 \otimes \sigma - \gamma \otimes e_2
,
\end{align*}
and the $A$-$A$-bimodule
$\Omega_{A^e}^3(A) = \Ker R$
generated by the following elements in $\bP_2$
\begin{align*}
 \Gamma_1 &=
  \sigma (e_2 \otimes e_1) - (e_1 \otimes e_2) \gamma
  - \alpha (e_1 \otimes e_1) + (e_1 \otimes e_1) \alpha
,
\\
 \Gamma_2 &=
  \gamma (e_1 \otimes e_2) - (e_2 \otimes e_1) \sigma
  + \beta (e_2 \otimes e_2) - (e_2 \otimes e_2) \beta
.
\end{align*}

We fix also the following basis
$\cB = e_1 \cB \cup e_2 \cB$
of the $K$-vector space
$A$:
\begin{align*}
e_1 \cB &=
\{ e_1, \alpha, \alpha^2, \alpha^3, \sigma, \sigma \beta \}
,
\\
e_2 \cB &=
\{ e_2, \beta, \beta^2, \beta^3, \gamma, \gamma \alpha \}
.
\end{align*}

We shall show now that the equalities (i), (ii), (iii) hold.

\smallskip

(i)
We have $HH^0(A) = C(A)$.
Clearly, the center $C(A)$ contains the identity
and the socle $\soc(A)$ of $A$.
Observe that any element in the radical $\rad A$ of $A$
which does not have a component in $\soc (A)$ is of the
form $x=x_1+x_2$, where
\[
  x_1=c_1\alpha + c_2\alpha^2  \in e_1 A e_1,
\quad
  x_2=d_1\beta + d_2\beta^2  \in e_2 A e_2.
\]
Then $x$ commutes with $\alpha$ and $\beta$.
Moreover, $x$ commutes with $\sigma$
and $\gamma$ if and only if $c_1=d_1$.
Hence, the center has the basis
\[
  \{ 1, \alpha+\beta, \alpha^2, \alpha^3, \beta^2, \beta^3\}
\]
over $K$, and consequently $\dim_K HH^0(A)=6$.

\smallskip

(ii)
We will determine $\dim_K HH^1(A)$ using the exact sequence
of $K$-vector spaces (see Proposition~\ref{prop:2.5})
\[
 0
  \to HH^0(A)
  \to \Hom_{A^e}(\bP_0,A)
  \to \Hom_{A^e}\big(\Omega_{A^e}(A),A\big)
  \to HH^1(A)
  \to 0
  .
\]
We have isomorphisms of $K$-vector spaces
\begin{align*}
 \Hom_{A^e} (\bP_0, A) &=
   e_1 A e_1  \oplus e_2 A e_2
,
\\
 \Hom_{A^e} (\bP_1, A) &=
   e_1 A e_1  \oplus e_1 A e_2 \oplus
   e_2 A e_1  \oplus e_2 A e_2 ,
\end{align*}
and hence we obtain
\begin{align*}
 \dim_K \Hom_{A^e} (\bP_0, A) &=
   4+4=8
,
\\
 \dim_K \Hom_{A^e} (\bP_1, A) &=
   4+2+2+4=12
.
\end{align*}
In order to calculate
$\dim_K \Hom_{A^e}(\Omega_{A^e}(A),A)$,
we identify $\Hom_{A^e}(\Omega_{A^e}(A),A)$
with the $K$-vector space
$\{ \varphi \in  \Hom_{A^e} (\bP_1, A) \,|\, \varphi R=0\}$.
Let $\varphi: \bP_1\to A$ be a homomorphism in $\mod A^e$.
Then there are
$a_1, a_2, b_1, b_2, c_0, c_1, c_2, c_3, d_0, d_1, d_2, d_3 \in K$ such that
\begin{align*}
  \varphi(e_1\otimes e_2) &= a_1\sigma + a_2\sigma\beta,\\
  \varphi(e_2\otimes e_1) &= b_1\gamma + b_2\gamma\alpha,\\
  \varphi(e_1\otimes e_1) &= \sum_{i=0}^3 c_i\alpha^i,\\
  \varphi(e_2\otimes e_2) &= \sum_{i=0}^3 d_i\beta^i,
\end{align*}
where $\alpha^0 = e_1$ and $\beta^0 = e_2$.
We have the equalities
\begin{align*}
  \varphi \big( R(e_1\otimes e_2) \big)
  &=
   \bigg(\sum_{i=0}^3 c_i\alpha^i\bigg)\sigma
   + \alpha(a_1\sigma + a_2\sigma\beta)
   - (a_1\sigma+a_2\sigma\beta)\beta
   - \sigma\bigg(\sum_{i=0}^3 d_i\beta^i\bigg)
 \\
  &= (c_0-d_0)\sigma + (c_1-d_1)\sigma\beta
.
\end{align*}
Hence, $\varphi ( R(e_1\otimes e_2) ) = 0$
if and only if $c_0 = d_0$ and $c_1 = d_1$.
Similarly, we have the equalities
\begin{align*}
  \varphi \big( R(e_2\otimes e_1) \big)
  &=
   (b_1\gamma + b_2\gamma\alpha)\alpha
   + \gamma \bigg(\sum_{i=0}^3 c_i\alpha^i\bigg)
   - \bigg(\sum_{i=0}^3 d_i\beta^i\bigg)\gamma
   - \beta(b_1\gamma+b_2\gamma\alpha)\beta
 \\
  &= (c_0-d_0)\gamma + (c_1-d_1)\gamma\alpha
.
\end{align*}
Hence, again $\varphi ( R(e_2\otimes e_1) ) = 0$
if and only if $c_0 = d_0$ and $c_1 = d_1$.
Further, we obtain
\begin{align*}
  \varphi \big( R(e_2\otimes e_2) \big)
  &=
   \lambda \bigg(\sum_{i=0}^3 d_i\beta^i\bigg)\beta
   + \lambda \beta\bigg(\sum_{i=0}^3 d_i\beta^i\bigg)
   - (b_1\gamma + b_2\gamma\alpha)\sigma
   - \gamma(a_1\sigma+a_2\sigma\beta)
 \\
  &= \lambda \Big (2 d_0 \beta + (2 d_1 - a_1 - b_1) \beta^2 + (2 d_2 - a_2 - b_2) \beta^3 \Big)
.
\end{align*}
Hence, $\varphi ( R(e_2\otimes e_2) ) = 0$
if and only if
\[
  2 d_0 = 0,
 \quad
  a_1 + b_1 = 2 d_1,
 \quad
  a_2 + b_2 = 2 d_2.
\]
Finally, we have the equalities
\begin{align*}
  \varphi \big( R(e_1\otimes e_1) \big)
  &=
   \bigg(\sum_{i=0}^3 c_i\alpha^i\bigg)\alpha
   + \alpha \bigg(\sum_{i=0}^3 c_i\alpha^i\bigg)
   - (a_1\sigma+a_2\sigma\beta)\gamma
   - \sigma(b_1\gamma + b_2\gamma\alpha)
 \\
  &= 2 c_0 \alpha + (2 c_1 - a_1 - b_1) \alpha^2 + (2 c_2 - a_2 - b_2) \alpha^3
.
\end{align*}
Thus $\varphi ( R(e_1\otimes e_1) ) = 0$
if and only if
\[
  2 c_0 = 0,
 \quad
  a_1 + b_1 = 2 c_1,
 \quad
  a_2 + b_2 = 2 c_2.
\]

Assume $\charact(K) \neq 2$.
Then $\varphi$ induces a homomorphism
$\Omega_{A^e}(A)\to A$ in $\mod A^e$ if and only if
\[
  c_0=d_0=0,
  \ \,
  c_1=d_1,
  \ \,
  c_2=d_2,
  \ \,
  a_1+b_1=2c_1,
  \ \,
  a_2+b_2=2c_2
  .
\]
Hence $\dim_K \Hom_{A^e}(\Omega_{A^e}(A),A) = 6$.
Therefore, we obtain
\begin{align*}
  \dim_K HH^1(A)
   &= \dim_K HH^0(A) + \dim_K \Hom_{A^e}\big(\Omega_{A^e}(A),A\big)
      - \dim_K \Hom_{A^e}(\bP_0,A)
   \\&
    = 6 + 6 - 8 = 4 .
\end{align*}

Assume $\charact(K) = 2$.
Then $\varphi$ induces a homomorphism
$\Omega_{A^e}(A)\to A$ in $\mod A^e$ if and only if
\[
  a_1+b_1=0,
  \ \,
  a_2+b_2=0,
  \ \,
  c_0=d_0,
  \ \,
  c_1=d_1
  .
\]
Hence $\dim_K \Hom_{A^e}(\Omega_{A^e}(A),A) = 8$.
Therefore, we obtain
\begin{align*}
  \dim_K HH^1(A)
   &= \dim_K HH^0(A) + \dim_K \Hom_{A^e}\big(\Omega_{A^e}(A),A\big)
      - \dim_K \Hom_{A^e}(\bP_0,A)
   \\&
    = 6 + 8 - 8 = 6 .
\end{align*}

\smallskip

(iii)
We will determine $\dim_K HH^2(A)$ using the exact sequence
of $K$-vector spaces (see Proposition~\ref{prop:2.5})
\[
 0
  \to \Hom_{A^e}\big(\Omega_{A^e}(A),A\big)
  \to \Hom_{A^e}(\bP_1,A)
  \to \Hom_{A^e}\big(\Omega_{A^e}^2(A),A\big)
  \to HH^2(A)
  \to 0
  .
\]
In order to calculate
$\dim_K \Hom_{A^e}(\Omega_{A^e}^2(A),A)$,
we identify $\Hom_{A^e}(\Omega_{A^e}^2(A),A)$
with the $K$-vector space
\[
  \Big\{ \varphi \in  \Hom_{A^e} (\bP_2, A) \,\big|\, \varphi \big(\Omega_{A^e}^3(A)\big) = 0\Big\} .
\]
Since $\bP_1 = \bP_2$, a homomorphism
$\varphi \in  \Hom_{A^e} (\bP_2, A)$
is defined as $\varphi \in  \Hom_{A^e} (\bP_1, A)$
in (ii).

We have the equalities
\begin{align*}
  \varphi (\Gamma_1)
  &=
   \sigma \varphi (e_2 \otimes e_1)
   - \varphi (e_1 \otimes e_2) \gamma
   - \alpha \varphi (e_1 \otimes e_1)
   - \varphi (e_1\otimes e_1) \alpha
 \\&
  =
   \sigma (b_1 \gamma + b_2 \gamma \alpha)
   - (a_1 \sigma + a_2 \sigma \beta) \gamma
   - \alpha \bigg(\sum_{i=0}^3 c_i\alpha^i\bigg)
   - \bigg(\sum_{i=0}^3 c_i\alpha^i\bigg)\alpha
 \\&
  =
   (b_1 - a_1) \sigma \gamma
   + (b_2 - a_2) \sigma \gamma \alpha
 \\&
  =
   (b_1 - a_1) \alpha^2
   + (b_2 - a_2) \alpha^3
,
\end{align*}
and
\begin{align*}
  \varphi (\Gamma_2)
  &=
   \gamma \varphi (e_1\otimes e_2)
   - \varphi (e_2\otimes e_1) \sigma
   + \beta \varphi (e_2\otimes e_2)
   - \varphi (e_2\otimes e_2) \beta
 \\&
  =
   \gamma (a_1 \sigma + a_2 \sigma \beta)
   - (b_1 \gamma + b_2 \gamma \alpha) \sigma
   + \beta \bigg(\sum_{i=0}^3 d_i\beta^i\bigg)
   - \bigg(\sum_{i=0}^3 d_i\beta^i\bigg) \beta
 \\&
  =
   (a_1 - b_1) \gamma \sigma
   + (a_2 - b_2) \gamma \alpha  \sigma
 \\&
  =
   \lambda (a_1 - b_1) \beta^2
   + \lambda (a_2 - b_2) \beta^3
.
\end{align*}
Hence, $\varphi(\Gamma_1) = 0$
and $\varphi (\Gamma_2) = 0$
if and only if $a_1 = b_1$ and $a_2 = b_2$.
This shows that
$\dim_K \Hom_{A^e}(\Omega_{A^e}^2(A),A) = 10$.
Using the equality
\begin{align*}
  \dim_K HH^2(A)
   &= \dim_K \Hom_{A^e}\big(\Omega_{A^e}(A),A\big)
      + \dim_K \Hom_{A^e}\big(\Omega_{A^e}^2(A),A\big)
   \\&\ \ \ \,
      - \dim_K \Hom_{A^e}(\bP_1,A)
\end{align*}
we obtain
\begin{align*}
  \dim_K HH^2(A) &= 6 + 10 - 12 = 4 \ \  \mbox{ if $\charact(K) \neq 2$}, \\
  \dim_K HH^2(A) &= 8 + 10 - 12 = 6 \ \  \mbox{ if $\charact(K) = 2$}.
\end{align*}
We note that we have always $\dim_K HH^1(A) = \dim_K HH^2(A)$.
\end{proof}

\begin{proposition}
\label{prop:5.2}
Let $K$ be an algebraically closed field of characteristic 2
and $A = \Lambda_3(\lambda)$ for some
$\lambda \in K \setminus \{ 0, 1 \}$.
Then
\begin{enumerate}[(i)]
\item
  $\dim_K HH^0(A) = 6$.
\item
  $\dim_K HH^1(A) = 4$.
\item
  $\dim_K HH^2(A) = 4$.
\end{enumerate}
\end{proposition}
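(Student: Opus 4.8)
The plan is to argue exactly as in the proof of Proposition~\ref{prop:5.1}, exploiting that $\Lambda_3(\lambda)$ is a socle deformation of $\Lambda'_3(\lambda)$ (Theorem~\ref{thm:4.4}(iii)) and is periodic of period~$4$ (Theorem~\ref{thm:4.7}(ii)). It follows from the proof of \cite[Proposition~7.1]{BES2} that $A = \Lambda_3(\lambda)$ admits a minimal projective resolution $\bP_2 \xrightarrow{R} \bP_1 \xrightarrow{d} \bP_0 \xrightarrow{d_0} A \to 0$ in $\mod A^e$ with the \emph{same} terms $\bP_0$ and $\bP_1 = \bP_2$ as for $\Lambda'_3(\lambda)$ in Proposition~\ref{prop:5.1}, differential $R = d_2$ given by $R(e_i \otimes e_j) = \varrho(\mu)$ for the corresponding defining relation $\mu$ of $\Lambda_3(\lambda)$, and $\Omega_{A^e}^3(A) = \Ker R$ generated as an $A$-$A$-bimodule by two elements $\Gamma_1, \Gamma_2 \in \bP_2$. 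Thus $R$ differs from its counterpart for $\Lambda'_3(\lambda)$ only in
\[
 R(e_1 \otimes e_1) = \varrho(\alpha^2 - \sigma\gamma - \alpha^3)
   = \big(e_1 \otimes \alpha + \alpha \otimes e_1 - e_1 \otimes \gamma - \sigma \otimes e_1\big)
     - \big(e_1 \otimes \alpha^2 + \alpha \otimes \alpha + \alpha^2 \otimes e_1\big),
\]
and $\Gamma_1, \Gamma_2$ are the socle-deformed versions of those in Proposition~\ref{prop:5.1}. Since $A$ is socle equivalent to $\Lambda'_3(\lambda)$ we may take for $A$ the same $K$-basis $\cB = e_1\cB \cup e_2\cB$ as before; one first records, using the defining relations together with $\lambda \neq 1$ and $\charact K = 2$, that $\sigma\beta^2 = \gamma\alpha^2 = 0$, $\alpha^4 = \beta^4 = 0$ and $\soc(A) = K\alpha^3 \oplus K\beta^3$, exactly as for $\Lambda'_3(\lambda)$.

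Part~(i) then follows by the same argument as in the proof of Proposition~\ref{prop:5.1}(i): the deformation only affects products landing in the socle, so the commutation conditions again single out $\{1, \alpha + \beta, \alpha^2, \alpha^3, \beta^2, \beta^3\}$ as a basis of $C(A) = HH^0(A)$, whence $\dim_K HH^0(A) = 6$. For part~(ii) we use the exact sequence $0 \to HH^0(A) \to \Hom_{A^e}(\bP_0, A) \to \Hom_{A^e}(\Omega_{A^e}(A), A) \to HH^1(A) \to 0$ of Proposition~\ref{prop:2.5}, with $\dim_K \Hom_{A^e}(\bP_0, A) = 8$ and $\dim_K \Hom_{A^e}(\bP_1, A) = 12$ unchanged. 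Parametrising $\varphi \in \Hom_{A^e}(\bP_1, A)$ by the twelve scalars $a_1, a_2, b_1, b_2, c_0, \dots, c_3, d_0, \dots, d_3$ as in Proposition~\ref{prop:5.1}(ii) and identifying $\Hom_{A^e}(\Omega_{A^e}(A), A)$ with $\{\varphi : \varphi R = 0\}$, the conditions coming from $R(e_1 \otimes e_2)$, $R(e_2 \otimes e_1)$, $R(e_2 \otimes e_2)$ reproduce, in characteristic~$2$, the relations $c_0 = d_0$, $c_1 = d_1$, $a_1 + b_1 = 0$, $a_2 + b_2 = 0$, while the extra summand $-\varrho(\alpha^3)$ of $R(e_1 \otimes e_1)$ contributes the two further relations $c_0 + a_1 + b_1 = 0$ and $c_1 + a_1 + b_1 + a_2 + b_2 = 0$, which together with the previous ones force $c_0 = c_1 = d_0 = d_1 = 0$. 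Hence $\dim_K \Hom_{A^e}(\Omega_{A^e}(A), A) = 6$ and $\dim_K HH^1(A) = 6 + 6 - 8 = 4$.

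For part~(iii) we use the exact sequence $0 \to \Hom_{A^e}(\Omega_{A^e}(A), A) \to \Hom_{A^e}(\bP_1, A) \to \Hom_{A^e}(\Omega_{A^e}^2(A), A) \to HH^2(A) \to 0$ and identify $\Hom_{A^e}(\Omega_{A^e}^2(A), A)$ with $\{\varphi : \varphi(\Gamma_1) = \varphi(\Gamma_2) = 0\}$; evaluating $\varphi$ on the two generators $\Gamma_1, \Gamma_2$ yields, as in Proposition~\ref{prop:5.1}(iii), exactly two independent conditions (to the effect $a_1 = b_1$, $a_2 = b_2$), so $\dim_K \Hom_{A^e}(\Omega_{A^e}^2(A), A) = 10$ and $\dim_K HH^2(A) = 6 + 10 - 12 = 4$. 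The main obstacle is the linear algebra in part~(ii): one must verify with care that the additional summand $-\varrho(\alpha^3)$ of $R(e_1 \otimes e_1)$ really contributes two \emph{independent} new relations in characteristic~$2$ — equivalently, that the socle deformation kills exactly the two derivations that survive for the standard algebra $\Lambda'_3(\lambda)$ — and that the auxiliary vanishings $\sigma\beta^2 = \gamma\alpha^2 = 0$ and $\alpha^4 = \beta^4 = 0$ used throughout indeed hold for $\Lambda_3(\lambda)$. This is precisely the point at which the strict inequalities $\dim_K HH^1(\Lambda_3(\lambda)) < \dim_K HH^1(\Lambda'_3(\lambda))$ and $\dim_K HH^2(\Lambda_3(\lambda)) < \dim_K HH^2(\Lambda'_3(\lambda))$ of Theorem~\ref{thm:main1} become visible.
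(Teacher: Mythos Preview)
Your proposal is correct and follows essentially the same route as the paper's proof: identical resolution terms, the same parametrisation of $\varphi$, the same extra conditions $c_0 + a_1 + b_1 = 0$ and $c_1 + a_1 + b_1 + a_2 + b_2 = 0$ arising from the additional summand $-\varrho(\alpha^3)$ in $R(e_1\otimes e_1)$, and the same dimension counts. Two small corrections: the relevant reference for the resolution of $\Lambda_3(\lambda)$ is \cite[Proposition~7.2]{BES2}, not 7.1; and the bimodule generators $\Gamma_1,\Gamma_2$ of $\Omega_{A^e}^3(A)$ are not ``socle-deformed versions'' but are in fact literally the same expressions as in Proposition~\ref{prop:5.1} (the deformation enters only through the relation $\sigma\gamma = \alpha^2 + \alpha^3$ when evaluating $\varphi(\Gamma_1)$).
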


\begin{proof}
It follows from the proof of \cite[Proposition~7.2]{BES2} that $A$
admits the first three terms in a minimal projective resolution in $\mod A^e$
\[
  \bP_2 \xrightarrow{d_2}
  \bP_1 \xrightarrow{d_1}
  \bP_0 \xrightarrow{d_0}
  A \rightarrow 0
\]
with
\begin{align*}
 \bP_0 &= 
   P(1,1) \oplus P(2,2)
,
\\
 \bP_1 = \bP_2 &=
   P(1,1) \oplus P(1,2) \oplus P(2,1) \oplus P(2,2)
,
\end{align*}
the differential
$R = d_2$
given by
\begin{align*}
 R(e_1 \otimes e_1)
  &= \varrho(\alpha^2 - \sigma \gamma - \alpha^3)
  \\&
   = e_1 \otimes \alpha + \alpha \otimes e_1
     - e_1 \otimes \gamma - \sigma \otimes e_1
     - e_1 \otimes \alpha^2 - \alpha \otimes \alpha - \alpha^2 \otimes e_1
,\\
 R(e_1 \otimes e_2)
  &= \varrho(\alpha \sigma - \sigma \beta)
   = e_1 \otimes \sigma + \alpha \otimes e_2
     - e_1 \otimes \beta - \sigma \otimes e_2
,\\
 R(e_2 \otimes e_1)
  &= \varrho(\gamma \alpha - \beta \gamma)
   = e_2 \otimes \alpha + \gamma \otimes e_1
     - e_2 \otimes \gamma - \beta \otimes e_1
,\\
 R(e_2 \otimes e_2)
  &= \varrho(\lambda \beta^2 - \gamma \sigma)
   = \lambda e_2 \otimes \beta + \lambda \beta \otimes e_2
     - e_2 \otimes \sigma - \gamma \otimes e_2
,
\end{align*}
and the $A$-$A$-bimodule
$\Omega_{A^e}^3(A) = \Ker R$
generated by the following elements in $\bP_2$
\begin{align*}
 \Gamma_1 &=
  \sigma (e_2 \otimes e_1) - (e_1 \otimes e_2) \gamma
  - \alpha (e_1 \otimes e_1) + (e_1 \otimes e_1) \alpha
,
\\
 \Gamma_2 &=
  \gamma (e_1 \otimes e_2) - (e_2 \otimes e_1) \sigma
  + \beta (e_2 \otimes e_2) - (e_2 \otimes e_2) \beta
.
\end{align*}

We use also the basis
$\cB = e_1 \cB \cup e_2 \cB$
of the $K$-vector space $A$,
defined in the proof of Proposition~\ref{prop:5.1}.

We shall show now that the equalities (i), (ii), (iii) hold.

\smallskip

(i)
As in the proof of Proposition~\ref{prop:5.1}
we infer that the center $C(A)$ of $A$ has the basis
\[
  \{ 1, \alpha+\beta, \alpha^2, \alpha^3, \beta^2, \beta^3\}
\]
over $K$, and hence $\dim_K HH^0(A)=\dim_K C(A) = 6$.

\smallskip

(ii)
We proceed as in the proof of the statement (ii)
of Proposition~\ref{prop:5.1}.
We have again
$\dim_K \Hom_{A^e} (\bP_0, A) = 8$
and
$\dim_K \Hom_{A^e} (\bP_1, A) = 12$.
We determine now
$\dim_K \Hom_{A^e}(\Omega_{A^e}(A),A)$,
identifying $\Hom_{A^e}(\Omega_{A^e}(A),A)$
with
\[
  \big\{ \varphi \in  \Hom_{A^e} (\bP_1, A) \,\big|\, \varphi R=0\big\}
.
\]
Take a homomorphism $\varphi: \bP_1\to A$ in $\mod A^e$,
so $\varphi$ is given by
\begin{align*}
  \varphi(e_1\otimes e_2) &= a_1\sigma + a_2\sigma\beta,\\
  \varphi(e_2\otimes e_1) &= b_1\gamma + b_2\gamma\alpha,\\
  \varphi(e_1\otimes e_1) &= \sum_{i=0}^3 c_i\alpha^i,\\
  \varphi(e_2\otimes e_2) &= \sum_{i=0}^3 d_i\beta^i,
\end{align*}
with $\alpha^0 = e_1$ and $\beta^0 = e_2$,
for some $a_1, a_2, b_1, b_2, c_0, c_1, c_2, c_3, d_0, d_1, d_2, d_3$ from $K$.
Then, as in the proof of (ii) in Proposition~\ref{prop:5.1},
we have the equalities
\begin{align*}
  \varphi \big( R(e_1\otimes e_2) \big)
  &= (c_0-d_0)\sigma + (c_1-d_1)\sigma\beta
 ,\\
  \varphi \big( R(e_2\otimes e_1) \big)
  &= (c_0-d_0)\gamma + (c_1-d_1)\gamma\alpha
 ,\\
  \varphi \big( R(e_2\otimes e_2) \big)
  &= \lambda \Big(2 d_0 \beta + (2 d_1 - a_1 - b_1) \beta^2 + (2 d_2 - a_2 - b_2) \beta^3 \Big)
 \\
  &= \lambda \Big((a_1 + b_1) \beta^2 + (a_2 + b_2) \beta^3 \Big)
,
\end{align*}
since $\charact(K) = 2$.
But for $\varphi ( R(e_1\otimes e_1) )$
we have the equalities
\begin{align*}
  \varphi \big( R(e_1\otimes e_1) \big)
  &=
   \varphi (e_1\otimes e_1) \alpha
   + \alpha \varphi (e_1\otimes e_1)
   - \varphi (e_1\otimes e_2) \gamma
   - \sigma \varphi (e_2\otimes e_1)
  \\&\ \ \ \,
   - \varphi (e_1\otimes e_1) \alpha^2
   - \alpha \varphi (e_1\otimes e_1) \alpha
   - \alpha^2 \varphi (e_1\otimes e_1)
  \\
  &=
   2 \alpha \bigg(\sum_{i=0}^3 c_i\alpha^i\bigg)
   - a_1\sigma \gamma - a_2\sigma\beta \gamma
   - b_1\sigma\gamma - b_2\sigma\gamma\alpha
   - 3 \alpha^2 \bigg(\sum_{i=0}^3 c_i\alpha^i\bigg)
  \\
  &=
   (a_1 + b_1) \sigma \gamma + (a_2 + b_2) \sigma \gamma \alpha
   + c_0 \alpha^2 + c_1 \alpha^3
 \\
  &= (c_0 + a_1 + b_1) \alpha^2 + (c_1 +  a_1 + b_1 + a_2 + b_2) \alpha^3
,
\end{align*}
because $\charact (K) = 2$ and $\sigma \gamma = \alpha^2 + \alpha^3$.
Therefore, $\varphi$ induces a homomorphism
$\Omega_{A^e}(A)\to A$ in $\mod A^e$ if and only if
\begin{gather*}
  c_0=d_0,
  \ \,
  c_1=d_1,
  \ \,
  a_1+b_1=0,
  \ \,
  a_2+b_2=0,
  \\
  c_0 + a_1 + b_1 = 0,
  \ \,
  c_1 +  a_1 + b_1 + a_2 + b_2 = 0,
\end{gather*}
or equivalently,
\[
  c_0=d_0=0,
  \ \,
  c_1=d_1=0,
  \ \,
  a_1+b_1=0,
  \ \,
  a_2+b_2=0
  .
\]
Hence, we conclude that
$\dim_K \Hom_{A^e}(\Omega_{A^e}(A),A) = 6$.
Therefore, we obtain
\begin{align*}
  \dim_K HH^1(A)
   &= \dim_K HH^0(A) + \dim_K \Hom_{A^e}\big(\Omega_{A^e}(A),A\big)
 \\&\ \ \ \,
      - \dim_K \Hom_{A^e}(\bP_0,A)
   \\&
    = 6 + 6 - 8 = 4 .
\end{align*}

\smallskip

(iii)
We identify again $\Hom_{A^e}(\Omega_{A^e}^2(A),A)$
with the $K$-vector space
\[
  \big\{ \varphi \in  \Hom_{A^e} (\bP_2, A) \,\big|\, \varphi\big (\Omega_{A^e}^3(A)\big) = 0\big\}.
\]
Since $\bP_1 = \bP_2$, a homomorphism
$\varphi \in  \Hom_{A^e} (\bP_2, A)$
is defined as $\varphi \in  \Hom_{A^e} (\bP_1, A)$
in (ii).
We have the equalities
\begin{align*}
  \varphi(\Gamma_1)
  &=
   \sigma \varphi (e_2 \otimes e_1)
   - \varphi (e_1 \otimes e_2) \gamma
   - \alpha \varphi (e_1 \otimes e_1)
   - \varphi (e_1\otimes e_1) \alpha
 \\&
  =
   (b_1 - a_1) \sigma \gamma
   + (b_2 - a_2) \sigma \gamma \alpha
 \\&
  =
   (b_1 - a_1) (\alpha^2 + \alpha^3)
   + (b_2 - a_2) (\alpha^2 + \alpha^3) \alpha
 \\&
  =
   (b_1 - a_1) \alpha^2
   + (b_1 - a_1 + b_2 - a_2) \alpha^3
,
\end{align*}
because $\sigma \gamma = \alpha^2 + \alpha^3$
and $\alpha^4 = 0$.
Similarly, we have
\begin{align*}
  \varphi(\Gamma_2)
  &=
   \gamma \varphi (e_1\otimes e_2)
   - \varphi (e_2\otimes e_1) \sigma
   + \beta \varphi (e_2\otimes e_2)
   - \varphi (e_2\otimes e_2) \beta
 \\&
  =
   (a_1 - b_1) \gamma \sigma
   + (a_2 - b_2) \gamma \alpha  \sigma
 \\&
  =
   \lambda (a_1 - b_1) \beta^2
   + \lambda (a_2 - b_2) \beta^3
.
\end{align*}
Hence, we conclude that $\varphi(\Gamma_1) = 0$
and $\varphi (\Gamma_2) = 0$
if and only if $a_1 = b_1$ and $a_2 = b_2$.
This shows that
$\dim_K \Hom_{A^e}(\Omega_{A^e}^2(A),A) = 10$.
Therefore,
we obtain
\begin{align*}
  \dim_K HH^2(A)
   &= \dim_K \Hom_{A^e}\big(\Omega_{A^e}(A),A\big)
      + \dim_K \Hom_{A^e}\big(\Omega_{A^e}^2(A),A\big)
   \\&\ \ \ \,
      - \dim_K \Hom_{A^e}(\bP_1,A)
   \\&
    = 6 + 10 - 12 = 4 .
\end{align*}
We note that $\dim_K HH^1(A) = \dim_K HH^2(A)$.
\end{proof}

\begin{proposition}
\label{prop:5.3}
Let $K$ be an algebraically closed field of characteristic 2,
$\Lambda = \Lambda_3(\lambda)$ for some
$\lambda \in K \setminus \{ 0, 1 \}$,
and $A$ a standard self-injective algebra over $K$.
Then $A$ and $\Lambda$ are not derived equivalent.
\end{proposition}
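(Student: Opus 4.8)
The plan is to argue by contradiction: first use the derived and stable invariants of Section~\ref{sec:pre-results} to force $A$ into the same shape as $\Lambda'_3(\lambda)$, and then read off a contradiction from the Hochschild cohomology dimensions computed in Propositions~\ref{prop:5.1} and~\ref{prop:5.2}. So suppose that $A$ and $\Lambda := \Lambda_3(\lambda)$ are derived equivalent; recall that by our standing conventions $A$ is basic and indecomposable. By Theorem~\ref{thm:2.8}, $A$ and $\Lambda$ are stably equivalent, so $A$ is of polynomial growth by Theorem~\ref{thm:2.9}, and $A$ is representation-infinite because $\Lambda$ is and representation-finiteness is preserved under stable equivalences of self-injective algebras. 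Since $\Lambda$ is a periodic algebra of period $4$ by Theorem~\ref{thm:4.7}(ii), Theorem~\ref{thm:2.10} shows that $A$ is a periodic algebra of period $4$ as well. Thus $A$ is a basic, indecomposable, representation-infinite, standard self-injective algebra of polynomial growth which is periodic of period $4$, and hence, by Theorem~\ref{thm:pre2}, $A$ is isomorphic to an orbit algebra $\widehat{B}/G$ of the repetitive category of a tubular algebra $B$ with respect to an admissible infinite cyclic automorphism group $G$.

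Next I would identify $A$ precisely. Since $\Lambda = \Lambda_3(\lambda)$ is socle equivalent to $\Lambda'_3(\lambda)$ by Theorem~\ref{thm:4.4}(iii), and socle equivalent algebras have isomorphic Auslander--Reiten quivers, the stable equivalence between $A$ and $\Lambda$ gives an isomorphism between the stable Auslander--Reiten quivers of $A$ and of $\Lambda'_3(\lambda)$; in particular $A$ is of tubular type $(2,2,2,2)$ and, like $\Lambda_3(\lambda)$, has two simple modules. Appealing to the classification of standard periodic representation-infinite algebras of polynomial growth of period $4$ in \cite{BES2}, one concludes that $A$ is isomorphic to $\Lambda'_3(\mu)$ for some $\mu \in K \setminus \{0,1\}$.

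The contradiction is then immediate. As $\charact K = 2$ and $A \cong \Lambda'_3(\mu)$, Proposition~\ref{prop:5.1} gives $\dim_K HH^1(A) = 6$. On the other hand $A$ and $\Lambda = \Lambda_3(\lambda)$ are derived equivalent, so $\dim_K HH^1(A) = \dim_K HH^1(\Lambda)$ by Theorem~\ref{thm:2.6}, while $\dim_K HH^1(\Lambda) = 4$ by Proposition~\ref{prop:5.2}; thus $6 = 4$, which is absurd. One could equally well compare $\dim_K HH^2$, whose values $6$ and $4$ are again supplied by Propositions~\ref{prop:5.1} and~\ref{prop:5.2}; and, since $\Lambda_3(\lambda)$ is symmetric by Proposition~\ref{prop:4.5}(ii), an alternative finish is to combine Theorem~\ref{thm:pre2} with the K\"ulshammer-ideal invariance of \cite{Zi} and the main result of \cite{HS2}.

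I expect the main obstacle to be the identification in the second paragraph. The soft reductions leading to ``$A$ is an orbit algebra $\widehat{B}/G$ of a tubular algebra and has period $4$'' are routine once Theorems~\ref{thm:2.6}--\ref{thm:2.10}, \ref{thm:pre2} and~\ref{thm:4.7} are granted, but passing from there to ``$A \cong \Lambda'_3(\mu)$'' rests on the detailed classification of \cite{BES2}: one needs that, among the standard periodic representation-infinite algebras of polynomial growth, exactly those of tubular type $(2,2,2,2)$ have period $4$, and that the two-vertex ones among these are, up to isomorphism, the algebras $\Lambda'_3(\mu)$. Granting that, the remainder is just the bookkeeping with the exact sequences of Proposition~\ref{prop:2.5} already carried out in Propositions~\ref{prop:5.1} and~\ref{prop:5.2}.
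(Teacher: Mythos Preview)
Your argument is correct and follows the same overall strategy as the paper: assume a derived equivalence, use the invariants of Section~\ref{sec:pre-results} to force $A$ to be isomorphic to some $\Lambda'_3(\mu)$, and then contradict Theorem~\ref{thm:2.6} via the dimension gap between Propositions~\ref{prop:5.1} and~\ref{prop:5.2}.

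The only substantive difference is in the identification step. You pin down $A$ using the period (Theorem~\ref{thm:2.10}) together with the tubular type and the rank of $K_0$, and then invoke the classification in \cite{BES2}. The paper instead observes that $\Lambda_3(\lambda)$ is symmetric (Proposition~\ref{prop:4.5}(ii)), hence so is $A$ by Theorem~\ref{thm:2.7}, and then applies \cite[Theorem~1]{BiS2} directly: a standard symmetric self-injective algebra of tubular type $(2,2,2,2)$ with $K_0$ of rank $2$ is isomorphic to $\Lambda'_3(\mu)$ for some $\mu\in K\setminus\{0,1\}$. This route is cleaner because \cite[Theorem~1]{BiS2} gives the conclusion in one step, whereas your appeal to \cite{BES2} requires extracting from that paper the two facts you isolate in your final paragraph (that period $4$ singles out tubular type $(2,2,2,2)$, and that the two-vertex members of that family are exactly the $\Lambda'_3(\mu)$). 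Both routes work; the paper's is more self-contained given the references at hand. The paper finishes with $HH^2$ rather than $HH^1$, but as you note either works.
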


\begin{proof}
Assume that $A$ and $\Lambda$ are derived equivalent.
Then it follows from Theorem~\ref{thm:2.8} that
$A$ and $\Lambda$ are stably equivalent,
and hence the stable Auslander-Reiten quivers $\Gamma_A^s$
and $\Gamma_{\Lambda}^s$ are isomorphic.
In particular, we conclude that $\Gamma_{\Lambda}^s$
consists of stable tubes of rank $1$ and $2$.
Further, by
Theorems \ref{thm:2.9} and \ref{thm:2.10},
$A$ is a representation-infinite periodic algebra
of polynomial growth, and hence is a standard
self-injective algebra of tubular type $(2,2,2,2)$.
Moreover, $A$ is a symmetric algebra by Theorems
\ref{thm:2.7} and \ref{thm:4.1}.
We also note that $K_0(A)$ is isomorphic to $K_0(\Lambda)$,
and so $K_0(A)$ is of rank $2$.
Applying now \cite[Theorem~1]{BiS2} we conclude
that $A$ is isomorphic to an algebra $\Lambda_3'(\mu)$
for some $\mu \in K \setminus \{0,1\}$.
On the other hand, it follows from
Propositions \ref{prop:5.1} and \ref{prop:5.2} that
$\dim_K HH^2(\Lambda) = 4 < 6 = \dim_K HH^2(A)$,
which contradicts Theorem \ref{thm:2.6}.
Therefore, $A$ and $\Lambda$ are not derived equivalent.
\end{proof}

\section{Hochschild cohomology for tubular type $(3,3,3)$}%
\label{sec:type333}

In this section we determine the dimensions
of the low Hochschild cohomology spaces
for the exceptional periodic algebras of tubular type $(3,3,3)$.

\begin{proposition}
\label{prop:6.1}
Let $A$ be one of the algebras
$\Lambda_1'$ or $\Lambda_2'$.
Then
\begin{enumerate}[(i)]
\item
  $\dim_K HH^0(A) = 5$.
\item
  $\dim_K HH^1(A) = \left\{ \begin{array}{cl} 3, & \charact (K \neq 3 \\ 4, & \charact (K) = 3 \end{array} \right.$.\vspace{2pt}
\item
  $\dim_K HH^2(A) = \left\{ \begin{array}{cl} 3, & \charact (K) \neq 3 \\ 4, & \charact (K) = 3 \end{array} \right.$.
\end{enumerate}
\end{proposition}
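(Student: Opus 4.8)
The plan is to reduce to a single algebra and then carry out the standard bimodule-resolution computation. By Theorem~\ref{thm:4.1}(i) the algebras $\Lambda_1'$ and $\Lambda_2'$ are derived equivalent, so Theorem~\ref{thm:2.6} gives an isomorphism $HH^*(\Lambda_1') \cong HH^*(\Lambda_2')$ of graded $K$-algebras and in particular $\dim_K HH^n(\Lambda_1') = \dim_K HH^n(\Lambda_2')$ for all $n$; hence it is enough to prove (i)--(iii) for one of them, and I would work with $A = \Lambda_1'$. Following the proofs of Propositions~\ref{prop:5.1} and~\ref{prop:5.2}, I would take from the argument in \cite{BES2} that $A$ is periodic of period $6$ the start $\bP_2 \xrightarrow{R} \bP_1 \xrightarrow{d} \bP_0 \xrightarrow{d_0} A \to 0$ of a minimal projective resolution in $\mod A^e$, where $\bP_0 = P(1,1)\oplus P(2,2)$, $\bP_1 = P(2,2)\oplus P(1,2)\oplus P(2,1)$ (one summand per arrow), $\bP_2 = P(2,2)\oplus P(1,1)$ (one summand per defining relation $\alpha^2-\gamma\beta$, $\beta\alpha\gamma$), $R = d_2$ is given by $R(e_2\otimes e_2) = \varrho(\alpha^2-\gamma\beta)$ and $R(e_1\otimes e_1) = \varrho(\beta\alpha\gamma)$, and $\Omega_{A^e}^3(A) = \Ker R$ is generated as a bimodule by explicit elements $\Gamma_1,\Gamma_2 \in \bP_2$; I would also fix a monomial $K$-basis $\cB = e_1\cB\cup e_2\cB$ of $A$ adapted to the paths, together with the resulting dimensions $\dim_K e_iAe_j$ and the multiplication table, which is all the input the $\Hom_{A^e}$-computations need.

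For part (i), since $HH^0(A) = C(A)$, I would describe a central element as $c_0\cdot 1$ plus a combination of the non-unit basis elements of $e_1Ae_1$ and $e_2Ae_2$; imposing commutation with each of $\alpha$, $\beta$, $\gamma$ leaves a $5$-dimensional space, spanned by $1$, $\alpha^2+\beta\gamma$, $\alpha^3$, $\alpha^4$, $(\beta\gamma)^2$. (Equivalently, as $A$ is symmetric, $\dim_K C(A)$ is the number of cyclic words of the bound quiver, which one counts to be $5$.) For parts (ii) and (iii) I would use the two exact sequences of Proposition~\ref{prop:2.5}. Here $\dim_K\Hom_{A^e}(\bP_0,A) = \dim_K e_1Ae_1 + \dim_K e_2Ae_2 = 8$ and $\dim_K\Hom_{A^e}(\bP_1,A) = \dim_K e_2Ae_2 + \dim_K e_1Ae_2 + \dim_K e_2Ae_1 = 11$, and I would identify $\Hom_{A^e}(\Omega_{A^e}(A),A)$ with $\{\varphi\in\Hom_{A^e}(\bP_1,A)\mid\varphi R = 0\}$ and $\Hom_{A^e}(\Omega_{A^e}^2(A),A)$ with $\{\varphi\in\Hom_{A^e}(\bP_2,A)\mid\varphi(\Gamma_1)=\varphi(\Gamma_2)=0\}$. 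Writing $\varphi$ through its values in the relevant spaces $e_sAe_t$ and expanding $\varphi(R(e_i\otimes e_i))$, resp. $\varphi(\Gamma_k)$, in the basis $\cB$, each of these conditions becomes an explicit homogeneous linear system with integer coefficients; solving them gives $\dim_K\Hom_{A^e}(\Omega_{A^e}(A),A)$ and $\dim_K\Hom_{A^e}(\Omega_{A^e}^2(A),A)$, whence $\dim_K HH^1(A) = \dim_K HH^0(A) + \dim_K\Hom_{A^e}(\Omega_{A^e}(A),A) - \dim_K\Hom_{A^e}(\bP_0,A)$ and $\dim_K HH^2(A) = \dim_K\Hom_{A^e}(\Omega_{A^e}(A),A) + \dim_K\Hom_{A^e}(\Omega_{A^e}^2(A),A) - \dim_K\Hom_{A^e}(\bP_1,A)$ yield the asserted values $3$ when $\charact K \neq 3$ and $4$ when $\charact K = 3$, with $\dim_K HH^1(A) = \dim_K HH^2(A)$ in all cases.

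The main obstacle will be the bookkeeping in these linear systems, and above all locating precisely where characteristic $3$ enters and checking that the solution space grows by exactly $1$ there. Concretely, the component of $\varphi(R(e_2\otimes e_2))$ produces relations of the form $a_i + b_i = 2c_i$ coming from the quadratic relation $\alpha^2 = \gamma\beta$, while the component of $\varphi(R(e_1\otimes e_1))$ adds one further relation which, after substitution, forces $3c_2 = 0$: for $\charact K \neq 3$ this kills one more coordinate, but for $\charact K = 3$ it is vacuous, which is exactly what produces the jump from $3$ to $4$ in $HH^1$; since the same space $\Hom_{A^e}(\Omega_{A^e}(A),A)$ enters the formula for $HH^2$, while $\dim_K\Hom_{A^e}(\Omega_{A^e}^2(A),A)$ turns out to be characteristic-independent, the jump propagates to $HH^2$ and keeps the two dimensions equal. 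Apart from that, the only genuinely delicate point is getting the bimodule generators $\Gamma_1,\Gamma_2$ of $\Ker R$ and the minimality of the imported resolution right, so that the $\Hom$-complex really computes $HH^*(A)$ without spurious terms.
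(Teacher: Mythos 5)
Your proposal follows essentially the same route as the paper's proof: reduce to $\Lambda_1'$ via Theorems~\ref{thm:2.6} and~\ref{thm:4.1}, import the start of the minimal bimodule resolution from \cite{BES2}, compute $C(A)$ directly (your basis $\{1,\alpha^2+\beta\gamma,\alpha^3,\alpha^4,(\beta\gamma)^2\}$ is the paper's $\{1,\beta\gamma+\gamma\beta,\alpha^3,\alpha^4,\beta\alpha^2\gamma\}$), and run the two exact sequences of Proposition~\ref{prop:2.5} with $\dim_K\Hom_{A^e}(\bP_0,A)=8$, $\dim_K\Hom_{A^e}(\bP_1,A)=11$. You also correctly pinpoint the mechanism of the characteristic-$3$ jump (the combination of $z+x+y=0$ from $\varrho(\beta\alpha\gamma)$ with $x+y=2z$ from $\varrho(\alpha^2-\gamma\beta)$ forcing $3z=0$) and the characteristic-independence of $\dim_K\Hom_{A^e}(\Omega_{A^e}^2(A),A)=8$, which is exactly how the paper's computation goes.
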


\begin{proof}
We may assume, by Theorems \ref{thm:2.6} and \ref{thm:4.1},
that $A = \Lambda_1'$.
It follows from the proof of \cite[Proposition~8.1]{BES2} that $A$
admits the first three terms in a minimal projective resolution in $\mod A^e$
\[
  \bP_2 \xrightarrow{d_2}
  \bP_1 \xrightarrow{d_1}
  \bP_0 \xrightarrow{d_0}
  A \rightarrow 0
\]
with
\begin{align*}
 \bP_0 = \bP_2 &=
   P(1,1) \oplus P(2,2)
,
\\
 \bP_1 &=
   P(1,2) \oplus P(2,1) \oplus P(2,2)
,
\end{align*}
the differential
$R = d_2$
given by
\begin{align*}
 R(e_1 \otimes e_1)
  &= \varrho(\beta \alpha \gamma)
   = e_1 \otimes \alpha \gamma + \beta \otimes \gamma
     + \beta \alpha \otimes e_1
,\\
 R(e_2 \otimes e_2)
  &= \varrho(\alpha^2 - \gamma \beta)
   = e_2 \otimes \alpha + \alpha \otimes e_2
     - e_2 \otimes \beta - \gamma \otimes e_2
,
\end{align*}
and the $A$-$A$-bimodule
$\Omega_{A^e}^3(A)$
generated by the following elements in $\bP_2$
\begin{align*}
 \Psi_1' &=
  - e_1 \otimes \beta \gamma
  + \beta \gamma \otimes e_1
  + \beta \otimes \alpha \gamma
  - \beta \alpha \otimes \gamma
,
\\
 \Psi_2' &=
   e_2 \otimes \alpha^4
  - \alpha \otimes \alpha^3
  + \alpha^3 \otimes \alpha
  - \alpha^4 \otimes e_2
  + \gamma \otimes \beta \alpha
  - \alpha \gamma \otimes \beta
.
\end{align*}

We fix also the following basis
$\cB = e_1 \cB \cup e_2 \cB$
of the $K$-vector space
$A$:
\begin{align*}
e_1 \cB &=
\{ e_1, \beta, \beta \alpha, \beta \gamma, \beta \alpha^2, \beta \alpha^2 \gamma \}
,
\\
e_2 \cB &=
\{ e_2, \alpha, \alpha^2, \alpha^3, \alpha^4, \gamma, \alpha \gamma, \alpha^2 \gamma \}
.
\end{align*}

We shall show now that the equalities (i), (ii), (iii) hold.

\smallskip

(i)
One checks that the center $C(A)$
of $A$ has  $K$-basis
\[
  \{ 1, \beta \gamma + \gamma \beta, \alpha^3, \alpha^4, \beta \alpha^2 \gamma\}
\]
and hence $\dim_K HH^0(A) = \dim_K C(A) = 5$.

\smallskip

(ii)
We will determine $\dim_K HH^1(A)$ using the exact sequence
of $K$-vector spaces (see Proposition~\ref{prop:2.5})
\[
 0
  \to HH^0(A)
  \to \Hom_{A^e}(\bP_0,A)
  \to \Hom_{A^e}\big(\Omega_{A^e}(A),A\big)
  \to HH^1(A)
  \to 0
  .
\]
We have isomorphisms of $K$-vector spaces
\begin{align*}
 \Hom_{A^e} (\bP_0, A) &=
  e_1 A e_1  \oplus e_2 A e_2
,
\\
 \Hom_{A^e} (\bP_1, A) &=
   e_1 A e_2 \oplus
   e_2 A e_1  \oplus e_2 A e_2 ,
\end{align*}
and hence we obtain
\begin{align*}
 \dim_K \Hom_{A^e} (\bP_0, A) &=
   3+5=8
,
\\
 \dim_K \Hom_{A^e} (\bP_1, A) &=
   3+3+5=11
.
\end{align*}
We identify $\Hom_{A^e}(\Omega_{A^e}(A),A)$
with the $K$-vector space
\[
  \big\{ \varphi \in  \Hom_{A^e} (\bP_1, A) \,|\, \varphi R=0\big\} .
\]

Let $\varphi: \bP_1\to A$ be a homomorphism in $\mod A^e$.
Then, using the basis $\cB = e_1 \cB \cup e_2 \cB$ of $A$,
we conclude that there exist
$b_0, b_1, b_2, c_0, c_1, c_2, a_0, a_1, a_2, a_3, a_4 \in K$
such that
\begin{align*}
  \varphi(e_1\otimes e_2) &= b_0 \beta + b_1 \beta \alpha + b_2 \beta \alpha^2,\\
  \varphi(e_2\otimes e_1) &= c_0\gamma + c_1\alpha\gamma + c_2\alpha^2\gamma,\\
  \varphi(e_2\otimes e_2) &= a_0 e_2 + a_1 \alpha + a_2 \alpha^2
                             + a_3 \alpha^3 + a_4 \alpha^4.
\end{align*}
We have the equalities
\begin{align*}
  \varphi \big( R(e_1\otimes e_1) \big)
  &=
   \varphi (e_1\otimes e_2) \alpha \gamma
   + \beta \varphi (e_2\otimes e_2) \gamma
   + \beta \alpha \varphi (e_2\otimes e_1)
 \\
  &=
   a_0 \beta \gamma
   + (a_2 + b_1 + c_1) \beta \alpha^2 \gamma ,
 \\
  \varphi \big( R(e_2\otimes e_2) \big)
  &=
   \varphi (e_2\otimes e_2) \alpha
   + \alpha \varphi (e_2\otimes e_2)
   - \varphi (e_2\otimes e_1) \beta
   - \gamma \varphi (e_1\otimes e_2)
 \\
  &=
   2 a_0 \alpha
   + (2 a_1 - b_0 - c_0) \alpha^2
   + (2 a_2 - b_1 - c_1) \alpha^3
   \\&\ \ \ \,
   + (2 a_3 - b_2 - c_2) \alpha^4
.
\end{align*}
Hence,
$\varphi ( R(e_1\otimes e_1) ) = 0$
and
$\varphi ( R(e_2\otimes e_2) ) = 0$
if and only if
\[
  a_0=0,
  \ \,
  a_2+b_1+c_1=0,
  \ \,
  b_0 + c_0 = 2 a_1,
  \ \,
  b_1 + c_1 = 2 a_2,
  \ \,
  b_2 + c_2 = 2 a_3
  .
\]

Assume $\charact(K) \neq 3$.
Then $a_2 + b_1 + c_1 = 0$ and $b_1 + c_1 = 2 a_2$
force $a_2 = 0$.
Thus $\dim_K \Hom_{A^e}(\Omega_{A^e}(A),A) = 6$.
Hence we get
\begin{align*}
  \dim_K HH^1(A)
   &= \dim_K HH^0(A) + \dim_K \Hom_{A^e}\big(\Omega_{A^e}(A),A\big)
   \\&\ \ \ \,
      - \dim_K \Hom_{A^e}(\bP_0,A)
   \\&
    = 5 + 6 - 8 = 3 .
\end{align*}

Assume $\charact(K) = 3$.
Then $a_2 + b_1 + c_1 = 0$ is equivalent to $b_1 + c_1 = 2 a_2$.
Thus $\dim_K \Hom_{A^e}(\Omega_{A^e}(A),A) = 7$.
Hence we get
\begin{align*}
  \dim_K HH^1(A)
   &= \dim_K HH^0(A) + \dim_K \Hom_{A^e}\big(\Omega_{A^e}(A),A\big)
   \\&\ \ \ \,
      - \dim_K \Hom_{A^e}(\bP_0,A)
   \\&
    = 5 + 7 - 8 = 4 .
\end{align*}

\smallskip

(iii)
We calculate $\dim_K HH^2(A)$ using the exact sequence
of $K$-vector spaces (see Proposition~\ref{prop:2.5})
\[
 0
  \to \Hom_{A^e}\big(\Omega_{A^e}(A),A\big)
  \to \Hom_{A^e}(\bP_1,A)
  \to \Hom_{A^e}\big(\Omega_{A^e}^2(A),A\big)
  \to HH^2(A)
  \to 0
  .
\]
We identify $\Hom_{A^e}(\Omega_{A^e}^2(A),A)$
with the $K$-vector space
\[
  \big\{ \varphi \in  \Hom_{A^e} (\bP_2, A) \,|\, \varphi \big(\Omega_{A^e}^3(A)\big) = 0\big\} .
\]
Recall also that
$\bP_2 = A e_1 \otimes e_1 A \oplus A e_2 \otimes e_2 A$.
Let $\varphi: \bP_2\to A$ be a homomorphism in $\mod A^e$.
Then there exist elements
$d_0, d_1, d_2, a_0, a_1, a_2, a_3, a_4 \in K$
such that
\begin{align*}
  \varphi(e_1\otimes e_1) &= d_0 e_1 + d_1 \beta \gamma + d_2 \beta \alpha^2 \gamma,\\
  \varphi(e_2\otimes e_2) &= a_0 e_2 + a_1 \alpha + a_2 \alpha^2
                             + a_3 \alpha^3 + a_4 \alpha^4.
\end{align*}
We have the equalities
\begin{align*}
  \varphi ( \Psi_1' )
  &=
   - \varphi (e_1\otimes e_1) \beta \gamma
   + \beta \gamma \varphi (e_1\otimes e_1)
   + \beta \varphi (e_2\otimes e_2) \alpha \gamma
   - \beta \alpha \varphi (e_2\otimes e_2) \gamma
 \\
  &=
   \sum_{i=0}^4 a_i \beta \alpha^i \alpha \gamma
   - \sum_{i=0}^4 a_i \beta \alpha \alpha^i \gamma
  = 0
,
 \\
  \varphi ( \Psi_2' )
  &=
   \varphi (e_2\otimes e_2) \alpha^4
   - \alpha \varphi (e_2\otimes e_2) \alpha^3
   + \alpha^3 \varphi (e_2\otimes e_2) \alpha
   - \alpha^4 \varphi (e_2\otimes e_2)
 \\&\ \ \ \,
   + \gamma \varphi (e_1\otimes e_1) \beta \gamma
   - \alpha \gamma \varphi (e_1\otimes e_1) \beta
 \\
  &=
   d_0 \gamma \beta \alpha
   - d_0 \alpha \gamma \beta
  = 0
.
\end{align*}
Hence, we get
$\dim_K \Hom_{A^e}(\Omega_{A^e}^2(A),A) = 8$.
Therefore, if $\charact(K) \neq 3$, we obtain
\begin{align*}
  \dim_K HH^2(A)
   &= \dim_K \Hom_{A^e}\big(\Omega_{A^e}(A),A\big)
      + \dim_K \Hom_{A^e}\big(\Omega_{A^e}^2(A),A\big)
   \\&\ \ \ \,
      - \dim_K \Hom_{A^e}(\bP_1,A)
   \\&
    = 6 + 8 - 11 = 3 .
\end{align*}
On the other hand, if $\charact(K) = 3$, we obtain
\begin{align*}
  \dim_K HH^2(A)
   &= \dim_K \Hom_{A^e}\big(\Omega_{A^e}(A),A\big)
      + \dim_K \Hom_{A^e}\big(\Omega_{A^e}^2(A),A\big)
   \\&\ \ \ \,
      - \dim_K \Hom_{A^e}(\bP_1,A)
   \\&
    = 7 + 8 - 11 = 4 .
\end{align*}
We note that we have always $\dim_K HH^1(A) = \dim_K HH^2(A)$.
\end{proof}

\begin{proposition}
\label{prop:6.2}
Let $K$ be an algebraically closed field of characteristic $3$
and
$A$ be one of the algebras
$\Lambda_1$ or $\Lambda_2$.
Then
\begin{enumerate}[(i)]
\item
  $\dim_K HH^0(A) = 5$.
\item
  $\dim_K HH^1(A) = 3$.
\item
  $\dim_K HH^2(A) = 3$.
\end{enumerate}
\end{proposition}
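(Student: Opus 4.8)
The plan is to follow the strategy of Proposition~\ref{prop:5.2}, transporting the computation of Proposition~\ref{prop:6.1} to the socle deformations $\Lambda_1$ and $\Lambda_2$. First I would reduce to a single algebra: by Theorem~\ref{thm:4.3}(i) the algebras $\Lambda_1$ and $\Lambda_2$ are derived equivalent, so by Theorem~\ref{thm:2.6} the graded algebras $HH^*(\Lambda_1)$ and $HH^*(\Lambda_2)$ are isomorphic; in particular their graded dimensions agree, and it suffices to treat $A = \Lambda_1$ with $\charact K = 3$.

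Next I would record, from the proof of \cite[Proposition~8.2]{BES2}, the beginning
\[
  \bP_2 \xrightarrow{R} \bP_1 \xrightarrow{d_1} \bP_0 \xrightarrow{d_0} A \to 0
\]
of a minimal projective bimodule resolution, with $\bP_0 = \bP_2 = P(1,1) \oplus P(2,2)$ and $\bP_1 = P(1,2) \oplus P(2,1) \oplus P(2,2)$ --- the same projective terms as for $\Lambda_1'$, since $\dim_K \Lambda_1 = \dim_K \Lambda_1'$ by Theorem~\ref{thm:4.4}(i) and $\Lambda_1$ is socle equivalent to $\Lambda_1'$ --- together with the explicit differential $R = d_2$ and two generators $\Psi_1$, $\Psi_2$ of the $A$-$A$-bimodule $\Omega_{A^e}^3(A) = \Ker R$. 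The essential point is that, for $\Lambda_1$, the map $R(e_1 \otimes e_1) = \varrho(\beta\alpha\gamma - \beta\alpha^2\gamma)$ and the generators $\Psi_1$, $\Psi_2$ carry the extra terms coming from the deformed relation $\beta\alpha\gamma = \beta\alpha^2\gamma$, whereas $R(e_2 \otimes e_2) = \varrho(\alpha^2 - \gamma\beta)$ is unchanged. I would also keep fixed the $K$-basis $\cB = e_1\cB \cup e_2\cB$ of $A$ used in the proof of Proposition~\ref{prop:6.1}, which is still a basis of $A$ because $A$ and $\Lambda_1'$ are socle equivalent of the same dimension.

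With this set-up the three equalities are obtained as in Propositions~\ref{prop:5.2} and~\ref{prop:6.1}. For (i), the description of the center $C(A)$ involves only the relations up to the socle, which coincide for $A$ and $\Lambda_1'$, so $C(A)$ has the $K$-basis $\{1, \beta\gamma + \gamma\beta, \alpha^3, \alpha^4, \beta\alpha^2\gamma\}$ and $\dim_K HH^0(A) = 5$. For (ii), I would use the first exact sequence of Proposition~\ref{prop:2.5}, with $\dim_K \Hom_{A^e}(\bP_0, A) = 8$ and $\dim_K \Hom_{A^e}(\bP_1, A) = 11$ as in Proposition~\ref{prop:6.1}, write a general $\varphi \in \Hom_{A^e}(\bP_1, A)$ in the coordinates $b_i$, $c_i$, $a_i$ used there, and evaluate $\varphi \circ R$. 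For $\Lambda_1'$ in characteristic~$3$ the conditions forced by $\varphi(R(e_1 \otimes e_1)) = 0$ and $\varphi(R(e_2 \otimes e_2)) = 0$ overlap, giving $\dim_K \Hom_{A^e}(\Omega_{A^e}(A), A) = 7$ and $\dim_K HH^1 = 4$; the extra terms in $R(e_1 \otimes e_1)$ for $\Lambda_1$ contribute one further independent linear condition, so that $\dim_K \Hom_{A^e}(\Omega_{A^e}(A), A) = 6$ and $\dim_K HH^1(A) = 5 + 6 - 8 = 3$. For (iii), I would use the second exact sequence of Proposition~\ref{prop:2.5}, identify $\Hom_{A^e}(\Omega_{A^e}^2(A), A)$ with $\{\varphi \in \Hom_{A^e}(\bP_2, A) : \varphi(\Omega_{A^e}^3(A)) = 0\}$, and verify that, just as for $\Lambda_1'$, one has $\varphi(\Psi_1) = 0$ and $\varphi(\Psi_2) = 0$ for every $\varphi \in \Hom_{A^e}(\bP_2, A)$, so that $\dim_K \Hom_{A^e}(\Omega_{A^e}^2(A), A) = \dim_K \Hom_{A^e}(\bP_2, A) = 8$ and hence $\dim_K HH^2(A) = 6 + 8 - 11 = 3$; in particular $\dim_K HH^1(A) = \dim_K HH^2(A)$.

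The main obstacle is not conceptual but bookkeeping: one must pin down precisely which extra terms the socle deformation $\beta\alpha\gamma \mapsto \beta\alpha\gamma - \beta\alpha^2\gamma$ contributes to $R$ and to the generators $\Psi_1$, $\Psi_2$, and then check that these terms, first, produce exactly one additional independent relation among the coordinates of $\varphi$ in the computation of $\Hom_{A^e}(\Omega_{A^e}(A), A)$ --- accounting for the drop from $4$ to $3$ in $\dim_K HH^1$ --- while, second, leaving $\varphi(\Psi_1)$ and $\varphi(\Psi_2)$ identically zero, so that $\dim_K \Hom_{A^e}(\Omega_{A^e}^2(A), A)$ is unchanged. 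As in the passage from Proposition~\ref{prop:5.1} to Proposition~\ref{prop:5.2}, this is carried out by explicit computation with the bimodule resolution taken from \cite{BES2}.
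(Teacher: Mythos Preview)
Your proposal is correct and follows essentially the same approach as the paper: reduce to $A=\Lambda_1$ via derived equivalence, import the start of the minimal bimodule resolution from \cite{BES2}, and redo the linear-algebra count of Proposition~\ref{prop:6.1} with the deformed $R(e_1\otimes e_1)=\varrho(\beta\alpha\gamma-\beta\alpha^2\gamma)$ and the corresponding generators $\Psi_1,\Psi_2$. The only discrepancy is bibliographic: the relevant resolution for $\Lambda_1$ appears in the proof of \cite[Proposition~8.4]{BES2}, not~8.2, and your invocation of Theorem~\ref{thm:4.3}(i) for the reduction is in fact the correct reference (the paper's own citation of Theorem~\ref{thm:4.1} at this point is a slip).
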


\begin{proof}
We may assume, by Theorems \ref{thm:2.6} and \ref{thm:4.1},
that $A = \Lambda_1$.
It follows from the proof of \cite[Proposition~8.4]{BES2} that $A$ 
admits the first three terms in a minimal projective resolution in $\mod A^e$
\[
  \bP_2 \xrightarrow{d_2}
  \bP_1 \xrightarrow{d_1}
  \bP_0 \xrightarrow{d_0}
  A \rightarrow 0
\]
with
\begin{align*}
 \bP_0 = \bP_2 &=
   P(1,1) \oplus P(2,2)
,
\\
 \bP_1 &=
   P(1,2) \oplus P(2,1) \oplus P(2,2)
,
\end{align*}
the differential
$R = d_2$
given by
\begin{align*}
 R(e_1 \otimes e_1)
  &= \varrho(\beta \alpha \gamma - \beta \alpha^2 \gamma)
  \\&
   = e_1 \otimes \alpha \gamma + \beta \otimes \gamma
     + \beta \alpha \otimes e_1
     - e_1 \otimes \alpha^2 \gamma
     - \beta \otimes \alpha \gamma
   \\&\ \ \ \,
     - \beta \alpha \otimes \gamma
     - \beta \alpha^2 \otimes e_1
,\\
 R(e_2 \otimes e_2)
  &= \varrho(\alpha^2 - \gamma \beta)
   = e_2 \otimes \alpha + \alpha \otimes e_2
     - e_2 \otimes \beta - \gamma \otimes e_2
,
\end{align*}
and the $A$-$A$-bimodule
$\Omega_{A^e}^3(A)$
generated by the following elements in $\bP_2$
\begin{align*}
 \Psi_1 &=
  - e_1 \otimes \beta \gamma
  + \beta \gamma \otimes e_1
  + \beta \otimes \alpha \gamma
  - \beta \alpha \otimes \gamma
  + \beta \alpha^2 \otimes \gamma
  - \beta \otimes \alpha^2 \gamma
,
\\
 \Psi_2 &=
  e_2 \otimes \alpha^4
  - \alpha \otimes \alpha^3
  + \alpha^3 \otimes \alpha
  - \alpha^4 \otimes e_2
  + \gamma \otimes \beta \alpha
  - \alpha \gamma \otimes \beta
  - \alpha^4 \otimes \alpha
  + \alpha \otimes \alpha^4
.
\end{align*}

We use the basis
$\cB = e_1 \cB \cup e_2 \cB$
of
$A$,
defined in the proof of Proposition~\ref{prop:6.1}.

We shall show now that the equalities (i), (ii), (iii) hold.

\smallskip

(i)
The center $C(A)$ of $A$ has the $K$-linear basis
\[
  \{ 1, \beta \gamma + \gamma \beta, \alpha^3, \alpha^4, \beta \alpha^2 \gamma\}
\]
and hence $\dim_K HH^0(A) = \dim_K C(A) = 5$.

\smallskip

(ii)
We proceed as in the proof
of Proposition~\ref{prop:6.1}.
We have again
$\dim_K \Hom_{A^e} (\bP_0, A) = 8$
and
$\dim_K \Hom_{A^e} (\bP_1, A) = 11$.
We calculate
$\dim_K \Hom_{A^e}(\Omega_{A^e}(A),A)$,
identifying $\Hom_{A^e}(\Omega_{A^e}(A),A)$
with
the $K$-vector space
$\{ \varphi \in  \Hom_{A^e} (\bP_1, A) \,|\, \varphi R=0\}$.
Let $\varphi: \bP_1\to A$
be a homomorphism in $\mod A^e$.
Then there exist elements
$b_0, b_1, b_2, c_0, c_1, c_2, a_0, a_1, a_2, a_3, a_4 \in K$
such that
\begin{align*}
  \varphi(e_1\otimes e_2) &= b_0 \beta + b_1 \beta \alpha + b_2 \beta \alpha^2,\\
  \varphi(e_2\otimes e_1) &= c_0\gamma + c_1\alpha\gamma + c_2\alpha^2\gamma,\\
  \varphi(e_2\otimes e_2) &= a_0 e_2 + a_1 \alpha + a_2 \alpha^2
                             + a_3 \alpha^3 + a_4 \alpha^4.
\end{align*}
We have the equalities
\begin{align*}
  \varphi \big( R(e_1\otimes e_1) \big)
  &=
   \varphi (e_1\otimes e_2) \alpha \gamma
   + \beta \varphi (e_2\otimes e_2) \gamma
   + \beta \alpha \varphi (e_2\otimes e_1)
   - \varphi (e_1\otimes e_2) \alpha^2 \gamma
  \\&\ \ \ \,
   - \beta \varphi (e_2\otimes e_2) \alpha \gamma
   - \beta \alpha \varphi (e_2\otimes e_2) \gamma
   - \beta \alpha^2 \varphi (e_2\otimes e_1)
 \\
  &=
   a_0 \beta \gamma
   + (b_0 + b_1 + c_1 + a_0 - a_1 + a_2) \beta \alpha^2 \gamma ,
\end{align*}
because $\beta \alpha \gamma = \beta \alpha^2 \gamma$
and $\charact(K) = 3$.
Similarly, we have
\begin{align*}
  \varphi \big( R(e_2\otimes e_2) \big)
  &=
   \varphi (e_2\otimes e_2) \alpha
   + \alpha \varphi (e_2\otimes e_2)
   - \varphi (e_2\otimes e_1) \beta
   - \gamma \varphi (e_1\otimes e_2)
 \\
  &=
   2 a_0 \alpha
   + (2 a_1 - c_0 - b_0) \alpha^2
   + (2 a_2 - c_1 - b_1) \alpha^3
   + (2 a_3 - c_2 - b_2) \alpha^4
 \\
  &=
   - a_0 \alpha
   - (a_1 + c_0 + b_0) \alpha^2
   - (a_2 + c_1 + b_1) \alpha^3
   - (a_3 + c_2 + b_2) \alpha^4
,
\end{align*}
because $\charact(K) = 3$.
Hence, we conclude that
$\varphi ( R(e_1\otimes e_1) ) = 0$
and
$\varphi ( R(e_2\otimes e_2) ) = 0$
if and only if
\begin{gather*}
  a_0=0,
 \ \,
  b_0 + b_1 + c_1 + a_0 - a_1 + a_2=0,
\\
  a_1 + c_0 + b_0 = 0,
 \ \,
  a_2 + c_1 + b_1 = 0,
 \ \,
  a_3 + c_2 + b_2 = 0.
\end{gather*}
This implies that
$\dim_K \Hom_{A^e}(\Omega_{A^e}(A),A) = 6$.
Therefore, we obtain
\begin{align*}
  \dim_K HH^1(A)
   &= \dim_K HH^0(A) + \dim_K \Hom_{A^e}\big(\Omega_{A^e}(A),A\big)
  \\&\ \ \ \,
      - \dim_K \Hom_{A^e}(\bP_0,A)
   \\&
    = 5 + 6 - 8 = 3 .
\end{align*}

\smallskip

(iii)
We identify $\Hom_{A^e}(\Omega_{A^e}^2(A),A)$
with the $K$-vector space
\[
  \big\{ \varphi \in  \Hom_{A^e} (\bP_2, A) \,|\, \varphi \big(\Omega_{A^e}^3(A)\big) = 0\big\}.
\]
Recall also that
$\bP_2 = A e_1 \otimes e_1 A \oplus A e_2 \otimes e_2 A$.
Take a homomorphism $\varphi: \bP_2\to A$ in $\mod A^e$.
Then there exist elements
$d_0, d_1, d_2, a_0, a_1, a_2, a_3, a_4 \in K$
such that
\begin{align*}
  \varphi(e_1\otimes e_1) &= d_0 e_1 + d_1 \beta \gamma + d_2 \beta \alpha^2 \gamma,\\
  \varphi(e_2\otimes e_2) &= a_0 e_2 + a_1 \alpha + a_2 \alpha^2
                             + a_3 \alpha^3 + a_4 \alpha^4.
\end{align*}
Then again we have the equalities
\begin{align*}
  \varphi ( \Psi_1 )
  &=
   - \varphi (e_1\otimes e_1) \beta \gamma
   + \beta \gamma \varphi (e_1\otimes e_1)
   + \beta \varphi (e_2\otimes e_2) \alpha \gamma
   - \beta \alpha \varphi (e_2\otimes e_2) \gamma
 \\&\ \ \ \,
   + \beta \alpha^2 \varphi (e_2\otimes e_2) \gamma
   - \beta \varphi (e_2\otimes e_2) \alpha^2 \gamma
 \\
  &=
   \sum_{i=0}^4 a_i \beta \alpha^2 \alpha^i \gamma
   - \sum_{i=0}^4 a_i \beta \alpha^i \alpha^2 \gamma
  = 0
,
 \\
  \varphi ( \Psi_2 )
  &=
   \varphi (e_2\otimes e_2) \alpha^4
   - \alpha \varphi (e_2\otimes e_2) \alpha^3
   + \alpha^3 \varphi (e_2\otimes e_2) \alpha
   - \alpha^4 \varphi (e_2\otimes e_2)
 \\&\ \ \ \,
   + \gamma \varphi (e_1\otimes e_1) \beta \gamma
   - \alpha \gamma \varphi (e_1\otimes e_1) \beta
   - \alpha^4 \varphi (e_2\otimes e_2) \alpha
   + \alpha \varphi (e_2\otimes e_2) \alpha^4
 \\
  &=
   0
.
\end{align*}
Hence, we obtain
$\dim_K \Hom_{A^e}(\Omega_{A^e}^2(A),A) = 8$.
Therefore, we get
\begin{align*}
  \dim_K HH^2(A)
   &= \dim_K \Hom_{A^e}\big(\Omega_{A^e}(A),A\big)
      + \dim_K \Hom_{A^e}\big(\Omega_{A^e}^2(A),A\big)
   \\&\ \ \ \,
      - \dim_K \Hom_{A^e}(\bP_1,A)
   \\&
    = 6 + 8 - 11 = 3 .
\end{align*}
We note that $\dim_K HH^1(A) = \dim_K HH^2(A)$.
\end{proof}

\begin{proposition}
\label{prop:6.3}
Let $K$ be an algebraically closed field of characteristic $3$,
$\Lambda$ one of the non-standard algebras
$\Lambda_1$ or $\Lambda_2$ over $K$,
and $A$ a standard self-injective algebra over $K$.
Then $A$ and $\Lambda$ are not derived equivalent.
\end{proposition}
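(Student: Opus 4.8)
The plan is to reproduce, for tubular type $(3,3,3)$, the argument used for tubular type $(2,2,2,2)$ in Proposition~\ref{prop:5.3}, now invoking Propositions~\ref{prop:6.1} and \ref{prop:6.2} in place of Propositions~\ref{prop:5.1} and \ref{prop:5.2}. I would argue by contradiction: assume $A$ and $\Lambda$ are derived equivalent. By Theorem~\ref{thm:2.8} they are then stably equivalent, so the stable Auslander--Reiten quivers $\Gamma_A^s$ and $\Gamma_\Lambda^s$ are isomorphic. Since $\Lambda$ is one of the non-standard algebras $\Lambda_1$, $\Lambda_2$ --- a periodic, representation-infinite algebra of polynomial growth of tubular type $(3,3,3)$ (see Theorems~\ref{thm:4.2} and \ref{thm:4.7} and the description in Section~\ref{sec:periodic}) --- its stable Auslander--Reiten quiver consists solely of stable tubes of rank $1$ and $3$; hence the same is true of $\Gamma_A^s$, and in particular $A$ is representation-infinite.

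Next I would identify $A$. By Theorems~\ref{thm:2.9} and \ref{thm:2.10}, $A$ is a representation-infinite periodic algebra of polynomial growth; since $\Gamma_A^s$ contains only tubes of rank $1$ and $3$, its tubular type is forced to be $(3,3,3)$ (the remaining types $(2,2,2,2)$, $(2,4,4)$, $(2,3,6)$ all involve a tube of rank $2$, $4$, or $6$). As $A$ is standard, it is therefore a standard self-injective algebra of tubular type $(3,3,3)$. It is symmetric by Theorem~\ref{thm:2.7} together with Proposition~\ref{prop:4.5} (which tells us $\Lambda_1$ and $\Lambda_2$ are symmetric). Moreover, a derived equivalence induces an isometry of Grothendieck groups, so $K_0(A) \cong K_0(\Lambda)$ is of rank $2$. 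Feeding the constraints ``standard self-injective, tubular type $(3,3,3)$, $K_0$ of rank $2$'' into the classification \cite[Theorem~1]{BiS2} then forces $A$ to be isomorphic to $\Lambda_1'$ or $\Lambda_2'$.

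With $A \cong \Lambda_1'$ or $\Lambda_2'$ and $\charact K = 3$, Proposition~\ref{prop:6.1} gives $\dim_K HH^2(A) = 4$, whereas Proposition~\ref{prop:6.2} gives $\dim_K HH^2(\Lambda) = 3$. By Theorem~\ref{thm:2.6}, a derived equivalence would make $HH^*(A)$ and $HH^*(\Lambda)$ isomorphic as graded $K$-algebras, so in particular $\dim_K HH^2(A) = \dim_K HH^2(\Lambda)$ --- a contradiction. Hence $A$ and $\Lambda$ are not derived equivalent.

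I expect the closing deduction to be routine bookkeeping with the structural theorems; the real content of the section lies in the bimodule-resolution computations behind Propositions~\ref{prop:6.1} and \ref{prop:6.2}, the decisive point being that in characteristic $3$ the dimension $\dim_K HH^2$ drops from $4$ to $3$ on passing from the standard algebra $\Lambda_1'$ (equivalently $\Lambda_2'$) to its socle deformation $\Lambda_1$ (equivalently $\Lambda_2$). The only mildly delicate step in the present proof is checking that the three conditions --- standard, tubular type $(3,3,3)$, and $K_0$ of rank $2$ --- genuinely isolate $\{\Lambda_1', \Lambda_2'\}$ among standard self-injective algebras via \cite[Theorem~1]{BiS2}; in particular, the rank-$2$ hypothesis is what rules out the four-vertex $(3,3,3)$-algebra $\Lambda_9'$, just as the rank-$2$, type-$(2,2,2,2)$ combination pins down the family $\{\Lambda_3'(\mu)\}$ in Proposition~\ref{prop:5.3}.
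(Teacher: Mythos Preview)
Your argument is essentially identical to the paper's: contradiction via stable equivalence, identification of the tubular type as $(3,3,3)$ from the tube ranks, symmetry from Theorem~\ref{thm:2.7} and Proposition~\ref{prop:4.5}, rank~$2$ Grothendieck group, then \cite[Theorem~1]{BiS2} to pin $A$ down to $\Lambda_1'$ or $\Lambda_2'$, and finally the mismatch $\dim_K HH^2(\Lambda)=3<4=\dim_K HH^2(A)$ from Propositions~\ref{prop:6.1} and \ref{prop:6.2}. The paper additionally opens by recording (from Theorem~\ref{thm:4.3}) that $\Lambda_1$ and $\Lambda_2$ are derived equivalent, but this is inessential since your argument applies to each separately; your added remarks on why rank~$2$ excludes $\Lambda_9'$ are correct and helpful.
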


\begin{proof}
We know from Theorem~\ref{thm:4.3}
that $\Lambda_1$ and $\Lambda_2$ are
derived equivalent.
Assume that $A$ and $\Lambda$ are derived equivalent.
Then it follows from Theorem~\ref{thm:2.8} that
$A$ and $\Lambda$ are stably equivalent,
and hence the stable Auslander-Reiten quivers $\Gamma_A^s$
and $\Gamma_{\Lambda}^s$ are isomorphic.
In particular, we conclude that
$\Gamma_{A}^s$
consists of stable tubes of rank $1$ and $3$.
Further, by
Theorems \ref{thm:2.9} and \ref{thm:2.10},
$A$ is a representation-infinite periodic algebra
of polynomial growth, and hence a standard
self-injective algebra of tubular type $(3,3,3)$.
Moreover, $A$ is a symmetric algebra by
Theorem~\ref{thm:2.7} and Proposition~\ref{prop:4.5}.
We also note that $K_0(A)$ is isomorphic to $K_0(\Lambda)$,
and so $K_0(A)$ is of rank $2$.
Applying now \cite[Theorem~1]{BiS2} we conclude
that $A$ is isomorphic to one of the algebras
$\Lambda_1'$ or $\Lambda_2'$.
On the other hand, it follows from
Theorem~\ref{thm:2.6} and
Propositions \ref{prop:6.1} and \ref{prop:6.2} that
\[
  \dim_K HH^2(\Lambda)
  = \dim_K HH^2(\Lambda_1)
  = 3
  < 4
  = \dim_K HH^2(\Lambda_1')
  = \dim_K HH^2(A)
.
\]
Therefore, applying Theorem \ref{thm:2.6} again,
we conclude that
$A$ and $\Lambda$ are not derived equivalent.
\end{proof}

We will describe now the dimensions of the low Hochschild cohomology spaces
of $\Lambda_9$.

\begin{proposition}
\label{prop:6.4}
Let $K$ be an algebraically closed field of characteristic $2$
and $A = \Lambda_9$.
Then
\begin{enumerate}[(i)]
\item
  $\dim_K HH^0(A) = 5$.
\item
  $\dim_K HH^1(A) = 1$.
\item
  $\dim_K HH^2(A) = 2$.
\end{enumerate}
\end{proposition}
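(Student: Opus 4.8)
The plan is to repeat, for $A = \Lambda_9$ over an algebraically closed field $K$ of characteristic $2$, the computation carried out in the proofs of Propositions~\ref{prop:5.1}--\ref{prop:6.2}. I would begin by quoting from \cite{BES2} the first three terms
\[
  \bP_2 \xrightarrow{\,R\,}
  \bP_1 \xrightarrow{d_1}
  \bP_0 \xrightarrow{d_0}
  A \rightarrow 0
\]
of a minimal projective resolution of $A$ in $\mod A^e$. Since $\Lambda_9$ has four vertices, six arrows $\alpha,\beta,\gamma,\delta,\varepsilon,\xi$, and a system of four relations --- one based at each vertex --- Proposition~\ref{prop:2.1} and the discussion around Lemmas~\ref{lem:2.3} and \ref{lem:2.4} give $\bP_0 = \bP_2 = \bigoplus_{i\in Q_0} P(i,i)$ and $\bP_1 = \bigoplus_{a} P\big(s(a),t(a)\big)$, together with the explicit values $R(e_i\otimes e_i) = \varrho(\mu^{(i)})$ and an explicit $A$-$A$-bimodule generating set $\Psi_1,\dots,\Psi_m$ of $\Omega_{A^e}^3(A) = \Ker R$ provided there. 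I would also fix a $K$-basis $\cB = \bigcup_{i\in Q_0} e_i\cB$ of $A$ adapted to the idempotents, as in \cite{BES2}; this records in particular the dimensions of all spaces $e_iAe_j$.

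For (i), $HH^0(A) = C(A)$, and I would compute the center directly. It contains $1$ and $\soc(A)$, and for a radical element with vanishing socle component one expresses it in the basis $\cB$, imposes commutativity with each of the six arrows, and is left with a short list of linear relations among the coefficients; reading off a basis one finds $\dim_K C(A) = 5$, hence $\dim_K HH^0(A) = 5$.

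For (ii) I would use the first exact sequence of Proposition~\ref{prop:2.5}, together with $\Hom_{A^e}(\bP_0,A)\cong\bigoplus_i e_iAe_i$ and $\Hom_{A^e}(\bP_1,A)\cong\bigoplus_a e_{s(a)}Ae_{t(a)}$ (whose dimensions are read from $\cB$), and identify $\Hom_{A^e}(\Omega_{A^e}(A),A)$ with $\{\varphi\in\Hom_{A^e}(\bP_1,A)\mid\varphi R = 0\}$. Parametrizing a general $\varphi$ by the $\cB$-coefficients of the elements $\varphi(e_{s(a)}\otimes e_{t(a)})$ and evaluating $\varphi(R(e_i\otimes e_i)) = \varphi(\varrho(\mu^{(i)}))$ at each relation, I obtain a homogeneous linear system in which, thanks to $\charact K = 2$, the multiplication-by-$2$ terms vanish, exactly as in the proof of Proposition~\ref{prop:5.2}; solving it gives $\dim_K\Hom_{A^e}(\Omega_{A^e}(A),A)$, and then
\[
  \dim_K HH^1(A) = \dim_K HH^0(A) + \dim_K\Hom_{A^e}\big(\Omega_{A^e}(A),A\big) - \dim_K\Hom_{A^e}(\bP_0,A) = 1 .
\]
For (iii) I would use the second exact sequence of Proposition~\ref{prop:2.5} and identify $\Hom_{A^e}(\Omega_{A^e}^2(A),A)$ with $\{\varphi\in\Hom_{A^e}(\bP_2,A)\mid\varphi(\Omega_{A^e}^3(A)) = 0\}$; since $\bP_2 = \bP_0$, such a $\varphi$ is again described by the $\cB$-coefficients of $\varphi(e_i\otimes e_i)\in e_iAe_i$, and evaluating $\varphi(\Psi_1),\dots,\varphi(\Psi_m)$ by means of the relations of $A$ and $\charact K = 2$ yields a homogeneous system whose solution space has dimension $\dim_K\Hom_{A^e}(\Omega_{A^e}^2(A),A)$; hence
\[
  \dim_K HH^2(A) = \dim_K\Hom_{A^e}\big(\Omega_{A^e}(A),A\big) + \dim_K\Hom_{A^e}\big(\Omega_{A^e}^2(A),A\big) - \dim_K\Hom_{A^e}(\bP_1,A) = 2 .
\]

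The obstacle is not conceptual but computational. With four vertices and six arrows the resolution involves several distinct indecomposable projective bimodules, so the families of homomorphisms $\varphi$ carry many scalar parameters and the bookkeeping of the resulting linear systems is heavy; moreover the socle-deformed relation $\alpha\beta = \alpha\delta\gamma\beta$ (replacing $\alpha\beta = 0$ of $\Lambda_9'$) contributes correction terms that must be tracked carefully through $\varrho$ and through the generators $\Psi_j$ of $\Omega_{A^e}^3(A)$. Finally, in contrast with the symmetric cases of the earlier sections, here $\dim_K HH^1(A) = 1 \ne 2 = \dim_K HH^2(A)$, so there is no $HH^1\cong HH^2$ shortcut available and each of the two linear systems must be solved on its own.
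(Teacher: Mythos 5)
Your plan follows the paper's proof of Proposition~\ref{prop:6.4} step for step: the same first three terms of the bimodule resolution quoted from \cite[Proposition~8.7]{BES2} with $\bP_0=\bP_2=\bigoplus_{i=1}^4 P(i,i)$ and $\bP_1=\bigoplus_{i=1}^3\big(P(i,4)\oplus P(4,i)\big)$, the same identification of $\Hom_{A^e}\big(\Omega_{A^e}(A),A\big)$ with $\{\varphi \mid \varphi R=0\}$ and of $\Hom_{A^e}\big(\Omega_{A^e}^2(A),A\big)$ with the maps annihilating the generators $\Psi_1,\dots,\Psi_4$ of $\Omega_{A^e}^3(A)$, and the same two Euler-characteristic formulas, which in the paper come out as $5+6-10=1$ and $6+8-12=2$. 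The only thing separating your write-up from a complete proof is that you set up but do not actually solve the two linear systems — i.e.\ you never derive $\dim_K\Hom_{A^e}\big(\Omega_{A^e}(A),A\big)=6$ and $\dim_K\Hom_{A^e}\big(\Omega_{A^e}^2(A),A\big)=8$ — and, as you yourself note, that bookkeeping (including the correction terms coming from the deformed relation $\alpha\beta=\alpha\delta\gamma\beta$) is where all the content of the proposition lies.
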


\begin{proof}
It follows from the proof of \cite[Proposition~8.7]{BES2}
that $A$ admits
the first three terms of a minimal projective resolution in $\mod A^e$
\[
  \bP_2 \xrightarrow{d_2}
  \bP_1 \xrightarrow{d_1}
  \bP_0 \xrightarrow{d_0}
  A \rightarrow 0
\]
with
\begin{align*}
 \bP_0 = \bP_2 &=
   \bigoplus_{i=1}^4 P(i,i)
,
\\
 \bP_1 &=
   \bigoplus_{i=1}^3
    \big(P(i,4) \oplus P(4,i)\big)
,
\end{align*}
the differential
$R = d_2$
given by
\begin{align*}
 R(e_1 \otimes e_1)
  &= \varrho(\alpha \beta + \alpha \delta \gamma \beta)
   = e_1 \otimes \beta + \alpha \otimes e_1
     + e_1 \otimes \delta \gamma \beta
     + \alpha \otimes \gamma \beta
     + \alpha \delta \otimes \beta
   \\&\ \ \ \,
     + \alpha \delta \gamma \otimes e_1
,\\
 R(e_2 \otimes e_2)
  &= \varrho(\xi \varepsilon)
   = e_2 \otimes \varepsilon + \xi \otimes e_2
,\\
 R(e_3 \otimes e_3)
  &= \varrho(\gamma \delta)
   = e_3 \otimes \delta + \gamma \otimes e_3
,\\
 R(e_4 \otimes e_4)
  &= \varrho(\beta \alpha + \varepsilon \xi + \delta \gamma)
   =
    e_4 \otimes \alpha + \beta \otimes e_4
    + e_4 \otimes \xi + \varepsilon \otimes e_4
    + e_4 \otimes \gamma 
   \\&\ \ \ \,
    + \delta \otimes e_4
,
\end{align*}
and the $A$-$A$-bimodule
$\Omega_{A^e}^3(A) = \Ker R$
generated by the following elements in $\bP_2$
\begin{align*}
 \Psi_1 &=
   e_1 \otimes \alpha\delta\gamma\beta
  + \alpha \otimes \delta\gamma\beta
  + \alpha\delta \otimes \gamma\beta
  + \alpha\varepsilon \otimes \xi\beta
  + \alpha\delta\gamma \otimes \beta
  + \alpha\delta\gamma \otimes \delta\gamma\beta
   \\&\ \ \ \,
  + \alpha\delta\gamma\beta \otimes e_1
   + \alpha \delta \gamma \otimes \delta \gamma \beta
,
\\
 \Psi_2 &=
  e_2 \otimes \xi\beta\alpha\varepsilon
  + \xi \otimes \beta\alpha\varepsilon
  + \xi\beta \otimes \alpha\varepsilon
  + \xi\delta \otimes \gamma\varepsilon
  + \xi\beta\alpha \otimes \varepsilon
  + \xi\beta\alpha\varepsilon \otimes e_2
,
\\
 \Psi_3 &=
  e_3 \otimes \gamma\beta\alpha\delta
  + \gamma \otimes \beta\alpha\delta
  + \gamma\beta \otimes \alpha\delta
  + \gamma\varepsilon \otimes \xi\delta
  + \gamma\varepsilon\xi \otimes \delta
  + \gamma\beta\alpha\delta \otimes e_3
   \\&\ \ \ \,
   + \gamma \beta \alpha \otimes \beta \alpha \delta
,
\\
 \Psi_4 &=
  e_4 \otimes \beta\alpha\delta\gamma
  + \beta \otimes \alpha\delta\gamma
  + \varepsilon \otimes \xi\beta\alpha
  + \delta \otimes \gamma\beta\alpha
  + \beta\alpha \otimes \delta\gamma
  + \delta\gamma \otimes \beta\alpha
   \\&\ \ \ \,
  + \beta\alpha\delta \otimes \gamma
  + \delta\gamma\varepsilon \otimes \xi
  + \delta\gamma\beta \otimes \alpha
  + \beta\alpha\delta\gamma \otimes e_4
   + \delta \gamma \beta \otimes \alpha \delta \gamma
   + \beta \alpha \delta \otimes \gamma \beta \alpha
.
\end{align*}

We fix the following basis
$\cB = e_1 \cB \cup e_2 \cB \cup e_3 \cB \cup e_4 \cB$
of the $K$-vector space $A$:
\begin{align*}
 e_1 \cB
  &=
   \{ e_1, \alpha, \alpha \delta, \alpha \varepsilon,
      \alpha \delta \gamma, \alpha \delta \gamma \beta \}
,\\
 e_2 \cB
  &=
   \{ e_2, \xi, \xi \beta, \xi \delta,
      \xi \beta \alpha, \xi \beta \alpha \varepsilon \}
,\\
 e_3 \cB
  &=
   \{ e_3, \gamma, \gamma \beta, \gamma \varepsilon,
      \gamma \beta \alpha, \gamma \beta \alpha \delta \}
,\\
 e_4 \cB
  &=
   \{ e_4, \beta, \varepsilon, \delta, \beta \alpha,
      \delta \gamma, \beta \alpha \delta,
      \delta \gamma \varepsilon, \delta \gamma \beta,
      \beta \alpha \delta \gamma \}
.
\end{align*}

We shall show now that the equalities (i), (ii), (iii) hold.

\smallskip

(i)
The center $C(A)$ of $A$ has the $K$-linear basis
\[
  \big\{
   1,
   \alpha \delta \gamma \beta,
   \xi \beta \alpha \varepsilon,
   \gamma \beta \alpha \delta,
   \beta \alpha \delta \gamma
  \big\},
\]
and hence $\dim_K HH^0(A) = \dim_K C(A) = 5$.

\smallskip

For calculations of
$\dim_K HH^1(A)$
and
$\dim_K HH^2(A)$
we use the first terms of the minimal projective resolution
of $A$ in $\mod A^e$ described above.
In particular,
we have $K$-linear isomorphisms
\[
 \Hom_{A^e} (\bP_0, A)
   \cong \bigoplus_{i=1}^4 e_i A e_i
 ,
 \ \ \,
 \Hom_{A^e} (\bP_1, A)
   \cong \bigoplus_{i=1}^3 (e_i A e_4 \oplus e_4 A e_i)
\]
and hence
\begin{align*}
 \dim_K \Hom_{A^e} (\bP_0, A) &=
   2+2+2+4=10
,
\\
 \dim_K \Hom_{A^e} (\bP_1, A) &=
   2+2+2+2+2+2=12
.
\end{align*}

\smallskip
(ii)
We calculate $\dim_K HH^1(A)$ using the exact sequence
of $K$-vector spaces
\[
 0
  \to HH^0(A)
  \to \Hom_{A^e}(\bP_0,A)
  \to \Hom_{A^e}\big(\Omega_{A^e}(A),A\big)
  \to HH^1(A)
  \to 0
  .
\]
We identify $\Hom_{A^e}(\Omega_{A^e}(A),A)$
with
$\{ \varphi \in  \Hom_{A^e} (\bP_1, A) \,|\, \varphi R=0\}$.
Let $\varphi: \bP_1\to A$
be a homomorphism in $\mod A^e$.
Then
\begin{align*}
  \varphi(e_1\otimes e_4) &= a_1 \alpha + a_2 \alpha \delta \gamma,
&
  \varphi(e_4\otimes e_1) &= b_1 \beta + b_2 \delta \gamma \beta,
  \\
  \varphi(e_2\otimes e_4) &= c_1 \xi + c_2 \xi \beta \alpha,
&
  \varphi(e_4\otimes e_2) &= d_1 \varepsilon + d_2 \delta \gamma \varepsilon,
  \\
  \varphi(e_3\otimes e_4) &= g_1 \gamma + g_2 \gamma \varepsilon \xi,
&
  \varphi(e_4\otimes e_3) &= h_1 \delta + h_2 \beta \alpha \delta,
\end{align*}
for some elements
$a_1, a_2, b_1, b_2, c_1, c_2, d_1, d_2, g_1, g_2, h_1, h_2 \in K$.
We have the equalities
\begin{align*}
  \varphi \big( R(e_1\otimes e_1) \big)
  &=
   \varphi(e_1 \otimes e_4) \beta
   + \alpha\varphi(e_4 \otimes e_1)
   - \varphi(e_1 \otimes e_4) \delta \gamma \beta
  \\&\ \ \ \,
   - \alpha \varphi(e_4 \otimes e_3) \gamma \beta
   - \alpha \delta \varphi(e_3 \otimes e_4) \beta
   - \alpha \delta \gamma \varphi(e_4 \otimes e_1)
  \\&=
   (a_1 + b_1) \alpha \beta
   + (a_2 + b_2 - a_1 - b_1 - g_1 - h_1) \alpha \delta \gamma \beta
  \\&=
   (a_2 + b_2 - g_1 - h_1) \alpha \delta \gamma \beta
  ,
  \\
  \varphi \big( R(e_2\otimes e_2) \big)
  &=
   \varphi(e_2 \otimes e_4) \varepsilon
   + \xi \varphi(e_4 \otimes e_2)
  \\&=
   (c_2 + d_2) \xi \beta \alpha \varepsilon
  ,
  \\
  \varphi \big( R(e_3\otimes e_3) \big)
  &=
   \varphi(e_3 \otimes e_4) \delta
   + \gamma \varphi(e_4 \otimes e_3)
  \\&=
   (g_2 + h_2) \gamma \beta \alpha \delta
  ,
  \\
  \varphi \big( R(e_4\otimes e_4) \big)
  &=
   \varphi(e_4 \otimes e_1) \alpha
   + \beta \varphi(e_1 \otimes e_4)
   + \varphi(e_4 \otimes e_3) \gamma
  \\&\ \ \ \,
   + \delta \varphi(e_3 \otimes e_4)
   + \varphi(e_4 \otimes e_2) \xi
   + \varepsilon \varphi(e_2 \otimes e_4)
  \\&=
   (a_1 + b_1) \beta \alpha
   + (g_1 + h_1) \delta \gamma
   + (c_1 + d_1) \varepsilon \xi
  \\&\ \ \ \,
   + (a_2 + b_2 + c_2 + d_2 + g_2 + h_2) \beta \alpha \delta \gamma
  \\&=
   (a_1 + b_1 + c_1 + d_1 ) \alpha \beta
   + (g_1 + h_1 + c_1 + d_1) \delta \gamma
  \\&\ \ \ \,
   + (a_2 + b_2 + c_2 + d_2 + g_2 + h_2) \beta \alpha \delta \gamma
.
\end{align*}
Hence, $\varphi R = 0$
if and only if
\begin{gather*}
  a_2 + b_2 = g_1 + h_1,
  \ \,
  c_2 + d_2 = 0,
  \ \,
  g_2 + h_2 = 0,
  \\
  a_1 + b_1 + c_1 + d_1 = 0,
  \ \,
  g_1 + h_1 + c_1 + d_1 = 0,
  \ \,
  a_2 + b_2 + c_2 + d_2 + g_2 + h_2 = 0.
\end{gather*}
Therefore, we conclude that $\dim_K \Hom_{A^e}(\Omega_{A^e}(A),A) = 12 - 6 = 6$.
Then we get
\begin{align*}
  \dim_K HH^1(A)
   &= \dim_K HH^0(A) + \dim_K \Hom_{A^e}\big(\Omega_{A^e}(A),A\big)
  \\&\ \ \ \,
      - \dim_K \Hom_{A^e}(\bP_0,A)
   \\&
    = 5 + 6 - 10 = 1 .
\end{align*}

\smallskip

(iii)
We identify $\Hom_{A^e}(\Omega_{A^e}^2(A),A)$
with
$\{ \varphi \in  \Hom_{A^e} (\bP_2, A) \,|\, \varphi (\Omega_{A^e}^3(A)) = 0\}$.
Let $\varphi: \bP_2 \to A$ be a homomorphism in $\mod A^e$.
Then there exist elements
$a_1, b_1, a_2, b_2, a_3, b_3, a_4, b_4, c, d \in K$
such that
\begin{align*}
  \varphi(e_1\otimes e_1) &= a_1 e_1 + b_1 \omega_1, \\
  \varphi(e_2\otimes e_2) &= a_2 e_2 + b_2 \omega_2, \\
  \varphi(e_3\otimes e_3) &= a_3 e_3 + b_3 \omega_3, \\
  \varphi(e_4\otimes e_4) &= a_4 e_4 + b_4 \omega_4 + c \beta \alpha + d \delta \gamma,
\end{align*}
where
$\omega_1 = \alpha \delta \gamma \beta$,
$\omega_2 = \xi \beta \alpha \varepsilon$,
$\omega_1 = \gamma \beta \alpha \delta$,
$\omega_1 = \beta \alpha \delta \gamma$.
Since the $A$-$A$-bimodule $\Omega_{A^e}^3(A)$
is generated by the elements
$\Psi_1, \Psi_2, \Psi_3, \Psi_4$,
we have $\varphi(\Omega_{A^e}^3(A)) = 0$
if and only if $\varphi(\Psi_i) = 0$
for any $i\in\{1,2,3,4\}$.
Using $\charact(K) = 2$,
we obtain the equalities
\begin{align*}
  \varphi(\Psi_1) &= (a_2 + a_3) \omega_1, \\
  \varphi(\Psi_2) &= (a_1 + a_3) \omega_2, \\
  \varphi(\Psi_3) &= (a_1 + a_2) \omega_3, \\
  \varphi(\Psi_4) &= 0.
\end{align*}
Hence, $\varphi(\Psi_i) = 0$
for all $i\in\{1,2,3,4\}$
if and only if
$a_2 + a_3 = 0$,
$a_1 + a_3 = 0$,
$a_1 + a_2 = 0$,
or equivalently
$a_1 = a_2 = a_3$.
It follows that
$\dim_K \Hom_{A^e}(\Omega_{A^e}^2(A),A) = 10 - 2 = 8$.
Therefore, we get
\begin{align*}
  \dim_K HH^2(A)
   &= \dim_K \Hom_{A^e}\big(\Omega_{A^e}(A),A\big)
      + \dim_K \Hom_{A^e}\big(\Omega_{A^e}^2(A),A\big)
   \\&\ \ \ \,
      - \dim_K \Hom_{A^e}(\bP_1,A)
   \\&
    = 6 + 8 - 12 = 2 .
\end{align*}
\end{proof}

We note that the part (iii) of Proposition~\ref{prop:6.4}
corrects \cite[Theorem~3.1]{AK2}.

\begin{proposition}
\label{prop:6.5}
Let $A = \Lambda_9'$.
Then
\begin{enumerate}[(i)]
\item
  $\dim_K HH^0(A) = 5$.
\item
  $\dim_K HH^1(A) = \left\{ \begin{array}{cl} 1, & \charact (K) \neq 2 \\ 2, & \charact (K) = 2 \end{array} \right.$.\vspace{2pt}
\item
  $\dim_K HH^2(A) = \left\{ \begin{array}{cl} 0, & \charact (K) \neq 2 \\ 3, & \charact (K) = 2 \end{array} \right.$.
\end{enumerate}
\end{proposition}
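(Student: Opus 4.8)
The plan is to follow verbatim the strategy used in the proofs of Propositions~\ref{prop:5.1}--\ref{prop:6.4}. First I would quote from \cite{BES2} the first three terms
\[
  \bP_2 \xrightarrow{R} \bP_1 \xrightarrow{d_1} \bP_0 \xrightarrow{d_0} A \to 0
\]
of a minimal projective resolution of $A = \Lambda_9'$ in $\mod A^e$. By Proposition~\ref{prop:2.1} and the shape of the quiver of $\Lambda_9'$ one has $\bP_0 = \bP_2 = \bigoplus_{i=1}^4 P(i,i)$ and $\bP_1 = \bigoplus_{i=1}^3\big(P(i,4)\oplus P(4,i)\big)$, with $R = d_2$ obtained by applying $\varrho$ to the defining relations $\alpha\beta$, $\xi\varepsilon$, $\gamma\delta$, $\beta\alpha + \varepsilon\xi + \delta\gamma$ of $\Lambda_9'$, together with an explicit family of generators $\Psi_1,\Psi_2,\Psi_3,\Psi_4$ of the bimodule $\Omega_{A^e}^3(A) = \Ker R$; these should be somewhat shorter than the corresponding elements for $\Lambda_9$ in the proof of Proposition~\ref{prop:6.4}, since here $\alpha\beta = 0$ rather than $\alpha\beta = \alpha\delta\gamma\beta$. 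I would also fix a $K$-basis $\cB = \bigcup_{i=1}^4 e_i\cB$ of $A$ of the same shape as the one used there.

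For (i), a direct computation shows that $C(A) = HH^0(A)$ has the $K$-basis $\{\,1,\ \alpha\delta\gamma\beta,\ \xi\beta\alpha\varepsilon,\ \gamma\beta\alpha\delta,\ \beta\alpha\delta\gamma\,\}$ regardless of $\charact K$, so $\dim_K HH^0(A) = 5$. For (ii) and (iii) I would use the two four-term exact sequences of Proposition~\ref{prop:2.5}. From the shapes of $\bP_0$, $\bP_1$, $\bP_2$ one reads off $\dim_K\Hom_{A^e}(\bP_0,A) = \dim_K\Hom_{A^e}(\bP_2,A) = 10$ and $\dim_K\Hom_{A^e}(\bP_1,A) = 12$. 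Writing a general bimodule map $\varphi:\bP_1\to A$ in coordinates against $\cB$ and imposing $\varphi R = 0$ componentwise produces a linear system in which the only characteristic-sensitive constraints carry a coefficient $2$, and hence become vacuous when $\charact K = 2$. Solving gives $\dim_K\Hom_{A^e}(\Omega_{A^e}(A),A) = 6$ if $\charact K \neq 2$ and $= 7$ if $\charact K = 2$, whence
\[
  \dim_K HH^1(A) = 5 + \dim_K\Hom_{A^e}\big(\Omega_{A^e}(A),A\big) - 10
\]
equals $1$, respectively $2$. Likewise, writing $\varphi:\bP_2\to A$ in coordinates and imposing $\varphi(\Psi_j) = 0$ for $j = 1,2,3,4$ yields $\dim_K\Hom_{A^e}(\Omega_{A^e}^2(A),A) = 6$ if $\charact K \neq 2$ and $= 8$ if $\charact K = 2$, so that
\[
  \dim_K HH^2(A) = \dim_K\Hom_{A^e}\big(\Omega_{A^e}(A),A\big) + \dim_K\Hom_{A^e}\big(\Omega_{A^e}^2(A),A\big) - 12
\]
equals $6 + 6 - 12 = 0$, respectively $7 + 8 - 12 = 3$.

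The main obstacle will be the bookkeeping in part (iii): one must evaluate a general $\varphi$ on the rather long generators $\Psi_1,\dots,\Psi_4$ of $\Omega_{A^e}^3(A)$ and read off exactly which coefficients survive, so as to confirm that when $\charact K \neq 2$ the natural map $\Hom_{A^e}(\bP_1,A)\to\Hom_{A^e}(\Omega_{A^e}^2(A),A)$ is surjective --- yielding the vanishing $HH^2(A) = 0$ --- whereas when $\charact K = 2$ exactly three extra classes survive. Two built-in consistency checks are available: in characteristic $2$ the dimensions for the standard form $\Lambda_9'$ must strictly exceed those of the non-standard algebra $\Lambda_9$ from Proposition~\ref{prop:6.4} (namely $2 > 1$ and $3 > 2$), in accordance with Theorem~\ref{thm:main1}; and in any characteristic different from $2$, where $\Lambda_9 \cong \Lambda_9'$ by Theorem~\ref{thm:4.4}, the two computations must agree.
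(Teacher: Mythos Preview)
Your proposal is correct and follows essentially the same approach as the paper, with matching intermediate dimensions throughout ($\dim_K\Hom_{A^e}(\Omega_{A^e}(A),A)=6$ or $7$, and $\dim_K\Hom_{A^e}(\Omega_{A^e}^2(A),A)=6$ or $8$, according to characteristic). One minor point: the paper does \emph{not} cite \cite{BES2} for the generators of $\Omega_{A^e}^3(A)$ in the case of $\Lambda_9'$; instead it writes down candidate elements $\zeta_1,\zeta_2,\zeta_3,\zeta_4\in\bP_2$ explicitly, checks by hand that $R(\zeta_i)=0$, and then argues via a dimension count (using $\Omega_{A^e}^3(A)\cong{}_1A_\mu$) that they generate the kernel --- so you should be prepared to do this verification yourself rather than import it.
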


\begin{proof}
(i)
The center $C(A)$ of $A$ has the $K$-linear basis
\[
  \big\{
   1,
   \alpha \delta \gamma \beta,
   \xi \beta \alpha \varepsilon,
   \gamma \beta \alpha \delta,
   \beta \alpha \delta \gamma
  \big\},
\]
and hence $\dim_K HH^0(A) = \dim_K C(A) = 5$.

\smallskip

The first
three terms in a minimal projective resolution
\[
  \bP_2 \xrightarrow{R}
  \bP_1 \xrightarrow{d}
  \bP_0 \xrightarrow{d_0}
  A \rightarrow 0
\]
of $A$ in $\mod A^e$ are as
for the algebra $\Lambda_9$,
so we have
\begin{align*}
 \bP_0 &= \bP_2 =
   \bigoplus_{i=1}^4 P(i,i)
,
\\
 \bP_1 &=
   \bigoplus_{i=1}^3
    \big(P(i,4) \oplus P(4,i)\big)
.
\end{align*}
Hence, we conclude that there are $K$-linear isomorphisms
\[
 \Hom_{A^e} (\bP_0, A)
   \cong \bigoplus_{i=1}^4 e_i A e_i
 ,
 \ \ \,
 \Hom_{A^e} (\bP_1, A)
   \cong \bigoplus_{i=1}^3 (e_i A e_4 \oplus e_4 A e_i)
 ,
\]
and then we get
\[
 \dim_K \Hom_{A^e} (\bP_0, A) =10
 \quad
 \mbox{ and }
 \quad
 \dim_K \Hom_{A^e} (\bP_1, A) =12
.
\]
Moreover, according to the recipe described in
Section~\ref{sec:pre-results},
the differential
$R = d_2 : \bP_2 \to \bP_1$
is given by
\begin{align*}
 R(e_1 \otimes e_1)
  &= \varrho(\alpha \beta)
   = e_1 \otimes \beta + \alpha \otimes e_1
,\\
 R(e_2 \otimes e_2)
  &= \varrho(\xi \varepsilon)
   = e_2 \otimes \varepsilon + \xi \otimes e_2
,\\
 R(e_3 \otimes e_3)
  &= \varrho(\gamma \delta)
   = e_3 \otimes \delta + \gamma \otimes e_3
,\\
 R(e_4 \otimes e_4)
  &= \varrho(\beta \alpha + \varepsilon \xi + \delta \gamma)
   =
    e_4 \otimes \alpha + \beta \otimes e_4
    + e_4 \otimes \xi + \varepsilon \otimes e_4
   \\&\ \ \ \,
    + e_4 \otimes \gamma + \delta \otimes e_4
.
\end{align*}
We also fix the following basis
$\cB = e_1 \cB \cup e_2 \cB \cup e_3 \cB \cup e_4 \cB$
of the $K$-vector space $A$:
\begin{align*}
 e_1 \cB
  &=
   \{ e_1, \alpha, \alpha \delta, \alpha \varepsilon,
      \alpha \delta \gamma, \alpha \delta \gamma \beta \}
,\\
 e_2 \cB
  &=
   \{ e_2, \xi, \xi \beta, \xi \delta,
      \xi \beta \alpha, \xi \beta \alpha \varepsilon \}
,\\
 e_3 \cB
  &=
   \{ e_3, \gamma, \gamma \beta, \gamma \varepsilon,
      \gamma \beta \alpha, \gamma \beta \alpha \delta \}
,\\
 e_4 \cB
  &=
   \{ e_4, \beta, \varepsilon, \delta, \beta \alpha,
      \delta \gamma, \beta \alpha \delta,
      \delta \gamma \varepsilon, \delta \gamma \beta,
      \beta \alpha \delta \gamma \}
.
\end{align*}

\smallskip

(ii)
We calculate $\dim_K HH^1(A)$ using the exact sequence
of $K$-vector spaces
\[
 0
  \to HH^0(A)
  \to \Hom_{A^e}(\bP_0,A)
  \to \Hom_{A^e}\big(\Omega_{A^e}(A),A\big)
  \to HH^1(A)
  \to 0
  .
\]
In order to compute $\dim_K \Hom_{A^e}(\Omega_{A^e}(A),A)$
we identify $\Hom_{A^e}(\Omega_{A^e}(A),A)$
with the $K$-vector space
$\{ \varphi \in  \Hom_{A^e} (\bP_1, A) \,|\, \varphi R=0\}$.
Let $\varphi: \bP_1\to A$
be a homomorphism in $\mod A^e$.
Then
\begin{align*}
  \varphi(e_1\otimes e_4) &= a_1 \alpha + a_2 \alpha \delta \gamma,
&
  \varphi(e_4\otimes e_1) &= b_1 \beta + b_2 \delta \gamma \beta,
  \\
  \varphi(e_2\otimes e_4) &= c_1 \xi + c_2 \xi \beta \alpha,
&
  \varphi(e_4\otimes e_2) &= d_1 \varepsilon + d_2 \delta \gamma \varepsilon,
  \\
  \varphi(e_3\otimes e_4) &= g_1 \gamma + g_2 \gamma \varepsilon \xi,
&
  \varphi(e_4\otimes e_3) &= h_1 \delta + h_2 \beta \alpha \delta,
\end{align*}
for some elements
$a_1, a_2, b_1, b_2, c_1, c_2, d_1, d_2, g_1, g_2, h_1, h_2 \in K$.
We have the equalities
\begin{align*}
  \varphi \big( R(e_1\otimes e_1) \big)
  &=
   \varphi(e_1 \otimes e_4) \beta
   + \alpha\varphi(e_4 \otimes e_1)
   =
   (a_2 + b_2) \alpha \delta \gamma \beta
  ,
  \\
  \varphi \big( R(e_2\otimes e_2) \big)
  &=
   \varphi(e_2 \otimes e_4) \varepsilon
   + \xi \varphi(e_4 \otimes e_2)
   =
   (c_2 - d_2) \xi \beta \alpha \varepsilon
  ,
  \\
  \varphi \big( R(e_3\otimes e_3) \big)
  &=
   \varphi(e_3 \otimes e_4) \delta
   + \gamma \varphi(e_4 \otimes e_3)
   =
   (-g_2 + h_2) \gamma \beta \alpha \delta
  ,
  \\
  \varphi \big( R(e_4\otimes e_4) \big)
  &=
   \varphi(e_4 \otimes e_1) \alpha
   + \beta \varphi(e_1 \otimes e_4)
   + \varphi(e_4 \otimes e_2) \xi
   + \varepsilon \varphi(e_2 \otimes e_4)
  \\&\ \ \ \,
   + \varphi(e_4 \otimes e_3) \gamma
   + \delta \varphi(e_3 \otimes e_4)
  \\&=
   (a_1 + b_1 - c_1 - d_1) \alpha \beta
   + (- c_1 - d_1 + g_1 + h_1) \delta \gamma
  \\&\ \ \ \,
   + (a_2 - b_2 + c_2 + d_2 + g_2 + h_2) \beta \alpha \delta \gamma
.
\end{align*}
Hence, $\varphi$ factors through $\Omega_{A^e}(A)$
if and only if
\begin{gather*}
  a_2 + b_2 = 0,
  \ \,
  c_2 - d_2 = 0,
  \ \,
  - g_2 + h_2 = 0,
  \\
  a_1 + b_1 = c_1 + d_1,
  \ \,
  c_1 + d_1 = g_1 + h_1,
  \ \,
  a_2 - b_2 + c_2 + d_2 + g_2 + h_2 = 0.
\end{gather*}

Assume first that $\charact(K) \neq 2$.
Then the equations are equivalent to
\begin{gather*}
  a_2 = - b_2,
  \ \,
  c_2 = d_2,
  \ \,
  g_2 = h_2,
  \\
  a_1 + b_1 = c_1 + d_1 = g_1 + h_1,
  \ \,
  a_2 + c_2 + h_2 = 0.
\end{gather*}
We may choose $a_2$ and $c_2$, and then
$h_2$ and $g_2$ are fixed.
Then we may choose $a_1, c_1, d_1, h_1$
arbitrarily, and get a unique homomorphism.
Hence we obtain \linebreak
$\dim_K \Hom_{A^e}(\Omega_{A^e}(A),A) = 6$.
Therefore, we get
\[
  \dim_K HH^1(A) = 5 + 6 - 10 = 1 .
\]

Assume now $\charact(K) = 2$.
Then the last equality follows from the others.
We can choose $a_1, a_2, c_1, c_2, d_1, g_2, h_1$
arbitrarily, and then get a unique homomorphism.
Hence we obtain $\dim_K \Hom_{A^e}(\Omega_{A^e}(A),A) = 7$.
Therefore, we get
\[
  \dim_K HH^1(A) = 5 + 7 - 10 = 2 .
\]

\smallskip

(iii)
We calculate $\dim_K HH^2(A)$ using the exact sequence
of $K$-vector spaces
\[
 0
  \to \Hom_{A^e}\big(\Omega_{A^e}(A),A\big)
  \to \Hom_{A^e}(\bP_1,A)
  \to \Hom_{A^e}\big(\Omega_{A^e}^2(A),A\big)
  \to HH^2(A)
  \to 0
  .
\]
We identify $\Hom_{A^e}(\Omega_{A^e}^2(A),A)$
with the $K$-vector space
\[
  \big\{ \varphi \in  \Hom_{A^e} (\bP_2, A) \,\big|\, \varphi \big(\Omega_{A^e}^3(A)\big) = 0\big\}.
\]
We first indicate the $A$-$A$-bimodule generators of $\Omega_{A^e}^3(A)$.
Consider the following elements in $\bP_2$
\begin{align*}
 \zeta_1 &=
   e_1 \otimes \alpha \delta \gamma \beta
   - \alpha \otimes \delta \gamma \beta
   + \alpha \delta \otimes \gamma \beta
   - \alpha \varepsilon \otimes \xi \beta
   - \alpha \delta \gamma \otimes \beta
   + \alpha \delta \gamma \beta \otimes e_1
,
\\
 \zeta_2 &=
   e_2 \otimes \xi \beta \alpha \varepsilon
   - \xi \otimes \beta \alpha \varepsilon
   + \xi \beta \otimes \alpha \varepsilon
   - \xi \delta \otimes \gamma \varepsilon
   - \xi \beta \alpha \otimes \varepsilon
   + \xi \beta \alpha \varepsilon \otimes e_2
,
\\
 \zeta_3 &=
   e_3 \otimes \gamma \beta \alpha \delta
   - \gamma \otimes \beta \alpha \delta
   + \gamma \beta \otimes \alpha \delta
   - \gamma \varepsilon \otimes \xi \delta
   - \gamma \beta \alpha \otimes \delta
   + \gamma \beta \alpha \delta \otimes e_3
,
\\
 \zeta_4 &=
   e_4 \otimes \beta \alpha \delta \gamma
   - \beta \otimes \alpha \delta \gamma
   - \varepsilon \otimes \xi \beta \alpha
   + \delta \otimes \gamma \beta \alpha
   + \beta \alpha \otimes \delta \gamma
\\&\ \ \ \,
   - \delta \gamma \otimes \beta \alpha
   - \beta \alpha \delta \otimes \gamma
   - \delta \gamma \varepsilon \otimes \xi
   + \delta \gamma \beta \otimes \alpha
   + \beta \alpha \delta \gamma \otimes e_4
.
\end{align*}
Observe that
$\zeta_i \in e_i \bP_2 e_i$ for any $i\in\{1,2,3,4\}$.
Then a straightforward checking shows that
$R(\zeta_i) = 0$ for any $i\in\{1,2,3,4\}$.
We also note that the right $A$-modules
$\zeta_1 A$,
$\zeta_2 A$,
$\zeta_3 A$,
$\zeta_4 A$
are isomorphic to the right $A$-modules
$e_1 A$,
$e_2 A$,
$e_3 A$,
$e_4 A$,
and their socles contain linearly independent elements
of $\bP_2$
\[
  \alpha \delta \gamma \beta
  \otimes
  \alpha \delta \gamma \beta
 ,
 \ \,
  \xi \beta \alpha \varepsilon
  \otimes
  \xi \beta \alpha \varepsilon
 ,
 \ \,
  \gamma \beta \alpha \delta
  \otimes
  \gamma \beta \alpha \delta
 ,
 \ \,
  \beta \alpha \delta \gamma
  \otimes
  \beta \alpha \delta \gamma
 ,
\]
and hence
$\zeta_1 A \oplus \zeta_2 A \oplus \zeta_3 A \oplus \zeta_4 A$
is a right $A^e$-submodule of $\Ker R = \Omega_{A^e}^3(A)$.
Moreover, $\Omega_{A}^3(S_i) \cong S_i$ in $\mod A$
for any $i\in\{1,2,3,4\}$, implies that
$\Omega_{A^e}^3(A) \cong {}_1 A_{\mu}$
for a $K$-algebra automorphism $\mu$ of $A$,
and hence $\dim_K \Omega_{A^e}^3(A) = \dim_K A$.
Therefore,
$\zeta_1$,
$\zeta_2$,
$\zeta_3$,
$\zeta_4$
are $A$-$A$-bimodule generators of $\Omega_{A^e}^3(A)$.
Summing up, we conclude that
\begin{align*}
  \big\{ \varphi \in  \Hom_{A^e} (\bP_2, A) \,|\, &\varphi \big(\Omega_{A^e}^3(A)\big) = 0\big\}
 \\ &
  =
  \big\{ \varphi \in  \Hom_{A^e} (\bP_2, A) \,|\, \varphi (\zeta_i) = 0
        \mbox{ for } i\in\{1,2,3,4\}  \big\}
  .
\end{align*}
Let
$\omega_1 = \alpha \delta \gamma \beta$,
$\omega_2 = \xi \beta \alpha \varepsilon$,
$\omega_3 = \gamma \beta \alpha \delta$,
$\omega_4 = \beta \alpha \delta \gamma$.
Take a homomorphism $\varphi: \bP_2 \to A$ in $\mod A^e$.
Then
\begin{align*}
  \varphi(e_1\otimes e_1) &= a_1 e_1 + b_1 \omega_1, \\
  \varphi(e_2\otimes e_2) &= a_2 e_2 + b_2 \omega_2, \\
  \varphi(e_3\otimes e_3) &= a_3 e_3 + b_3 \omega_3, \\
  \varphi(e_4\otimes e_4) &= a_4 e_4 + b_4 \omega_4 + c \beta \alpha + d \delta \gamma,
\end{align*}
for some elements
$a_1, b_1, a_2, b_2, a_3, b_3, a_4, b_4, c, d \in K$.
We have the equalities
\begin{align*}
  \varphi(\zeta_1)
   &=
    \varphi(e_1\otimes e_1) \alpha \delta \gamma \beta
    - \alpha \varphi(e_4\otimes e_4) \delta \gamma \beta
    + \alpha \delta \varphi(e_3\otimes e_3) \gamma \beta
 \\&\ \ \ \,
    - \alpha \varepsilon \varphi(e_2\otimes e_2) \xi \beta
    - \alpha \delta \gamma \varphi(e_4\otimes e_4) \beta
    + \alpha \delta \gamma \beta \varphi(e_1\otimes e_1)
 \\&
   =
    (2 a_1 + a_2 + a_3 - 2 a_4) \omega_1,
  \\
  \varphi(\zeta_2)
   &=
    \varphi(e_2\otimes e_2) \xi \beta \alpha \varepsilon
    - \xi \varphi(e_4\otimes e_4) \beta \alpha \varepsilon
    + \xi \beta \varphi(e_1\otimes e_1) \alpha \varepsilon
 \\&\ \ \ \,
    - \xi \delta \varphi(e_3\otimes e_3) \gamma \varepsilon
    - \xi \beta \alpha \varphi(e_4\otimes e_4) \varepsilon
    + \xi \beta \alpha \varepsilon \varphi(e_2\otimes e_2)
 \\&
   =
    (a_1 + 2 a_2 + a_3 - 2 a_4) \omega_2,
  \\
  \varphi(\zeta_3)
   &=
    \varphi(e_3\otimes e_3) \gamma \beta \alpha \delta
    - \gamma \varphi(e_4\otimes e_4) \beta \alpha \delta
    + \gamma \beta \varphi(e_1\otimes e_1) \alpha \delta
 \\&\ \ \ \,
    - \gamma \varepsilon \varphi(e_2\otimes e_2) \xi \delta
    - \gamma \beta \alpha \varphi(e_4\otimes e_4) \delta
    + \gamma \beta \alpha \delta \varphi(e_3\otimes e_3)
 \\&
   =
    (a_1 + a_2 + 2 a_3 - 2 a_4) \omega_3,
 \\
  \varphi(\zeta_4)
   &=
    \varphi(e_4\otimes e_4) \beta \alpha \delta \gamma
    - \beta \varphi(e_1\otimes e_1) \alpha \delta \gamma
    - \varepsilon \varphi(e_2\otimes e_2) \xi \beta \alpha
 \\&\ \ \ \,
    + \delta \varphi(e_3\otimes e_3) \gamma \beta \alpha
    + \beta \alpha \varphi(e_4\otimes e_4) \delta \gamma
    - \delta \gamma \varphi(e_4\otimes e_4) \beta \alpha
 \\&\ \ \ \,
    - \beta \alpha \delta \varphi(e_3\otimes e_3) \gamma
    - \delta \gamma \varepsilon \varphi(e_2\otimes e_2) \xi
    + \delta \gamma \beta \varphi(e_1\otimes e_1) \alpha
 \\&\ \ \ \,
    + \beta \alpha \delta \gamma \varphi(e_4\otimes e_4)
 \\&
   =
    - 2 (a_2 + a_3 - 2 a_4) \omega_4.
\end{align*}
Hence, $\varphi$ factors through $\Omega_{A^e}^2(A)$
if and only if
\begin{align*}
 2 a_1 + a_2 + a_3 - 2 a_4 &= 0, \\
 a_1 + 2 a_2 + a_3 - 2 a_4 &= 0, \\
 a_1 + a_2 + 2 a_3 - 2 a_4 &= 0, \\
 2 (a_2 + a_3 - 2 a_4) &= 0
.
\end{align*}

Assume $\charact(K) \neq 2$.
Then the equations are equivalent to
$a_1 = a_2 = a_3 = a_4 = 0$,
and hence
$\dim_K \Hom_{A^e}(\Omega_{A^e}^2(A),A) = 10 - 4 = 6$.
Then we get
\[
  \dim_K HH^2(A) = 6 + 6 - 12 = 0 .
\]

Assume $\charact(K) = 2$.
Then the equations are equivalent to
$a_1 = a_2 = a_3$,
and hence
$\dim_K \Hom_{A^e}(\Omega_{A^e}^2(A),A) = 10 - 2 = 8$.
Then we get
\[
  \dim_K HH^2(A) = 7 + 8 - 12 = 3 .
\]
\end{proof}

We note that
$\dim_K HH^2(\Lambda_9') = 3$
for $\charact(K) = 2$ coincides
with \cite[Theorem~4.1]{AK2}.

\begin{proposition}
\label{prop:6.6}
Let $K$ be an algebraically closed field of characteristic 2,
$\Lambda$ the non-standard algebra $\Lambda_9$ over $K$,
and $A$ a standard self-injective algebra over $K$.
Then $A$ and $\Lambda$ are not derived equivalent.
\end{proposition}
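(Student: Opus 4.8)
The plan is to argue by contradiction along the lines of the proofs of Propositions~\ref{prop:5.3} and \ref{prop:6.3}. Assume that $A$ and $\Lambda=\Lambda_9$ are derived equivalent. By Theorem~\ref{thm:2.8} they are then stably equivalent, so the stable Auslander-Reiten quivers $\Gamma_A^s$ and $\Gamma_\Lambda^s$ are isomorphic; since $\Lambda_9$ is a periodic representation-infinite algebra of polynomial growth of tubular type $(3,3,3)$, this quiver consists of stable tubes of rank $1$ and $3$. By Theorems~\ref{thm:2.9} and \ref{thm:2.10}, $A$ is a representation-infinite periodic algebra of polynomial growth whose period equals the period of $\Lambda_9$, namely $6$ by Theorem~\ref{thm:4.7}(iv). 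Hence, by Theorem~\ref{thm:pre2}, $A$ is a standard self-injective algebra of tubular type $(3,3,3)$; moreover $K_0(A)\cong K_0(\Lambda)$ is of rank $4$, and by Theorem~\ref{thm:2.7} together with Proposition~\ref{prop:4.5}(iii) the algebra $A$ is not symmetric.

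The next step is to identify $A$. Applying the classification of the standard self-injective algebras of tubular type $(3,3,3)$ from \cite{BiS2} (as in the proofs of Propositions~\ref{prop:5.3} and \ref{prop:6.3}), together with the constraints that $K_0(A)$ has rank $4$ and that $A$ is not symmetric, one concludes that $A$ is isomorphic to $\Lambda_9'$. Then Theorem~\ref{thm:2.6}, applied to the assumed derived equivalence, gives isomorphisms of graded $K$-algebras $HH^*(\Lambda_9)=HH^*(\Lambda)\cong HH^*(A)\cong HH^*(\Lambda_9')$. But for $\charact K=2$ we have $\dim_K HH^2(\Lambda_9)=2$ by Proposition~\ref{prop:6.4} and $\dim_K HH^2(\Lambda_9')=3$ by Proposition~\ref{prop:6.5}, a contradiction. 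Hence $A$ and $\Lambda$ are not derived equivalent. Once $A\cong\Lambda_9'$ is known, the contradiction can in fact be reached in two further ways: $\Lambda_9'$ is symmetric in characteristic $2$ by Theorem~\ref{thm:4.1}(iv) while $A$ is not, and $\Lambda_9'$ has period $3$ in characteristic $2$ by Theorem~\ref{thm:4.2-new}(iv) while $A$ has period $6$.

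The main obstacle is the identification step. One has to check, using \cite{BiS2}, that the combinatorial data already derived --- standard, self-injective, tubular type $(3,3,3)$, four simple modules, not symmetric --- single out $\Lambda_9'$ among all orbit algebras $\widehat{B}/G$ of repetitive categories of tubular algebras of type $(3,3,3)$. It will be useful here that a derived equivalence between self-injective algebras is a stable equivalence of Morita type and hence also preserves weak symmetry, so that $A$ is moreover weakly symmetric; it is also worth noting that for the Hochschild cohomology argument it suffices to know that $A$ is derived equivalent to $\Lambda_9'$ rather than isomorphic to it. Once $A$ has been pinned down, the conclusion is immediate from the computations in Propositions~\ref{prop:6.4} and \ref{prop:6.5}.
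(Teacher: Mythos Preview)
Your identification step is wrong, and you even supply the reason yourself without noticing. You argue that the constraints (standard, tubular type $(3,3,3)$, four simples, not symmetric) force $A\cong\Lambda_9'$. But in characteristic~$2$ the algebra $\Lambda_9'$ \emph{is} symmetric by Theorem~\ref{thm:4.1}(iv), so it fails the very constraint you used to pin it down. Your ``alternative contradictions'' at the end are not alternative routes; they show that $A\cong\Lambda_9'$ is impossible under the hypotheses you have already derived, so step~3 of your argument collapses rather than step~1.

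The paper's proof makes the correct identification. Using \cite[Theorem~1]{BiS2} together with \cite[Theorem~4.2]{BiS3}, the constraints (standard non-symmetric self-injective of tubular type $(3,3,3)$ with four simples) force $A$ to be the canonical mesh algebra $\Lambda(\mathbb{G}_2)$, given by the same quiver as $\Lambda_9'$ but with the relations $\alpha\delta=\xi\beta=\gamma\varepsilon=0$ in place of $\alpha\beta=\xi\varepsilon=\gamma\delta=0$. This algebra has period~$3$ in characteristic~$2$ (by \cite{Du2}), whereas $\Lambda_9$ has period~$6$, so Theorem~\ref{thm:2.10} gives the contradiction; alternatively $\dim_K C(\Lambda(\mathbb{G}_2))=2\neq 5=\dim_K C(\Lambda_9)$. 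Note in particular that the computations of Propositions~\ref{prop:6.4} and~\ref{prop:6.5} are not what settle Proposition~\ref{prop:6.6}; they compare $\Lambda_9$ with its standard form $\Lambda_9'$ for Theorem~\ref{thm:main1}, but the derived-equivalence obstruction runs through $\Lambda(\mathbb{G}_2)$ instead.
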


\begin{proof}
Assume that $A$ and $\Lambda$ are derived equivalent.
Then it follows from Theorem~\ref{thm:2.8} that
$A$ and $\Lambda$ are stably equivalent,
and hence
the stable Auslander-Reiten quivers $\Gamma_A^s$
and $\Gamma_{\Lambda}^s$ are isomorphic.
In particular, we conclude that $\Gamma_{A}^s$
consists of stable tubes of ranks $1$ and $3$.
Further, by
Theorems \ref{thm:2.9} and \ref{thm:2.10},
$A$ is a representation-infinite periodic algebra
of polynomial growth, and hence is a standard
self-injective algebra of tubular type $(3,3,3)$.
Moreover, $\Lambda$ is not a symmetric algebra by
Proposition~\ref{prop:4.5}.
Hence, applying Theorem~\ref{thm:2.7},
we infer that $A$ is not a symmetric algebra.
We also note that the Grothendieck groups $K_0(A)$
and $K_0(\Lambda)$ are isomorphic,
and then $K_0(A)$ is of rank $4$.
Therefore,
$A$ is an exceptional
(in the sense of \cite{BiS1,S2})
 standard non-symmetric
self-injective algebra of tubular type $(3,3,3)$.
Applying
\cite[Theorem~1]{BiS2} and \cite[Theorem~4.2]{BiS3}
we conclude that $A$ is isomorphic to the algebra
given by the quiver
\[
  \xymatrix@C=1.9pc@R=.75pc{
      & \bullet  \ar@<+.5ex>[dd]^{\gamma}
     \save[] +<-2.5mm,0mm> *{3} \restore
      \\ \\
      & \bullet  \ar@<+.5ex>[uu]^{\delta}
           \ar@<+.5ex>[dr]^{\varepsilon}
           \ar@<+.5ex>[dl]^{\beta}
           \save[] +<0mm,-2.5mm> *{4} \restore
           \\
      \bullet  \ar@<+.5ex>[ru]^{\alpha}
     \save[] +<-2.5mm,0mm> *{1} \restore
      &&  \bullet  \ar@<+.5ex>[lu]^{\xi}
     \save[] +<2.5mm,0mm> *{2} \restore
  }
\]
and the relations
\[
  \beta \alpha + \varepsilon \xi + \delta \gamma =  0,
 \ \,
  \alpha \delta = 0,
 \ \,
  \xi \beta = 0,
 \ \,
  \gamma \varepsilon = 0.
\]
We note that it is the canonical mesh algebra
$\Lambda(\bG_2)$ of Dynkin type
(in the sense of \cite[Secion~7]{ESk2}),
which has been shown in \cite[Proposition~8.1]{Du2}
to be a periodic algebra of period $3$ (in characteristic $2$).
Therefore, since $\Lambda = \Lambda_9$
is periodic algebra of period $6$,
applying Theorem~\ref{thm:2.10},
we conclude that $A$ and $\Lambda$ are not derived equivalent.
This also follows from Theorem~\ref{thm:2.6} and the fact
that
$\dim_K C(\Lambda) = 5$
and
$\dim_K C(\Lambda(\bG_2)) = 2$.
\end{proof}

\section{Hochschild cohomology for tubular type $(2,4,4)$}%
\label{sec:type244}

Our first aim is to determine the dimensions
of the low Hochschild cohomology spaces
of the exceptional periodic algebras of tubular type $(2,4,4)$.

\begin{proposition}
\label{prop:7.1}
Let $A$ be one of the algebras
$\Lambda_4'$,
$\Lambda_5'$,
$\Lambda_6'$,
$\Lambda_7'$,
$\Lambda_8'$.
Then
\begin{enumerate}[(i)]
\item
  $\dim_K HH^0(A) = 5$.
\item
  $\dim_K HH^1(A) = \left\{ \begin{array}{cl} 2, & \charact (K \neq 2 \\ 3, & \charact (K) = 2 \end{array} \right.$.\vspace{2pt}
\item
  $\dim_K HH^2(A) = \left\{ \begin{array}{cl} 2, & \charact (K) \neq 2 \\ 3, & \charact (K) = 2 \end{array} \right.$.
\end{enumerate}
\end{proposition}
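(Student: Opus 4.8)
The plan is to reduce to a single algebra and then run the machinery of the proofs of Propositions~\ref{prop:5.1} and~\ref{prop:6.1}. By Theorem~\ref{thm:4.1}(iii) the algebras $\Lambda_4'$, $\Lambda_5'$, $\Lambda_6'$, $\Lambda_7'$, $\Lambda_8'$ are derived equivalent, so by Theorem~\ref{thm:2.6} the graded algebras $HH^*(\Lambda_i')$ are pairwise isomorphic; in particular $\dim_K HH^n$ is independent of $i$ for every $n$. Hence it suffices to treat one of them, and I would take $A = \Lambda_7'$ (vertices $1,2,3$; arrows $\alpha\colon 1\to 1$, $\sigma\colon 1\to 3$, $\delta\colon 1\to 2$, $\beta\colon 2\to 1$, $\gamma\colon 3\to 2$; relations $\beta\delta=0$, $\alpha\sigma=0$, $\alpha\delta=\sigma\gamma$, $\gamma\beta\alpha=0$, $\alpha^2=\delta\beta$), since the square relation $\alpha^2=\delta\beta$ makes the characteristic-dependent behaviour appear in the same way as in Propositions~\ref{prop:5.1} and~\ref{prop:6.1}.

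First I would import from \cite{BES2} the first three terms
\[
 \bP_2 \xrightarrow{R} \bP_1 \xrightarrow{d} \bP_0 \xrightarrow{d_0} A \to 0
\]
of a minimal projective resolution of $A$ in $\mod A^e$: the summands $\bP_0$, $\bP_1$, $\bP_2$ (prescribed by Proposition~\ref{prop:2.1} and the quiver), the differential $R = d_2$, given by $\varrho$ applied to the defining relations of $A$, and an explicit set of $A$-$A$-bimodule generators of $\Omega_{A^e}^3(A) = \Ker R$. I would also fix a $K$-basis $\cB = e_1\cB \cup e_2\cB \cup e_3\cB$ of $A$ adapted to the idempotent decomposition, so that the spaces $e_iAe_j$ and all the maps below become completely explicit.

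For (i), $HH^0(A) = C(A)$ is computed directly: $1$ and $\soc(A)$ lie in $C(A)$, and imposing commutation with every arrow on a general element of $\rad A/\soc(A)$, expressed in the basis $\cB$, pins down the rest; I expect a $5$-dimensional center. For (ii) and (iii) I would then use the two exact sequences of Proposition~\ref{prop:2.5}. With $\Hom_{A^e}(\bP_0,A) \cong \bigoplus_i e_iAe_i$ and $\Hom_{A^e}(\bP_1,A) \cong \bigoplus_{\alpha\in Q_1} e_{s(\alpha)}Ae_{t(\alpha)}$, and identifying $\Hom_{A^e}(\Omega_{A^e}(A),A)$ with $\{\varphi\in\Hom_{A^e}(\bP_1,A) : \varphi R = 0\}$ and $\Hom_{A^e}(\Omega_{A^e}^2(A),A)$ with $\{\varphi\in\Hom_{A^e}(\bP_2,A) : \varphi(\Omega_{A^e}^3(A)) = 0\}$, the problem reduces to two explicit linear systems: write a general $\varphi$ in $\cB$-coordinates and impose $\varphi(R(e_i\otimes e_i))=0$ for each generator of $\bP_2$, respectively $\varphi = 0$ on each bimodule generator of $\Omega_{A^e}^3(A)$. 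The values of $\dim_K HH^1(A)$ and $\dim_K HH^2(A)$ then follow from the alternating-sum identities already used in Propositions~\ref{prop:5.1} and~\ref{prop:6.1}.

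The main obstacle is the characteristic-$2$ case distinction. Applying $\varphi$ to the image of $R$ (and to the generators of $\Omega_{A^e}^3(A)$) produces several coefficients carrying a factor $2$, coming from the relation $\alpha^2 = \delta\beta$ and its consequences; in characteristic $\neq 2$ these yield extra vanishing conditions, while in characteristic $2$ they disappear and one further parameter survives, raising $\dim_K HH^1$ and $\dim_K HH^2$ from $2$ to $3$. The delicate part is to keep track of precisely which equations degenerate and to check that the extra solution is not simultaneously killed by another relation. As a consistency check I would verify that $\dim_K HH^1(A) = \dim_K HH^2(A)$ in both characteristics, mirroring the other tubular types.
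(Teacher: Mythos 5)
Your strategy is exactly the one the paper uses: reduce to a single representative via Theorems~\ref{thm:2.6} and~\ref{thm:4.1}, import the first three terms of the bimodule resolution from \cite{BES2}, compute the center directly for (i), and turn (ii) and (iii) into explicit linear systems via Proposition~\ref{prop:2.5}. The only methodological difference is the choice of representative: the paper works with $\Lambda_6'$, because the resolution it imports is the one constructed in the proof of \cite[Proposition~9.1]{BES2} for that algebra (with $\bP_2 = P(1,1)\oplus P(1,3)\oplus P(3,1)\oplus P(2,2)$, the differential $R$ given by $\varrho$ applied to $\alpha\beta$, $\alpha\delta\gamma\delta$, $\gamma\delta\gamma\beta$, $\beta\alpha-\delta\gamma\delta\gamma$, and generators $\Gamma_{12},\Gamma_{21},\Gamma_{33}$ of $\Omega_{A^e}^3(A)$). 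If you insist on $\Lambda_7'$ you cannot simply cite that result; you would have to build the corresponding resolution and the generators of $\Omega_{A^e}^3$ yourself, which is a nontrivial extra step your plan does not address. Note also that in the paper's computation the characteristic-$2$ jump does not come from a ``square'' relation of the form $\alpha^2=\delta\beta$ at all: for $\Lambda_6'$ it arises from the coefficients $2c_1+2d_1$ and $2c_2+2d_2$ produced by $\varphi\bigl(R(e_2\otimes e_2)\bigr)$, i.e.\ from the relation $\beta\alpha=\delta\gamma\delta\gamma$, so your heuristic for preferring $\Lambda_7'$ is not load-bearing.

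The more substantive issue is that your proposal is a plan rather than a proof. Everything that actually establishes the numbers $5$, $2/3$, $2/3$ --- the explicit basis $\cB$ of $A$, the five-element basis of $C(A)$, the dimensions $\dim_K\Hom_{A^e}(\bP_0,A)=10$ and $\dim_K\Hom_{A^e}(\bP_1,A)=10$, the solution of the system $\varphi R=0$ (dimension $7$ in characteristic $\neq 2$, $8$ in characteristic $2$), and the solution of the system $\varphi(\Gamma_{ij})=0$ (dimension $5$) --- is deferred to ``I would compute.'' Since the proposition is a purely computational statement, these calculations are the proof; without them the argument is incomplete, even though the framework you describe is the correct one and your consistency check $\dim_K HH^1(A)=\dim_K HH^2(A)$ does hold in the paper's computation.
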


\begin{proof}
We may assume, by Theorems \ref{thm:2.6} and \ref{thm:4.1},
that $A = \Lambda_6'$.
It follows from the proof of \cite[Proposition~9.1]{BES2}
that $A$ admits
the first three terms of a minimal projective resolution in $\mod A^e$
\[
  \bP_2 \xrightarrow{d_2}
  \bP_1 \xrightarrow{d_1}
  \bP_0 \xrightarrow{d_0}
  A \rightarrow 0
\]
with
\begin{align*}
 \bP_0 &=
   P(1,1) \oplus P(2,2) \oplus P(3,3)
,
\\
 \bP_1 &=
   P(1,2) \oplus P(2,1) \oplus P(2,3) \oplus P(3,2)
,
\\
 \bP_2 &=
   P(1,1) \oplus P(1,3) \oplus P(3,1) \oplus P(2,2)
,
\end{align*}
the differential
$R = d_2$
given by
\begin{align*}
 R(e_1 \otimes e_1)
  &= \varrho(\alpha \beta)
   = e_1 \otimes \beta
     + \alpha \otimes e_1
,\\
 R(e_1 \otimes e_3)
  &= \varrho(\alpha \delta \gamma \delta)
   = e_1 \otimes \delta \gamma \delta + \alpha \otimes \gamma \delta
     + \alpha \delta \otimes \delta + \alpha \delta \gamma \otimes e_3
,\\
 R(e_3 \otimes e_1)
  &= \varrho(\gamma \delta \gamma \beta)
   = e_3 \otimes \delta \gamma \beta + \gamma \otimes \gamma \beta
     + \gamma \delta \otimes \beta + \gamma \delta \gamma \otimes e_1
,\\
 R(e_2 \otimes e_2)
  &= \varrho(\beta \alpha - \delta \gamma \delta \gamma)
   = e_2 \otimes \alpha + \beta \otimes e_2
     - e_2 \otimes \gamma \delta \gamma
     - \delta \otimes \delta \gamma
 \\&\ \ \ \,
     - \delta \gamma \otimes \gamma
     - \delta \gamma \delta \otimes e_2
,
\end{align*}
and the $A$-$A$-bimodule
$\Omega_{A^e}^3(A) = \Ker R$
generated by the following elements in $\bP_2$
\begin{align*}
 \Gamma_{12} &=
  \alpha \otimes e_2
  - e_1 \otimes \alpha
  + e_1 \otimes \gamma
,
\\
 \Gamma_{21} &=
  \beta \otimes e_1
  - e_2 \otimes \beta
  - \delta \otimes e_1
,
\\
 \Gamma_{33} &=
  \gamma \beta \otimes e_3
  - e_3 \otimes \alpha \delta
  - \gamma \otimes \delta \gamma \delta
  + \gamma \delta \gamma \otimes \delta
.
\end{align*}

We fix also the following basis
$\cB = e_1 \cB \cup e_2 \cB \cup e_3 \cB$
of the $K$-vector space $A$:
\begin{align*}
e_1 \cB &=
\{ e_1, \alpha, \alpha \delta, \alpha \delta \gamma, \alpha \delta \gamma \beta
\}
,
\\
e_2 \cB &=
\{
e_2, \beta, \delta, \delta \gamma, \delta \gamma \delta,
\delta \gamma \beta, \beta \alpha, \beta \alpha \delta, \beta \alpha \delta \gamma
\}
,
\\
e_3 \cB &=
\{
e_3, \gamma, \gamma \beta, \gamma \delta, \gamma \delta \gamma, \gamma \delta \gamma \delta,
\gamma \beta \alpha, \gamma \beta \alpha \delta
\}
.
\end{align*}

We shall show now that the equalities (i), (ii), (iii) hold.

\smallskip

(i)
A direct checking shows that the center $C(A)$
of $A$ has the $K$-linear basis
\[
  \big\{ 1, (\delta \gamma)^2 + (\gamma \delta)^2, \alpha \delta \gamma \beta,
     \beta \alpha \delta \gamma, \gamma \beta \alpha \delta \big\}
\]
and hence $\dim_K HH^0(A) = \dim_K C(A) = 5$.

\smallskip

(ii)
We will determine $\dim_K HH^1(A)$ using the exact sequence
of $K$-vector spaces (see Proposition~\ref{prop:2.5})
\[
 0
  \to HH^0(A)
  \to \Hom_{A^e}(\bP_0,A)
  \to \Hom_{A^e}\big(\Omega_{A^e}(A),A\big)
  \to HH^1(A)
  \to 0
  .
\]
We have isomorphisms of $K$-vector spaces
\begin{align*}
 \Hom_{A^e} (\bP_0, A) &=
  e_1 A e_1 \oplus e_2 A e_2 \oplus e_3 A e_3
,
\\
 \Hom_{A^e} (\bP_1, A) &=
   e_1 A e_2 \oplus
   e_2 A e_1 \oplus
   e_2 A e_3 \oplus e_3 A e_2 ,
\end{align*}
and hence we obtain
\begin{align*}
 \dim_K \Hom_{A^e} (\bP_0, A) &=
   2+4+4=10
,
\\
 \dim_K \Hom_{A^e} (\bP_1, A) &=
   2+2+3+3=10
.
\end{align*}
We identify $\Hom_{A^e}(\Omega_{A^e}(A),A)$
with the $K$-vector space
\[
  \big\{ \varphi \in  \Hom_{A^e} (\bP_1, A) \,|\, \varphi R=0\big\} .
\]
and use the chosen basis $\cB = e_1 \cB \cup e_2 \cB \cup e_3 \cB$ of $A$.
Recall also that
\[
 \bP_2
 = A e_1 \otimes e_1 A \oplus
   A e_1 \otimes e_3 A \oplus
   A e_3 \otimes e_1 A \oplus
   A e_2 \otimes e_2 A.
\]

Let $\varphi: \bP_1\to A$ be a homomorphism in $\mod A^e$.
Then there exist
$a_1, a_2, b_1, b_2$, $c_1, c_2, c_3, d_1, d_2, d_3 \in K$
such that
\begin{align*}
  \varphi(e_1\otimes e_2) &= a_1 \alpha + a_2 \alpha \delta \gamma,\\
  \varphi(e_2\otimes e_1) &= b_1 \beta + b_2 \delta \gamma \beta,\\
  \varphi(e_2\otimes e_3) &= c_1 \delta + c_2 \delta \gamma \delta + c_3 \beta \alpha \delta,\\
  \varphi(e_3\otimes e_2) &= d_1 \gamma + d_2 \gamma \delta \gamma + d_3 \gamma \beta \alpha.
\end{align*}
We have the equalities
\begin{align*}
  \varphi \big( R(e_1\otimes e_1) \big)
  &=
   \varphi (e_1 \otimes e_2) \beta
   + \alpha \varphi (e_2 \otimes e_1)
 \\
  &=
   (a_2 + b_2) \alpha \delta \gamma \beta ,
 \\
  \varphi \big( R(e_1 \otimes e_3) \big)
  &=
   \varphi (e_1 \otimes e_2) \delta \gamma \delta
   + \alpha \varphi (e_2 \otimes e_3) \gamma \delta
   + \alpha \delta \varphi (e_3 \otimes e_2) \delta
 \\&\ \ \ \,
   + \alpha \delta \gamma \varphi (e_2 \otimes e_3)
 \\
  &=
   0 ,
 \\
  \varphi \big( R(e_3 \otimes e_1) \big)
  &=
   \varphi (e_3 \otimes e_2) \delta \gamma \beta
   + \gamma \varphi (e_2 \otimes e_3) \gamma \beta
   + \gamma \delta \varphi (e_3 \otimes e_2) \beta
 \\&\ \ \ \,
   + \gamma \delta \gamma \varphi (e_2 \otimes e_1)
 \\
  &=
   0 ,
 \\
  \varphi \big( R(e_2\otimes e_2) \big)
  &=
   \varphi (e_2 \otimes e_1) \alpha
   + \beta \varphi (e_1 \otimes e_2)
   - \varphi (e_2 \otimes e_3) \gamma \delta \gamma
 \\&\ \ \ \,
   - \delta \varphi (e_3 \otimes e_2) \delta \gamma
   - \gamma \delta \varphi (e_2 \otimes e_3) \gamma
   - \delta \gamma \delta \varphi (e_3 \otimes e_2)
 \\
  &=
   (a_1 + b_1 - 2 c_1 - 2 d_1) \beta \alpha
   + (a_2 + b_2 - 2 c_2 - 2 d_2) \gamma \beta \alpha \delta
.
\end{align*}
Hence,
$\varphi$ factors through $\Omega_{A^e}(A)$
if and only if
\[
  a_2 + b_2=0,
  \ \,
  a_1 + b_1 - 2 c_1 - 2 d_1 = 0,
  \ \,
  a_2 + b_2 - 2 c_2 - 2 d_2 = 0
  .
\]

Assume $\charact(K) \neq 2$.
We can choose
$a_1, b_1, b_2, c_1, c_3, d_2, d_3$
arbitrarily,
and then the other parameters are uniquely determined.
Thus $\dim_K \Hom_{A^e}(\Omega_{A^e}(A),A) = 7$.
Hence we get
\begin{align*}
  \dim_K HH^1(A)
   &= \dim_K HH^0(A) + \dim_K \Hom_{A^e}\big(\Omega_{A^e}(A),A\big)
 \\&\ \ \ \,
      - \dim_K \Hom_{A^e}(\bP_0,A)
   \\&
    = 5 + 7 - 10 = 2 .
\end{align*}

Assume $\charact(K) = 2$.
Then $\varphi$ factors through $\Omega_{A^e}(A)$
if and only if
$a_1 = b_1$ and $a_2 = b_2$.
Thus $\dim_K \Hom_{A^e}(\Omega_{A^e}(A),A) = 10 - 2 = 8$.
Hence we get
\begin{align*}
  \dim_K HH^1(A)
   &= \dim_K HH^0(A) + \dim_K \Hom_{A^e}\big(\Omega_{A^e}(A),A\big)
 \\&\ \ \ \,
      - \dim_K \Hom_{A^e}(\bP_0,A)
   \\&
    = 5 + 8 - 10 = 3 .
\end{align*}

\smallskip

(iii)
We calculate $\dim_K HH^2(A)$ using the exact sequence
of $K$-vector spaces
\[
 0
  \to \Hom_{A^e}\big(\Omega_{A^e}(A),A\big)
  \to \Hom_{A^e}(\bP_1,A)
  \to \Hom_{A^e}\big(\Omega_{A^e}^2(A),A\big)
  \to HH^2(A)
  \to 0
  .
\]
We identify $\Hom_{A^e}(\Omega_{A^e}^2(A),A)$
with the $K$-vector space
\[
  \big\{ \varphi \in  \Hom_{A^e} (\bP_2, A) \,|\, \varphi (\Omega_{A^e}^3(A)) = 0\big\} .
\]
which is equal to the $K$-vector space
\[
  \big\{ \varphi \in \Hom_{A^e} (\bP_2, A) \,|\,
  \varphi (\Gamma_{12}) = 0, \varphi (\Gamma_{21}) = 0, \varphi (\Gamma_{33}) = 0  \big\}.
\]

Let $\varphi: \bP_2\to A$ be a homomorphism in $\mod A^e$.
Then there exist elements
$a_1, a_2, b_1, b_2, b_3, b_4$, $c_3, d_3 \in K$
such that
\begin{align*}
  \varphi(e_1\otimes e_1) &= a_1 e_1 + a_2 \alpha \delta \gamma \beta , \\
  \varphi(e_2\otimes e_2) &= b_1 e_2 + b_2 \delta \gamma + b_3 \beta \alpha
                             + b_4 \beta \alpha \delta \gamma , \\
  \varphi(e_1\otimes e_3) &= c_3 \alpha \delta , \\
  \varphi(e_3\otimes e_1) &= d_3 \gamma \beta .
\end{align*}
We have the equalities
\begin{align*}
  \varphi ( \Gamma_{12} )
  &=
   \alpha \varphi (e_2\otimes e_2)
   - \varphi (e_1\otimes e_1) \alpha
   + \varphi (e_1\otimes e_3) \gamma
 \\
  &=
   (b_1 - a_1) \alpha
   + (b_2 + c_3) \alpha \delta \gamma
,
 \\
  \varphi ( \Gamma_{21} )
  &=
   \beta \varphi (e_1\otimes e_1)
   - \varphi (e_2\otimes e_2) \beta
   - \delta \varphi (e_3\otimes e_1)
 \\
  &=
   (a_1 - b_1) \beta
   - (b_2 + d_3) \delta \gamma \beta
,
 \\
  \varphi ( \Gamma_{33} )
  &=
   \gamma \beta \varphi (e_1 \otimes e_3)
   - \varphi (e_3\otimes e_1) \alpha \delta
   - \gamma \varphi (e_2\otimes e_2) \delta \gamma \delta
   + \gamma \delta \gamma \varphi (e_2\otimes e_2) \delta
 \\
  &=
   (c_3 - d_3) \gamma \beta \alpha \delta
.
\end{align*}
Hence,
$\varphi$ factors through $\Omega_{A^e}^2(A)$
if and only if
\[
  a_1 = b_1,
  \ \,
  c_3 = d_3 = -b_2
  .
\]
Thus
$\dim_K \Hom_{A^e}(\Omega_{A^e}^2(A),A) = 8 - 3 = 5$.
Therefore,
\begin{align*}
  \dim_K HH^2(A)
   &= \dim_K \Hom_{A^e}\big(\Omega_{A^e}(A),A\big)
      + \dim_K \Hom_{A^e}\big(\Omega_{A^e}^2(A),A\big)
   \\&\ \ \ \,
      - \dim_K \Hom_{A^e}(\bP_1,A)
\end{align*}
equals
$2 = 7+5-10$ if $\charact(K) \neq 2$,
and
$3 = 8+5-10$ if $\charact(K) = 2$.

We note that we have always $\dim_K HH^1(A) = \dim_K HH^2(A)$.
\end{proof}

\begin{proposition}
\label{prop:7.2}
Let $K$ be an algebraically closed field of characteristic $2$
and $A$ be one of the algebras
$\Lambda_4$,
$\Lambda_5$,
$\Lambda_6$,
$\Lambda_7$,
$\Lambda_8$.
Then
\begin{enumerate}[(i)]
\item
  $\dim_K HH^0(A) = 5$.
\item
  $\dim_K HH^1(A) = 2$.
\item
  $\dim_K HH^2(A) = 2$.
\end{enumerate}
\end{proposition}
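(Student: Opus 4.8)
We may assume, by Theorems~\ref{thm:2.6} and \ref{thm:4.3}, that $A = \Lambda_6$. Since $\Lambda_6$ and $\Lambda_6'$ are socle equivalent (Theorem~\ref{thm:4.4}), they have the same dimension, and we may use for $A$ the same $K$-basis $\cB = e_1\cB \cup e_2\cB \cup e_3\cB$ as in the proof of Proposition~\ref{prop:7.1}. The only relation distinguishing $\Lambda_6$ from $\Lambda_6'$ is $\alpha\beta = \alpha\delta\gamma\beta$ (in place of $\alpha\beta = 0$), so from the proof of the corresponding proposition in \cite[Section~9]{BES2} one obtains the first three terms
\[
  \bP_2 \xrightarrow{d_2} \bP_1 \xrightarrow{d_1} \bP_0 \xrightarrow{d_0} A \to 0
\]
of a minimal projective resolution of $A$ in $\mod A^e$, with $\bP_0, \bP_1, \bP_2$ exactly as for $\Lambda_6'$, so $\dim_K\Hom_{A^e}(\bP_0,A) = \dim_K\Hom_{A^e}(\bP_1,A) = 10$ and $\dim_K\Hom_{A^e}(\bP_2,A) = 8$; here $R = d_2$ differs from the standard case only by the extra summand $-\varrho(\alpha\delta\gamma\beta)$ in $R(e_1\otimes e_1)$, and the three $A$-$A$-bimodule generators $\Gamma_{12}, \Gamma_{21}, \Gamma_{33}$ of $\Omega_{A^e}^3(A) = \Ker R$ (named as in Proposition~\ref{prop:7.1}) acquire the matching correction terms.

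For part (i), I would verify directly, exactly as in the proof of Proposition~\ref{prop:7.1}(i), that the deformation creates no new central elements, so that $C(A)$ still has $K$-basis $\{1,\ (\delta\gamma)^2 + (\gamma\delta)^2,\ \alpha\delta\gamma\beta,\ \beta\alpha\delta\gamma,\ \gamma\beta\alpha\delta\}$ and $\dim_K HH^0(A) = 5$.

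For parts (ii) and (iii), I would use the two exact sequences of Proposition~\ref{prop:2.5}. For (ii), parametrize a homomorphism $\varphi\colon\bP_1\to A$ by its values on $e_1\otimes e_2$, $e_2\otimes e_1$, $e_2\otimes e_3$, $e_3\otimes e_2$ as in the proof of Proposition~\ref{prop:7.1}(ii) and compute $\varphi(R(e_i\otimes e_j))$. Using $\charact(K) = 2$ together with the vanishing of the products $\alpha\delta\gamma\delta$, $\gamma\delta\gamma\beta$ and $(\beta\alpha)^2$, one gets $\varphi(R(e_1\otimes e_3)) = \varphi(R(e_3\otimes e_1)) = 0$ automatically and $\varphi(R(e_2\otimes e_2)) = (a_1+b_1)\beta\alpha + (a_2+b_2)\beta\alpha\delta\gamma$ as for $\Lambda_6'$, whereas the new summand of $R(e_1\otimes e_1)$ changes $\varphi(R(e_1\otimes e_1))$ from $(a_2+b_2)\alpha\delta\gamma\beta$ to $(a_2+b_2+c_1+d_1)\alpha\delta\gamma\beta$. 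Combined with $a_2 + b_2 = 0$ this forces the extra relation $c_1 + d_1 = 0$, so that $\dim_K\Hom_{A^e}(\Omega_{A^e}(A),A) = 10 - 3 = 7$, one less than in the standard characteristic~$2$ case; hence $\dim_K HH^1(A) = 5 + 7 - 10 = 2$.

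For (iii), I would identify $\Hom_{A^e}(\Omega_{A^e}^2(A),A)$ with $\{\varphi\in\Hom_{A^e}(\bP_2,A) : \varphi(\Gamma_{12}) = \varphi(\Gamma_{21}) = \varphi(\Gamma_{33}) = 0\}$, parametrize $\varphi\colon\bP_2\to A$ as in the proof of Proposition~\ref{prop:7.1}(iii), and evaluate $\varphi$ on the deformed generators. Since the correction terms in $\Gamma_{12}, \Gamma_{21}, \Gamma_{33}$ lie in the socle of $A$, I expect the three vanishing conditions to cut the $8$-dimensional space down to dimension $5$, just as for $\Lambda_6'$, giving $\dim_K HH^2(A) = 7 + 5 - 10 = 2$ (and $\dim_K HH^1(A) = \dim_K HH^2(A)$, as in the other cases). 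The main obstacle will be the bookkeeping: extracting from \cite{BES2} the precise deformed differential $R$ and generators $\Gamma_{12},\Gamma_{21},\Gamma_{33}$, checking the many vanishing products that prevent the correction terms from propagating, and confirming that exactly one additional linear constraint appears in the computation of $\Hom_{A^e}(\Omega_{A^e}(A),A)$ while $\dim_K\Hom_{A^e}(\Omega_{A^e}^2(A),A)$ remains $5$.
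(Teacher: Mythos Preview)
Your approach is essentially the same as the paper's and leads to the correct answers; the paper also reduces to $A=\Lambda_6$, uses the resolution from \cite[Proposition~9.4]{BES2} with the single modified term $R(e_1\otimes e_1)=\varrho(\alpha\beta-\alpha\delta\gamma\beta)$, and obtains the extra relation $c_1+d_1=0$ in part~(ii), giving $\dim_K\Hom_{A^e}(\Omega_{A^e}(A),A)=7$.

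One point in your sketch of (iii) is not quite right and deserves care. The deformed generators (the paper calls them $\Delta_{12},\Delta_{21},\Delta_{33}$) pick up extra summands such as $\alpha\delta\gamma\otimes e_2$ and $e_1\otimes\gamma\delta\gamma$ in $\Delta_{12}$; these do \emph{not} lie in the socle, and their images under $\varphi$ do contribute. Concretely, $\alpha\delta\gamma\,\varphi(e_2\otimes e_2)=b_1\alpha\delta\gamma$, so the condition coming from $\varphi(\Delta_{12})=0$ changes from $b_2+c_3=0$ (standard case) to $b_1+b_2+c_3=0$, and similarly for $\Delta_{21}$. The upshot is that the three linear conditions are genuinely different from those for $\Lambda_6'$, but they remain three independent conditions, so $\dim_K\Hom_{A^e}(\Omega_{A^e}^2(A),A)=5$ and $\dim_K HH^2(A)=2$ as you predicted. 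Just be aware that the invariance of the dimension here is not because the corrections are negligible, but because the modified system of equations happens to have the same rank.
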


\begin{proof}
We may assume, by Theorems \ref{thm:2.6} and \ref{thm:4.1},
that $A = \Lambda_6$.
It follows from the proof of \cite[Proposition~9.4]{BES2}
that $A$ admits
the first three terms of a minimal projective resolution in $\mod A^e$
\[
  \bP_2 \xrightarrow{d_2}
  \bP_1 \xrightarrow{d_1}
  \bP_0 \xrightarrow{d_0}
  A \rightarrow 0
\]
with
\begin{align*}
 \bP_0 &=
   P(1,1) \oplus P(2,2) \oplus P(3,3)
,
\\
 \bP_1 &=
   P(1,2) \oplus P(2,1) \oplus P(2,3) \oplus P(3,2)
,
\\
 \bP_2 &=
   P(1,1) \oplus P(1,3) \oplus P(3,1) \oplus P(2,2)
,
\end{align*}
the differential
$R = d_2$
given by
\begin{align*}
 R(e_1 \otimes e_1)
  &= \varrho(\alpha \beta - \alpha \delta \gamma \beta)
   = e_1 \otimes \beta
     + \alpha \otimes e_1
     - e_1 \otimes \delta \gamma \beta
     - \alpha \otimes \gamma \beta
 \\&\ \ \ \,
     - \alpha \delta \otimes \beta
     - \alpha \delta \gamma \otimes e_1.
,\\
 R(e_1 \otimes e_3)
  &= \varrho(\alpha \delta \gamma \delta)
   = e_1 \otimes \delta \gamma \delta + \alpha \otimes \gamma \delta
     + \alpha \delta \otimes \delta + \alpha \delta \gamma \otimes e_3
,\\
 R(e_3 \otimes e_1)
  &= \varrho(\gamma \delta \gamma \beta)
   = e_3 \otimes \delta \gamma \beta + \gamma \otimes \gamma \beta
     + \gamma \delta \otimes \beta + \gamma \delta \gamma \otimes e_1
,\\
 R(e_2 \otimes e_2)
  &= \varrho(\beta \alpha - \delta \gamma \delta \gamma)
   = e_2 \otimes \alpha + \beta \otimes e_2
     - e_2 \otimes \gamma \delta \gamma
     - \delta \otimes \delta \gamma
 \\&\ \ \ \,
     - \delta \gamma \otimes \gamma
     - \delta \gamma \delta \otimes e_2
,
\end{align*}
and the $A$-$A$-bimodule
$\Omega_{A^e}^3(A) = \Ker R$
generated by the following elements in $\bP_2$
\begin{align*}
 \Delta_{12} &=
  \alpha \otimes e_2
  + e_1 \otimes \alpha
  + e_1 \otimes \gamma
  + \alpha \delta \gamma \otimes e_2
  + e_1 \otimes \gamma \delta \gamma
,
\\
 \Delta_{21} &=
  \beta \otimes e_1
  + e_2 \otimes \beta
  + \delta \otimes e_1
  + e_2 \otimes \delta \gamma \beta
  + \delta \gamma \delta \otimes e_1
,
\\
 \Delta_{33} &=
  \gamma \beta \otimes e_3
  + e_3 \otimes \alpha \delta
  + \gamma \otimes \delta \gamma \delta
  + \gamma \delta \gamma \otimes \delta
.
\end{align*}

We shall use the basis
$\cB = e_1 \cB \cup e_2 \cB \cup e_3 \cB$
of
$A$,
defined in the proof of Proposition~\ref{prop:7.1}.

We shall show now that the equalities (i), (ii), (iii) hold.

\smallskip

(i)
The center $C(A)$ of $A$ has the $K$-linear basis
\[
  \{ 1, (\delta \gamma)^2 + (\gamma \delta)^2,
     \alpha \delta \gamma \beta,
     \beta \alpha \delta \gamma,
     \gamma \beta \alpha \delta \}
\]
and hence $\dim_K HH^0(A) = \dim_K C(A) = 5$.

\smallskip

(ii)
We proceed as in the proof
of Proposition~\ref{prop:7.1}.
We have again
$\dim_K \Hom_{A^e} (\bP_0, A) = 10$
and
$\dim_K \Hom_{A^e} (\bP_1, A) = 10$.
We use the same basis
$\cB = e_1 \cB \cup e_2 \cB \cup e_3 \cB$ of $A$ 
as in the proof of Proposition~\ref{prop:7.1}.

Let $\varphi: \bP_1\to A$ be a homomorphism in $\mod A^e$.
Then
\begin{align*}
  \varphi(e_1\otimes e_2) &= a_1 \alpha + a_2 \alpha \delta \gamma,\\
  \varphi(e_2\otimes e_1) &= b_1 \beta + b_2 \delta \gamma \beta,\\
  \varphi(e_2\otimes e_3) &= c_1 \delta + c_2 \delta \gamma \delta + c_3 \beta \alpha \delta,\\
  \varphi(e_3\otimes e_2) &= d_1 \gamma + d_2 \gamma \delta \gamma + d_3 \gamma \beta \alpha,
\end{align*}
for some elements
$a_1, a_2, b_1, b_2, c_1, c_2, c_3, d_1, d_2, d_3 \in K$.
We have the equalities
\begin{align*}
  \varphi \big( R(e_1\otimes e_1) \big)
  &=
   \varphi (e_1 \otimes e_2) \beta
   + \alpha \varphi (e_2 \otimes e_1)
   - \varphi (e_1 \otimes e_2) \delta \gamma \beta
 \\&\ \ \ \,
   - \alpha \varphi (e_2 \otimes e_3) \gamma \beta
   - \alpha \delta \varphi (e_3 \otimes e_2) \beta
   - \alpha \delta \gamma \varphi (e_2 \otimes e_1)
 \\
  &=
   (a_2 + b_2 - c_1 - d_1) \alpha \delta \gamma \beta ,
 \\
  \varphi \big( R(e_1 \otimes e_3) \big)
  &=
   \varphi (e_1 \otimes e_2) \delta \gamma \delta
   + \alpha \varphi (e_2 \otimes e_3) \gamma \delta
   + \alpha \delta \varphi (e_3 \otimes e_2) \delta
 \\&\ \ \ \,
   + \alpha \delta \gamma \varphi (e_2 \otimes e_3)
 \\
  &=
   0 ,
 \\
  \varphi \big( R(e_3 \otimes e_1) \big)
  &=
   \varphi (e_3 \otimes e_2) \delta \gamma \beta
   + \gamma \varphi (e_2 \otimes e_3) \gamma \beta
   + \gamma \delta \varphi (e_3 \otimes e_2) \beta
 \\&\ \ \ \,
   + \gamma \delta \gamma \varphi (e_2 \otimes e_1)
 \\
  &=
   0 ,
 \\
  \varphi \big( R(e_2\otimes e_2) \big)
  &=
   \varphi (e_2 \otimes e_1) \alpha
   + \beta \varphi (e_1 \otimes e_2)
   - \varphi (e_2 \otimes e_3) \gamma \delta \gamma
 \\&\ \ \ \,
   - \delta \varphi (e_3 \otimes e_2) \delta \gamma
   - \delta \gamma \varphi (e_2 \otimes e_3) \gamma
   - \delta \gamma \delta \varphi (e_3 \otimes e_2)
 \\
  &=
   (a_1 + b_1 - 2 c_1 - 2 d_1) \beta \alpha
   + (a_2 + b_2 - 2 c_2 - 2 d_2) \gamma \beta \alpha \delta
 \\
  &=
   (a_1 + b_1) \beta \alpha
   + (a_2 + b_2) \gamma \beta \alpha \delta
.
\end{align*}
Hence,
$\varphi$ factors through $\Omega_{A^e}(A)$
if and only if
\[
  a_1 + b_1 = 0,
  \ \,
  a_2 + b_2 = 0
  \ \,
  c_1 + d_1 = 0
  .
\]
It follows that
$\dim_K \Hom_{A^e}(\Omega_{A^e}(A),A) = 10 - 3 = 7$.
Therefore, we obtain
\begin{align*}
  \dim_K HH^1(A)
   &= \dim_K HH^0(A) + \dim_K \Hom_{A^e}\big(\Omega_{A^e}(A),A\big)
  \\&\ \ \ \,
      - \dim_K \Hom_{A^e}(\bP_0,A)
   \\&
    = 5 + 7 - 10 = 2 .
\end{align*}

\smallskip

(iii)
Let $\varphi: \bP_2\to A$ be a homomorphism in $\mod A^e$.
Then
\begin{align*}
  \varphi(e_1\otimes e_1) &= a_1 e_1 + a_2 \alpha \delta \gamma \beta , \\
  \varphi(e_2\otimes e_2) &= b_1 e_2 + b_2 \delta \gamma + b_3 \beta \alpha
                             + b_4 \beta \alpha \delta \gamma , \\
  \varphi(e_1\otimes e_3) &= c_3 \alpha \delta , \\
  \varphi(e_3\otimes e_1) &= d_3 \gamma \beta ,
\end{align*}
for some elements
$a_1, a_2, b_1, b_2, b_3, b_4$, $c_3, d_3 \in K$.

Recall that the $A$-$A$-bimodule $\Omega_{A^e}^3(A)$
is generated by the elements
$\Delta_{12}$,
$\Delta_{21}$,
$\Delta_{33}$.
We have the equalities
\begin{align*}
  \varphi ( \Delta_{12} )
  &=
   \alpha \varphi (e_2 \otimes e_2)
   + \varphi (e_1 \otimes e_1) \alpha
   + \varphi (e_1 \otimes e_3) \gamma
   + \alpha \delta \gamma \varphi (e_2 \otimes e_2)
 \\&\ \ \ \,
   + \varphi (e_1 \otimes e_3) \gamma \delta \gamma \delta
 \\
  &=
   (a_1 + b_1) \alpha
   + (b_1 + b_2 + c_3) \alpha \delta \gamma
,
 \\
  \varphi ( \Delta_{21} )
  &=
   \beta \varphi (e_1\otimes e_1)
   + \varphi (e_2\otimes e_2) \beta
   + \delta \varphi (e_3\otimes e_1)
   + \varphi (e_2 \otimes e_2) \delta \gamma \beta
 \\&\ \ \ \,
   + \delta \gamma \delta \varphi (e_3 \otimes e_1)
 \\
  &=
   (a_1 + b_1) \beta
   + (b_1 + b_2 + d_3) \delta \gamma \beta
,
 \\
  \varphi ( \Delta_{33} )
  &=
   \gamma \beta \varphi (e_1 \otimes e_3)
   + \varphi (e_3\otimes e_1) \alpha \delta
   + \gamma \varphi (e_2\otimes e_2) \delta \gamma \delta
   + \gamma \delta \gamma \varphi (e_2\otimes e_2) \delta
 \\
  &=
   (c_3 + d_3) \gamma \beta \alpha \delta
.
\end{align*}
Hence,
$\varphi$ factors through $\Omega_{A^e}^2(A)$
if and only if
\[
  a_1 = b_1,
  \ \,
  c_3 = d_3
  \ \,
  b_1 + b_2 + c_3 = 0
  .
\]
It follows that
$\dim_K \Hom_{A^e}(\Omega_{A^e}^2(A),A) = 8 - 3 = 5$.
Therefore, we obtain
\begin{align*}
  \dim_K HH^2(A)
   &= \dim_K \Hom_{A^e}\big(\Omega_{A^e}(A),A\big)
      + \dim_K \Hom_{A^e}\big(\Omega_{A^e}^2(A),A\big)
   \\&\ \ \ \,
      - \dim_K \Hom_{A^e}(\bP_1,A)
   \\&
    = 7 + 5 - 10 = 2 .
\end{align*}

Observe that $\dim_K HH^1(A) = \dim_K HH^2(A)$.
\end{proof}

\begin{proposition}
\label{prop:7.3}
Let $K$ be an algebraically closed field of characteristic $2$,
$\Lambda$ one of the non-standard algebras
$\Lambda_4$, $\Lambda_5$, $\Lambda_6$, $\Lambda_7$
or $\Lambda_8$ over $K$,
and $A$ a standard self-injective algebra over $K$.
Then $A$ and $\Lambda$ are not derived equivalent.
\end{proposition}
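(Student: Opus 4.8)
The plan is to run the argument already used for Propositions~\ref{prop:5.3}, \ref{prop:6.3} and \ref{prop:6.6}: assume $A$ and $\Lambda$ are derived equivalent and reach a contradiction with the $HH^2$-computations of Propositions~\ref{prop:7.1} and \ref{prop:7.2}. First, by Theorem~\ref{thm:2.8} the algebras $A$ and $\Lambda$ are stably equivalent, so the stable Auslander--Reiten quivers $\Gamma_A^s$ and $\Gamma_\Lambda^s$ are isomorphic. Since $\Lambda$ is a non-standard periodic representation-infinite algebra of polynomial growth which, by Theorem~\ref{thm:4.4}, is socle equivalent to one of $\Lambda_4',\dots,\Lambda_8'$, and these are of tubular type $(2,4,4)$, the clock description in Theorem~\ref{thm:pre1} shows that $\Gamma_\Lambda^s$ is built from $\bP_1(K)$-families of stable tubes of ranks $1$, $2$ and $4$; hence the same holds for $\Gamma_A^s$.

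Next, by Theorems~\ref{thm:2.9} and \ref{thm:2.10} the algebra $A$ is a representation-infinite periodic algebra of polynomial growth, so by Theorem~\ref{thm:pre2} it is isomorphic to an orbit algebra $\widehat{B}/G$ of the repetitive category of a tubular algebra $B$; the tube ranks $1,2,4$ force $B$ to be of tubular type $(2,4,4)$. Moreover each of $\Lambda_4,\dots,\Lambda_8$ is symmetric by Proposition~\ref{prop:4.5}, so $\Lambda$ is symmetric and hence $A$ is symmetric by Theorem~\ref{thm:2.7}. Finally $K_0(A)\cong K_0(\Lambda)$ is of rank $3$, so $A$ has exactly three simple modules. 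Applying the classification of standard self-injective algebras of tubular type in \cite[Theorem~1]{BiS2}, one concludes that $A$ is isomorphic to one of $\Lambda_4'$, $\Lambda_5'$, $\Lambda_6'$, $\Lambda_7'$, $\Lambda_8'$.

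It then remains to invoke the cohomology computations. Since $\charact K = 2$, Proposition~\ref{prop:7.1} gives $\dim_K HH^2(A) = 3$, while Proposition~\ref{prop:7.2} gives $\dim_K HH^2(\Lambda) = 2$. This contradicts Theorem~\ref{thm:2.6}, and therefore $A$ and $\Lambda$ are not derived equivalent.

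The computational heart of the matter --- the dimensions of $HH^0$, $HH^1$, $HH^2$ for all of the $\Lambda_i'$ and $\Lambda_i$ with $i\in\{4,5,6,7,8\}$ --- is already settled in Propositions~\ref{prop:7.1} and \ref{prop:7.2}, so nothing new needs to be computed; the remaining work is purely structural. The one point demanding care is the identification of $A$: I would need to check that \cite[Theorem~1]{BiS2} indeed exhausts the standard symmetric self-injective algebras of tubular type $(2,4,4)$ having three simple modules, so that $A$ is forced into the family $\{\Lambda_4',\dots,\Lambda_8'\}$ and no additional algebra (possibly with different Hochschild cohomology) can occur. That classification input, together with the reading off of the tubular type from the tube ranks, is where I expect the main (if modest) obstacle to lie.
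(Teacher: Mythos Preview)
Your proposal is correct and follows essentially the same route as the paper's own proof: assume a derived equivalence, use stable equivalence and the tube ranks to pin down tubular type $(2,4,4)$, use symmetry and the rank of $K_0$ together with \cite[Theorem~1]{BiS2} to force $A\in\{\Lambda_4',\dots,\Lambda_8'\}$, and then contradict Theorem~\ref{thm:2.6} via the $HH^2$ computations of Propositions~\ref{prop:7.1} and \ref{prop:7.2}. The only cosmetic difference is that the paper opens by recalling Theorem~\ref{thm:4.3} (the mutual derived equivalence of $\Lambda_4,\dots,\Lambda_8$), which you omit but do not need.
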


\begin{proof}
We know from Theorem~\ref{thm:4.3}
that
the algebras
$\Lambda_4$, $\Lambda_5$, $\Lambda_6$, $\Lambda_7$
or $\Lambda_8$ are derived equivalent.
Assume that $A$ and $\Lambda$ are derived equivalent.
Then it follows from Theorem~\ref{thm:2.8} that
$A$ and $\Lambda$ are stably equivalent, and hence
the stable Auslander-Reiten quivers $\Gamma_A^s$
and $\Gamma_{\Lambda}^s$ are isomorphic.
In particular, we conclude that
$\Gamma_{A}^s$
consists of stable tubes of ranks $1$, $2$ and $4$.
Further, by
Theorems \ref{thm:2.9} and \ref{thm:2.10},
$A$ is a representation-infinite periodic algebra
of polynomial growth, and hence a standard
self-injective algebra of tubular type $(2,4,4)$.
Moreover, $A$ is a symmetric algebra by
Theorem~\ref{thm:2.7} and Proposition~\ref{prop:4.5}.
We also know that $K_0(A)$ is of rank $3$.
Applying now \cite[Theorem~1]{BiS2} we infer
that $A$ is isomorphic to one of the algebras
$\Lambda_4'$, $\Lambda_5'$, $\Lambda_6'$, $\Lambda_7'$
or $\Lambda_8'$.
On the other hand, it follows from
Theorem~\ref{thm:2.6} and
Propositions \ref{prop:7.1} and \ref{prop:7.2} that
\[
  \dim_K HH^2(\Lambda)
  = \dim_K HH^2(\Lambda_6)
  = 2
  < 3
  = \dim_K HH^2(\Lambda_6')
  = \dim_K HH^2(A)
.
\]
Therefore, applying Theorem \ref{thm:2.6} again,
we conclude that
$A$ and $\Lambda$ are not derived equivalent.
\end{proof}

\section{Hochschild cohomology for tubular type $(2,3,6)$}%
\label{sec:type236}

Our first aim is to determine the dimensions
of the low Hochschild cohomology spaces
of the exceptional algebras $\Lambda_{10}'$ and $\Lambda_{10}$
(in characteristic $2$) of tubular type $(2,3,6)$.

\begin{proposition}
\label{prop:8.1}
Let $A = \Lambda_{10}'$.
Then
\begin{enumerate}[(i)]
\item
  $\dim_K HH^0(A) = 2$.
\item
  $\dim_K HH^1(A) = \left\{ \begin{array}{cl} 0, & \charact (K \neq 2 \\ 1, & \charact (K) = 2 \end{array} \right.$.\vspace{2pt}
\item
  $\dim_K HH^2(A) = \left\{ \begin{array}{cl} 0, & \charact (K) \neq 2 \\ 1, & \charact (K) = 2 \end{array} \right.$.
\end{enumerate}
\end{proposition}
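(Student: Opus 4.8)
The plan is to follow the scheme of the proofs of Propositions~\ref{prop:5.1}, \ref{prop:6.1}, \ref{prop:6.5} and \ref{prop:7.1}. First I would take, from the proof of the corresponding statement in \cite{BES2} for tubular type $(2,3,6)$, the first three terms
\[
  \bP_2 \xrightarrow{R}
  \bP_1 \xrightarrow{d}
  \bP_0 \xrightarrow{d_0}
  A \rightarrow 0
\]
of a minimal projective resolution of $A = \Lambda'_{10}$ in $\mod A^e$, where $\bP_0 = \bigoplus_{i=1}^{5} P(i,i)$, where $\bP_1 = \bigoplus_{\alpha \in Q_1} P\big(s(\alpha),t(\alpha)\big)$ is the sum over the eight arrows of the quiver of $\Lambda'_{10}$, and where $\bP_2 = \bigoplus_{j=1}^{5} P\big(s(\mu^{(j)}),t(\mu^{(j)})\big)$ is the sum over the five defining relations $\mu^{(1)},\dots,\mu^{(5)}$; here $R = d_2$ is given by $R(e_{s(\mu^{(j)})} \otimes e_{t(\mu^{(j)})}) = \varrho(\mu^{(j)})$, and one records a set of $A$-$A$-bimodule generators $\Gamma_1,\dots,\Gamma_5$ of $\Omega_{A^e}^3(A) = \Ker R$. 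Since the defining relations of $\Lambda'_{10}$ carry only integral coefficients, these first three terms and the elements $\Gamma_j$ are independent of $\charact K$. I would also fix a $K$-basis $\cB = e_1 \cB \cup \cdots \cup e_5 \cB$ of $A$ consisting of classes of paths and adapted to the primitive idempotents. Note that $\Lambda'_{10}$ is self-injective but not weakly symmetric, so its Nakayama permutation is nontrivial and $\bP_2 \not\cong \bP_0$; this asymmetry must be tracked throughout.

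For (i) I would compute $HH^0(A) = C(A)$ directly. A central element decomposes along the diagonal blocks $e_i A e_i$, and imposing commutation with each arrow, using the relations $\mu \beta = 0$, $\alpha \eta = 0$, $\beta \alpha = \delta \gamma$, $\eta \mu = \xi \sigma$, $\sigma \delta = \gamma \xi$, should leave only the identity together with one further element, giving $\dim_K HH^0(A) = 2$.

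For (ii) and (iii) I would use the two exact sequences of Proposition~\ref{prop:2.5}. For (ii) one has $\Hom_{A^e}(\bP_0,A) \cong \bigoplus_{i=1}^{5} e_i A e_i$ and $\Hom_{A^e}(\bP_1,A) \cong \bigoplus_{\alpha \in Q_1} e_{s(\alpha)} A e_{t(\alpha)}$, whose dimensions are read off from $\cB$; then $\Hom_{A^e}(\Omega_{A^e}(A),A)$ is identified with $\{ \varphi \in \Hom_{A^e}(\bP_1,A) \,|\, \varphi R = 0 \}$, a general $\varphi$ is expanded in the coefficients of the basis paths in each $e_{s(\alpha)} A e_{t(\alpha)}$, and evaluating $\varphi$ on each $\varrho(\mu^{(j)})$ produces a homogeneous linear system. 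The commutativity relations $\beta \alpha = \delta \gamma$, $\eta \mu = \xi \sigma$, $\sigma \delta = \gamma \xi$ should contribute equations whose coefficients carry a factor $2$, so the solution space picks up one dimension precisely when $\charact K = 2$, and
\begin{align*}
  \dim_K HH^1(A)
   &= \dim_K HH^0(A) + \dim_K \Hom_{A^e}\big(\Omega_{A^e}(A),A\big)
   \\&\ \ \ \,
      - \dim_K \Hom_{A^e}(\bP_0,A)
\end{align*}
then yields $0$ or $1$ as claimed. For (iii) one identifies $\Hom_{A^e}(\Omega_{A^e}^2(A),A)$ with $\{ \varphi \in \Hom_{A^e}(\bP_2,A) \,|\, \varphi(\Gamma_j) = 0,\ j = 1,\dots,5 \}$, parametrises $\varphi$ by the coefficients of its values on the generators of $\bP_2$ in the blocks $e_{s(\mu^{(j)})} A e_{t(\mu^{(j)})}$, evaluates on the $\Gamma_j$, and solves; the outcome is again characteristic-dependent, and
\begin{align*}
  \dim_K HH^2(A)
   &= \dim_K \Hom_{A^e}\big(\Omega_{A^e}(A),A\big)
      + \dim_K \Hom_{A^e}\big(\Omega_{A^e}^2(A),A\big)
   \\&\ \ \ \,
      - \dim_K \Hom_{A^e}(\bP_1,A)
\end{align*}
yields $0$ or $1$.

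The step I expect to be the main obstacle is not conceptual but computational: $\Lambda'_{10}$ is the largest of the exceptional algebras (five vertices, eight arrows, five relations) and, being only self-injective rather than (weakly) symmetric, its bimodule resolution has a genuinely asymmetric shape, with $\bP_2 \not\cong \bP_0$ and a nontrivial Nakayama permutation. Determining $R$ and the bimodule generators $\Gamma_j$ of $\Omega_{A^e}^3(A)$ correctly, and then running the two linear-algebra computations without sign or indexing errors, is the delicate part. A useful consistency check is that $\Lambda'_{10}$ is a periodic algebra by Theorem~\ref{thm:4.2-new}, so $\Omega_{A^e}^3(A)$ is nonprojective with $\dim_K \Omega_{A^e}^3(A) = \dim_K \bP_2 - \dim_K \bP_1 + \dim_K \bP_0 - \dim_K A$; this constrains the number of bimodule generators and confirms that the $\Gamma_j$ indeed generate $\Ker R$.
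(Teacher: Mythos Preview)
Your proposal is correct and follows essentially the same route as the paper: the paper also imports the first three terms of the minimal bimodule resolution from \cite[Proposition~10.1]{BES2}, fixes a path basis $\cB$, computes the center directly for (i), and then runs the two linear-algebra computations via the exact sequences of Proposition~\ref{prop:2.5} for (ii) and (iii). One small point worth flagging: in the paper's execution, the space $\Hom_{A^e}(\Omega_{A^e}^2(A),A)$ turns out to have dimension $8$ \emph{independently} of the characteristic (the only condition is $c_1 = 0$), so the characteristic dependence of $\dim_K HH^2(A)$ is inherited entirely from the term $\dim_K \Hom_{A^e}(\Omega_{A^e}(A),A)$ already computed in (ii); your phrasing suggests you might expect the $\Gamma_j$-system itself to detect the characteristic, which it does not here.
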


\begin{proof}
It follows from the proof of \cite[Proposition~10.1]{BES2}
that $A$ admits
the first three terms of a minimal projective resolution in $\mod A^e$
\[
  \bP_2 \xrightarrow{d_2}
  \bP_1 \xrightarrow{d_1}
  \bP_0 \xrightarrow{d_0}
  A \rightarrow 0
\]
with
\begin{align*}
 \bP_0 &=
   P(1,1) \oplus P(2,2) \oplus P(3,3) \oplus P(4,4) \oplus P(5,5)
,
\\
 \bP_1 &=
   P(1,2) \oplus P(2,1) \oplus P(1,3) \oplus P(3,1) \oplus P(2,5)
          \oplus P(5,3) \oplus P(3,4) \oplus P(4,2)
,
\\
 \bP_2 &=
   P(1,1) \oplus P(2,3) \oplus P(3,2) \oplus P(4,5) \oplus P(5,4)
,
\end{align*}
the differential
$R = d_2$
given by
\begin{align*}
 R(e_1 \otimes e_1)
  &= \varrho(\sigma \delta - \gamma \xi)
   = e_1 \otimes \delta
     + \sigma \otimes e_1
     - e_1 \otimes \xi
     - \gamma \otimes e_1
,\\
 R(e_2 \otimes e_3)
  &= \varrho(\eta \mu - \xi \sigma)
   = e_2 \otimes \mu
     + \eta \otimes e_3
     - e_2 \otimes \sigma
     - \xi \otimes e_3
,\\
 R(e_3 \otimes e_2)
  &= \varrho(\beta \alpha - \delta \gamma)
   = e_3 \otimes \alpha
     + \beta \otimes e_2
     - e_3 \otimes \gamma
     - \delta \otimes e_2
,\\
 R(e_4 \otimes e_5)
  &= \varrho(\alpha \eta)
   = e_4 \otimes \eta
     + \alpha \otimes e_5
,\\
 R(e_5 \otimes e_4)
  &= \varrho(\mu \beta)
   = e_5 \otimes \beta
     + \mu \otimes e_4
.
\end{align*}
Moreover, the $A$-$A$-bimodule
$\Omega_{A^e}^3(A) = \Ker R$
is generated by the following elements in $\bP_2$
\begin{align*}
 \zeta_{1} &=
   e_1 \otimes (\sigma\delta)^2
  - \sigma\delta \otimes \gamma\xi
  + (\sigma\delta)^2 \otimes e_1
  - \gamma \otimes \delta\sigma\delta
  + \gamma\xi\gamma \otimes \delta
  + \sigma \otimes \xi\gamma\xi
 \\&\ \ \ \,
  - \sigma\delta\sigma \otimes \xi
  + \gamma\eta \otimes \alpha\xi
  - \sigma\beta \otimes \mu\delta
,
\\
 \zeta_{2} &=
    e_2 \otimes \beta\alpha\xi\gamma
  - \xi\gamma \otimes \beta\alpha
  + \eta\mu \otimes \xi\gamma
  - \eta\mu\delta\sigma \otimes e_2
  - \eta \otimes \alpha\xi\gamma
  + \xi\gamma\eta \otimes \alpha
 \\&\ \ \ \,
  + \xi \otimes \gamma\xi\gamma
  - \xi\gamma\xi \otimes \gamma
,
\\
 \zeta_{3} &=
   e_3 \otimes \xi\gamma\eta\mu
  - \delta\sigma \otimes \eta\mu
  - \beta \otimes \mu\delta\sigma
  + \delta\sigma\beta \otimes \mu
  - \delta \otimes \sigma\delta\sigma
  + \delta\sigma\delta \otimes \sigma
 \\&\ \ \ \,
  + \beta\alpha \otimes \delta\sigma
  - \delta\sigma\beta\alpha \otimes e_3
,
\\
 \zeta_{4} &=
   e_4 \otimes \mu\delta\sigma\beta
  - \alpha \otimes \delta\sigma\beta
  - \alpha\xi \otimes \sigma\beta
  + \alpha\xi\gamma \otimes \beta
  - \alpha\xi\gamma\eta \otimes e_4
,
\\
 \zeta_{5} &=
   e_5 \otimes \alpha\xi\gamma\eta
  - \mu \otimes \xi\gamma\eta
  + \mu\delta \otimes \gamma\eta
  + \mu\delta\sigma \otimes \eta
  - \mu\delta\sigma\beta \otimes e_5
.
\end{align*}

We fix also the following basis
$\cB = e_1 \cB \cup e_2 \cB \cup e_3 \cB \cup e_4 \cB \cup e_5 \cB$
of the $K$-vector space $A$:
\begin{align*}
e_1 \cB &=
\{
e_1,
\sigma \delta,
(\sigma \delta)^2,
\gamma,
\gamma \xi \gamma,
\sigma,
\sigma \delta \sigma,
\gamma \eta,
\sigma \beta
\}
,
\\
e_2 \cB &=
\{
e_2,
\xi \gamma,
\eta \mu,
\eta \mu \delta \sigma,
\eta,
\xi \gamma \eta,
\xi,
\xi \gamma \xi
\}
,
\\
e_3 \cB &=
\{
e_3,
\delta \sigma,
\beta,
\delta \sigma \beta,
\delta,
\delta \sigma \delta,
\beta \alpha,
\delta \sigma \beta \alpha
\}
,
\\
e_4 \cB &=
\{
e_4,
\alpha,
\alpha \xi,
\alpha \xi \gamma,
\alpha \xi \gamma \eta
\}
,
\\
e_5 \cB &=
\{
e_5,
\mu,
\mu \delta,
\mu \delta \sigma,
\mu \delta \sigma \beta
\}
.
\end{align*}

We shall show now that the equalities (i), (ii), (iii) hold.

\smallskip

(i)
A direct checking shows that the center $C(A)$
of $A$ has the $K$-linear basis
\[
  \big\{ 1, (\delta \gamma)^2 \big\} ,
\]
and hence $\dim_K HH^0(A) = \dim_K C(A) = 2$.

\smallskip

(ii)
We will determine $\dim_K HH^1(A)$ using the exact sequence
of $K$-vector spaces (Proposition~\ref{prop:2.5})
\[
 0
  \to HH^0(A)
  \to \Hom_{A^e}(\bP_0,A)
  \to \Hom_{A^e}\big(\Omega_{A^e}(A),A\big)
  \to HH^1(A)
  \to 0
  .
\]
We have isomorphisms of $K$-vector spaces
\begin{align*}
 \Hom_{A^e} (\bP_0, A) &=
  \bigoplus_{i=1}^5 e_i A e_i
,
\\
 \Hom_{A^e} (\bP_1, A) &=
   e_1 A e_2 \oplus
   e_2 A e_1 \oplus
   e_1 A e_3 \oplus
   e_3 A e_1 \oplus
   e_2 A e_5 \oplus
   e_5 A e_3 
 \\&\ \ \ \,
\oplus
   e_3 A e_4 \oplus
   e_4 A e_2 ,
\end{align*}
and hence we obtain
\begin{align*}
 \dim_K \Hom_{A^e} (\bP_0, A) &=
   4+2+2+1+1=10
,
\\
 \dim_K \Hom_{A^e} (\bP_1, A) &=
   2+2+2+2+2+2+2+2=16
.
\end{align*}
We identify $\Hom_{A^e}(\Omega_{A^e}(A),A)$
with the $K$-vector space
\[
  \big\{ \varphi \in  \Hom_{A^e} (\bP_1, A) \,|\, \varphi R=0\big\}.
\]

Let $\varphi: \bP_1\to A$ be a homomorphism in $\mod A^e$.
Then,
using the basis
$\cB = e_1 \cB \cup e_2 \cB \cup e_3 \cB \cup e_4 \cB \cup e_5 \cB$ of $A$,
we conclude that there exist
$c_1, c_2, c_3, c_4, d_1, d_2, d_3, d_4$, $a_2, a_3, a_4, a_5, b_2, b_3, b_4$, $b_5 \in K$
such that
\begin{align*}
  \varphi(e_1\otimes e_2) &= c_1 \gamma + d_1 \gamma \xi \gamma,
 &
  \varphi(e_2\otimes e_1) &= c_2 \xi + d_2 \xi \gamma \xi,
 \\
  \varphi(e_1\otimes e_3) &= c_3 \sigma + d_3 \sigma \delta \sigma,
 &
  \varphi(e_3\otimes e_1) &= c_4 \delta + d_4 \delta \sigma \delta,
 \\
  \varphi(e_4\otimes e_2) &= a_4 \alpha + b_4 \alpha \xi \gamma,
 &
  \varphi(e_3\otimes e_4) &= a_3 \beta + b_3 \delta \sigma \beta,
 \\
  \varphi(e_5 \otimes e_3) &= a_5 \mu + b_5 \mu \delta \sigma,
 &
  \varphi(e_2 \otimes e_5) &= a_2 \eta + b_2 \xi \gamma \eta.
\end{align*}
Recall that
$\bP_2
 = P(1,1) \oplus
   P(2,3) \oplus
   P(3,2) \oplus
   P(4,5) \oplus
   P(5,4)$.

We have the equalities
\begin{align*}
  \varphi \big( R(e_1\otimes e_1) \big)
  &=
   \varphi (e_1 \otimes e_3) \delta
   + \sigma \varphi (e_3 \otimes e_1)
   - \varphi (e_1 \otimes e_2) \xi
   - \gamma \varphi (e_2 \otimes e_1)
 \\
  &=
   (- c_1 - c_2 + c_3 + c_4) \sigma \delta
   + (- d_1 - d_2 + d_3 + d_4) (\sigma \delta)^2
  ,
 \\
  \varphi \big( R(e_2 \otimes e_3) \big)
  &=
   \varphi (e_2 \otimes e_5) \mu
   + \eta \varphi (e_5 \otimes e_3)
   - \varphi (e_2 \otimes e_1) \sigma
   - \xi \varphi (e_1 \otimes e_3)
 \\
  &=
   (a_2 + a_5 - c_2 - c_3) \eta \mu
   + (b_2 + b_5 - d_2 - d_3) \xi \gamma \eta \mu
  ,
 \\
  \varphi \big( R(e_3\otimes e_2) \big)
  &=
   \varphi (e_3 \otimes e_4) \alpha
   + \beta \varphi (e_4 \otimes e_2)
   - \varphi (e_3 \otimes e_1) \gamma
   - \delta \varphi (e_1 \otimes e_2)
 \\
  &=
   (a_3 + a_4 - c_4 - c_1) \beta \alpha
   + (b_3 + b_4 - d_4 - d_1) \sigma \delta \beta \alpha
  ,
 \\
  \varphi \big( R(e_4 \otimes e_5) \big)
  &=
   \varphi (e_4 \otimes e_2) \eta
   + \alpha \varphi (e_2 \otimes e_5)
 \\
  &=
   (b_2 + b_4) \alpha \xi \gamma \eta ,
 \\
  \varphi \big( R(e_5 \otimes e_4) \big)
  &=
   \varphi (e_5 \otimes e_3) \beta
   + \mu \varphi (e_3 \otimes e_4)
 \\
  &=
   (b_3 + b_5) \mu \delta \sigma \beta
.
\end{align*}
Then
$\varphi$ factors through $\Omega_{A^e}(A)$
if and only if
\begin{align*}
  b_2 + b_4 &= 0
  ,
 &
  b_3 + b_5 &= 0
  ,
 \\
  c_3 + c_4 - (c_1 + c_2) &= 0
  ,
 &
  d_3 + d_4 - (d_1 + d_2) &= 0
  ,
 \\
  a_2 + a_5 - (c_2 + c_3) &= 0
  ,
 &
  b_2 + b_5 - (d_2 + d_3) &= 0
  ,
 \\
  a_3 + a_4 - (c_1 + c_4) &= 0
  ,
 &
  b_3 + b_4 - (d_1 + d_4) &= 0
  .
\end{align*}

Consider first the equations for the $b_i$ and the $d_i$.
Note that these are independent of the $a_i$ and $c_i$.
We may chose $b_2$ and $b_3$ arbitrarily,
then by $b_4$ and $b_5$ are fixed by the equations.
Moreover,
$d_2 + d_3 = b_2 + b_5 = - (b_3 + b_4) = d_1 + d_4$ are fixed.
We may choose $d_2$, this fixes $d_3$.
We must have have
$d_4 - d_1 = d_2 - d_3$.
If $\charact(K) \neq 2$, this determines
$d_1$ and $d_4$ uniquely.
On the other hand,
if $\charact(K) = 2$, then one of the $d_1$, $d_4$
can be chosen and the other is then fixed.
This shows that this space
has dimension $3$ if $\charact(K) \neq 2$
and dimension $4$ otherwise.

Now consider the equations for the $a_i$ and the $c_i$
We may choose $c_1, c_2, c_3$ arbitrarily,
then $c_4$ is fixed, and as well $a_2 + a_5$ and $a_3 + a_4$.
We can choose $a_2$ and $a_3$ arbitrarily and then
all remaining parameters are fixed by the equations.
Hence the space has dimension $5$.

Summing up, we conclude that
$\dim_K \Hom_{A^e}(\Omega_{A^e}(A),A) = 8$
if $\charact(K) \neq 2$
and
$\dim_K \Hom_{A^e}(\Omega_{A^e}(A),A) = 9$
if $\charact(K) = 2$.
Therefore, we obtain
\begin{align*}
  \dim_K HH^1(A)
   &= 2 + 8 - 10 = 0
   \quad \mbox{ if } \charact(K) \neq 2,
  \\
  \dim_K HH^1(A)
   &= 2 + 9 - 10 = 1
   \quad \mbox{ if } \charact(K) = 2.
\end{align*}

\smallskip

(iii)
We calculate $\dim_K HH^2(A)$ using the exact sequence
of $K$-vector spaces
\[
 0
  \to \Hom_{A^e}\big(\Omega_{A^e}(A),A\big)
  \to \Hom_{A^e}(\bP_1,A)
  \to \Hom_{A^e}\big(\Omega_{A^e}^2(A),A\big)
  \to HH^2(A)
  \to 0
  .
\]
We may identify $\Hom_{A^e}(\Omega_{A^e}^2(A),A)$
with the $K$-vector space
\[
  \big\{ \varphi \in  \Hom_{A^e} (\bP_2, A) \,|\, \varphi (\zeta_i) = 0 \mbox{ for } i \in \{ 1,2,3,4,5 \}  \big\} .
\]

Let $\varphi: \bP_2\to A$ be a homomorphism in $\mod A^e$.
Then,
using the basis
$\cB$ of $A$,
we conclude that there exist
$c_1, c_2, c_3, c_4, c_5, d_1, d_2, d_3, f_1 \in K$
such that
\begin{align*}
 \varphi(e_1 \otimes e_1)
  &= c_1 e_1 + d_1 \sigma \delta + f_1 (\sigma \delta)^2
,\\
 \varphi(e_2 \otimes e_3)
  &= c_2 \eta \mu + d_2 \eta \mu \delta \sigma
,\\
 \varphi(e_3 \otimes e_2)
  &= c_3 \beta \alpha +d_3 \delta \sigma \beta \alpha
,\\
 \varphi(e_4 \otimes e_5)
  &= c_4 \alpha \xi \gamma \eta
,\\
 \varphi(e_5 \otimes e_4)
  &= c_5 \mu \delta \sigma \beta
.
\end{align*}
We have the equalities
\begin{align*}
 \varphi(\zeta_1) &=
   \varphi (e_1 \otimes e_1) (\sigma \delta)^2
   - \sigma \delta \varphi (e_1 \otimes e_1) \gamma \xi
   + (\sigma \delta)^2 \varphi (e_1 \otimes e_1)
 \\&\ \ \ \,
   - \gamma \varphi (e_2 \otimes e_3) \delta \sigma \delta
   + \gamma \xi \gamma \varphi (e_2 \otimes e_3) \delta
   + \sigma \varphi (e_3 \otimes e_2) \xi \gamma \xi
 \\&\ \ \ \,
   - \sigma \delta \sigma \varphi (e_3 \otimes e_2) \xi
   + \gamma \eta \varphi (e_5 \otimes e_4) \alpha \xi
   - \sigma \beta \varphi (e_4 \otimes e_5) \mu \delta
 \\&=
   c_1 (\sigma \delta)^2
,
\\
 \varphi(\zeta_2) &=
   \varphi (e_2 \otimes e_3) \beta \alpha \xi \gamma
   - \xi \gamma \varphi (e_2 \otimes e_3) \beta \alpha
   + \eta \mu \varphi (e_3 \otimes e_2) \xi \gamma
 \\&\ \ \ \,
   - \eta \mu \delta \sigma \varphi (e_3 \otimes e_2)
   - \eta \varphi (e_5 \otimes e_4) \alpha \xi \gamma
   + \xi \gamma \eta \varphi (e_5 \otimes e_4) \alpha
 \\&\ \ \ \,
   + \xi \varphi (e_1 \otimes e_1) \gamma \xi \gamma
   - \xi \gamma \xi \varphi (e_1 \otimes e_1) \gamma
 \\&=
   0
,
\\
 \varphi(\zeta_3) &=
   \varphi (e_3 \otimes e_2) \xi \gamma \eta \mu
   - \delta \sigma \varphi (e_3 \otimes e_2) \eta \mu
   - \beta \varphi (e_4 \otimes e_5) \mu \delta \sigma
 \\&\ \ \ \,
   + \delta \sigma \beta \varphi (e_4 \otimes e_5) \mu
   - \delta \varphi (e_1 \otimes e_1) \sigma \delta \sigma
   + \delta \sigma \delta \varphi (e_1 \otimes e_1) \sigma
 \\&\ \ \ \,
   + \beta \alpha \varphi (e_2 \otimes e_3) \delta \sigma
   - \delta \sigma \beta \alpha \varphi (e_2 \otimes e_3)
 \\&=
   0
,
\\
 \varphi(\zeta_4) &=
   \varphi (e_4 \otimes e_5) \mu \delta \sigma \beta
   - \alpha \varphi (e_2 \otimes e_3) \delta \sigma \beta
   - \alpha \xi \varphi (e_1 \otimes e_1) \sigma \beta
  \\& \ \ \ \,
   + \alpha \xi \gamma \varphi (e_2 \otimes e_3) \beta
   - \alpha \xi \gamma \eta \varphi (e_5 \otimes e_4)
 \\&=
   0
,
\\
 \varphi(\zeta_5) &=
   \varphi (e_5 \otimes e_4) \alpha \xi \gamma \eta
   - \mu \varphi (e_3 \otimes e_2) \xi \gamma \eta
   + \mu \delta \varphi (e_1 \otimes e_1) \gamma \eta
  \\& \ \ \ \,
   + \mu \delta \sigma \varphi (e_3 \otimes e_2) \eta
   - \mu \delta \sigma \beta \varphi (e_4 \otimes e_5)
 \\&=
   0
.
\end{align*}
Hence, $\varphi(\zeta_i) = 0$
for $i \in \{1,2,3,4,5\}$
if and only if $c_1 = 0$.
This implies that
$\dim_K \Hom_{A^e}(\Omega_{A^e}^2(A),A) = 9 -1 = 8$.
Therefore, we obtain
\begin{align*}
  \dim_K HH^2(A)
   &= 8 + 8 - 16 = 0
   \quad \mbox{ if } \charact(K) \neq 2,
  \\
  \dim_K HH^2(A)
   &= 9 + 8 - 16 = 1
   \quad \mbox{ if } \charact(K) = 2.
\end{align*}
\end{proof}

\begin{proposition}
\label{prop:8.2}
Let $K$ be an algebraically closed field of characteristic $2$
and $A = \Lambda_{10}$.
Then
\begin{enumerate}[(i)]
\item
  $\dim_K HH^0(A) = 2$.
\item
  $\dim_K HH^1(A) = 0$.
\item
  $\dim_K HH^2(A) = 0$.
\end{enumerate}
\end{proposition}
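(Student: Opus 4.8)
The plan is to rerun the computation from the proof of Proposition~\ref{prop:8.1}, now for the socle deformation $A=\Lambda_{10}$ instead of $\Lambda_{10}'$. Since $\Lambda_{10}$ and $\Lambda_{10}'$ are socle equivalent (Theorem~\ref{thm:4.4}), I would first quote from \cite{BES2} the first three terms
\[
  \bP_2 \xrightarrow{R} \bP_1 \xrightarrow{d} \bP_0 \xrightarrow{d_0} A \to 0
\]
of a minimal projective resolution of $A$ in $\mod A^e$: the bimodules $\bP_0,\bP_1,\bP_2$ are exactly those of Proposition~\ref{prop:8.1}, the map $d$ is the canonical one, and the basis $\cB=e_1\cB\cup\dots\cup e_5\cB$ of $A$ fixed there is still a basis of $\Lambda_{10}$. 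The only change occurs in $R=d_2$: the relation $\sigma\delta-\gamma\xi$ is replaced by $\sigma\delta-\gamma\xi-\sigma\delta\sigma\delta$, so that $R(e_1\otimes e_1)=\varrho(\sigma\delta-\gamma\xi-\sigma\delta\sigma\delta)$ while the other four values of $R$ are as before, and the $A$-$A$-bimodule generators $\zeta_1,\dots,\zeta_5$ of $\Omega_{A^e}^3(A)=\Ker R$ acquire the extra summands forced by $\sigma\delta\sigma\delta$ and by $\delta\sigma\delta\sigma=0$.

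For part~(i) I would compute the center of $A$ directly from the bound quiver, exactly as in Proposition~\ref{prop:8.1}. Since the deformation affects $A$ only in its socle, the element $(\delta\gamma)^2$ remains central and $\{1,(\delta\gamma)^2\}$ is a $K$-basis of $C(A)$, whence $\dim_K HH^0(A)=2$.

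For parts~(ii) and~(iii) I would use the two short exact sequences of Proposition~\ref{prop:2.5}. As in Proposition~\ref{prop:8.1} we have $\dim_K\Hom_{A^e}(\bP_0,A)=10$ and $\dim_K\Hom_{A^e}(\bP_1,A)=16$. Writing an arbitrary $\varphi\in\Hom_{A^e}(\bP_1,A)$ in terms of the same sixteen scalars and evaluating $\varphi(R(e_i\otimes e_j))$ for the five bimodule generators, the four relations not involving $\sigma\delta$ yield the same linear equations as for $\Lambda_{10}'$, while $\varphi(R(e_1\otimes e_1))$ now picks up additional terms coming from $\varrho(\sigma\delta\sigma\delta)$. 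The expectation is that these impose exactly one further independent linear condition, so that $\dim_K\Hom_{A^e}(\Omega_{A^e}(A),A)=8$ instead of $9$, giving $\dim_K HH^1(A)=2+8-10=0$. For~(iii) I would identify $\Hom_{A^e}(\Omega_{A^e}^2(A),A)$ with $\{\varphi\in\Hom_{A^e}(\bP_2,A):\varphi(\zeta_1)=\dots=\varphi(\zeta_5)=0\}$, parametrize $\varphi$ by the same nine scalars as in Proposition~\ref{prop:8.1}, and evaluate the $\varphi(\zeta_i)$ using $\charact(K)=2$, $\delta\sigma\delta\sigma=0$ and $\sigma\delta=\gamma\xi+\sigma\delta\sigma\delta$; I expect the system to reduce, as it does for $\Lambda_{10}'$, to the single condition $c_1=0$, so that $\dim_K\Hom_{A^e}(\Omega_{A^e}^2(A),A)=8$ and $\dim_K HH^2(A)=8+8-16=0$.

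The substance of the argument, and the only genuine difficulty, lies in the bookkeeping for~(ii)--(iii): one must transfer from \cite{BES2} the precise modified differential $R$ and the precise modified generators $\zeta_1,\dots,\zeta_5$, and then check that the terms produced by $\sigma\delta\sigma\delta$ raise the rank of the degree-one constraint system by exactly one --- neither zero nor two --- while leaving the degree-two system of the same rank as for $\Lambda_{10}'$. Handling the characteristic-$2$ cancellations, together with the rewriting of products such as $\sigma\delta$, $\beta\alpha$ and $\eta\mu$ in terms of the fixed basis $\cB$, is where care is needed; once the relevant linear maps are written out explicitly, the dimension counts are purely mechanical.
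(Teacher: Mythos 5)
Your proposal follows exactly the paper's route: same resolution from \cite{BES2} with the modified differential $R(e_1\otimes e_1)=\varrho(\sigma\delta-\gamma\xi-(\sigma\delta)^2)$ and modified bimodule generators of $\Omega_{A^e}^3(A)$, the same identification of the center, and the same two exact sequences from Proposition~\ref{prop:2.5}; moreover every dimension you predict ($\dim_K\Hom_{A^e}(\Omega_{A^e}(A),A)=8$ rather than $9$, and $\dim_K\Hom_{A^e}(\Omega_{A^e}^2(A),A)=8$ with the single condition $c_1=0$) is exactly what the paper's computation yields, the extra degree-one condition being that $c_1+c_2+\sum d_i=0$ forces $c_1=c_2$ and $c_3=c_4$ once $\sum d_i=0$ is extracted from the $b,d$-equations in characteristic $2$. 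The only caveat is that you state the decisive rank counts as expectations rather than carrying out the bookkeeping, but the plan is correct and essentially identical to the paper's proof.
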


\begin{proof}
It follows from the proof of \cite[Proposition~10.3]{BES2}
that $A$ admits
the first three terms of a minimal projective resolution in $\mod A^e$
\[
  \bP_2 \xrightarrow{d_2}
  \bP_1 \xrightarrow{d_1}
  \bP_0 \xrightarrow{d_0}
  A \rightarrow 0
\]
with
\begin{align*}
 \bP_0 &=
   P(1,1) \oplus P(2,2) \oplus P(3,3) \oplus P(4,4) \oplus P(5,5)
,
\\
 \bP_1 &=
   P(1,2) \oplus P(2,1) \oplus P(1,3) \oplus P(3,1) \oplus P(2,5)
  \\& \ \ \ \,
          \oplus P(5,3) \oplus P(3,4) \oplus P(4,2)
,
\\
 \bP_2 &=
   P(1,1) \oplus P(2,3) \oplus P(3,2) \oplus P(4,5) \oplus P(5,4)
,
\end{align*}
the differential
$R = d_2$
given by
\begin{align*}
 R(e_1 \otimes e_1)
  &= \varrho\big(\sigma \delta - \gamma \xi - (\sigma \delta)^2\big)
   = e_1 \otimes \delta
     + \sigma \otimes e_1
     - e_1 \otimes \xi
     - \gamma \otimes e_1
  \\& \ \ \ \,
     - e_1 \otimes \delta \sigma \delta
     - \sigma \otimes \sigma \delta
     - \sigma \delta \otimes \delta
     - \sigma \delta \sigma \otimes e_1
,\\
 R(e_2 \otimes e_3)
  &= \varrho(\eta \mu - \xi \sigma)
   = e_2 \otimes \mu
     + \eta \otimes e_3
     - e_2 \otimes \sigma
     - \xi \otimes e_3
,\\
 R(e_3 \otimes e_2)
  &= \varrho(\beta \alpha - \delta \gamma)
   = e_3 \otimes \alpha
     + \beta \otimes e_2
     - e_3 \otimes \gamma
     - \delta \otimes e_2
,\\
 R(e_4 \otimes e_5)
  &= \varrho(\alpha \eta)
   = e_4 \otimes \eta
     + \alpha \otimes e_5
,\\
 R(e_5 \otimes e_4)
  &= \varrho(\mu \beta)
   = e_5 \otimes \beta
     + \mu \otimes e_4
.
\end{align*}
Moreover, the $A$-$A$-bimodule
$\Omega_{A^e}^3(A) = \Ker R$
is generated by the following elements in $\bP_2$
\begin{align*}
 \Psi_{1} &=
   e_1 \otimes (\sigma \delta)^2
   + \sigma \delta \otimes \big(\gamma \xi+(\sigma \delta)^2\big)
   +(\sigma \delta)^2 \otimes e_1
   + \gamma \otimes \delta \sigma \delta
   + \gamma \xi \gamma \otimes \delta
  \\& \ \ \ \,
   + \sigma \otimes \xi \gamma \xi
   + \sigma \delta \sigma \otimes \xi
   + \gamma \eta \otimes \alpha \xi
   + \sigma \beta \otimes \mu \delta
 + (\sigma \delta)^2 \otimes \sigma \delta + \sigma \delta \sigma \otimes \xi \gamma \xi
,
\\
 \Psi_{2} &=
   e_2 \otimes \beta \alpha \xi \gamma
   + \xi \gamma \otimes \beta \alpha
   + \eta \mu \otimes \xi \gamma
   + \eta \mu \delta \sigma \otimes e_2
   + \eta \otimes \alpha \xi \gamma
  \\& \ \ \ \,
   + \xi \gamma \eta \otimes \alpha
   + \xi \otimes \gamma \xi \gamma
   + \xi \gamma \xi \otimes (\gamma + \gamma \xi \gamma)
 + \xi \gamma \xi \otimes \gamma \xi \gamma
,
\\
 \Psi_{3} &=
   e_3 \otimes \xi \gamma \eta \mu
   + \delta \sigma \otimes \eta \mu
   + \beta \otimes \mu \delta \sigma
   + \delta \sigma \beta \otimes \mu
   + \delta \otimes \sigma \delta \sigma
  \\& \ \ \ \,
   + \delta \sigma \delta \otimes \sigma
   + \beta \alpha \otimes \delta \sigma
   + \delta \sigma \beta \alpha \otimes e_3
,
\\
 \Psi_{4} &=
   e_4 \otimes \mu \delta \sigma \beta
   + \alpha \otimes \delta \sigma \beta
   + \alpha \xi \otimes \sigma \beta
   + \alpha \xi \gamma \otimes \beta
   + \alpha \xi \gamma \eta \otimes e_4
,
\\
 \Psi_{5} &=
   e_5 \otimes \alpha \xi \gamma \eta
   + \mu \otimes \xi \gamma \eta
   + \mu \delta \otimes \gamma \eta
   + \mu \delta \sigma \otimes \eta
   + \mu \delta \sigma \beta \otimes e_5
.
\end{align*}

We use also the basis
$\cB = e_1 \cB \cup e_2 \cB \cup e_3 \cB \cup e_4 \cB \cup e_5 \cB$
of the $K$-vector space $A$,
defined in the proof of Proposition~\ref{prop:8.1}.

We shall show now that the equalities (i), (ii), (iii) hold.

\smallskip

(i)
A direct checking shows that the center $C(A)$
of $A$ has the $K$-linear basis
\[
  \big\{ 1, (\delta \gamma)^2 \big\} ,
\]
and hence $\dim_K HH^0(A) = \dim_K C(A) = 2$.

\smallskip

(ii)
We determine $\dim_K HH^1(A)$ using the exact sequence
of $K$-vector spaces
\[
 0
  \to HH^0(A)
  \to \Hom_{A^e}(\bP_0,A)
  \to \Hom_{A^e}\big(\Omega_{A^e}(A),A\big)
  \to HH^1(A)
  \to 0
  .
\]
We have
\begin{align*}
 \dim_K \Hom_{A^e} (\bP_0, A) &=
   10
,
\\
 \dim_K \Hom_{A^e} (\bP_1, A) &=
   16
,
\end{align*}
as in the proof
of Proposition~\ref{prop:8.1}.
Recall that the differential
$R = d_2 : \bP_2 \to \bP_1$
is given by
\begin{align*}
 R(e_1 \otimes e_1)
  &
   = e_1 \otimes \delta
     + \sigma \otimes e_1
     - e_1 \otimes \xi
     - \gamma \otimes e_1
     - e_1 \otimes \delta \sigma \delta
  \\& \ \ \ \,
     - \sigma \otimes \sigma \delta
     - \sigma \delta \otimes \delta
     - \sigma \delta \sigma \otimes e_1
,\\
 R(e_2 \otimes e_3)
  &
   = e_2 \otimes \mu
     + \eta \otimes e_3
     - e_2 \otimes \sigma
     - \xi \otimes e_3
,\\
 R(e_3 \otimes e_2)
  &
   = e_3 \otimes \alpha
     + \beta \otimes e_2
     - e_3 \otimes \gamma
     - \delta \otimes e_2
,\\
 R(e_4 \otimes e_5)
  &
   = e_4 \otimes \eta
     + \alpha \otimes e_5
,\\
 R(e_5 \otimes e_4)
  &
   = e_5 \otimes \beta
     + \mu \otimes e_4
.
\end{align*}

Let $\varphi: \bP_1 \to A$ be a homomorphism in $\mod A^e$.
Then, as in the proof of Proposition~\ref{prop:8.1}, we have
\begin{align*}
  \varphi(e_1\otimes e_2) &= c_1 \gamma + d_1 \gamma \xi \gamma,
 &
  \varphi(e_2\otimes e_1) &= c_2 \xi + d_2 \xi \gamma \xi,
 \\
  \varphi(e_1\otimes e_3) &= c_3 \sigma + d_3 \sigma \delta \sigma,
 &
  \varphi(e_3\otimes e_1) &= c_4 \delta + d_4 \delta \sigma \delta,
 \\
  \varphi(e_4\otimes e_2) &= a_4 \alpha + b_4 \alpha \xi \gamma,
 &
  \varphi(e_3\otimes e_4) &= a_3 \beta + b_3 \delta \sigma \beta,
 \\
  \varphi(e_5 \otimes e_3) &= a_5 \mu + b_5 \mu \delta \sigma,
 &
  \varphi(e_2 \otimes e_5) &= a_2 \eta + b_2 \xi \gamma \eta.
\end{align*}
for some elements
$c_1, c_2, c_3, c_4, d_1, d_2, d_3, d_4, a_2, a_3, a_4, a_5, b_2, b_3, b_4$, $b_5 \in K$.
We have the equalities
\begin{align*}
  \varphi \big( R(e_1\otimes e_1) \big)
  &=
   \varphi (e_1 \otimes e_3) \delta
   + \sigma \varphi (e_3 \otimes e_1)
   - \varphi (e_1 \otimes e_2) \xi
   - \gamma \varphi (e_2 \otimes e_1)
  \\& \ \ \ \,
   - \varphi (e_1 \otimes e_3) \delta \sigma \delta
   - \sigma \varphi (e_3 \otimes e_1) \sigma \delta
   - \sigma \delta \varphi (e_1 \otimes e_3) \delta
  \\& \ \ \ \,
   - \sigma \delta \sigma \varphi (e_3 \otimes e_1)
 \\
  &=
   \Bigg( \sum_{i=1}^4 c_i \Bigg) \sigma \delta
   + \Bigg( c_1 + c_2 + \sum_{i=1}^4 d_i \Bigg) (\sigma \delta)^2
  ,
\end{align*}
because $\charact(K) = 2$, and
\begin{align*}
  \varphi \big( R(e_2 \otimes e_3) \big)
  &=
   \varphi (e_2 \otimes e_5) \mu
   + \eta \varphi (e_5 \otimes e_3)
   - \varphi (e_2 \otimes e_1) \sigma
   - \xi \varphi (e_1 \otimes e_3)
 \\
  &=
   (a_2 + a_5 - c_2 - c_3) \eta \mu
   + (b_2 + b_5 - d_2 - d_3) \xi \gamma \eta \mu
  ,
 \\
  \varphi \big( R(e_3\otimes e_2) \big)
  &=
   \varphi (e_3 \otimes e_4) \alpha
   + \beta \varphi (e_4 \otimes e_2)
   - \varphi (e_3 \otimes e_1) \gamma
   - \delta \varphi (e_1 \otimes e_2)
 \\
  &=
   (a_3 + a_4 - c_4 - c_1) \beta \alpha
   + (b_3 + b_4 - d_4 - d_1) \sigma \delta \beta \alpha
  ,
 \\
  \varphi \big( R(e_4 \otimes e_5) \big)
  &=
   \varphi (e_4 \otimes e_2) \eta
   + \alpha \varphi (e_2 \otimes e_5)
 \\
  &=
   (b_2 + b_4) \alpha \xi \gamma \eta ,
 \\
  \varphi \big( R(e_5 \otimes e_4) \big)
  &=
   \varphi (e_5 \otimes e_3) \beta
   + \mu \varphi (e_3 \otimes e_4)
 \\
  &=
   (b_3 + b_5) \mu \delta \sigma \beta
.
\end{align*}
Then
$\varphi$ factors through $\Omega_{A^e}(A)$
if and only if
\begin{align*}
  b_2 + b_4 &= 0
  ,
 &
  b_3 + b_5 &= 0
  ,
 \\
  \sum_{i=1}^4 c_i &= 0
  ,
 &
  c_1 + c_2 + \sum_{i=1}^4 d_i &= 0
  ,
 \\
  a_2 + a_5 - (c_2 + c_3) &= 0
  ,
 &
  b_2 + b_5 - (d_2 + d_3) &= 0
  ,
 \\
  a_3 + a_4 - (c_1 + c_4) &= 0
  ,
 &
  b_3 + b_4 - (d_1 + d_4) &= 0
  .
\end{align*}
Consider first the equations involving the $b_i$ and the $d_i$.
We substitute $b_4 = b_2$ and $b_5 = b_3$.
Then the remaining equations with
$b_i, d_i$ are equivalent with
\[
  b_2 + b_5 = d_1 + d_4 = d_2 + d_3 .
\]
We may choose $b_2$, $b_5$ and $d_1$, $d_2$,
and then the remaining $b_i$ and $d_i$
are fixed.
Now we consider the remaining parameters.
We have $d_1 + d_2 + d_3 + d_4 = 0$
from the above conditions.
Then it follows that
$c_1 = c_2$ and $c_3 = c_4$.
This again implies
\[
  a_2 + a_5 = c_2 + c_3 = a_3 + a_4 .
\]
We may choose $c_1$, $c_3$, $a_2$ and $a_3$,
and then the remaining parameters
are fixed by the equations.
Summing up, we see that
$\dim_K \Hom_{A^e}(\Omega_{A^e}(A),A) = 8$.
Therefore, we conclude that
\[
  \dim_K HH^1(A)
    = 2 + 8 - 10 = 0
  .
\]

\smallskip

(iii)
We may identify $\Hom_{A^e}(\Omega_{A^e}^2(A),A)$
with the $K$-vector space
\[
  \big\{ \varphi \in  \Hom_{A^e} (\bP_2, A) \,|\, \varphi (\Psi_i) = 0 \mbox{ for } i \in \{ 1,2,3,4,5 \}  \big\} .
\]

Let $\varphi: \bP_2\to A$ be a homomorphism in $\mod A^e$.
Then
\begin{align*}
 \varphi(e_1 \otimes e_1)
  &= c_1 e_1 + d_1 \sigma \delta + f_1 (\sigma \delta)^2
,\\
 \varphi(e_2 \otimes e_3)
  &= c_2 \eta \mu + d_2 \eta \mu \delta \sigma
,\\
 \varphi(e_3 \otimes e_2)
  &= c_3 \beta \alpha +d_3 \delta \sigma \beta \alpha
,\\
 \varphi(e_4 \otimes e_5)
  &= c_4 \alpha \xi \gamma \eta
,\\
 \varphi(e_5 \otimes e_4)
  &= c_5 \mu \delta \sigma \beta
,
\end{align*}
for some elements
$c_1, c_2, c_3, c_4, c_5, d_1, d_2, d_3, f_1 \in K$.
Then we conclude that
\begin{align*}
 \varphi(\Psi_1) &
   = c_1 (\sigma \delta)^2
,
&
 \varphi(\Psi_2) &
   = 0
,
&
 \varphi(\Psi_3) &
   = 0
,
&
 \varphi(\Psi_4) &
   = 0
,
&
 \varphi(\Psi_5) &
   = 0
.
\end{align*}
Hence, $\varphi(\Psi_i) = 0$
for $i \in \{1,2,3,4,5\}$
if and only if $c_1 = 0$.
This gives
\[
  \dim_K \Hom_{A^e}\big(\Omega_{A^e}^2(A),A\big) = 9 -1 = 8.
\]
Therefore, we obtain
\begin{align*}
  \dim_K HH^2(A)
   &= \dim_K \Hom_{A^e}\big(\Omega_{A^e}(A),A\big)
      + \dim_K \Hom_{A^e}\big(\Omega_{A^e}^2(A),A\big)
   \\&\ \ \ \,
      - \dim_K \Hom_{A^e}(\bP_1,A)
   \\&
    = 8 + 8 - 16 = 0 .
\end{align*}
\end{proof}

\begin{proposition}
Let $K$ be an algebraically closed field of characteristic $2$,
$\Lambda$ the non-standard algebra
$\Lambda_{10}$ over $K$,
and $A$ a standard self-injective algebra over $K$.
Then $A$ and $\Lambda$ are not derived equivalent.
\end{proposition}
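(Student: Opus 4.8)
The plan is to argue as in the proofs of Propositions~\ref{prop:5.3}, \ref{prop:6.3}, \ref{prop:6.6} and~\ref{prop:7.3}. Suppose, for a contradiction, that $A$ and $\Lambda = \Lambda_{10}$ are derived equivalent. First I would invoke Theorem~\ref{thm:2.8} to conclude that $A$ and $\Lambda$ are stably equivalent, so that the stable Auslander-Reiten quivers $\Gamma_A^s$ and $\Gamma_\Lambda^s$ are isomorphic; in particular $\Gamma_A^s$ consists of stable tubes of ranks $1$, $2$, $3$ and $6$. Then, by Theorems~\ref{thm:2.9} and~\ref{thm:2.10}, $A$ is a representation-infinite periodic algebra of polynomial growth, and hence, by Theorem~\ref{thm:pre2}, a standard self-injective algebra of tubular type $(2,3,6)$. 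Moreover, $\Lambda_{10}$ is not a weakly symmetric algebra by Proposition~\ref{prop:4.5}, so it is not symmetric, and therefore $A$ is not symmetric either by Theorem~\ref{thm:2.7}. Finally $K_0(A) \cong K_0(\Lambda)$ is of rank $5$.

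The decisive step, and the one I expect to be the main obstacle, is the identification of $A$: using \cite[Theorem~1]{BiS2} (together with the treatment of the non-symmetric case in \cite[Section~5]{BiS3}, exactly as in the proof of Proposition~\ref{prop:6.6}) one should conclude that $A$ is isomorphic to $\Lambda_{10}'$. Here one has to check that $\Lambda_{10}'$ is, up to isomorphism, the only standard non-symmetric self-injective algebra of tubular type $(2,3,6)$ whose Grothendieck group has rank $5$; should any further ``mesh-type'' exception occur here (as $\Lambda(\bG_2)$ does in the $(3,3,3)$ case in Proposition~\ref{prop:6.6}), it would have to be excluded separately, for instance by comparing the dimensions of the centres or the periods.

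Once $A \cong \Lambda_{10}'$ is established, the contradiction follows in either of two ways. By Theorem~\ref{thm:4.7} the algebra $\Lambda = \Lambda_{10}$ is periodic of period $6$, while by Theorem~\ref{thm:4.2-new} the algebra $\Lambda_{10}'$ is periodic of period $3$, because $\charact K = 2$; since periods are invariant under derived equivalence by Theorem~\ref{thm:2.10}, this is impossible. Alternatively, Propositions~\ref{prop:8.1} and~\ref{prop:8.2} give
\[
  \dim_K HH^2(\Lambda) = 0 \neq 1 = \dim_K HH^2(\Lambda_{10}') = \dim_K HH^2(A),
\]
which contradicts Theorem~\ref{thm:2.6}. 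In either case $A$ and $\Lambda$ are not derived equivalent.
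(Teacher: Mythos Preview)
Your overall strategy matches the paper's, but the identification step has a genuine gap that you correctly anticipated without resolving. The classification result you want is not \cite[Theorem~1]{BiS2} (which treats weakly symmetric algebras, and $\Lambda_{10}$ is not even weakly symmetric by Proposition~\ref{prop:4.5}), but rather \cite[Theorem~5.4]{LS2}. That theorem shows that a standard self-injective algebra of tubular type $(2,3,6)$ with $K_0$ of rank~$5$ is isomorphic either to $\Lambda'_{10}$ \emph{or} to the preprojective algebra $P(\bA_5)$ of Dynkin type $\bA_5$. So the ``mesh-type'' exception you feared does occur, and it must be excluded.

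Here your proposed fallbacks do not all work. The period comparison disposes of $\Lambda'_{10}$ (period~$3$ in characteristic~$2$ versus period~$6$ for $\Lambda_{10}$), but it does \emph{not} dispose of $P(\bA_5)$: by \cite[2.10]{ESn}, $P(\bA_5)$ is also periodic of period~$6$. The paper instead cites \cite[Corollary~6.3 and Lemma~7.2]{ESn2} to obtain $\dim_K HH^1(P(\bA_5)) = \dim_K HH^2(P(\bA_5)) = 2$, and then the single invariant $\dim_K HH^2$ separates all three algebras ($0$, $1$, $2$ for $\Lambda_{10}$, $\Lambda'_{10}$, $P(\bA_5)$ respectively), contradicting Theorem~\ref{thm:2.6}. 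So your $HH^2$ alternative is the one that actually carries the argument through both cases; the period argument alone is insufficient.
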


\begin{proof}
Assume that $A$ and $\Lambda$ are derived equivalent.
Then it follows from Theorem~\ref{thm:2.8} that
$A$ and $\Lambda$ are stably equivalent, and hence
the stable Auslander-Reiten quivers $\Gamma_A^s$
and $\Gamma_{\Lambda}^s$ are isomorphic.
In particular, we conclude that
$\Gamma_{A}^s$
consists of stable tubes of ranks $1$, $2$, $3$ and $6$.
Further, by
Theorems \ref{thm:2.9} and \ref{thm:2.10},
$A$ is a representation-infinite periodic algebra
of polynomial growth, and hence is a standard
self-injective algebra of tubular type $(2,3,6)$.
Moreover, the Grothendieck groups $K_0(A)$
and $K_0(\Lambda)$ are isomorphic,
and then $K_0(A)$ is of rank $5$.
Then it follows from
\cite[Theorem~5.4]{LS2} that $A$ is either
isomorphic to $\Lambda'_{10}$ or
to the preprojective algebra
$P(\bA_5)$ of Dynkin type $\bA_5$
given by the quiver
\[
  \xymatrix{ \bullet \ar@<+.5ex>[r]^{\alpha}
     \save[] +<0mm,-2.5mm> *{4} \restore
    & \bullet \ar@<+.5ex>[l]^{\beta}
                    \ar@<+.5ex>[r]^{\delta}
     \save[] +<0mm,-2.5mm> *{2} \restore
    & \bullet \ar@<+.5ex>[l]^{\gamma}
                    \ar@<+.5ex>[r]^{\sigma}
     \save[] +<0mm,-2.5mm> *{1} \restore
    & \bullet \ar@<+.5ex>[l]^{\xi}
                    \ar@<+.5ex>[r]^{\eta}
     \save[] +<0mm,-2.5mm> *{3} \restore
    & \bullet \ar@<+.5ex>[l]^{\mu}
     \save[] +<0mm,-2.5mm> *{5} \restore
  }
\]
and the relations
\[
  \alpha \beta = 0,
 \ \,
  \beta \alpha = \delta \gamma,
 \ \,
  \gamma \delta = \sigma \xi,
 \ \,
  \xi \sigma = \eta \mu,
 \ \,
  \mu \eta = 0 .
\]
It is known that $P(\bA_5)$
is a periodic algebra of period $6$
(see \cite[2.10]{ESn}).
Moreover, it follows from
\cite[Corollary~6.3 and Lemma~7.2]{ESn2}
that
\[
  \dim_K HH^1\big(P(\bA_5)\big)
  = \dim_K HH^2\big(P(\bA_5)\big)
  = 2
.
\]
Recall also that, by
Proposition~\ref{prop:8.1},
we have
\[
  \dim_K HH^1(\Lambda'_{10})
  = \dim_K HH^2(\Lambda'_{10})
  = 1
,
\]
because $\charact(K) = 2$.
On the other hand, by
Proposition~\ref{prop:8.2},
we have
\[
  \dim_K HH^1(\Lambda_{10})
  = \dim_K HH^2(\Lambda_{10})
  = 0
.
\]
Therefore, applying Theorem \ref{thm:2.6} again,
we conclude that
$A$ and $\Lambda$ are not derived equivalent.
\end{proof}

\section*{Acknowledgements}

The research was done during the visits of the first and third
named authors at the Mathematical Institute in Oxford
and the second named author at the Nicolaus Copernicus University
in Toru\'n.
The authors thank all these institutions for the warm hospitality.

\end{document}